\documentclass[a4paper]{article}

\usepackage{geometry}
\usepackage{amsmath,amssymb,amsfonts,mathtools}
\usepackage{amsthm,mathrsfs}

\usepackage{indentfirst}
\usepackage{array}
\usepackage{color}
\usepackage{cite}
\usepackage{microtype}
\def\baselinestretch{1.2}
\usepackage[colorlinks=true]{hyperref}

\allowdisplaybreaks[1]
\numberwithin{equation}{section}

\theoremstyle{plain}
\newtheorem{theorem}{Theorem}[section]
\newtheorem{lemma}[theorem]{Lemma}
\newtheorem{proposition}[theorem]{Proposition}
\newtheorem{corollary}[theorem]{Corollary}

\theoremstyle{remark}

\newcommand{\HH}{\mathbb H}
\newcommand{\BB}{\mathbb B}
\newcommand{\RR}{\mathbb R}
\newcommand{\cD}{\mathcal D}
\newcommand{\cM}{\mathcal M}
\newcommand{\cR}{\mathcal R}
\newcommand{\cK}{\mathcal K}
\newcommand{\cX}{\mathcal X}
\newcommand{\dd}{\,\mathrm d}
\newcommand{\dv}{\dd v_g}
\newcommand{\ip}[2]{\left\langle #1,#2\right\rangle_\lambda}
\newcommand{\norm}[1]{\left\lVert #1\right\rVert_\lambda}
\newcommand{\dnorm}[1]{\left\lVert #1\right\rVert_{H^{-1}_\lambda}}
\newcommand{\dist}{\operatorname{dist}}
\newcommand{\Span}{\operatorname{span}}
\newcommand{\weakto}{\rightharpoonup}
\newcommand{\eps}{\varepsilon}
\newcommand{\lamc}{\lambda_c}
\newcommand{\lambdab}{\lambda_{\mathrm{bot}}}
\newcommand{\mix}{\mathrm{mix}}
\newcommand{\Euc}{\mathrm E}
\newcommand{\Hyp}{\mathrm H}

\begin{document}

\title{Sharp stability for mixed bubbles on hyperbolic space}

\author{Juncheng Wei, Zetian Yan, Zhipeng Yang\thanks{Corresponding author: yangzhipeng326@163.com.}}
\date{}
\maketitle
\AtEndDocument{%
\par
\medskip
\noindent\begin{minipage}{\textwidth}
\small
\renewcommand{\baselinestretch}{1}\selectfont
\noindent
\textbf{Juncheng Wei:}\\[0.2em]
\textsc{Department of Mathematics, Chinese University of Hong Kong, Shatin, NT, Hong Kong}\\[0.3em]
\textit{E-mail address}: \texttt{wei@math.cuhk.edu.hk}\\[0.7em]
\noindent
\textbf{Zetian Yan:}\\[0.2em]
\textsc{Department of Mathematics, Chinese University of Hong Kong, Shatin, NT, Hong Kong}\\[0.3em]
\textit{E-mail address}: \texttt{zetianyan@cuhk.edu.hk}\\[0.7em]
\noindent
\textbf{Zhipeng Yang:}\\[0.2em]
\textsc{Department of Mathematics, Yunnan Key Laboratory of Modern Analytical Mathematics and Applications, Yunnan Normal University, 650500 Kunming, China}\\
\textsc{Department of Mathematics: Analysis, Logic and Discrete Mathematics, Ghent University, 9000 Ghent, Belgium}\\[0.3em]
\textit{E-mail address}: \texttt{yangzhipeng326@163.com}\par
\end{minipage}%
}

\begin{abstract}
We prove sharp residual stability estimates for the critical Poincar\'e--Sobolev equation near sums of one positive hyperbolic solution and one projected Euclidean bubble in dimensions at least four. The estimates are uniform in the relative positions of the centers. The sharp rate is linear except in dimension six, where it contains a square root of a logarithm. For nonnegative functions, the same estimates hold under an energy constraint near the sum of the two profile energies for \(\lambda\) outside a finite set, and an additional mass constraint removes this restriction. For arbitrary finite numbers of profiles, we identify and realize all nonnegative Palais--Smale levels. We characterize the parameters at which different pairs of profile numbers have the same energy and show that the limiting energy and mass determine both profile numbers.
\end{abstract}

\par
\smallskip
\noindent \textbf{Keywords}: Critical Poincar\'e--Sobolev equation, hyperbolic space, quantitative stability.

\smallskip
\noindent \textbf{MSC2020}: 35B35, 35B33, 35J61.

\section{Introduction and main results}
\label{Sec1}

We consider the critical Poincar\'e--Sobolev equation
\[
-\Delta_{\HH^n}u-\lambda  u=|u|^{2^*-2}u \quad\text{in }\HH^n, \qquad2^*=\frac{2n}{n-2}.
\]
Here \(\HH^n\) is hyperbolic space with its canonical metric \(g\), distance \(d_g\), volume element \(\dv\), gradient \(\nabla_g\), and Laplace--Beltrami operator \(\Delta_{\HH^n}\).  Unless otherwise stated, \(n\ge4\) and \(\lambda\) are fixed, with
\begin{equation}\label{eq1.1}
\lamc=\frac{n(n-2)}4<\lambda< \lambdab=\frac{(n-1)^2}{4}.
\end{equation}
The number \(\lambdab\) is the bottom of the spectrum of \(-\Delta_{\HH^n}\).  Thus the quadratic form
\[
\norm{u}^2=\int_{\HH^n} \bigl(|\nabla_g u|_g^2-\lambda  u^2\bigr)\dv
\]
defines a norm equivalent to the usual \(H^1\)-norm.

The spectral gap controls the mass by the energy norm, as shown in Section~\ref{Sec2.1}. The associated inner product is
\[
\langle u,v\rangle_\lambda =\int_{\HH^n} \bigl(\langle\nabla_g u,\nabla_g v\rangle_g-\lambda uv\bigr)\dv.
\]
We set
\[
P^{\HH}_\lambda=-\Delta_{\HH^n}-\lambda,\qquad\cR_\lambda(u)=P^{\HH}_\lambda  u-|u|^{2^*-2}u, \qquad\Gamma_\lambda(u)=\|\cR_\lambda(u)\|_{H^{-1}_\lambda}.
\]
We study two related questions. The first asks how the equation residual controls the energy distance to a sum of one hyperbolic profile and one concentrating Euclidean profile. The second asks how energy and mass determine the types and numbers of profiles in a nonnegative sequence whose residual tends to zero. The local stability estimate answers the first question. The classification of finite profile configurations and their energy resonances answers the second, and supplies global criteria for applying the local estimate.

Mancini and Sandeep \cite{ManciniSandeep2008} proved existence and uniqueness, up to isometries, of the positive hyperbolic solution with finite energy in the range \eqref{eq1.1}.  If \(B^{\HH}_{0,\lambda}\) denotes its radial profile, centered at a fixed origin \(o\in\HH^n\), the full family is
\[
B^{\HH}_{z,\lambda}(x)=B^{\HH}_{0,\lambda}\bigl(d_g(x,z)\bigr),\qquad  z\in\HH^n.
\]
Its mass is \(M_\lambda=\|B^{\HH}_{0,\lambda}\|_{L^2(\HH^n)}^2>0\). Ganguly and Sandeep \cite{GangulySandeep2015} proved that the kernel of the linearized operator consists precisely of the infinitesimal isometry directions.  The Euclidean profiles are the Aubin--Talenti bubbles \cite{Aubin1976,Talenti1976},
\[
B^{\RR}_{y,\delta}(x)=\kappa_n \left(\frac{\delta}{\delta^2+|x-y|^2}\right)^{\frac{n-2}{2}},\qquad\kappa_n=[n(n-2)]^{(n-2)/4},
\]
where \(\delta>0\) and \(y\in\RR^n\).  We write \(B^{\RR}_{\delta}=B^{\RR}_{0,\delta}\).  By the classification of Caffarelli, Gidas, and Spruck, these are all positive solutions of \(-\Delta  v=v^{2^*-1}\) in \(\RR^n\) with finite energy \cite{CaffarelliGidasSpruck1989}.

The qualitative description of critical loss of compactness is based on the concentration-compactness principle of Lions \cite{Lions1985,Lions1985II} and Struwe's global compactness theorem \cite{Struwe1984}.  Profile decompositions of Solimini and G\'erard describe the translation and dilation parameters responsible for the failure of compactness \cite{Solimini1995,Gerard1998}, while the Brezis--Lieb lemma gives the corresponding nonlinear splitting \cite{BrezisLieb1983}.  On hyperbolic space, Bhakta and Sandeep \cite{BhaktaSandeep2012} proved that a critical Palais--Smale sequence can contain both hyperbolic translates and localized Euclidean bubbles. See also \cite{SandeepTintarev2019} for profile decompositions on noncompact manifolds.  These results identify the limiting profiles and their energies and establish convergence of the remainder in the energy norm.

Quantitative stability has two related forms.  For the Sobolev inequality, the remainder problem of Brezis and Lieb \cite{BrezisLieb1985} was resolved by Bianchi and Egnell \cite{BianchiEgnell1991}: the Sobolev deficit controls the squared distance to the family of optimizers. The stability constant and its dependence on dimension have been studied further in \cite{Konig2023,DolbeaultEstebanFigalliFrankLoss2025}. For approximate solutions of the Euler--Lagrange equation, one instead seeks an estimate in terms of its residual.  Ciraolo, Figalli, and Maggi \cite{CiraoloFigalliMaggi2018} proved such an estimate for nonnegative functions in an energy range containing one Euclidean bubble. Figalli and Glaudo \cite{FigalliGlaudo2020} obtained a linear residual estimate near sums of weakly interacting Euclidean bubbles in dimensions three to five, and showed that this rate fails in higher dimensions. Deng, Sun, and Wei \cite{DengSunWei2025} subsequently established the sharp logarithmic rate in dimension six and the sharp power rates in higher dimensions.

For the hyperbolic space, Bhakta, Ganguly, Karmakar, and Mazumdar established quantitative stability for a single profile \cite{BGKMOneBubble} and for any fixed finite number of weakly interacting hyperbolic profiles \cite{BGKMPartI}. Their multibubble theorem also includes subcritical powers. In the critical case, it gives a linear residual estimate in dimensions four and five, while such an estimate fails for sums of at least two hyperbolic bubbles in dimensions at least six. The interaction with concentrating Aubin--Talenti bubbles is left open there. Their global conclusion at pure hyperbolic energy levels excludes parameters for which the ratio of the hyperbolic and Euclidean bubble energies is rational. Compactness results for potential and forcing perturbations have also been obtained in \cite{BhaktaGangulyGuptaSahoo2024,BGGS2025}.

The interaction of a concentrating bubble with a nonzero background has also been studied in other geometries. The Brezis--Nirenberg problem and Rey's analysis of the Green function show how lower order terms and the boundary affect critical concentration \cite{BrezisNirenberg1983,Rey1990}. Chen and Kim \cite{ChenKim2024} established sharp residual stability estimates for the Yamabe problem on closed manifolds. Besides linear estimates in dimensions three to five, they studied a single bubble in higher dimensions, where the optimal rate depends on the dimension, the background solution, and the geometry. Chen, Kim, and Wei \cite{ChenKimWei2026} proved sharp quantitative stability estimates for the Brezis--Nirenberg problem, including dimension-dependent rates and boundary effects. For a single bubble whose center stays in a compact subset of the domain, their estimates include a linear rate in dimensions \(n\ge7\) and a logarithmic rate in dimension six.

We treat the interaction of a concentrating Euclidean profile with a hyperbolic profile moving over a noncompact isometry orbit. The hyperbolic profile has a kernel generated by its center parameters, and its value at the concentration center may tend to zero. We obtain sharp residual estimates in every dimension \(n\ge4\), uniformly in the relative positions of the centers, including coincidence and escape of the hyperbolic center. We also classify all finite nonnegative Palais--Smale levels, realize every pair of profile numbers, and describe all equal-energy configurations as the parameter varies. Energy and mass then select the profile numbers, including at resonant parameters.

A sum of one profile of each type is the simplest configuration in which concentration and escape can occur simultaneously.  To formulate the result, let \(B^{\HH}_{\mathrm{pr},\xi,\eps}\in  H^1_\lambda(\HH^n)\) be the Dirichlet resolvent projection of a Euclidean bubble of hyperbolic scale \(\eps>0\), centered at \(\xi\in\HH^n\), as constructed by the resolvent equation \eqref{eq2.4} and the isometry action in Section~\ref{Sec2.1}.  Its Euclidean limit after rescaling is an Aubin--Talenti bubble.  For small \(\eps_0>0\), define
\[
\cM_{\mix}(\eps_0)= \{B^{\HH}_{z,\lambda}+B^{\HH}_{\mathrm{pr},\xi,\eps}:z,\xi\in\HH^n,\ 0<\eps<\eps_0\}.
\]
The distance to a subset \(\mathcal S\subset  H^1_\lambda(\HH^n)\) is \(\dist_{H^1_\lambda}(u,\mathcal S)=\inf_{v\in\mathcal S}\norm{u-v}\). Besides this distance, our estimate controls the interaction and the signed dilation pairing of the projected bubble.  Put
\[
r=d_g(z,\xi),\qquad\cD=\eps\partial_\eps,
\]
and define
\[
I(\eps,r)=\int_{\HH^n}B^{\HH}_{z,\lambda}(B^{\HH}_{\mathrm{pr},\xi,\eps})^{2^*-1}\dv, \qquad  s(\eps)=-\bigl\langle\cR_\lambda(B^{\HH}_{\mathrm{pr},\xi,\eps}),\cD  B^{\HH}_{\mathrm{pr},\xi,\eps}\bigr\rangle.
\]
The quantity \(s(\eps)\) is the signed dilation pairing. Both quantities are invariant under a common isometry of the centers. In particular, \(s\) depends only on \(\eps\). Proposition~\ref{Prop2.10} proves that \(s(\eps)>0\) for all sufficiently small \(\eps\).

For \(0<t<1\), define
\[
\Psi_n(t)=\begin{cases}
t\sqrt{\log(2/t)},&n=6,\\
t,&n=4,5\text{ or }n\ge7,
\end{cases}
\qquad\Psi_n(0)=0.
\]

\begin{theorem}\label{Thm1.1}
Let \(n\ge4\) and assume \eqref{eq1.1}. There exist \(\eps_0,\eta_0,C>0\), depending only on \(n,\lambda\), such that every real-valued \(u\in H^1_\lambda(\HH^n)\) satisfying
\begin{equation}\label{eq1.2}
\norm{u-B^{\HH}_{z_*,\lambda}-B^{\HH}_{\mathrm{pr},\xi_*,\eps_*}}<\eta_0
\end{equation}
for some \(z_*,\xi_*\in\HH^n\) and \(0<\eps_*<\eps_0/2\) admits a decomposition
\[
u=\nu_\Hyp B^{\HH}_{z,\lambda}+\nu_\Euc B^{\HH}_{\mathrm{pr},\xi,\eps}+\rho, \qquad 0<\eps<\eps_0,\qquad \rho\perp_\lambda\cK_{(z,\eps,\xi)},
\]
where \(\cK_{(z,\eps,\xi)}\) is defined in \eqref{eq4.1}. Moreover, \(0<\Gamma_\lambda(u)<1\) and
\begin{equation}\label{eq1.3}
|\nu_\Hyp-1|+|\nu_\Euc-1|+s(\eps)+I(\eps,d_g(z,\xi)) \le C\Gamma_\lambda(u),
\end{equation}
\begin{equation}\label{eq1.4}
\norm{\rho}+\dist_{H^1_\lambda}\bigl(u,\cM_\mix(\eps_0)\bigr) \le C\Psi_n(\Gamma_\lambda(u)).
\end{equation}
\end{theorem}

Since \(s(\eps)>0\), estimate \eqref{eq1.3} excludes solutions of the equation in the neighborhood \eqref{eq1.2}, including sign-changing functions. Proposition~\ref{Prop4.8} proves optimality of the linear rate in \eqref{eq1.4} for \(n\ne6\). In dimension six, Proposition~\ref{Prop4.9} constructs positive functions for which the distance is comparable to \(\Psi_6(\Gamma_\lambda(u))\). In dimensions \(n\ge7\), the mixed configuration still has a linear rate, whereas the corresponding linear estimate fails for sums of at least two Euclidean bubbles or at least two hyperbolic bubbles \cite{DengSunWei2025,BGKMPartI}.

For the global results, choose \(\eps_0\) small enough for the preceding theorem and for
\[
\sup_{0<\eps<\eps_0}\|B^{\HH}_{\mathrm{pr},o,\eps}\|_2 <(\sqrt3-1)M_\lambda^{1/2},
\]
which is possible by Lemma~\ref{Lem2.5}.

Our second result removes the neighborhood assumption for nonnegative functions at the mixed energy.  Let \(S_{n,\lambda}\) and \(\mathcal S_{n,2}\) be the sharp constants in
\[
S_{n,\lambda}\|u\|_{2^*}^2\le\norm{u}^2, \qquad\mathcal S_{n,2}\|v\|_{2^*}^2\le\|\nabla  v\|_2^2,
\]
for \(u\in  H^1_\lambda(\HH^n)\) and \(v\in\dot  H^1(\RR^n)\), respectively.  Here \(\dot  H^1(\RR^n)\) is the completion of \(C_c^\infty(\RR^n)\) for the gradient norm.  Write
\[
J_\lambda(u)=\frac12\norm{u}^2 -\frac1{2^*}\int_{\HH^n}|u|^{2^*}\dv.
\]
A hyperbolic profile has energy \(S_{n,\lambda}^{n/2}/n\), and a Euclidean bubble has energy \(\mathcal S_{n,2}^{n/2}/n\).  The mixed energy is therefore
\[
E_{\mix}(\lambda)=\frac1n(S_{n,\lambda}^{n/2}+\mathcal S_{n,2}^{n/2}).
\]

The additional issue is whether this energy determines the types and numbers of profiles. A sequence \((u_\ell)\) is Palais--Smale at level \(c\) if \(J_\lambda(u_\ell)\to c\) and \(\Gamma_\lambda(u_\ell)\to0\). For nonnegative sequences at levels
\[
0<c<2\mathcal S_{n,2}^{n/2}/n,
\]
Theorem~\ref{Thm1.4} gives positivity, classification, and energy quantization, counting a nonzero weak limit as one hyperbolic profile. At these levels, there is at most one Euclidean profile. Since \(0<S_{n,\lambda}<\mathcal S_{n,2}\), a configuration at the mixed energy other than one profile of each type can only consist of \(m\ge3\) hyperbolic profiles, in which case
\[
(m-1)S_{n,\lambda}^{n/2}=\mathcal S_{n,2}^{n/2}.
\]
For each fixed dimension \(n\), define
\[
\Lambda_{\mix}=\left\{\lambda\in(\lamc,\lambdab): \left(\frac{S_{n,\lambda}}{\mathcal S_{n,2}}\right)^{n/2} =\frac1{m-1}\ \text{for some }m\in\mathbb N,\ m\ge3\right\}.
\]
Section~\ref{Sec5.2} shows that this set is finite.

\begin{theorem}\label{Thm1.2}
Let \(n\ge4\), assume \eqref{eq1.1}, and suppose that \(\lambda\notin\Lambda_\mix\). There exist \(\eta>0\) and \(\Gamma_0\in(0,1)\), depending only on \(n,\lambda\), such that every nonnegative \(u\in H^1_\lambda(\HH^n)\) satisfying
\[
|J_\lambda(u)-E_\mix(\lambda)|<\eta, \qquad \Gamma_\lambda(u)<\Gamma_0
\]
satisfies all conclusions of Theorem~\ref{Thm1.1}.
\end{theorem}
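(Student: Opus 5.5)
The plan is to argue by contradiction and compactness, reducing to the local statement Theorem~\ref{Thm1.1}. Suppose no such \(\eta,\Gamma_0\) exist. Then there is a sequence of nonnegative \(u_\ell\in H^1_\lambda(\HH^n)\) with \(J_\lambda(u_\ell)\to E_\mix(\lambda)\) and \(\Gamma_\lambda(u_\ell)\to0\), none of which satisfies the neighborhood condition \eqref{eq1.2} with the fixed \(\eta_0\) of Theorem~\ref{Thm1.1}. It therefore suffices to show that, after passing to a subsequence, \(u_\ell\) eventually satisfies \eqref{eq1.2}: there should be \(z_\ell,\xi_\ell\in\HH^n\) and \(\eps_\ell\to0\) with
\[
\norm{u_\ell-B^{\HH}_{z_\ell,\lambda}-B^{\HH}_{\mathrm{pr},\xi_\ell,\eps_\ell}}\to0 .
\]
Once this holds, Theorem~\ref{Thm1.1} gives the decomposition together with \eqref{eq1.3} and \eqref{eq1.4}. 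The constants remain those of Theorem~\ref{Thm1.1} and depend only on \(n,\lambda\).

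\textbf{Step 1: boundedness and profile decomposition.} I first bound \(u_\ell\) in \(H^1_\lambda\). This follows from the identity
\[
J_\lambda(u)-\tfrac12\langle \cR_\lambda(u),u\rangle=\tfrac1n\|u\|_{2^*}^{2^*}
\]
together with the bound \(|\langle\cR_\lambda(u),u\rangle|\le\Gamma_\lambda(u)\norm{u}\). Since \(E_\mix(\lambda)<2\mathcal S_{n,2}^{n/2}/n\) (because \(S_{n,\lambda}<\mathcal S_{n,2}\)), the level lies in the range covered by Theorem~\ref{Thm1.4}, which I invoke for nonnegative sequences. It provides profile decomposition and energy quantization: up to a subsequence,
\[
u_\ell=\sum_{i=1}^{m}B^{\HH}_{z^i_\ell,\lambda}+\sum_{j=1}^{k}B^{\HH}_{\mathrm{pr},\xi^j_\ell,\eps^j_\ell}+o(1)\quad\text{in }H^1_\lambda,
\]
with \(k\le1\), pairwise diverging hyperbolic centers, \(\eps^j_\ell\to0\), and
\[
E_\mix(\lambda)=\tfrac1n\bigl(mS_{n,\lambda}^{n/2}+k\mathcal S_{n,2}^{n/2}\bigr).
\]
Nonnegativity is what excludes sign-changing solutions of the hyperbolic equation and negative Euclidean bubbles. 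A nonzero weak limit counts as a hyperbolic profile, and by uniqueness \cite{ManciniSandeep2008} it is a translate of \(B^{\HH}_{0,\lambda}\). Moreover, the Euclidean profile, after rescaling, is a positive Aubin--Talenti bubble and so can be replaced by the projected bubble up to \(o(1)\) (Lemma~\ref{Lem2.5}).

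\textbf{Step 2: the arithmetic.} If \(k=0\), then \((m-1)S_{n,\lambda}^{n/2}=\mathcal S_{n,2}^{n/2}\). Here \(m\le2\) is impossible since \(S_{n,\lambda}<\mathcal S_{n,2}\), and \(m\ge3\) would force \(\lambda\in\Lambda_\mix\), which is excluded. Hence \(k=1\) and \(m=1\). The decomposition then reads \(u_\ell=B^{\HH}_{z_\ell,\lambda}+B^{\HH}_{\mathrm{pr},\xi_\ell,\eps_\ell}+o(1)\), which is the required convergence and contradicts the choice of \(u_\ell\). If the two centers coincide, concentration is still allowed: the decomposition does not require separation between the two different types.

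\textbf{Main obstacle.} I expect the difficulty to lie in Step 1 rather than Step 2. One must ensure that the \(o(1)\) remainder is genuinely small in the \(\norm{\cdot}\) norm and not just in the dual norm, and that the Euclidean profile's projection error vanishes uniformly even when \(\xi_\ell\) and \(z_\ell\) escape relative to each other. If Theorem~\ref{Thm1.4} does not state strong convergence of the remainder, I would supply it by a Brezis--Lieb splitting of \(J_\lambda\). The energy identity then forces the remainder's energy to vanish, and the Sobolev inequality with \(S_{n,\lambda}\) turns vanishing energy with vanishing residual into vanishing norm.
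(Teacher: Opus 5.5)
Your proposal is correct and follows the paper's proof. The steps match: contradiction, boundedness from \eqref{eq5.1}, the decomposition and energy quantization of Theorem~\ref{Thm1.4}, the arithmetic excluding \((m,0)\) with \(m\ge3\) via \(\lambda\notin\Lambda_{\mix}\), and then Theorem~\ref{Thm1.1}. Your Brezis--Lieb fallback is unnecessary, since Theorem~\ref{Thm1.4} already gives \(\norm{\rho_\ell}\to0\) in the energy norm.

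The one difference is minor. You let \(\eta\to0\) along the contradiction sequence, so the limit level is \(E_{\mix}(\lambda)\) automatically. The paper instead fixes \(\eta\) by local finiteness of the levels \(E_{m,k}(\lambda)\), so that \(E_{\mix}(\lambda)\) is the only level within \(2\eta\). Both suffice for the existence statement.
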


The global theorem at pure hyperbolic levels in \cite{BGKMPartI} assumes that \(S_{n,\lambda}^{n/2}/\mathcal S_{n,2}^{n/2}\) is irrational. The excluded parameters form a countable dense set, as shown in Theorem~\ref{Thm6.1}. At the mixed energy, only the reciprocal integers defining the finite set \(\Lambda_{\mix}\) permit a competing configuration. This exclusion is necessary for Theorem~\ref{Thm1.2}: at every \(\lambda\in\Lambda_{\mix}\), separated hyperbolic profiles have energy tending to \(E_{\mix}(\lambda)\) and residual tending to zero, while their distance from \(\cM_{\mix}(\eps_0)\) stays positive. See Lemma~\ref{Lem5.6}.

Our third result distinguishes these configurations by their mass. The mass is Lipschitz on bounded subsets of \(H^1_\lambda(\HH^n)\). In the profile decomposition, each hyperbolic profile contributes \(M_\lambda\), whereas each concentrating Euclidean profile has vanishing mass. Thus a mass constraint fixes the number of hyperbolic profiles, including at resonant parameters.

\begin{theorem}\label{Thm1.3}
Let \(n\ge4\) and assume \eqref{eq1.1}. Fix
\begin{equation}\label{eq1.5}
0<\eta_M<M_\lambda,\qquad 0<\eta_E<\frac{\mathcal S_{n,2}^{n/2}}n.
\end{equation}
There exists \(\Gamma_0\in(0,1)\), depending on \(n,\lambda,\eta_E,\eta_M\), such that every nonnegative \(u\in H^1_\lambda(\HH^n)\) satisfying
\begin{equation}\label{eq1.6}
\begin{gathered}
|J_\lambda(u)-E_{\mix}(\lambda)|<\eta_E,\qquad \left|\|u\|_{L^2(\HH^n)}^2-M_\lambda\right|<\eta_M,\\
\Gamma_\lambda(u)<\Gamma_0
\end{gathered}
\end{equation}
satisfies all conclusions of Theorem~\ref{Thm1.1}.
\end{theorem}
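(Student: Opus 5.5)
We argue by contradiction and reduce to the local statement of Theorem~\ref{Thm1.1}. It suffices to show that for every \(\eta_0'>0\) there is \(\Gamma_0\) such that every nonnegative \(u\) satisfying \eqref{eq1.6} lies within \(\eta_0'\) of some \(B^{\HH}_{z_*,\lambda}+B^{\HH}_{\mathrm{pr},\xi_*,\eps_*}\) with \(\eps_*<\eps_0/2\). Once this holds with \(\eta_0'=\eta_0\), Theorem~\ref{Thm1.1} applies directly and gives all its conclusions. So suppose the claim fails. Then there are nonnegative \(u_\ell\) satisfying \eqref{eq1.6} with \(\Gamma_\lambda(u_\ell)\to0\), but staying at distance at least \(\eta_0'\) from the set of such sums with \(\eps<\eps_0/2\).

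\emph{Step 1: boundedness and profile decomposition.} Since \(\Gamma_\lambda(u_\ell)\to 0\), the identity \(J_\lambda(u)-\frac12\langle \cR_\lambda(u),u\rangle=\frac1n\norm{u}^2\) shows that \(\norm{u_\ell}\) is bounded. Indeed, \(\frac1n\norm{u_\ell}^2\le J_\lambda(u_\ell)+\frac12\Gamma_\lambda(u_\ell)\norm{u_\ell}\). After passing to a subsequence, \(J_\lambda(u_\ell)\to c\) with \(|c-E_\mix|\le\eta_E\), so
\[
c<E_\mix+\mathcal S_{n,2}^{n/2}/n=\tfrac1n\bigl(S_{n,\lambda}^{n/2}+2\mathcal S_{n,2}^{n/2}\bigr).
\]
This is not directly inside the range \(c<2\mathcal S_{n,2}^{n/2}/n\) of Theorem~\ref{Thm1.4}. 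I would therefore invoke the general nonnegative Palais--Smale classification from the profile analysis (the Bhakta--Sandeep decomposition together with its nonnegativity refinement in Theorem~\ref{Thm1.4}, or its finite-profile extension in Section~\ref{Sec5}). That classification gives \(u_\ell=\sum_{i=1}^m B^{\HH}_{z_i^\ell,\lambda}+\sum_{j=1}^k B^{\HH}_{\mathrm{pr},\xi_j^\ell,\eps_j^\ell}+o(1)\) in \(H^1_\lambda\), with the centers mutually diverging and \(\eps_j^\ell\to0\). The limiting energy is \(c=\frac1n(mS_{n,\lambda}^{n/2}+k\mathcal S_{n,2}^{n/2})\).

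\emph{Step 2: the mass fixes \(m\).} Mass is Lipschitz on bounded sets, each hyperbolic profile contributes \(M_\lambda\), and concentrating projected bubbles have vanishing \(L^2\) norm by Lemma~\ref{Lem2.5}. Cross terms vanish because the centers diverge and the bubbles concentrate. Hence \(\|u_\ell\|_2^2\to mM_\lambda\). The constraint \(|mM_\lambda-M_\lambda|\le\eta_M<M_\lambda\) forces \(m=1\). The energy then reads \(c-\frac1nS_{n,\lambda}^{n/2}=\frac kn\mathcal S_{n,2}^{n/2}\), and \(|c-E_\mix|\le\eta_E<\mathcal S_{n,2}^{n/2}/n\) forces \(k=1\). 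The case \(m=0\) is also excluded by the mass bound.

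\emph{Step 3: conclusion, and the main obstacle.} With \(m=k=1\) we get \(\norm{u_\ell-B^{\HH}_{z^\ell,\lambda}-B^{\HH}_{\mathrm{pr},\xi^\ell,\eps^\ell}}\to0\) with \(\eps^\ell\to0\). This contradicts the assumed separation from the mixed manifold. The delicate point is the limit in Step 1. The energy window \(\eta_E<\mathcal S_{n,2}^{n/2}/n\) allows levels up to nearly two Euclidean bubbles plus one hyperbolic profile. So I must make sure the decomposition I cite covers nonnegative sequences at these levels. In particular, it must guarantee that all hyperbolic profiles are positive translates of \(B^{\HH}_{0,\lambda}\) and not sign-changing solutions, and that all Euclidean profiles are positive Aubin--Talenti bubbles. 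I would get this from nonnegativity of \(u_\ell\): weak limits of translates and rescalings are nonnegative solutions, and the uniqueness of Mancini--Sandeep together with the Caffarelli--Gidas--Spruck classification then identifies them. A second point is replacing the raw rescaled bubble by the projection \(B^{\HH}_{\mathrm{pr}}\) up to \(o(1)\), which is provided by the construction in Section~\ref{Sec2.1}.
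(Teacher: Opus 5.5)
Your argument is correct and is essentially the paper's: the paper proves Corollary~\ref{Cor6.5} by exactly this contradiction, using Theorem~\ref{Thm1.4} to obtain the limiting mass \(mM_\lambda\) and energy \(E_{m,k}(\lambda)\) and thereby force \((m,k)=(1,1)\), and then places \(u\) in the neighborhood \eqref{eq1.2} and applies Theorem~\ref{Thm1.1}; note that Theorem~\ref{Thm1.4} already holds at every level \(c\), so your worry about levels above \(2\mathcal S_{n,2}^{n/2}/n\) is unnecessary. One small slip: the identity is \(J_\lambda(u)-\frac1{2^*}\langle\cR_\lambda(u),u\rangle=\frac1n\norm{u}^2\) (see \eqref{eq5.1}), with coefficient \(\frac1{2^*}\) rather than \(\frac12\), although the boundedness inequality you derive from it remains true.
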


The tolerances in \eqref{eq1.5} are independent of the residual. In dimension six, the functions from Proposition~\ref{Prop4.9} eventually satisfy \eqref{eq1.6}, so the logarithmic rate remains optimal at resonant parameters as well.

The global classification applies to nonnegative Palais--Smale sequences at every energy level. For \(m,k\in\mathbb N_0\), define
\begin{equation}\label{eq1.7}
E_{m,k}(\lambda) =\frac{mS_{n,\lambda}^{n/2}+k\mathcal S_{n,2}^{n/2}}n.
\end{equation}
Thus \(E_{1,1}(\lambda)=E_{\mix}(\lambda)\). Our next result identifies all possible profile numbers and realizes every pair.

\begin{theorem}\label{Thm1.4}
Let \(n\ge4\) and assume \eqref{eq1.1}. If \(u_\ell\ge0\) is Palais--Smale at a level \(c\), then, after passage to a subsequence, there are \(m,k\in\mathbb N_0\) and parameters such that
\[
u_\ell=\sum_{i=1}^m B^{\HH}_{z_{i,\ell},\lambda} +\sum_{j=1}^k B^{\HH}_{\mathrm{pr},\xi_{j,\ell},\eps_{j,\ell}} +\rho_\ell, \qquad \norm{\rho_\ell}\longrightarrow0.
\]
The parameters satisfy
\[
d_g(z_{i,\ell},z_{j,\ell})\longrightarrow\infty\quad(i\ne j), \qquad \eps_{j,\ell}\longrightarrow0,
\]
\[
\frac{\eps_{i,\ell}}{\eps_{j,\ell}} +\frac{\eps_{j,\ell}}{\eps_{i,\ell}} +\frac{d_g(\xi_{i,\ell},\xi_{j,\ell})^2}{\eps_{i,\ell}\eps_{j,\ell}} \longrightarrow\infty\quad(i\ne j).
\]
Empty sums are zero, and a nonzero weak limit is counted among the hyperbolic profiles. Moreover,
\[
c=E_{m,k}(\lambda),\qquad \norm{u_\ell}^2\longrightarrow nc,\qquad \|u_\ell\|_{L^2(\HH^n)}^2\longrightarrow mM_\lambda.
\]
Conversely, every pair \((m,k)\in\mathbb N_0^2\) is realized by a nonnegative Palais--Smale sequence with weak limit zero, exactly these profile numbers, parameters satisfying the preceding separation conditions, and \(\rho_\ell=0\). Hence the nonnegative Palais--Smale levels are exactly the numbers \(E_{m,k}(\lambda)\).
\end{theorem}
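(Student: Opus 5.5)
The proof has two halves: a profile decomposition for nonnegative Palais--Smale sequences, and an explicit construction realizing every pair $(m,k)$. The plan is to use the global compactness theorem of Bhakta and Sandeep \cite{BhaktaSandeep2012} and then sharpen it using nonnegativity.

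\emph{Decomposition and classification of profiles.} First, $(u_\ell)$ is bounded in $H^1_\lambda$. This follows from $J_\lambda(u_\ell)-\frac1{2^*}\langle \cR_\lambda(u_\ell),u_\ell\rangle=\frac1n\norm{u_\ell}^2$ and $\Gamma_\lambda(u_\ell)\to0$. The Bhakta--Sandeep decomposition then writes $u_\ell$, up to $o(1)$ in $H^1_\lambda$, as a weak limit $w$ plus three kinds of terms: isometry translates $w_i\circ\tau_{i,\ell}$ of finite-energy solutions of the hyperbolic equation with $d_g(z_{i,\ell},o)\to\infty$; localized Euclidean profiles $v_j$ rescaled at scales $\eps_{j,\ell}\to0$ and cut off; and the usual orthogonality of parameters.

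Nonnegativity forces every profile to be nonnegative. Each profile arises as a weak limit of $u_\ell$ composed with isometries, or of suitable rescalings of $u_\ell$, and weak limits of nonnegative functions are nonnegative. By the strong maximum principle a nonzero profile is then positive. Mancini--Sandeep uniqueness gives $w_i=B^{\HH}_{0,\lambda}$ up to isometry, and the Caffarelli--Gidas--Spruck classification gives $v_j=$ Aubin--Talenti bubbles. The nonzero $w$ is handled the same way: it is a positive solution, hence a hyperbolic profile counted with $z_{i,\ell}$ fixed.

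Next I replace the cutoff Euclidean profiles by the projections $B^{\HH}_{\mathrm{pr},\xi_{j,\ell},\eps_{j,\ell}}$, which costs $o(1)$ in norm because $\eps\to0$ (Section~\ref{Sec2.1}). The separation conditions are the standard orthogonality of parameters from the profile decomposition. Energy splitting by Brezis--Lieb gives $c=E_{m,k}(\lambda)$, and the Pohozaev-type identity above gives $\norm{u_\ell}^2\to nc$.

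\emph{Mass.} Expanding $\|u_\ell\|_2^2$ produces three kinds of terms. The first are the diagonal hyperbolic terms, each equal to $M_\lambda$. The second are the projected-bubble masses, which tend to $0$ by Lemma~\ref{Lem2.5}. The third are cross terms; those between separated hyperbolic profiles vanish by decay of $B^{\HH}_{0,\lambda}$, and the others are bounded by Cauchy--Schwarz against the vanishing masses. The remainder satisfies $\|\rho_\ell\|_2\lesssim\norm{\rho_\ell}\to0$ by the spectral gap. Hence $\|u_\ell\|_2^2\to mM_\lambda$.

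\emph{Realization.} For a given pair $(m,k)$, set $u_\ell=\sum_i B^{\HH}_{z_{i,\ell},\lambda}+\sum_j B^{\HH}_{\mathrm{pr},\xi_{j,\ell},\eps_{j,\ell}}$. Choose mutual distances of the $z_{i,\ell}$ tending to infinity, choose distinct fixed $\xi_j$ far from the $z_{i,\ell}$, and let $\eps_{j,\ell}\to0$. This $u_\ell$ is nonnegative and has $\rho_\ell=0$. To show $\Gamma_\lambda(u_\ell)\to0$, write $\cR_\lambda(u_\ell)$ as the sum of the residuals of the individual projected bubbles plus the nonlinear interaction $\sum f(\cdot)-f(\sum\cdot)$. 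The individual residuals tend to $0$ in $H^{-1}_\lambda$ by the estimates of Section~2; for instance $\cR_\lambda(B^{\HH}_{\mathrm{pr}})\to0$ is consistent with $s(\eps)\to0$. The interaction term is bounded in $L^{2n/(n+2)}$ by products $a^{2^*-2}b$ of distinct profiles, which vanish under separation. The weak limit is $0$ since every profile escapes or concentrates, and then the first half yields the level $E_{m,k}$. The main obstacle, in my view, is the first half: checking that each Euclidean profile of \cite{BhaktaSandeep2012} is genuinely nonnegative and untwisted after the local conformal chart. This requires matching the hyperbolic scale $\eps$ with the chart rescaling so that the limit is an Aubin--Talenti bubble and not a mere solution of the Euclidean equation.
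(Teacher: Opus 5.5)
Your decomposition half follows the paper's route. The paper likewise uses boundedness from \eqref{eq5.1} and global compactness. It obtains nonnegativity of every profile by identifying it as a weak or a.e.\ limit of the \emph{original} sequence in its extraction frame. That is the content of Lemma~\ref{Lem5.4}: successive remainders may change sign, and the fixed-scale terms must be shown to vanish uniformly under concentration. Classification is by Mancini--Sandeep and Caffarelli--Gidas--Spruck, then localized bubbles are replaced by projected ones, then the mass is expanded. The replacement is more than ``Section~2.1''. Lemma~\ref{Lem5.3} must recenter the chart bubble by a M\"obius map, check that the center shift is \(o\) of the scale, and transfer separation to the hyperbolic form \eqref{eq5.2}. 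This is detail, not a missing idea.

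The genuine gap is in the realization. You assert that once \(\Gamma_\lambda(u_\ell)\to0\) and the weak limit vanishes, ``the first half yields the level \(E_{m,k}\)''. It does not. You have only shown \(\Gamma_\lambda(u_\ell)\to0\), so your sequence is Palais--Smale only along a subsequence on which the bounded values \(J_\lambda(u_\ell)\) converge. Applied there, the first half produces \emph{some} decomposition with numbers \((m',k')\) and level \(E_{m',k'}(\lambda)\). Nothing in your argument forces \((m',k')=(m,k)\), which is what ``exactly these profile numbers'' asserts. At resonant parameters this matters: if \(\mathfrak r_n(\lambda)=p/q\), then \(E_{q,0}(\lambda)=E_{0,p}(\lambda)\) (Proposition~\ref{Prop6.2}), so even the level would not fix the numbers.

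Two steps are missing. Write \(H_i=B^{\HH}_{z_{i,\ell},\lambda}\) and \(B_j=B^{\HH}_{\mathrm{pr},\xi_{j,\ell},\eps_{j,\ell}}\).
\begin{enumerate}
\item Prove \(J_\lambda(u_\ell)\to E_{m,k}(\lambda)\) directly. All cross pairings must vanish:
  \begin{itemize}
  \item \(\ip{H_i}{H_j}\to0\) for divergent centers;
  \item \(\ip{H_i}{B_j}=Q\to0\), uniformly in positions, by Theorem~\ref{Thm3.3};
  \item \(\ip{B_i}{B_j}\to0\) for concentration at distinct points.
  \end{itemize}
  Then the finite-sum power estimate with exponent \(2^*\) splits \(\int u_\ell^{2^*}\). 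The paper does this for cut-off bubbles with disjoint supports, then transfers it to projected bubbles via Lemma~\ref{Lem5.2} and Lipschitz continuity of \(J_\lambda\) and \(\cR_\lambda\) on bounded sets.
\item Exclude other profile counts. Your mass expansion applies verbatim to the explicit sum and gives \(\|u_\ell\|_2^2\to mM_\lambda\). By \eqref{eq5.5}, any positive decomposition then has \(m'=m\), and equality of energies forces \(k'=k\). Alternatively, show that in each extraction frame the selected term converges to its profile while all others vanish weakly, so the explicit sum is itself the decomposition with \(\rho_\ell=0\).
\end{enumerate}

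Two smaller points. For weak limit zero, every hyperbolic center must escape, not merely separate mutually; for example \(z_{i,\ell}=\exp_o(iL_\ell\zeta)\) with \(1\le i\le m\) and \(L_\ell\to\infty\). Also, the residual bound for a single projected bubble is \eqref{eq2.34}. It does not follow from \(s(\eps)\to0\), which controls only one pairing.
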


Section~\ref{Sec5} proves Theorem~\ref{Thm1.4} by global compactness, positivity of the extracted profiles, and replacement of localized Euclidean profiles by projected bubbles. The decomposition allows arbitrary finite profile numbers; the sharp residual rates in Theorem~\ref{Thm1.1} concern one profile of each type.

Equality between energies associated with different profile numbers is governed by the ratio
\begin{equation}\label{eq1.8}
\mathfrak r_n(\lambda) =\left(\frac{S_{n,\lambda}}{\mathcal S_{n,2}}\right)^{n/2}.
\end{equation}
Theorem~\ref{Thm6.1} proves that the centered positive solution depends \(C^1\) on \(\lambda\) and that
\[
\frac{\mathrm d}{\mathrm d\lambda}\frac{S_{n,\lambda}^{n/2}}n =-\frac12M_\lambda, \qquad \mathfrak r_n'(\lambda) =-\frac{nM_\lambda}{2\mathcal S_{n,2}^{n/2}}<0.
\]
The ratio is a decreasing \(C^1\) diffeomorphism onto the interval in \eqref{eq6.2}.

Theorem~\ref{Thm6.1} and Proposition~\ref{Prop6.2} characterize the parameters at which two distinct profile counts have the same energy. These parameters form a countable dense subset of the admissible interval. If \(\mathfrak r_n(\lambda)=p/q\), with \(p,q\in\mathbb N\) relatively prime, two nonnegative integer pairs have the same energy precisely when their difference is an integer multiple of \((q,-p)\). The smallest energy with two representations is
\[
\frac{qS_{n,\lambda}^{n/2}}n =\frac{p\mathcal S_{n,2}^{n/2}}n,
\]
with the two pairs \((q,0)\) and \((0,p)\). Both are realized by Palais--Smale sequences. Under any fixed upper bound on this first equal-energy level, there are only finitely many such parameters in the entire interval \((\lamc,\lambdab)\). The set \(\Lambda_{\mix}\) corresponds to the representations of the particular level \(E_{1,1}(\lambda)\).

Theorem~\ref{Thm6.3} and Corollaries~\ref{Cor6.4}--\ref{Cor6.5} determine profile numbers from energy and mass. Corollary~\ref{Cor6.5} shows that fixed constraints around \(E_{m_0,k_0}(\lambda)\) and \(m_0M_\lambda\) determine any prescribed pair \((m_0,k_0)\in\mathbb N_0^2\), at every admissible parameter. Applied to \((m_0,k_0)=(1,1)\), the arbitrary-profile decomposition places the function in the neighborhood of Theorem~\ref{Thm1.1} and proves Theorem~\ref{Thm1.3} with the energy window in \eqref{eq1.5}.

The realization in Theorem~\ref{Thm1.4} and the energy classification answer the question raised in \cite{BGKMPartI} about the coexistence of the two profile types at a prescribed energy. At each fixed pure hyperbolic level \(NS_{n,\lambda}^{n/2}/n\), only finitely many parameters admit another representation. In dimensions four and five, Corollary~\ref{Cor6.6} combines this classification with the local theorem of \cite{BGKMPartI}: the corresponding global linear estimate requires only this finite exclusion under the norm constraint alone, and no exclusion after imposing a mass constraint. The union of these finite sets over all \(N\) is still countable and dense, as explained after Corollary~\ref{Cor6.6}.

For the local estimate, the Euclidean bubble must first be replaced by an element of the energy space. In the conformal ball model, it has nonzero boundary trace. The Dirichlet resolvent projection corrects this trace, and boundary Hardy estimates control the resulting residual and its parameter derivatives. The conformal quadratic form is related to the Hardy--Sobolev--Maz'ya inequalities \cite{BenguriaFrankLoss2008,TertikasTintarev2007}.

A second issue is uniformity as the hyperbolic center escapes. The interaction estimates retain the value of the hyperbolic profile at the concentration center. On the orthogonal complement of the two profiles and their parameter derivatives, energy decoupling combines the hyperbolic and Euclidean spectral gaps into a uniform coercivity estimate.

The main new estimate is a positive lower bound for the dilation pairing of the mixed configuration. It controls the projection term \(s(\eps)\) and the interaction \(I(\eps,r)\) simultaneously, although their relative size depends on the positions of the centers. With \(\mu=\lambda-\lamc\in(0,1/4)\), Propositions~\ref{Prop3.4} and~\ref{Prop2.10} give
\[
I(\eps,r)\sim_{n,\lambda}\eps^{\frac{n-2}{2}}B^{\HH}_{0,\lambda}(r), \qquad  s(\eps)\sim_{n,\lambda}
\begin{cases}
\eps^2\log(2/\eps),&n=4,\\
\eps^2,&n\ge5.
\end{cases}
\]
The hyperbolic decay rate is
\[
\gamma_\lambda=\frac{n-1+\sqrt{(n-1)^2-4\lambda}}2, \qquad  B^{\HH}_{0,\lambda}(r)\sim_{n,\lambda}e^{-\gamma_\lambda  r}.
\]
The precise expansion and its derivatives are proved in Proposition~\ref{Prop3.1}.  In dimensions four and five, either contribution can dominate, depending on the relative position of the centers. In dimension six they can have the same order, whereas for \(n\ge7\) the interaction is uniformly \(o(s(\eps))\).  We retain both terms and prove the signed estimate
\[
-\bigl\langle\cR_\lambda(B^{\HH}_{z,\lambda}+B^{\HH}_{\mathrm{pr},\xi,\eps}),\cD B^{\HH}_{\mathrm{pr},\xi,\eps} \bigr\rangle\ge  c\bigl(s(\eps)+I(\eps,r)\bigr),
\]
for all sufficiently small \(\eps\), uniformly for \(r\ge0\) (Proposition~\ref{Prop4.4}). Combining this estimate with orthogonal decomposition and coercivity gives Theorem~\ref{Thm1.1}.  Global compactness and energy quantization give Theorem~\ref{Thm1.2}. The additional mass splitting selects one profile of each type at every admissible parameter and gives Theorem~\ref{Thm1.3}.

The parameter and dimension restrictions enter at specific points of this argument. The condition \(\mu>0\) gives the positive leading projection term in the dilation pairing, while \(\mu<1/4\) makes the conformal Hardy form coercive. In dimension six, the residual of the mixed configuration has dual norm \(O(\eps^2\sqrt{\log(2/\eps)})\), while its pairings with the two profiles and the normalized scale derivative remain \(O(\eps^2)\). The proof first controls \(s+I\) and then the orthogonal remainder, which yields the logarithmic rate. For \(n\ge7\), the nonlinear interaction term is \(O(\eps^{(n+2)/4})=o(\eps^2)\) in the dual energy norm, and the residual of the projected bubble is \(O(\eps^2)\). The nonlinear remainder can be absorbed because \(2^*-1>1\).

Sections~\ref{Sec2} and~\ref{Sec3} establish the projection and interaction estimates. Section~\ref{Sec4} proves local stability and sharpness. Section~\ref{Sec5} proves the profile decomposition, realization, and global estimate away from \(\Lambda_\mix\). Section~\ref{Sec6} treats parameter dependence, energy resonance, and selection by energy and mass.

Throughout the paper, \(f\sim g\) means \(cg\le f\le Cg\) with positive constants independent of the variables being estimated. Constants may depend on the fixed pair \((n,\lambda)\); any additional dependence is indicated. A ratio tending to one is written \(f=g(1+o(1))\).

\section{Projected Euclidean bubble}
\label{Sec2}

Fix \(n\ge4\) and \(\lamc<\lambda<\lambdab\), with the notation of Section~\ref{Sec1}. The scale threshold \(\eps_0=\eps_0(n,\lambda)>0\) may be decreased finitely many times; set \(\delta_0=\eps_0/2<1/2\). All profile estimates are uniform in the center.

We denote by \(\delta\) the Euclidean coordinate scale in the Poincar\'e ball model and by \(\eps\) the corresponding hyperbolic scale. They are related by
\[
\delta=\frac{\eps}{2}.
\]
For scale differentiation and logarithmic terms, we use
\[
\cD=\eps\partial_\eps=\delta\partial_\delta, \qquad \ell_\delta=\log(1/\delta).
\]
For a scalar quantity \(F_\delta\), we write \(F_\delta=O_\cD(r_\delta)\) if
\[
|F_\delta|+|\cD F_\delta|\le Cr_\delta.
\]
The operator \(\cD\) acts with all other variables held fixed. Function-valued estimates specify their norms.

\subsection{Conformal covariance and resolvent projection}
\label{Sec2.1}

McKean's spectral-bottom inequality \cite{McKean1970} gives, for every \(u\in H^1_\lambda(\HH^n)\),
\[
\left(1-\frac{\lambda}{\lambdab}\right)\|\nabla_gu\|_2^2 \le\norm{u}^2\le\|\nabla_gu\|_2^2, \qquad \|u\|_2^2\le\frac{\norm{u}^2}{\lambdab-\lambda}.
\]
In particular, convergence in the energy norm implies convergence in \(L^2\).

Let \(\BB^n\) be the Euclidean unit ball, and write \(\BB_R\) for the Euclidean ball of radius \(R\) centered at the origin. In the Poincar\'e ball model, we fix
\[
\HH^n=(\BB^n,g),\qquad g=\varrho^2\,dx^2,\qquad \varrho(x)=\frac2{1-|x|^2}.
\]
Set
\[
\mu=\lambda-\lamc\in(0,\tfrac14),\qquad \mathcal{P}^{\RR}_\mu=-\Delta_{\RR^n}-\mu\varrho^2,\qquad \alpha_\lambda=\frac{1+\sqrt{1-4\mu}}2>\frac12.
\]
Here \(H_0^1(\BB^n)\) is the closure of \(C_c^\infty(\BB^n)\) in the Dirichlet norm \(\|\nabla v\|_2\), with zero boundary trace. For \(w,v\in H_0^1(\BB^n)\), define the symmetric bilinear form \(\mathcal B_\mu(w,v)=\int_{\BB^n}(\nabla w\cdot\nabla v-\mu\varrho^2wv)\,dx\).

Conformal covariance and integration by parts directly give
\begin{align}
P^{\HH}_\lambda(\varrho^{-\frac{n-2}{2}}v)&=\varrho^{-\frac{n+2}{2}}\mathcal{P}^{\RR}_\mu v,\label{eq2.1}\\
\norm{\varrho^{-\frac{n-2}{2}}v}^2&=\mathcal B_\mu(v,v) =\int_{\BB^n}(|\nabla v|^2-\mu\varrho^2v^2)\,dx, \qquad v\in H_0^1(\BB^n).\notag
\end{align}
The critical integrals also agree:
\begin{align*}
\int_{\HH^n}|\varrho^{-\frac{n-2}{2}}v|^{2^*}\dv=\int_{\BB^n}|v|^{2^*}\,dx.
\end{align*}

\begin{lemma}\label{Lem2.1}
For every \(v\in H_0^1(\BB^n)\),
\begin{equation}\label{eq2.2}
\frac14\int_{\BB^n}\varrho^2v^2\,dx\le\int_{\BB^n}|\nabla v|^2\,dx,
\end{equation}
and consequently
\begin{equation}\label{eq2.3}
\mathcal B_\mu(v,v)\ge(1-4\mu)\|\nabla v\|_2^2.
\end{equation}
\end{lemma}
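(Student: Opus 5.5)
The estimate \eqref{eq2.2} is the Hardy inequality on the ball with weight $\varrho^2=4(1-|x|^2)^{-2}$. In the form $\frac14\int\varrho^2v^2\,dx\le\int|\nabla v|^2\,dx$ it reads
\[
\int_{\BB^n}\frac{v^2}{(1-|x|^2)^2}\,dx\le\int_{\BB^n}|\nabla v|^2\,dx .
\]
By density it suffices to treat $v\in C_c^\infty(\BB^n)$. I would prove it with the ground-state (vector field) method: find a vector field $F$ on $\BB^n$ with
\[
\operatorname{div}F-|F|^2\ge \frac{1}{(1-|x|^2)^2},
\]
and then expand $0\le\int|\nabla v+vF|^2$. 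After integrating $\int 2v\nabla v\cdot F=-\int v^2\operatorname{div}F$ by parts, this gives
\[
\int|\nabla v|^2\ge\int(\operatorname{div}F-|F|^2)v^2 .
\]

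The natural choice is $F=\frac{x}{1-|x|^2}$, i.e.\ $F=\nabla\phi/\phi$ with $\phi$ related to $(1-|x|^2)^{1/2}$, up to sign. For this field $|F|^2=\frac{|x|^2}{(1-|x|^2)^2}$. Moreover
\[
\operatorname{div}F=\frac{n}{1-|x|^2}+\frac{2|x|^2}{(1-|x|^2)^2}=\frac{n(1-|x|^2)+2|x|^2}{(1-|x|^2)^2}.
\]
Therefore
\[
\operatorname{div}F-|F|^2=\frac{n-(n-1)|x|^2}{(1-|x|^2)^2}.
\]
Since $n-(n-1)|x|^2\ge1$ on $\BB^n$, the inequality follows, and the bound is uniform because the term $n-(n-1)|x|^2$ only helps. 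An alternative route is conformal: \eqref{eq2.2} is equivalent, via \eqref{eq2.1} with $\mu=1/4$, to nonnegativity of $\|\cdot\|_{\lambdab}^2$, which is McKean's inequality. Indeed $\lamc+\frac14=\lambdab$, so the form $\mathcal B_{1/4}(v,v)\ge0$ is exactly \eqref{eq2.2}. I would mention this as a consistency check but give the direct computation as the proof.

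For \eqref{eq2.3}, write
\[
\mathcal B_\mu(v,v)=\|\nabla v\|_2^2-\mu\int\varrho^2v^2 .
\]
Then \eqref{eq2.2} gives $\mu\int\varrho^2v^2\le4\mu\|\nabla v\|_2^2$, so $\mathcal B_\mu(v,v)\ge(1-4\mu)\|\nabla v\|_2^2$. The density extension to $H_0^1(\BB^n)$ uses Fatou's lemma on the left side of \eqref{eq2.2}.

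The only delicate point is the integration by parts: $F$ blows up at the boundary, but $v$ has compact support, so there is no boundary term. Once the field is chosen, I don't expect any real obstacle.
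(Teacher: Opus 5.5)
Your proof is correct, but it takes a different route from the paper. The paper argues conformally. For $v\in C_c^\infty(\BB^n)$ it sets $u=\varrho^{-(n-2)/2}v$ and uses \eqref{eq2.1} to rewrite McKean's bound $\int_{\HH^n}(|\nabla_g u|_g^2-\lambdab u^2)\dv\ge0$ as exactly \eqref{eq2.2}, since $\lambdab-\lamc=\frac14$. Density and $\mu>0$ then finish the argument. This is the route you relegate to a consistency check.

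Your ground-state computation with $F=x/(1-|x|^2)$ is right: $\operatorname{div}F-|F|^2=(n-(n-1)|x|^2)(1-|x|^2)^{-2}$. The integration by parts is harmless because $F$ is smooth on the support of $v$.

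Your approach is self-contained: it does not invoke McKean's theorem, and through the same conformal dictionary it actually reproves it. It also gives a strictly stronger inequality,
\[
\int|\nabla v|^2\ge\frac14\int\varrho^2v^2+(n-1)\int\frac{v^2}{1-|x|^2},
\]
which, pulled back to $\HH^n$, is an improved Poincar\'e inequality with a positive remainder. The paper never needs that extra term.

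The paper's route is shorter. It also makes transparent that the constant $\frac14$ is the gap $\lambdab-\lamc$, which is why the condition $\mu<\frac14$ is exactly what makes $\mathcal B_\mu$ coercive.

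Your Fatou argument for the density step and your final line for \eqref{eq2.3} match the paper. That final line uses $\mu\ge0$, which holds here.
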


\begin{proof}
For \(v\in C_c^\infty(\BB^n)\), put \(u=\varrho^{-(n-2)/2}v\). Conformal covariance gives
\[
0\le\int_{\HH^n}\bigl(|\nabla_gu|_g^2-\lambdab u^2\bigr)\dv =\int_{\BB^n}|\nabla v|^2\,dx-\frac14\int_{\BB^n}\varrho^2v^2\,dx.
\]
This is \eqref{eq2.2}; density and \(\mu>0\) then give \eqref{eq2.3}.
\end{proof}

The correspondence in \eqref{eq2.1} identifies the dual spaces as well as the energy spaces. Indeed, multiplication by \(\varrho^{-(n-2)/2}\) maps \(C_c^\infty(\BB^n)\) bijectively onto the compactly supported smooth hyperbolic functions. The quadratic identity, together with \eqref{eq2.3}, therefore extends this map to an onto isometry from \(H_0^1(\BB^n)\), equipped with \(\mathcal B_\mu(v,v)^{1/2}\), onto \(H^1_\lambda(\HH^n)\). Polarization gives the corresponding bilinear identity. If \(h\in H^{-1}(\BB^n)=(H_0^1(\BB^n))^*\), define its hyperbolic representative \(F\) by
\[
\bigl\langle F,\varrho^{-\frac{n-2}{2}}v\bigr\rangle =\langle h,v\rangle, \qquad v\in H_0^1(\BB^n).
\]
For an ordinary function \(h\), this definition means \(F=\varrho^{-(n+2)/2}h\), with duality on the hyperbolic side taken against \(\mathrm dv_g\). In particular, if the Euclidean dual norm is taken with respect to \(\|\nabla v\|_2\), then
\[
\|h\|_{H^{-1}(\BB^n)} \le \dnorm{F} =\sup_{v\ne0} \frac{|\langle h,v\rangle|}{\mathcal B_\mu(v,v)^{1/2}} \le (1-4\mu)^{-1/2}\|h\|_{H^{-1}(\BB^n)}.
\]
For \(q=(2^*)'=2n/(n+2)\), the same exponents give \(\|\varrho^{-(n+2)/2}h\|_{L^q(\HH^n)}=\|h\|_{L^q(\BB^n)}\).

The singular potential is also bounded as a map into the dual space. For \(w,v\in H_0^1(\BB^n)\), Hardy and Cauchy--Schwarz give
\[
\left|\int_{\BB^n}\varrho^2wv\,dx\right| \le \|\varrho w\|_2\|\varrho v\|_2 \le4\|\nabla w\|_2\|\nabla v\|_2.
\]
By the coercivity established above and the uniform \(L^{(2^*)'}\)-bound for \((B^{\RR}_{\delta})^{2^*-1}\), the Lax--Milgram theorem yields a unique solution \(B^{\RR}_{\mathrm{pr},\delta}\in H_0^1(\BB^n)\) of
\begin{equation}\label{eq2.4}
\mathcal{P}^{\RR}_\mu B^{\RR}_{\mathrm{pr},\delta} =(B^{\RR}_{\delta})^{2^*-1} \quad\text{in }\BB^n.
\end{equation}
To establish positivity, write \(f_-=\max\{-f,0\}\) and test \eqref{eq2.4} with \(-(B^{\RR}_{\mathrm{pr},\delta})_-\). Coercivity gives
\[
(1-4\mu)\int_{\BB^n} \left|\nabla(B^{\RR}_{\mathrm{pr},\delta})_-\right|^2 \le -\int_{\BB^n}(B^{\RR}_{\delta})^{2^*-1} (B^{\RR}_{\mathrm{pr},\delta})_- \le 0,
\]
and hence \(B^{\RR}_{\mathrm{pr},\delta}\ge0\). Interior elliptic regularity applies since the potential is smooth on compact subsets of \(\BB^n\). Moreover,
\[
-\Delta B^{\RR}_{\mathrm{pr},\delta} =(B^{\RR}_{\delta})^{2^*-1} +\mu\varrho^2 B^{\RR}_{\mathrm{pr},\delta}>0,
\]
so the strong maximum principle yields \(B^{\RR}_{\mathrm{pr},\delta}>0\) in \(\BB^n\).

Rotational invariance of the equation and uniqueness imply that \(B^{\RR}_{\mathrm{pr},\delta}\) is radial. Integrating its radial equation from \(0\) to \(|x|\), we obtain
\[
(B^{\RR}_{\mathrm{pr},\delta})'(|x|) =-|x|^{1-n}\int_0^{|x|} s^{n-1} \left( (B^{\RR}_{\delta}(s))^{2^*-1} +\mu\varrho(s)^2 B^{\RR}_{\mathrm{pr},\delta}(s) \right)\,ds<0, \qquad 0<|x|<1.
\]
We therefore define the corresponding hyperbolic profiles by
\[
B^{\HH}_{\mathrm{pr},0,\eps} =\varrho^{-\frac{n-2}{2}}B^{\RR}_{\mathrm{pr},\eps/2}, \qquad B^{\HH}_{\mathrm{pr},\xi,\eps} =B^{\HH}_{\mathrm{pr},0,\eps}\circ\tau_\xi^{-1},
\]
where \(\tau_\xi\) is a hyperbolic isometry sending the origin to \(\xi\). Radiality of \(B^{\HH}_{\mathrm{pr},0,\eps}\) ensures that the latter definition is independent of the choice of \(\tau_\xi\). Finally, conformal covariance gives
\[
P^{\HH}_\lambda B^{\HH}_{\mathrm{pr},0,\eps} =\varrho^{-\frac{n+2}{2}} (B^{\RR}_{\eps/2})^{2^*-1}.
\]

To compare the projected bubble with the Euclidean bubble, we separate the correction required by the Dirichlet boundary condition from that generated by the potential term. Set
\[
c_\delta=B^{\RR}_{\delta}(1),\qquad \mathring{B}^{\RR}_\delta=B^{\RR}_{\delta}-c_\delta,\qquad v_\delta=B^{\RR}_{\mathrm{pr},\delta} -\mathring{B}^{\RR}_\delta.
\]
Here \(c_\delta=\kappa_n\delta^{(n-2)/2}(1+O(\delta^2))>0\); thus \(B^{\RR}_\delta\) itself has nonzero boundary trace. Since \(B^{\RR}_{\delta}\) is constant on \(\partial\BB^n\), the function \(\mathring{B}^{\RR}_\delta\) is its Dirichlet projection for the Laplacian:
\[
-\Delta\mathring{B}^{\RR}_\delta =(B^{\RR}_{\delta})^{2^*-1} \quad\text{in }\BB^n,\qquad \mathring{B}^{\RR}_\delta=0 \quad\text{on }\partial\BB^n.
\]
In particular, both \(\mathring{B}^{\RR}_\delta\) and \(v_\delta\) belong to \(H_0^1(\BB^n)\). The source \(\varrho^2\mathring{B}^{\RR}_\delta\) behaves like a positive multiple of \((1-|x|)^{-1}\) near the boundary and is not integrable there. Nevertheless, Hardy and Cauchy--Schwarz give
\[
\int_{\BB^n}\varrho^2\mathring{B}^{\RR}_\delta|\phi|\,dx \le4\|\nabla\mathring{B}^{\RR}_\delta\|_2\|\nabla\phi\|_2, \qquad \phi\in H_0^1(\BB^n).
\]
Thus it defines an element of \(H^{-1}(\BB^n)\) by integration. Subtracting the defining equations and using positivity of the resolvent, we obtain
\begin{equation}\label{eq2.5}
\mathcal{P}^{\RR}_\mu v_\delta =\mu\varrho^2\mathring{B}^{\RR}_\delta,\qquad -\Delta v_\delta =\mu\varrho^2B^{\RR}_{\mathrm{pr},\delta},\qquad v_\delta\ge0.
\end{equation}
The full projection error therefore takes the form
\begin{equation*}
B^{\RR}_{\mathrm{pr},\delta}-B^{\RR}_{\delta} =v_\delta-c_\delta.
\end{equation*}

\subsection{Green kernel and the correction \texorpdfstring{$v_\delta$}{v-delta}}
\label{Sec2.2}

For a radial function \(u\), primes denote derivatives of its radial profile, and
\[
(\mathcal{P}^{\RR}_\mu u)(x) =-u''(|x|)-\frac{n-1}{|x|}u'(|x|) -\frac{4\mu}{(1-|x|^2)^2}u(|x|),\qquad 0<|x|<1.
\]
Radial kernels below are taken relative to the measure \(s^{n-1}\,ds\).

\begin{lemma}\label{Lem2.2}
The radial Friedrichs inverse of \(\mathcal{P}^{\RR}_\mu\) has a positive kernel \(G_\mu(|x|,s)\) with
\begin{align}
c\max\{|x|,s\}^{2-n}\le G_\mu(|x|,s)&\le C\max\{|x|,s\}^{2-n}, &&0<|x|,s\le\tfrac34,\label{eq2.6}\\
G_\mu(|x|,s)&\le C(1-s)^{\alpha_\lambda}, &&0\le|x|\le\tfrac12,\quad\tfrac34\le s<1.\label{eq2.7}
\end{align}
Let \(h\ge0\) be radial and locally integrable, and suppose that, for some \(A<\infty\),
\[
\int_{\BB^n}h|\phi|\,dx\le A\|\nabla\phi\|_2 \qquad\text{for every }\phi\in H_0^1(\BB^n).
\]
Define \(h\in H^{-1}(\BB^n)\) by \(\langle h,\phi\rangle=\int_{\BB^n}h\phi\,dx\). Then
\begin{equation}\label{eq2.8}
((\mathcal{P}^{\RR}_\mu)^{-1}h)(|x|)=\int_0^1G_\mu(|x|,s)h(s)s^{n-1}\,ds \quad\text{for almost every }x\in\BB^n.
\end{equation}
\end{lemma}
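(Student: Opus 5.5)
We will build the radial Green kernel explicitly from two solutions of the homogeneous radial ODE \(\mathcal{P}^{\RR}_\mu u=0\) on \((0,1)\).

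\textbf{Fundamental solutions.} Let \(\phi_0\) be the solution regular at the origin, with \(\phi_0(0)=1\) and \(\phi_0'(0)=0\). Let \(\phi_1\) be the solution that is small at the boundary. The indicial equation at \(s=1\) is obtained from \(\varrho^2\approx(1-s)^{-2}\). Writing \(t=1-s\), it reads \(a(a-1)+\mu=0\), with roots \(\alpha_\lambda\) and \(1-\alpha_\lambda\); since \(\alpha_\lambda>\tfrac12\), these roots are distinct. Frobenius theory, or a Volterra fixed point argument near \(t=0\), gives a solution \(\phi_1(s)=(1-s)^{\alpha_\lambda}(1+o(1))\) as \(s\to1\). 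Near the origin, \(\phi_1=a\,\phi_0+b\,s^{2-n}(1+o(1))\).

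\textbf{Positivity and the Wronskian.} We claim \(\phi_0>0\) on \([0,1)\) and \(\phi_1>0\) on \((0,1)\), so that \(b\neq0\).
\begin{itemize}
\item If \(\phi_0\) vanished at some \(s_0<1\), then \(\phi_0\mathbf 1_{s<s_0}\in H_0^1\) would give \(\mathcal B_\mu(\phi_0\mathbf 1,\phi_0\mathbf 1)=0\), contradicting \eqref{eq2.3}.
\item The same argument on \((s_0,1)\) excludes zeros of \(\phi_1\). Here \((1-s)^{\alpha_\lambda}\) lies in \(H^1\) near \(1\) precisely because \(\alpha_\lambda>\tfrac12\), so the truncation belongs to \(H_0^1\).
\item If \(b=0\), then \(\phi_1\) would be a nonzero \(H_0^1\) kernel element, again contradicting \eqref{eq2.3}.
\end{itemize}
Hence the Wronskian \(W=s^{n-1}(\phi_0\phi_1'-\phi_0'\phi_1)\) is a nonzero constant, and after normalizing signs \(W<0\). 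Define
\[
G_\mu(r,s)=\frac{\phi_0(\min\{r,s\})\,\phi_1(\max\{r,s\})}{-W}>0.
\]

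\textbf{Bounds \eqref{eq2.6} and \eqref{eq2.7}.} On \([0,\tfrac34]\), \(\phi_0\) is bounded above and below by positive constants. Since \(b\neq0\), \(\phi_1(s)\sim s^{2-n}\) for small \(s\), and by positivity and continuity it is comparable to a constant on compact subsets of \((0,\tfrac34]\); together these give \(\phi_1(s)\sim s^{2-n}\) on \((0,\tfrac34]\). This yields \eqref{eq2.6}. For \(r\le\tfrac12\) and \(s\ge\tfrac34\) we have \(G_\mu=\phi_0(r)\phi_1(s)/(-W)\le C(1-s)^{\alpha_\lambda}\), which is \eqref{eq2.7}.

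\textbf{Representation \eqref{eq2.8}.} This is the main obstacle, since \(h\) may fail to be integrable near the boundary (for example \(h=\varrho^2\mathring B^{\RR}_\delta\)).
\begin{enumerate}
\item First take \(h\) bounded with compact support in \(\BB^n\). Then \(U(r)=\int G_\mu(r,s)h(s)s^{n-1}\,ds\) is a classical radial solution that is regular at \(0\) and behaves like \(c(1-r)^{\alpha_\lambda}\) near \(1\). Therefore \(U\in H_0^1\), and Lax--Milgram uniqueness identifies \(U\) with the Friedrichs inverse.
\item For general \(h\ge0\), set \(h_k=\min\{h,k\}\mathbf 1_{|x|<1-1/k}\), which increases to \(h\). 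The hypothesis \(\int h|\phi|\le A\|\nabla\phi\|_2\) together with dominated convergence gives \(h_k\to h\) in \(H^{-1}\).
\item The inverse is bounded from \(H^{-1}\) to \(H_0^1\) by \eqref{eq2.3}, so \((\mathcal{P}^{\RR}_\mu)^{-1}h_k\) converges in \(H_0^1\), hence a.e. along a subsequence.
\item Since \(G_\mu>0\), monotone convergence shows the integrals \(\int G_\mu h_k\) increase to \(\int G_\mu h\). The two limits agree a.e., which proves \eqref{eq2.8}, including finiteness of the right-hand side a.e.
\end{enumerate}
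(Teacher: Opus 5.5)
Your construction of the kernel is the same as the paper's: the regular solution $\phi_0$, the Friedrichs solution $\phi_1\sim(1-s)^{\alpha_\lambda}$, positivity and $b\neq0$ from truncation plus \eqref{eq2.3}, the constant weighted Wronskian, and the bounds \eqref{eq2.6}--\eqref{eq2.7}. So is your truncation scheme $h_k=\min\{h,k\}\mathbf 1_{\{|x|<1-1/k\}}$ for \eqref{eq2.8}.

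The one step that does not follow as written is your item 2. Dominated convergence, applied to each fixed $\phi\in H_0^1(\BB^n)$ with majorant $h|\phi|$, gives $\int(h-h_k)\phi\,dx\to0$ for every $\phi$. That is only $h_k\weakto h$ weakly in $H^{-1}(\BB^n)$. Norm convergence would need this to be uniform over the unit ball of $H_0^1(\BB^n)$, and the pointwise argument does not provide that uniformity. Consequently item 3 only yields $u_k=(\mathcal{P}^{\RR}_\mu)^{-1}h_k\weakto u$ weakly in $H_0^1(\BB^n)$, and the conclusion ``hence a.e.\ along a subsequence'' is not justified as stated.

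The repair is short, and there are two ways to make it.
\begin{enumerate}
\item Invoke Rellich. On the bounded domain $\BB^n$, weak convergence in $H_0^1$ gives strong convergence in $L^2(\BB^n)$, hence a.e.\ convergence of a subsequence. That is all your item 4 needs.
\item Argue as the paper does, using positivity of the resolvent (test with the negative part and use \eqref{eq2.3}). Since $h-h_k\ge0$, you get $0\le u_k\le u$. Testing the equation for $u-u_k$ with $u-u_k$ then gives
\[
(1-4\mu)\|\nabla(u-u_k)\|_2^2\le\mathcal B_\mu(u-u_k,u-u_k)=\int_{\BB^n}(h-h_k)(u-u_k)\,dx\le\int_{\BB^n}(h-h_k)u\,dx\longrightarrow0.
\]
The limit follows by dominated convergence with the integrable majorant $hu$, which is finite by the hypothesis with $\phi=u$. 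This argument supplies exactly the missing uniformity: it gives $u_k\to u$ strongly, and hence, a posteriori, $h_k\to h$ in norm.
\end{enumerate}

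A minor point on item 1: for merely bounded $h$, the kernel integral is $C^1$ with $r^{n-1}U'$ absolutely continuous, not a classical solution. That is enough for the weak formulation. Alternatively, first treat smooth $h$ and then mollify, as the paper does.
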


\begin{proof}
We construct the radial Green kernel $G_\mu(|x|,s)$ from two solutions of the homogeneous equation, chosen to satisfy the appropriate condition at each endpoint. Let \(u_0\) be the solution regular at the origin, normalized by
\[
\mathcal{P}^{\RR}_\mu u_0=0,\qquad u_0(0)=1,\qquad u_0'(0)=0.
\]
At \(|x|=1\), the indicial roots are \(\alpha_\lambda\) and \(1-\alpha_\lambda\). Since \(\alpha_\lambda>\tfrac12\), only the \(\alpha_\lambda\)-branch has square-integrable derivative near the boundary. We therefore choose the Friedrichs solution \(u_1\), normalized by
\[
u_1(|x|) =(1-|x|)^{\alpha_\lambda}(1+O(1-|x|)) \qquad\text{as }|x|\uparrow1.
\]

Both solutions are strictly positive on \((0,1)\). Indeed, each is positive near its normalization endpoint. If either solution had a first zero when continued into the interval, its restriction to the region between that zero and the corresponding endpoint would have zero boundary trace. Extending this restriction by zero would give a nonzero function in \(H_0^1(\BB^n)\) whose quadratic form vanishes, by testing the homogeneous equation with the restricted solution. This contradicts the coercivity estimate \eqref{eq2.3}.

The same argument shows that \(u_1\) cannot be regular at the origin, since it would then be a nonzero solution of the homogeneous equation in \(H_0^1(\BB^n)\). The local behavior of the radial equation at the origin thus gives
\[
u_1(|x|)=c_*|x|^{2-n}(1+o(1)) \qquad\text{as }|x|\to 0
\]
for some \(c_*>0\). In particular, \(u_0\) and \(u_1\) are linearly independent. Their weighted Wronskian
\[
\mathcal W_\mu =|x|^{n-1}\bigl(u_0(|x|)u_1'(|x|)-u_0'(|x|)u_1(|x|)\bigr)
\]
is constant by the radial equation, and the preceding asymptotics give \(\mathcal W_\mu=(2-n)c_*<0\). The Green kernel, with respect to the measure \(s^{n-1}\,ds\), is therefore
\[
G_\mu(|x|,s) =\frac{1}{|\mathcal W_\mu|}
\begin{cases}
u_0(|x|)u_1(s),& |x|\le s,\\
u_0(s)u_1(|x|),& s\le|x|.
\end{cases}
\]

The Wronskian also fixes the normalization of this kernel. Writing \(t=|x|\), its one-sided radial derivatives satisfy
\[
\partial_tG_\mu(s+,s)-\partial_tG_\mu(s-,s) =\frac{\mathcal W_\mu}{|\mathcal W_\mu|s^{n-1}} =-s^{1-n}.
\]
Hence integration of the radial equation across \(t=s\) gives a unit point source relative to \(s^{n-1}\,ds\).

The kernel formula gives \(G_\mu>0\). Moreover, \(u_0\) is bounded above and below by positive constants on every compact subinterval of \([0,1)\), while
\[
u_1(|x|)\sim |x|^{2-n}\quad\text{as }|x|\to0, \qquad u_1(|x|)\sim(1-|x|)^{\alpha_\lambda} \quad\text{as }|x|\to1.
\]
Substituting these bounds into the kernel formula proves \eqref{eq2.6}--\eqref{eq2.7}.

We next justify \eqref{eq2.8} in the statement. For smooth compactly supported radial \(h\), the representation follows from the construction of \(G_\mu\). Radial mollification then extends it to bounded compactly supported radial functions: the corresponding solutions converge in \(H_0^1(\BB^n)\), and the kernel integrals converge because, for each fixed \(|x|>0\), \(G_\mu(|x|,\cdot)\) is bounded on their common compact support.

For \(h\) as in the statement, define
\[
h_\ell =\min\{h,\ell\}\mathbf1_{\{|x|<1-1/\ell\}}, \qquad \ell\ge2.
\]
Then \(h_\ell\) converges to \(h\) from below almost everywhere. Moreover, for every \(\phi\in H_0^1(\BB^n)\),
\[
\left|\int_{\BB^n}h_\ell\phi\,dx\right| \le\int_{\BB^n}h|\phi|\,dx \le A\|\nabla\phi\|_2.
\]
Thus \(\|h_\ell\|_{H^{-1}}\le A\), and dominated convergence against each \(\phi\in H_0^1(\BB^n)\) gives \(h_\ell\rightharpoonup h\) in \(H^{-1}(\BB^n)\).

Set
\[
u=(\mathcal{P}^{\RR}_\mu)^{-1}h, \qquad u_\ell=(\mathcal{P}^{\RR}_\mu)^{-1}h_\ell.
\]
Positivity of the resolvent gives \(0\le u_\ell\le u_{\ell+1}\le u\). Testing the difference equation with \(u-u_\ell\), we obtain
\[
\begin{aligned}
0\le\mathcal B_\mu(u-u_\ell,u-u_\ell) =\int_{\BB^n}(h-h_\ell)(u-u_\ell)\,dx \le\int_{\BB^n}(h-h_\ell)u\,dx\longrightarrow0,
\end{aligned}
\]
where dominated convergence applies since \(\int hu<\infty\). Coercivity therefore yields \(u_\ell\to u\) strongly in \(H_0^1(\BB^n)\). Passing to an almost everywhere convergent subsequence and applying monotone convergence to the kernel representation for \(u_\ell\), we conclude that
\[
u(|x|) =\lim_{\ell\to\infty} \int_0^1G_\mu(|x|,s)h_\ell(s)s^{n-1}\,ds =\int_0^1G_\mu(|x|,s)h(s)s^{n-1}\,ds
\]
for almost every \(x\in\BB^n\), proving \eqref{eq2.8}.
\end{proof}

\begin{lemma}\label{Lem2.3}
For \(0<\delta<\delta_0\), the correction \(v_\delta\) satisfies
\begin{align}
0\le v_\delta(|x|) &\le C
\begin{cases}
\delta(1+\ell_\delta),& n=4,\\
\delta^{\frac{6-n}{2}},& n\ge5,
\end{cases}
&&0\le|x|\le\delta, \label{eq2.9}\\
0\le v_\delta(|x|) &\le C
\begin{cases}
\delta(1+\log(1/|x|)),& n=4,\\
\delta^{\frac{n-2}{2}}|x|^{4-n},& n\ge5,
\end{cases}
&&\delta\le|x|\le\tfrac12. \label{eq2.10}
\end{align}
The projected bubble is comparable to the Euclidean bubble in the interior:
\[
cB^{\RR}_{\delta}(|x|) \le B^{\RR}_{\mathrm{pr},\delta}(|x|) \le CB^{\RR}_{\delta}(|x|), \qquad 0\le|x|\le\tfrac12,
\]
and satisfies the boundary decay estimate
\begin{equation}\label{eq2.11}
B^{\RR}_{\mathrm{pr},\delta}(|x|) \le C\delta^{\frac{n-2}{2}}(1-|x|)^{\alpha_\lambda}, \qquad \tfrac12\le|x|<1.
\end{equation}
\end{lemma}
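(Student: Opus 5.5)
We use the Green representation of Lemma~\ref{Lem2.2} applied to \eqref{eq2.5}, i.e.
\[
v_\delta(|x|)=\mu\int_0^1G_\mu(|x|,s)\varrho(s)^2\mathring{B}^{\RR}_\delta(s)s^{n-1}\,ds .
\]
The hypothesis of Lemma~\ref{Lem2.2} holds for \(h=\mu\varrho^2\mathring{B}^{\RR}_\delta\ge0\) by the Hardy bound displayed before Lemma~\ref{Lem2.2}, since \(\mathring B^{\RR}_\delta\ge0\) (as \(B^{\RR}_\delta\) is radially decreasing). Split the integral at \(s=\tfrac34\).

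\emph{Inner part.} For \(s\le\tfrac34\), \(\varrho\) is bounded and \(0\le\mathring B^{\RR}_\delta\le B^{\RR}_\delta\le C\min\{\delta^{-(n-2)/2},\delta^{(n-2)/2}s^{2-n}\}\). Combine this with \eqref{eq2.6}, \(G_\mu\sim\max\{|x|,s\}^{2-n}\).
\begin{itemize}
\item For \(|x|\le\delta\), the contribution is at most \(\int_0^{3/4}\max\{\delta,s\}^{2-n}B^{\RR}_\delta(s)s^{n-1}ds\). The region \(s\le\delta\) gives \(\delta^{2-n}\delta^{-(n-2)/2}\delta^n=\delta^{(6-n)/2}\). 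The region \(\delta\le s\le\tfrac34\) gives \(\delta^{(n-2)/2}\int_\delta^{3/4}s^{5-n}ds\), which is \(\delta\,\ell_\delta\) for \(n=4\), \(\delta^{(n-2)/2}\cdot O(1)\) for \(n=5\), and \(\delta^{(n-2)/2}\delta^{6-n}=\delta^{(6-n)/2}\) for \(n\ge6\). Since \(\delta^{(n-2)/2}\le\delta^{(6-n)/2}\) for \(n\ge4\), this yields \eqref{eq2.9}.
\item For \(\delta\le|x|\le\tfrac12\), split the integral at \(s=|x|\). The region \(s<|x|\) gives \(|x|^{2-n}\bigl(\delta^{(n+2)/2}\delta^{-n}\cdot\delta^{n}+\delta^{(n-2)/2}\int_\delta^{|x|}s\,ds\bigr)\lesssim\delta^{(n-2)/2}|x|^{4-n}\). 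The region \(s>|x|\) gives \(\delta^{(n-2)/2}\int_{|x|}^{3/4}s^{5-n}ds\), which is \(\delta\log(1/|x|)\) for \(n=4\), \(O(\delta^{3/2})\le C\delta^{3/2}|x|^{-1}\) for \(n=5\), and \(\delta^{(n-2)/2}|x|^{6-n}\le\delta^{(n-2)/2}|x|^{4-n}\) for \(n\ge6\). This yields \eqref{eq2.10}.
\end{itemize}

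\emph{Outer part.} For \(s\ge\tfrac34\) and \(|x|\le\tfrac12\), \eqref{eq2.7} gives \(G_\mu\le C(1-s)^{\alpha_\lambda}\). On this range \(\mathring B^{\RR}_\delta(s)=B^{\RR}_\delta(s)-B^{\RR}_\delta(1)\le C\delta^{(n-2)/2}(1-s)\) by the mean value theorem, since \(|\partial_sB^{\RR}_\delta|\le C\delta^{(n-2)/2}\) there. The integrand is therefore bounded by \(C\delta^{(n-2)/2}(1-s)^{\alpha_\lambda-1}\), which is integrable because \(\alpha_\lambda>0\). The outer contribution is \(O(\delta^{(n-2)/2})\), which is absorbed in both \eqref{eq2.9} and \eqref{eq2.10}.

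\emph{Comparability with the bubble.} Write \(B^{\RR}_{\mathrm{pr},\delta}=B^{\RR}_\delta-c_\delta+v_\delta\) and use \(v_\delta\ge0\).
\begin{itemize}
\item Upper bound: \(B^{\RR}_{\mathrm{pr},\delta}\le B^{\RR}_\delta+v_\delta\), so it suffices to show \(v_\delta\le CB^{\RR}_\delta\) on \(|x|\le\tfrac12\). For \(|x|\le\delta\) we have \(B^{\RR}_\delta\sim\delta^{-(n-2)/2}\), and \(v_\delta\) is smaller by the factor \(\delta^2\) (or \(\delta^2\ell_\delta\) when \(n=4\)). For \(\delta\le|x|\le\tfrac12\) we have \(B^{\RR}_\delta\sim\delta^{(n-2)/2}|x|^{2-n}\), and \(v_\delta/B^{\RR}_\delta\lesssim|x|^2\) for \(n\ge5\), or \(|x|^2\log(1/|x|)\) for \(n=4\); both are bounded.
\item Lower bound: \(B^{\RR}_{\mathrm{pr},\delta}\ge B^{\RR}_\delta-c_\delta\). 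For \(|x|\le\tfrac12\), \(B^{\RR}_\delta(|x|)\ge B^{\RR}_\delta(1/2)\ge(2^{n-2}-o(1))c_\delta\), so \(c_\delta\le\tfrac12B^{\RR}_\delta\) once \(\delta_0\) is small.
\end{itemize}

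\emph{Boundary decay \eqref{eq2.11}.} This is the step I expect to be the main obstacle, since the Green bound \eqref{eq2.7} was only stated for \(|x|\le\tfrac12\). I would avoid a fuller kernel analysis and use a comparison argument on the annulus \(A=\{\tfrac12<|x|<1\}\). There \(B^{\RR}_{\mathrm{pr},\delta}\) solves \(\mathcal P^{\RR}_\mu B^{\RR}_{\mathrm{pr},\delta}=(B^{\RR}_\delta)^{2^*-1}\le C\delta^{(n+2)/2}\), with boundary value \(\le C\delta^{(n-2)/2}\) on \(|x|=\tfrac12\) by the interior comparability just proved. Take the barrier
\[
w=K\delta^{(n-2)/2}\bigl(u_1+(1-|x|)^{\beta}\bigr),\qquad \tfrac12<\beta<\alpha_\lambda .
\]
Here \(u_1\) is the Friedrichs solution from Lemma~\ref{Lem2.2}, so \(\mathcal P^{\RR}_\mu u_1=0\). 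Since \(\beta\) lies strictly between the indicial roots \(1-\alpha_\lambda\) and \(\alpha_\lambda\), one checks \(\mathcal P^{\RR}_\mu(1-|x|)^\beta\ge c(1-|x|)^{\beta-2}\) near \(|x|=1\). After shrinking the annulus to \(1-\eta<|x|<1\) and handling the compact middle region by boundedness of \(u_1\), the barrier \(w\) is a supersolution. I would apply the weak maximum principle via coercivity \eqref{eq2.3}, testing with \((B^{\RR}_{\mathrm{pr},\delta}-w)_+\). This gives \(B^{\RR}_{\mathrm{pr},\delta}\le C\delta^{(n-2)/2}(1-|x|)^\beta\), which is not yet the exponent \(\alpha_\lambda\). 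Alternatively, use the exact radial representation \(G_\mu(|x|,s)=|\mathcal W_\mu|^{-1}u_0(\min)u_1(\max)\) directly for \(|x|\ge\tfrac12\). The part \(s\le|x|\) gives \(u_1(|x|)\int u_0\,h\,s^{n-1}ds\), where \(u_0\) grows at most like \((1-s)^{1-\alpha_\lambda}\) and \(h\lesssim\delta^{(n+2)/2}\) near the boundary plus bounded interior mass. The part \(s\ge|x|\) gives \(u_0(|x|)\int_{|x|}^1u_1h\lesssim(1-|x|)^{1-\alpha_\lambda}\delta^{(n+2)/2}(1-|x|)^{\alpha_\lambda+1}\). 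Both are \(\lesssim\delta^{(n-2)/2}(1-|x|)^{\alpha_\lambda}\), and this is the route I would ultimately take, applied to \(B^{\RR}_{\mathrm{pr},\delta}=(\mathcal P^{\RR}_\mu)^{-1}(B^{\RR}_\delta)^{2^*-1}\) via \eqref{eq2.8}.
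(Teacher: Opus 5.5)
Your proposal is correct once two bookkeeping slips are fixed. For \eqref{eq2.9}--\eqref{eq2.10} and the interior comparability it is the paper's argument: the Green representation of $v_\delta$ via Lemma~\ref{Lem2.2} and \eqref{eq2.5}; the piece $s>\tfrac34$ bounded through \eqref{eq2.7} by $C\delta^{\frac{n-2}{2}}\int_{3/4}^1(1-s)^{\alpha_\lambda-1}\,ds$; the piece $s<\tfrac34$ bounded through \eqref{eq2.6}, split at $s=|x|$ and $s=\delta$; then $\mathring{B}^{\RR}_\delta\ge cB^{\RR}_\delta$ and boundedness of $v_\delta/B^{\RR}_\delta$.

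The slips are these. First, on $\max\{|x|,\delta\}\le s\le\tfrac34$ the integrand is $\delta^{\frac{n-2}{2}}s^{3-n}$ (kernel $s^{2-n}$, bubble $\delta^{\frac{n-2}{2}}s^{2-n}$, weight $s^{n-1}$), not $s^{5-n}$. The correct exponent is what produces the logarithm for $n=4$. For $|x|\le\delta$ and $n\ge5$ it gives order $\delta^{\frac{6-n}{2}}$; for example, $\delta^{1/2}$ rather than $\delta^{3/2}$ when $n=5$. Second, for $|x|\le\delta$ you cannot replace $\max\{|x|,s\}^{2-n}$ by the smaller $\max\{\delta,s\}^{2-n}$. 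Instead use $\max\{|x|,s\}^{2-n}\le s^{2-n}$ on $s\le\delta$, which gives the same $\delta^{\frac{6-n}{2}}$. Your stated bounds are therefore right.

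For \eqref{eq2.11} you take a genuinely different route. The paper also uses a barrier, but with the exact exponent. Because $\alpha_\lambda(\alpha_\lambda-1)=-\mu$, the $(1-|x|^2)^{\alpha_\lambda-2}$ terms cancel and
\[
\mathcal{P}^{\RR}_\mu(1-|x|^2)^{\alpha_\lambda}=2\alpha_\lambda(n-2+2\alpha_\lambda)(1-|x|^2)^{\alpha_\lambda-1}>0 .
\]
Hence $C\delta^{\frac{n-2}{2}}(1-|x|^2)^{\alpha_\lambda}$ dominates the source $(B^{\RR}_\delta)^{2^*-1}\le C\delta^{\frac{n+2}{2}}$ on the annulus. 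By the interior comparison it also dominates $B^{\RR}_{\mathrm{pr},\delta}$ at $|x|=\tfrac12$. It has finite energy since $\alpha_\lambda>\tfrac12$, and testing with the positive part of the difference together with \eqref{eq2.3} finishes. Your choice $(1-|x|)^\beta$ with $\beta<\alpha_\lambda$ was dictated by the leading indicial term alone, which vanishes at $\beta=\alpha_\lambda$; the positive next-order term is what the paper exploits.

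Your fallback through the explicit kernel $|\mathcal W_\mu|^{-1}u_0(\min)u_1(\max)$, applied by \eqref{eq2.8} to $B^{\RR}_{\mathrm{pr},\delta}=(\mathcal{P}^{\RR}_\mu)^{-1}(B^{\RR}_\delta)^{2^*-1}$, is also valid:
\begin{itemize}
\item the part $s\le|x|$ is $u_1(|x|)\,O(\delta^{\frac{n-2}{2}})$, since $(B^{\RR}_\delta)^{2^*-1}$ has total mass of order $\delta^{\frac{n-2}{2}}$;
\item the part $s\ge|x|$ is $O\bigl(\delta^{\frac{n+2}{2}}(1-|x|)^{1+\alpha_\lambda}\bigr)$.
\end{itemize}
It needs one fact not recorded in Lemma~\ref{Lem2.2}: $u_0$ stays bounded up to $|x|=1$. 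This holds because both indicial roots $\alpha_\lambda$ and $1-\alpha_\lambda$ are positive; equivalently, reduction of order with the constant Wronskian gives $u_0\lesssim(1-s)^{1-\alpha_\lambda}$. Note this is a decaying bound, not the growth your wording suggests.

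The paper's barrier is shorter and uses no kernel information near the boundary, but it depends on the interior comparison at $|x|=\tfrac12$. Your kernel route is independent of that step and shows directly that the exponent $\alpha_\lambda$ is inherited from $u_1$.
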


\begin{proof}
We begin with an energy estimate for \(v_\delta\), obtained by controlling \(\varrho^2\mathring{B}^{\RR}_\delta\) in the first equation of \eqref{eq2.5}. The explicit formula for \(B^{\RR}_\delta\) yields
\[
0\le\mathring{B}^{\RR}_\delta(|x|) \le C
\begin{cases}
\delta^{-\frac{n-2}{2}}, &0\le|x|\le\delta,\\
\delta^{\frac{n-2}{2}}|x|^{2-n}, &\delta\le|x|\le\tfrac34,\\
\delta^{\frac{n-2}{2}}(1-|x|), &\tfrac12\le|x|<1.
\end{cases}
\]
We estimate \(\varrho^2\mathring{B}^{\RR}_\delta\) in \(H^{-1}(\BB^n)\), taking the dual norm with respect to \(\|\nabla\phi\|_2\) on \(H_0^1(\BB^n)\).

On \(|x|<\tfrac34\), the conformal factor is bounded, so Sobolev duality reduces the estimate to an \(L^{(2^*)'}\)-bound. For \(n=4,5\), direct integration of the preceding pointwise bounds gives
\[
\|\varrho^2\mathring{B}^{\RR}_\delta\|_ {L^{(2^*)'}(\{|x|<3/4\})} \le C\delta^{\frac{n-2}{2}}.
\]
For \(n\ge7\), the integrability \(B^{\RR}_1\in L^{(2^*)'}(\RR^n)\) and scaling instead give
\[
\|\varrho^2\mathring{B}^{\RR}_\delta\|_ {L^{(2^*)'}(\{|x|<3/4\})} \le C\delta^2.
\]

Near the boundary, the last pointwise bound gives
\[
\varrho^2\mathring{B}^{\RR}_\delta \le C\delta^{\frac{n-2}{2}}(1-|x|)^{-1}.
\]
Hence, by Cauchy--Schwarz and the Hardy inequality \eqref{eq2.2}, every \(\phi\in H_0^1(\BB^n)\) satisfies
\[
\begin{aligned}
\int_{\{x\in\BB^n:\,|x|>3/4\}} \varrho^2\mathring{B}^{\RR}_\delta|\phi|\,dx\le C\delta^{\frac{n-2}{2}} \left( \int_{\BB^n}\frac{|\phi|^2}{(1-|x|)^2}\,dx \right)^{1/2}\le C\delta^{\frac{n-2}{2}}\|\nabla\phi\|_2.
\end{aligned}
\]

It remains to treat the interior contribution when \(n=6\). Direct \(L^{3/2}\)-estimation gives a factor \(\ell_\delta^{2/3}\). To obtain the sharper factor \(\ell_\delta^{1/2}\), we use the Hardy inequality at the origin,
\[
\int_{\BB^6}\frac{|\phi|^2}{|x|^2}\,dx \le\frac14\int_{\BB^6}|\nabla\phi|^2\,dx.
\]
Since in the interior \(\varrho^2\mathring{B}^{\RR}_\delta\) is bounded by \(C\delta^2(\delta+|x|)^{-4}\), weighted Cauchy--Schwarz gives
\begin{equation}\label{eq2.12}
\begin{aligned}
\int_{\{|x|<3/4\}} \frac{\delta^2|\phi|}{(\delta+|x|)^4}\,dx\le C\delta^2 \left( \int_0^{3/4}\frac{t^7}{(\delta+t)^8}\,dt \right)^{1/2}\|\nabla\phi\|_2\le C\delta^2\sqrt{\ell_\delta}\, \|\nabla\phi\|_2.
\end{aligned}
\end{equation}

Combining the interior and boundary estimates and testing the first equation in \eqref{eq2.5} with \(v_\delta\), we obtain
\[
(1-4\mu)\|\nabla v_\delta\|_2^2 \le \mathcal B_\mu(v_\delta,v_\delta) =\mu\langle\varrho^2\mathring{B}^{\RR}_\delta,v_\delta\rangle \le \mu\|\varrho^2\mathring{B}^{\RR}_\delta\|_{H^{-1}} \|\nabla v_\delta\|_2.
\]
Consequently,
\begin{equation}\label{eq2.13}
\|\nabla v_\delta\|_2 \le C
\begin{cases}
\delta^{\frac{n-2}{2}},&n=4,5,\\
\delta^2\sqrt{\ell_\delta},&n=6,\\
\delta^2,&n\ge7.
\end{cases}
\end{equation}

The preceding absolute-integral bounds for \(\mu\varrho^2\mathring{B}^{\RR}_\delta\) allow us to apply Lemma~\ref{Lem2.2} to the first equation in \eqref{eq2.5}, giving
\[
v_\delta(|x|) =\mu\int_0^1G_\mu(|x|,s)\varrho(s)^2 \mathring{B}^{\RR}_\delta(s)s^{n-1}\,ds.
\]
By interior regularity and the kernel bounds, this representation holds pointwise for \(0\le|x|\le\tfrac12\). On this region, the contribution from \(\tfrac34<s<1\) is bounded by
\[
C\delta^{\frac{n-2}{2}} \int_{3/4}^1(1-s)^{\alpha_\lambda-1}\,ds \le C\delta^{\frac{n-2}{2}},
\]
using \eqref{eq2.7} and the boundary bound for \(\varrho^2\mathring{B}^{\RR}_\delta\).

For the remaining integral, we use \eqref{eq2.6} together with the pointwise bounds for \(\mathring{B}^{\RR}_\delta\). If \(0\le|x|\le\delta\), splitting at \(s=|x|\) and \(s=\delta\) yields
\[
v_\delta(|x|) \le C\left( \delta^{\frac{6-n}{2}} +\delta^{\frac{n-2}{2}} \int_\delta^{3/4}s^{3-n}\,ds +\delta^{\frac{n-2}{2}} \right).
\]
If \(\delta\le|x|\le\tfrac12\), splitting instead at \(s=\delta\) and \(s=|x|\) gives
\[
v_\delta(|x|) \le C\left( \delta^{\frac{n+2}{2}}|x|^{2-n} +\delta^{\frac{n-2}{2}}|x|^{4-n} +\delta^{\frac{n-2}{2}} \int_{|x|}^{3/4}s^{3-n}\,ds +\delta^{\frac{n-2}{2}} \right).
\]
Evaluating these integrals proves \eqref{eq2.9}--\eqref{eq2.10}. The logarithmic terms arise precisely when \(n=4\).

We next establish the comparison with the Euclidean bubble on \(0\le|x|\le\tfrac12\). The lower bound follows from
\[
B^{\RR}_{\mathrm{pr},\delta} =\mathring{B}^{\RR}_\delta+v_\delta \ge\mathring{B}^{\RR}_\delta \ge cB^{\RR}_\delta.
\]
For the upper bound, the estimates just proved imply
\[
\frac{v_\delta(|x|)}{B^{\RR}_\delta(|x|)} \le C
\begin{cases}
(\delta+|x|)^2 \bigl(1+|\log(\delta+|x|)|\bigr),&n=4,\\
(\delta+|x|)^2,&n\ge5.
\end{cases}
\]
The right-hand side is uniformly bounded on this interval, so \(B^{\RR}_{\mathrm{pr},\delta}\le B^{\RR}_\delta+v_\delta\le CB^{\RR}_\delta\).

To obtain the boundary decay, we use the interior comparison at \(|x|=\tfrac12\), which gives
\[
B^{\RR}_{\mathrm{pr},\delta}(\tfrac12) \le C\delta^{\frac{n-2}{2}}.
\]
Direct differentiation shows that
\[
\mathcal{P}^{\RR}_\mu(1-|x|^2)^{\alpha_\lambda} =2\alpha_\lambda(n-2+2\alpha_\lambda) (1-|x|^2)^{\alpha_\lambda-1}>0.
\]
Since \((B^{\RR}_\delta)^{2^*-1} \le C\delta^{\frac{n+2}{2}}\) on \(\tfrac12<|x|<1\), a sufficiently large constant \(C\) ensures that
\[
\mathcal{P}^{\RR}_\mu \left(C\delta^{\frac{n-2}{2}} (1-|x|^2)^{\alpha_\lambda}\right) \ge (B^{\RR}_\delta)^{2^*-1}
\]
throughout this annulus, and that the same barrier dominates \(B^{\RR}_{\mathrm{pr},\delta}\) at its inner boundary. Both functions have zero outer trace, and the barrier has finite Dirichlet energy because \(\alpha_\lambda>\tfrac12\). Testing the difference inequality with the positive part of the solution minus the barrier and using \eqref{eq2.3} proves \eqref{eq2.11}.

\end{proof}

\begin{lemma}\label{Lem2.4}
Set
\[
e_\delta =B^{\RR}_{\mathrm{pr},\delta}-B^{\RR}_\delta =v_\delta-c_\delta, \qquad f_\delta=v_\delta+c_\delta.
\]
For \(0<\delta<\delta_0\) and \(j=1,2\),
\[
|\cD^jB^{\RR}_\delta|\le CB^{\RR}_\delta,\qquad |\cD^jc_\delta|\le Cc_\delta,\qquad |\cD^j\mathring{B}^{\RR}_\delta| \le C\mathring{B}^{\RR}_\delta,
\]
and
\begin{equation}\label{eq2.14}
\begin{aligned}
|\cD^jv_\delta|\le Cv_\delta,\qquad |\cD^jB^{\RR}_{\mathrm{pr},\delta}| \le CB^{\RR}_{\mathrm{pr},\delta},\qquad |e_\delta|+|\cD e_\delta|\le Cf_\delta.
\end{aligned}
\end{equation}
The scale derivatives of the correction also satisfy
\begin{equation}\label{eq2.15}
\sum_{j=0}^2\|\nabla\cD^jv_\delta\|_2 \le C
\begin{cases}
\delta^{\frac{n-2}{2}},&n=4,5,\\
\delta^2\sqrt{\ell_\delta},&n=6,\\
\delta^2,&n\ge7.
\end{cases}
\end{equation}
Finally, the nonnegative function \(f_\delta\) obeys
\begin{equation}\label{eq2.16}
f_\delta(|x|)\le C
\begin{cases}
\delta(1+\ell_\delta), &n=4,\quad 0\le|x|\le\delta,\\
\delta(1+\log(1/|x|)), &n=4,\quad \delta\le|x|\le\tfrac12,\\
\delta^{\frac{n-2}{2}}(\delta+|x|)^{4-n}, &n\ge5,\quad 0\le|x|\le\tfrac12,\\
\delta^{\frac{n-2}{2}}, &n\ge4,\quad \tfrac12\le|x|<1.
\end{cases}
\end{equation}
\end{lemma}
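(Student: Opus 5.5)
The plan is to prove the explicit bounds for \(B^{\RR}_\delta\), \(c_\delta\) and \(\mathring{B}^{\RR}_\delta\) by direct computation, and then to transfer them to \(v_\delta\) by differentiating the equations \eqref{eq2.5} in the scale.

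\emph{Explicit pieces.} Write \(B^{\RR}_\delta(x)=\kappa_n\delta^{-\frac{n-2}{2}}(1+|x|^2/\delta^2)^{-\frac{n-2}{2}}\). Then
\[
\cD B^{\RR}_\delta=\frac{n-2}{2}\,\frac{|x|^2-\delta^2}{|x|^2+\delta^2}\,B^{\RR}_\delta .
\]
The prefactor is a bounded rational function of \(t=|x|^2/\delta^2\), and applying \(\cD\) to it again keeps it bounded. This gives \(|\cD^jB^{\RR}_\delta|\le CB^{\RR}_\delta\), and evaluation at \(|x|=1\) gives \(|\cD^jc_\delta|\le Cc_\delta\). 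For \(\mathring{B}^{\RR}_\delta=B^{\RR}_\delta(|x|)-B^{\RR}_\delta(1)\) we need more care, since it vanishes at the boundary.

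\begin{itemize}
\item Away from the boundary, say \(|x|\le\tfrac12\), we have \(\mathring{B}^{\RR}_\delta\ge cB^{\RR}_\delta\), so the bound follows from the previous step.
\item For \(\tfrac12\le|x|\le1\), view \(\cD^j\mathring{B}^{\RR}_\delta=g_j(|x|)-g_j(1)\) with \(g_j=\cD^jB^{\RR}_\delta\). By the mean value theorem this is bounded by \(\sup|\partial_r g_j|\,(1-|x|)\le C\delta^{\frac{n-2}{2}}(1-|x|)\).
\item Since \(\mathring{B}^{\RR}_\delta\ge c\delta^{\frac{n-2}{2}}(1-|x|)\) there (its radial derivative is \(\approx-\delta^{\frac{n-2}{2}}\)), this gives \(|\cD^j\mathring{B}^{\RR}_\delta|\le C\mathring{B}^{\RR}_\delta\).
\end{itemize}

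\emph{The correction \(v_\delta\).} Since \(\mathcal{P}^{\RR}_\mu\) does not depend on \(\delta\), differentiating \eqref{eq2.5} gives
\[
\mathcal{P}^{\RR}_\mu\cD^jv_\delta=\mu\varrho^2\cD^j\mathring{B}^{\RR}_\delta .
\]
To justify the differentiation, use difference quotients together with the Hardy \(H^{-1}\) bounds from the proof of Lemma~\ref{Lem2.3}, applied to \(\delta\)-difference quotients of \(\mathring{B}^{\RR}_\delta\); these are dominated as in the previous step. The key observation is that \(|\cD^j\mathring{B}^{\RR}_\delta|\le C\mathring{B}^{\RR}_\delta\) pointwise. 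Hence the Green representation \eqref{eq2.8} with the positive kernel \(G_\mu\) gives
\[
|\cD^jv_\delta|\le \mu\int G_\mu\,\varrho^2\,|\cD^j\mathring{B}^{\RR}_\delta|\le C\mu\int G_\mu\,\varrho^2\,\mathring{B}^{\RR}_\delta=Cv_\delta .
\]
More precisely, positivity of the resolvent is applied to \(Cv_\delta\pm\cD^jv_\delta\), which have nonnegative sources. The same domination \(|\cD^j\mathring{B}^{\RR}_\delta|\le C\mathring{B}^{\RR}_\delta\) means the \(H^{-1}\) estimates of the source in Lemma~\ref{Lem2.3} apply verbatim to \(\varrho^2\cD^j\mathring{B}^{\RR}_\delta\). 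Those estimates only used absolute-value pointwise bounds plus Hardy, the weighted estimate \eqref{eq2.12} for \(n=6\), and the \(L^{(2^*)'}\) scaling for \(n\ge7\). Testing with \(\cD^jv_\delta\) and using \eqref{eq2.3} then yields \eqref{eq2.15}.

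\emph{Remaining bounds.} Since \(B^{\RR}_{\mathrm{pr},\delta}=\mathring{B}^{\RR}_\delta+v_\delta\) is a sum of two nonnegative terms, each controlled by itself under \(\cD^j\), we get \(|\cD^jB^{\RR}_{\mathrm{pr},\delta}|\le CB^{\RR}_{\mathrm{pr},\delta}\). For \(e_\delta=v_\delta-c_\delta\),
\[
|e_\delta|+|\cD e_\delta|\le v_\delta+c_\delta+C(v_\delta+c_\delta)=Cf_\delta .
\]
Finally, \eqref{eq2.16} combines \eqref{eq2.9}--\eqref{eq2.10} with \(c_\delta\le C\delta^{\frac{n-2}{2}}\). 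The outer region \(\tfrac12\le|x|<1\) needs a separate bound \(v_\delta\le C\delta^{\frac{n-2}{2}}\), which comes from two facts:
\begin{itemize}
\item \(v_\delta\le B^{\RR}_{\mathrm{pr},\delta}+c_\delta\), since \(\mathring{B}^{\RR}_\delta\ge-c_\delta\);
\item \eqref{eq2.11} bounds \(B^{\RR}_{\mathrm{pr},\delta}\) there.
\end{itemize}
For \(n\ge5\) on \(|x|\le\delta\), note that \(\delta^{\frac{6-n}{2}}\sim\delta^{\frac{n-2}{2}}(\delta+|x|)^{4-n}\). For \(n=4\) the stated logarithmic forms dominate \(c_\delta\sim\delta\).

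The main obstacle is the boundary behaviour of \(\cD^j\mathring{B}^{\RR}_\delta\): the pointwise domination must hold up to \(|x|=1\), where \(\mathring{B}^{\RR}_\delta\) vanishes linearly. This is handled by the mean value argument, using that \(\cD^jB^{\RR}_\delta\) is smooth in \(r\) with derivative \(O(\delta^{\frac{n-2}{2}})\) near \(r=1\), uniformly in \(\delta<\delta_0\).
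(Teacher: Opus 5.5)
Your proposal is correct and follows essentially the same route as the paper: explicit bounds for the bubble pieces, then differentiation of \eqref{eq2.5} in the scale, then positivity of the resolvent for the nonnegative sources $C\mathring{B}^{\RR}_\delta\pm\cD^j\mathring{B}^{\RR}_\delta$, then reuse of the pointwise $H^{-1}$ source bounds from Lemma~\ref{Lem2.3} to get \eqref{eq2.15}, and finally \eqref{eq2.9}--\eqref{eq2.11} for \eqref{eq2.16}. The one difference is cosmetic: the paper gets $|\cD^j\mathring{B}^{\RR}_\delta|\le C\mathring{B}^{\RR}_\delta$ on the whole ball from the identity $\mathring{B}^{\RR}_\delta(|x|)=(n-2)\kappa_n\delta^{\frac{n-2}{2}}\int_{|x|}^1 s(\delta^2+s^2)^{-n/2}\,ds$, whose nonnegative integrand is dominated by itself under $\cD$, which is your mean-value argument applied globally instead of being split at $|x|=\tfrac12$.
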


\begin{proof}
The derivative bounds for \(B^{\RR}_\delta\) and \(c_\delta\) follow directly from their explicit formulas. For \(\mathring{B}^{\RR}_\delta\), we use the integral identity
\[
\mathring{B}^{\RR}_\delta(|x|) =(n-2)\kappa_n\delta^{\frac{n-2}{2}} \int_{|x|}^1 \frac{s}{(\delta^2+s^2)^{n/2}}\,ds.
\]
Applying \(\cD\) once or twice to the integrand, including the prefactor, changes its absolute value by at most a constant factor. Since the integrand is nonnegative,
\[
|\cD^j\mathring{B}^{\RR}_\delta| \le C_j\mathring{B}^{\RR}_\delta, \qquad j=1,2.
\]

We next justify differentiation of the equation for \(v_\delta\). For \(\delta>0\), the map \(\delta\mapsto\mathring{B}^{\RR}_\delta\) is smooth with values in \(H_0^1(\BB^n)\). By \eqref{eq2.2}, multiplication by \(\varrho^2\) is bounded from \(H_0^1(\BB^n)\) to \(H^{-1}(\BB^n)\), while coercivity gives a bounded inverse
\[
(\mathcal P^{\RR}_\mu)^{-1}: H^{-1}(\BB^n)\longrightarrow H_0^1(\BB^n).
\]
It follows that \(v_\delta\) depends smoothly on \(\delta\) in \(H_0^1(\BB^n)\), and
\[
\mathcal P^{\RR}_\mu\cD^jv_\delta =\mu\varrho^2\cD^j\mathring{B}^{\RR}_\delta, \qquad j=1,2.
\]
Since \(C_j\mathring{B}^{\RR}_\delta \pm\cD^j\mathring{B}^{\RR}_\delta\ge0\), positivity of the resolvent yields
\[
C_jv_\delta\pm\cD^jv_\delta\ge0,
\]
and hence \(|\cD^jv_\delta|\le C_jv_\delta\). Combining these bounds with
\[
B^{\RR}_{\mathrm{pr},\delta} =\mathring{B}^{\RR}_\delta+v_\delta, \qquad e_\delta=v_\delta-c_\delta
\]
proves \eqref{eq2.14}.

For the energy estimates, coercivity and the preceding pointwise bounds give, for \(j=0,1,2\),
\[
\|\nabla\cD^jv_\delta\|_2 \le C \|\varrho^2\cD^j\mathring{B}^{\RR}_\delta\|_{H^{-1}} \le C \|\varrho^2\mathring{B}^{\RR}_\delta\|_{H^{-1}}.
\]
Inserting the bounds for \(\varrho^2\mathring{B}^{\RR}_\delta\) established in the proof of \eqref{eq2.13} yields \eqref{eq2.15}.

It remains to estimate \(f_\delta=v_\delta+c_\delta\). On \(0\le|x|\le\tfrac12\), the asserted bounds follow from \eqref{eq2.9}--\eqref{eq2.10} and \(c_\delta\le C\delta^{\frac{n-2}{2}}\). On \(\tfrac12\le|x|<1\), we use \(0\le v_\delta\le B^{\RR}_{\mathrm{pr},\delta}\) and \eqref{eq2.11} to obtain
\[
f_\delta(|x|) \le C\delta^{\frac{n-2}{2}} (1-|x|)^{\alpha_\lambda} +C\delta^{\frac{n-2}{2}} \le C\delta^{\frac{n-2}{2}}.
\]
This proves \eqref{eq2.16}.
\end{proof}
\subsection{Scale and center derivatives}
\label{Sec2.3}

Choose a hyperbolic orthonormal basis \(e_1^\xi,\ldots,e_n^\xi\) of the tangent space \(T_\xi\HH^n\). Let \(\exp_\xi:T_\xi\HH^n\to\HH^n\) be the hyperbolic exponential map. We differentiate with respect to the center along geodesics:
\[
\partial_{\xi,j}B^{\HH}_{\mathrm{pr},\xi,\eps} = \left.\frac{d}{dt}\right|_{t=0} B^{\HH}_{\mathrm{pr},\exp_\xi(te_j^\xi),\eps}.
\]
The normalized scale and center derivatives are denoted by
\[
Z_0=\cD B^{\HH}_{\mathrm{pr},\xi,\eps}, \qquad Z_j=\eps\partial_{\xi,j}B^{\HH}_{\mathrm{pr},\xi,\eps}, \quad 1\le j\le n.
\]
The factor \(\eps\) normalizes center displacements relative to the concentration scale. Since this factor also depends on \(\eps\), the product rule gives
\begin{equation}\label{eq2.17}
\cD Z_j =\cD\bigl(\eps\partial_{\xi,j} B^{\HH}_{\mathrm{pr},\xi,\eps}\bigr) =\eps\partial_{\xi,j} \bigl(\cD B^{\HH}_{\mathrm{pr},\xi,\eps}\bigr)+Z_j.
\end{equation}
For second center derivatives, \(\nabla_{\xi,i}\partial_{\xi,j} B^{\HH}_{\mathrm{pr},\xi,\eps}\) denotes the covariant Hessian with respect to \(\xi\), evaluated on \((e_i^\xi,e_j^\xi)\).

To describe the concentration limit, we first recenter the profile at the origin and pass to its Euclidean representative. For \(v\in H_0^1(\BB^n)\), we write
\[
\widehat v(y) =
\begin{cases}
\delta^{\frac{n-2}{2}}v(\delta y), &|y|<\delta^{-1},\\
0,&|y|\ge\delta^{-1},
\end{cases}
\qquad \delta=\frac{\eps}{2}.
\]
Throughout this section, hats denote this rescaling followed by zero extension. The zero-trace condition ensures that \(\widehat v\in\dot H^1(\RR^n)\).

Finally, we denote the scale derivative of the unit Euclidean bubble by
\[
Z_0^{\RR}(y) = \left.\cD B^{\RR}_\delta(y)\right|_{\delta=1} =-\frac{n-2}{2}B^{\RR}_1(y) -y\cdot\nabla B^{\RR}_1(y).
\]
Scale derivatives are taken before rescaling.

\begin{lemma}\label{Lem2.5}
Fix \(0<r_0<R_0<1\) and \(\chi\in C_c^\infty(\BB^n)\) with \(\chi=1\) on \(\BB_{r_0}\) and \(\operatorname{supp}\chi\subset\BB_{R_0}\). There is a constant \(C\), depending only on \(n,\lambda,r_0,R_0\) and \(\|\chi\|_{C^1}\), such that, for \(0<\delta<\delta_0\),
\begin{equation}\label{eq2.18}
\begin{aligned}
&\|\nabla(B^{\RR}_{\mathrm{pr},\delta} -\chi B^{\RR}_\delta)\|_{L^2(\BB^n)} +\|\nabla(\cD B^{\RR}_{\mathrm{pr},\delta} -\chi\cD B^{\RR}_\delta)\|_{L^2(\BB^n)}\le C
\begin{cases}
\delta^{\frac{n-2}{2}},&n=4,5,\\
\delta^2\sqrt{\ell_\delta},&n=6,\\
\delta^2,&n\ge7.
\end{cases}
\end{aligned}
\end{equation}
Consequently, as \(\delta\to 0\),
\begin{equation}\label{eq2.19}
\widehat{B^{\RR}_{\mathrm{pr},\delta}} \longrightarrow B^{\RR}_1, \qquad \widehat{\cD B^{\RR}_{\mathrm{pr},\delta}} \longrightarrow Z_0^{\RR},
\end{equation}
strongly in both \(\dot H^1(\RR^n)\) and \(L^{2^*}(\RR^n)\). Moreover, the hyperbolic profiles satisfy
\[
\sup_{\xi\in\HH^n} \|B^{\HH}_{\mathrm{pr},\xi,\eps}\|_{L^2(\HH^n)} \longrightarrow0 \qquad\text{as }\eps\to 0.
\]
\end{lemma}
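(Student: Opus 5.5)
The estimate \eqref{eq2.18} comes from splitting the error into the correction \(v_\delta\), which is already controlled, and a cutoff error for \(B^{\RR}_\delta\). From the decomposition of Section~\ref{Sec2.1}, \(B^{\RR}_{\mathrm{pr},\delta}=\mathring B^{\RR}_\delta+v_\delta=B^{\RR}_\delta-c_\delta+v_\delta\). Hence
\[
B^{\RR}_{\mathrm{pr},\delta}-\chi B^{\RR}_\delta =v_\delta+\bigl((1-\chi)B^{\RR}_\delta-c_\delta\bigr).
\]
The first term is bounded by \eqref{eq2.15}. For the second term, \(\nabla\bigl((1-\chi)B^{\RR}_\delta-c_\delta\bigr)=-B^{\RR}_\delta\nabla\chi+(1-\chi)\nabla B^{\RR}_\delta\), since \(c_\delta\) is constant. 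This gradient is supported in \(\{|x|\ge r_0\}\), where \(|B^{\RR}_\delta|+|\nabla B^{\RR}_\delta|\le C\delta^{(n-2)/2}\). Its \(L^2\) norm is therefore \(O(\delta^{(n-2)/2})\), with a constant depending on \(r_0\) and \(\|\chi\|_{C^1}\). Because \(\delta^{(n-2)/2}\le\delta^2\) for \(n\ge6\), this is dominated by the stated rate in every dimension.

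The scale derivative is handled the same way. Applying \(\cD\) to the decomposition gives \(\cD B^{\RR}_{\mathrm{pr},\delta}-\chi\cD B^{\RR}_\delta=\cD v_\delta+(1-\chi)\cD B^{\RR}_\delta-\cD c_\delta\). The term \(\cD v_\delta\) is covered by \eqref{eq2.15}. Since \(\cD c_\delta\) is constant, the gradient of the remaining piece again lives on \(|x|\ge r_0\). There the explicit formula gives \(|\cD B^{\RR}_\delta|+|\nabla\cD B^{\RR}_\delta|\le C\delta^{(n-2)/2}\), so the same bound follows. The one point needing care is that \((1-\chi)B^{\RR}_\delta-c_\delta\) must lie in \(H_0^1(\BB^n)\), so that its gradient norm is the relevant quantity. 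This holds because it equals \(B^{\RR}_\delta(1)-c_\delta=0\) on \(\partial\BB^n\), as \(\chi\) vanishes near the boundary. The same argument applies to its \(\cD\)-derivative.

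For \eqref{eq2.19}, the rescaling \(v\mapsto\widehat v\) is an isometry from \(\dot H^1\) to \(\dot H^1\) on \(H_0^1(\BB^n)\), and \(\widehat{\chi B^{\RR}_\delta}(y)=\chi(\delta y)B^{\RR}_1(y)\). The functions \(\chi(\delta\cdot)B^{\RR}_1\) converge to \(B^{\RR}_1\) in \(\dot H^1(\RR^n)\). This follows from dominated convergence for the term \((1-\chi(\delta y))\nabla B^{\RR}_1\). The term \(\delta\nabla\chi(\delta y)B^{\RR}_1\) is estimated on \(|y|\sim\delta^{-1}\) by \(C\delta\bigl(\int_{|y|>r_0/\delta}|y|^{4-2n}\,dy\bigr)^{1/2}\to0\). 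The scale derivative is identical, with \(Z_0^{\RR}\) in place of \(B^{\RR}_1\), since \(Z_0^{\RR}\) has the same decay. Combined with \eqref{eq2.18}, this gives convergence in \(\dot H^1\), and the Sobolev inequality upgrades it to \(L^{2^*}\).

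For the mass statement, the isometry invariance of the \(L^2\) norm reduces it to \(\xi=o\). By McKean's inequality from Section~\ref{Sec2.1} and conformal covariance,
\[
\|B^{\HH}_{\mathrm{pr},o,\eps}\|_2^2\le(\lambdab-\lambda)^{-1}\mathcal B_\mu(B^{\RR}_{\mathrm{pr},\delta},B^{\RR}_{\mathrm{pr},\delta}),
\]
which is only bounded, not small. I would therefore compute the mass directly instead:
\[
\|B^{\HH}_{\mathrm{pr},o,\eps}\|_2^2=\int_{\BB^n}\varrho^2(B^{\RR}_{\mathrm{pr},\delta})^2\,dx.
\]
On \(|x|\le\tfrac12\), Lemma~\ref{Lem2.3} gives comparability with \(B^{\RR}_\delta\), so this part is bounded by \(C\int_{|x|<1/2}(B^{\RR}_\delta)^2\,dx\). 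That integral is \(O(\delta^2)\) for \(n\ge5\) and \(O(\delta^2\ell_\delta)\) for \(n=4\). On \(|x|\ge\tfrac12\), \eqref{eq2.11} gives the bound \(C\delta^{n-2}\int(1-|x|)^{2\alpha_\lambda-2}\,dx\), which is finite since \(\alpha_\lambda>\tfrac12\). Both parts tend to zero, and the whole argument is routine bookkeeping with no real obstacle.
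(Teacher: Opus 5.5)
Your proposal is correct and follows the paper's proof essentially step for step. It uses the same splitting $B^{\RR}_{\mathrm{pr},\delta}-\chi B^{\RR}_\delta=v_\delta-c_\delta+(1-\chi)B^{\RR}_\delta$ and its $\cD$-version, controlled by \eqref{eq2.15} plus an $O(\delta^{(n-2)/2})$ cutoff error. It obtains \eqref{eq2.19} by rescaling the zero-extended $H_0^1(\BB^n)$ differences, and bounds the mass by integrating $\varrho^2(B^{\RR}_{\mathrm{pr},\delta})^2$ directly, with Lemma~\ref{Lem2.3} inside and \eqref{eq2.11} near the boundary.

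One slip needs fixing. For $n=4$ the tail integral $\int_{|y|>r_0/\delta}|y|^{4-2n}\,dy$ diverges. It must be taken over the annulus $r_0/\delta<|y|<R_0/\delta$ where $\nabla\chi(\delta\,\cdot)$ is supported, which still gives an $O(\delta)$ bound. Alternatively, you can simply rescale the whole-space estimate $\|\nabla((1-\chi)B^{\RR}_\delta)\|_{L^2(\RR^n)}=O(\delta^{(n-2)/2})$, as the paper does.
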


\begin{proof}
For \(j=0,1\), the decomposition \(B^{\RR}_{\mathrm{pr},\delta} =B^{\RR}_\delta+v_\delta-c_\delta\) gives
\[
\cD^jB^{\RR}_{\mathrm{pr},\delta} -\chi\cD^jB^{\RR}_\delta =\cD^jv_\delta-\cD^jc_\delta +(1-\chi)\cD^jB^{\RR}_\delta.
\]
The term \(\cD^jc_\delta\) is constant in the spatial variable and therefore has zero gradient. Extending \(\chi\) by zero to \(\RR^n\), the explicit bubble formula and the fact that \(1-\chi\) vanishes near the origin yield
\[
\sum_{j=0}^1 \|\nabla((1-\chi)\cD^jB^{\RR}_\delta)\|_{L^2(\RR^n)} \le C\delta^{\frac{n-2}{2}}.
\]
Combining these estimates with \eqref{eq2.15} proves \eqref{eq2.18}.

Both differences in \eqref{eq2.18} belong to \(H_0^1(\BB^n)\), so their zero extensions belong to \(\dot H^1(\RR^n)\). After rescaling, the cutoff profiles satisfy
\[
\widehat{\chi B^{\RR}_\delta}(y) =\chi(\delta y)B^{\RR}_1(y), \qquad \widehat{\chi\cD B^{\RR}_\delta}(y) =\chi(\delta y)Z_0^{\RR}(y).
\]
The preceding bound for \(\nabla((1-\chi)\cD^jB^{\RR}_\delta)\), \(j=0,1\), and scale invariance of the Dirichlet norm show that these functions converge strongly in \(\dot H^1(\RR^n)\) to \(B^{\RR}_1\) and \(Z_0^{\RR}\), respectively. Applying the same rescaling to \eqref{eq2.18} proves the two limits in \eqref{eq2.19}. Their strong convergence in \(L^{2^*}(\RR^n)\) follows from the Sobolev inequality.

For the hyperbolic \(L^2\)-limit, recall that \(\delta=\eps/2\) and
\[
\|B^{\HH}_{\mathrm{pr},0,\eps}\|_{L^2(\HH^n)}^2 =\int_{\BB^n} \varrho^2(B^{\RR}_{\mathrm{pr},\delta})^2\,dx.
\]
The interior comparison in Lemma~\ref{Lem2.3} and the explicit Euclidean bubble formula give
\[
\int_{\{|x|<1/2\}} \varrho^2(B^{\RR}_{\mathrm{pr},\delta})^2\,dx \le C
\begin{cases}
\delta^2(1+\ell_\delta),&n=4,\\
\delta^2,&n\ge5.
\end{cases}
\]
Near the boundary, \eqref{eq2.11} yields
\[
\int_{\{x\in\BB^n:\,|x|>1/2\}} \varrho^2(B^{\RR}_{\mathrm{pr},\delta})^2\,dx \le C\delta^{n-2} \int_0^{1/2}t^{2\alpha_\lambda-2}\,dt \le C\delta^{n-2},
\]
where the integral is finite because \(\alpha_\lambda>\tfrac12\). Both contributions tend to zero. Since hyperbolic isometries preserve the \(L^2\)-norm, the convergence is uniform in \(\xi\).
\end{proof}

For \(h\in\BB^n\), use the M\"obius translation
\[
\tau_h(x)= \frac{(1+2h\cdot x+|x|^2)h+(1-|h|^2)x} {1+2h\cdot x+|h|^2|x|^2}, \qquad \tau_h(0)=h,\quad \tau_h^{-1}=\tau_{-h}.
\]
Define the weighted action
\[
(\mathsf C_\beta(h)f)(x) = \left( \frac{1-|h|^2}{1-2h\cdot x+|h|^2|x|^2} \right)^\beta f(\tau_{-h}(x)), \qquad \beta\in\RR.
\]
Here \(h\) is the Euclidean ball coordinate of the center.

Let \(e_1,\ldots,e_n\) denote the standard Euclidean basis. For \(1\le i,j\le n\), set
\[
\begin{aligned}
X_j(x) &=(1+|x|^2)e_j-2x_jx,\\
X_{ij}(x) &=\bigl(8x_ix_j-2(1+|x|^2)\delta_{ij}\bigr)x -2|x|^2(x_ie_j+x_je_i).
\end{aligned}
\]
Define the differential operators
\[
\mathcal L_j^{(\beta)}f =X_j\cdot\nabla f-2\beta x_jf
\]
and
\begin{align}
\mathcal K_{ij}^{(\beta)}f ={}&\nabla^2f[X_i,X_j]+X_{ij}\cdot\nabla f -2\beta x_iX_j\cdot\nabla f -2\beta x_jX_i\cdot\nabla f\notag\\
&+\bigl( 4\beta(\beta+1)x_ix_j -2\beta(1+|x|^2)\delta_{ij} \bigr)f. \label{eq2.20}
\end{align}
Here \(\nabla^2 f[X_i,X_j]\) is the Euclidean Hessian of \(f\) evaluated on \((X_i,X_j)\).

\begin{lemma}\label{Lem2.6}
For every \(h\in\BB^n\) and \(f\in C_c^\infty(\BB^n)\),
\[
\mathcal P^{\RR}_\mu \mathsf C_{\frac{n-2}{2}}(h)f = \mathsf C_{\frac{n+2}{2}}(h) \mathcal P^{\RR}_\mu f.
\]
Moreover, for every \(\beta\in\RR\) and \(1\le i,j\le n\),
\[
\left. \partial_{h_j}\bigl(\mathsf C_\beta(h)f\bigr) \right|_{h=0} =-\mathcal L_j^{(\beta)}f, \qquad \left. \partial_{h_i}\partial_{h_j} \bigl(\mathsf C_\beta(h)f\bigr) \right|_{h=0} =\mathcal K_{ij}^{(\beta)}f.
\]
\end{lemma}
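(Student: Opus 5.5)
The plan has two parts: the covariance identity, and the derivative formulas at \(h=0\).

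\emph{Covariance.} I would not compute \(\mathcal P^{\RR}_\mu\) of \(\mathsf C_{\frac{n-2}{2}}(h)f\) directly. Instead, transfer the identity to the hyperbolic side via \eqref{eq2.1}, where \(\tau_h\) is an isometry and the operator \(P^{\HH}_\lambda\) commutes with composition by isometries. The key geometric fact is the conformal-factor identity
\[
\varrho(\tau_{-h}(x))\,|\det D\tau_{-h}(x)|^{1/n}=\varrho(x).
\]
This holds because \(\tau_{-h}\) is a hyperbolic isometry, and \(D\tau_{-h}\) is a conformal matrix with linear stretch
\[
\frac{1-|h|^2}{1-2h\cdot x+|h|^2|x|^2}
\]
(the standard Möbius formula, checked by direct computation or by writing \(\tau_h\) as a composition of an inversion and a dilation). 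Hence
\[
\frac{1-|h|^2}{1-2h\cdot x+|h|^2|x|^2}=\frac{\varrho(x)}{\varrho(\tau_{-h}x)} .
\]
Consequently
\[
\mathsf C_\beta(h)f=\varrho^{\beta}\,\bigl(\varrho^{-\beta}f\bigr)\circ\tau_{-h}.
\]
With \(u=\varrho^{-\frac{n-2}{2}}f\), this gives
\[
\mathsf C_{\frac{n-2}{2}}(h)f=\varrho^{\frac{n-2}{2}}(u\circ\tau_{-h}).
\]
Applying \eqref{eq2.1} and then isometry invariance,
\[
\mathcal P^{\RR}_\mu\mathsf C_{\frac{n-2}{2}}(h)f=\varrho^{\frac{n+2}{2}}\,(P^{\HH}_\lambda u)\circ\tau_{-h}.
\]
Using \eqref{eq2.1} once more, \(P^{\HH}_\lambda u=\varrho^{-\frac{n+2}{2}}\mathcal P^{\RR}_\mu f\), so the right-hand side equals
\[
\varrho^{\frac{n+2}{2}}\bigl(\varrho^{-\frac{n+2}{2}}\mathcal P^{\RR}_\mu f\bigr)\circ\tau_{-h}=\mathsf C_{\frac{n+2}{2}}(h)\mathcal P^{\RR}_\mu f ,
\]
which is the first identity. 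The only step needing care is the stretch-factor formula, which I expect to be the main (though routine) obstacle.

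\emph{Derivative formulas.} These are Taylor expansions at \(h=0\) to second order. Write
\[
w(h,x)=\Bigl(\frac{1-|h|^2}{1-2h\cdot x+|h|^2|x|^2}\Bigr)^\beta .
\]
Then
\[
w=1+2\beta\,h\cdot x+\beta\bigl(2(\beta+1)(h\cdot x)^2-|h|^2(1+|x|^2)\bigr)+O(|h|^3).
\]
This uses \(\log w=-|h|^2+2h\cdot x+2(h\cdot x)^2-|h|^2|x|^2+O(|h|^3)\), followed by exponentiating. Similarly, expand
\[
\tau_{-h}(x)=x-hX(x)+\tfrac12 Q_h(x)+O(|h|^3),
\]
where the first-order term is \(-\sum_j h_jX_j(x)\). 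This is read off from the formula for \(\tau_{-h}\):
\begin{itemize}
\item the numerator is \((1-2h\cdot x+|x|^2)(-h)+(1-|h|^2)x\);
\item the denominator is \(1-2h\cdot x+|h|^2|x|^2\);
\item to first order this gives \(x-(1+|x|^2)h+2(h\cdot x)x\), which is \(x-\sum_j h_jX_j\).
\end{itemize}
The second-order coefficient \(\tfrac12\sum_{i,j}h_ih_jX_{ij}\) comes from the same expansion. It is symmetrized in \(i,j\) and should reproduce the stated \(X_{ij}\).

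Composing and using the chain rule, \(f(\tau_{-h}x)\) equals
\[
f-\sum_j h_jX_j\cdot\nabla f+\tfrac12\sum_{i,j}h_ih_j\bigl(\nabla^2f[X_i,X_j]+X_{ij}\cdot\nabla f\bigr)+O(|h|^3).
\]
Multiplying by \(w\) and collecting terms gives the result:
\begin{itemize}
\item the linear term is \(-\mathcal L^{(\beta)}_j f\), since \(2\beta x_j f - X_j\cdot\nabla f = -\mathcal L^{(\beta)}_j f\);
\item the quadratic cross terms \(-2\beta x_iX_j\cdot\nabla f-2\beta x_jX_i\cdot\nabla f\) come from multiplying the linear parts of \(w\) and of \(f\circ\tau_{-h}\);
\item the zeroth-order coefficient \(4\beta(\beta+1)x_ix_j-2\beta(1+|x|^2)\delta_{ij}\) is twice the quadratic part of \(w\).
\end{itemize}
This matches \eqref{eq2.20}.
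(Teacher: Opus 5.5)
Your proposal is correct and follows the paper's proof. Both rewrite $\mathsf C_\beta(h)f=\varrho^\beta(\varrho^{-\beta}f)\circ\tau_{-h}$ and then combine isometry invariance of $P^{\HH}_\lambda$ with \eqref{eq2.1}; you make explicit the factor identity $\varrho(x)/\varrho(\tau_{-h}x)=(1-|h|^2)/(1-2h\cdot x+|h|^2|x|^2)$, which the paper leaves implicit. Both also obtain the derivative formulas from second-order expansions of $\tau_{-h}(x)$ and of the weight, composed with Taylor's formula for $f$ (one small slip: your displayed expansion of $\log w$ is missing the overall factor $\beta$, although your expansion of $w$ itself is correct).
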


\begin{proof}
The definition of \(\mathsf C_{\frac{n-2}{2}}(h)\) gives
\[
\varrho^{-\frac{n-2}{2}} \mathsf C_{\frac{n-2}{2}}(h)f = \bigl(\varrho^{-\frac{n-2}{2}}f\bigr) \circ\tau_{-h}.
\]
Since \(\tau_{-h}\) is a hyperbolic isometry, the first identity follows from the conformal covariance formula \eqref{eq2.1}.

To compute the parameter derivatives, we expand the translation and its conformal factor at \(h=0\):
\begin{align*}
\tau_{-h}(x) =x+2(h\cdot x)x-(1+|x|^2)h+\bigl( 4(h\cdot x)^2-(1+|x|^2)|h|^2 \bigr)x -2|x|^2(h\cdot x)h +O(|h|^3),
\end{align*}
and
\begin{align*}
\left( \frac{1-|h|^2}{1-2h\cdot x+|h|^2|x|^2} \right)^\beta =1+2\beta h\cdot x +2\beta(\beta+1)(h\cdot x)^2-\beta(1+|x|^2)|h|^2+O(|h|^3).
\end{align*}
Combining these expansions with the second-order Taylor formula for \(f\) yields
\[
\mathsf C_\beta(h)f = f-\sum_{j=1}^n h_j\mathcal L_j^{(\beta)}f +\frac12\sum_{i,j=1}^n h_ih_j\mathcal K_{ij}^{(\beta)}f +O(|h|^3).
\]
For fixed \(f\) and \(\beta\), the remainder is uniform in \(x\in\BB^n\). The linear and quadratic terms give the asserted derivative formulas.
\end{proof}

Let \(\widetilde{B^{\RR}_{\mathrm{pr},\delta}}\) denote zero extension to \(\RR^n\). Equation \eqref{eq2.18} and the Euclidean bubble tail give
\begin{equation}\label{eq2.21}
\left\|\nabla\left(\widetilde{B^{\RR}_{\mathrm{pr},\delta}} -B^{\RR}_\delta\right)\right\|_{L^2(\RR^n)}\longrightarrow0.
\end{equation}
The M\"obius action extends to \(\dot H^1(\RR^n)\) and preserves its norm:
\[
\|\nabla\mathsf C_{\frac{n-2}{2}}(h)f\|_2 =\|\nabla f\|_2,\qquad h\in\BB^n.
\]
Indeed, this holds for translations, orthogonal transformations, dilations, and the Kelvin transform, hence for their compositions and, by density, for every \(f\in\dot H^1\). Applying it to the difference in \eqref{eq2.21} makes that approximation uniform in \(h\), including as \(|h|\to1\). Since the transformation preserves the ball and its complement, its weighted action commutes with zero extension.

At the origin, choose \(e_j^0=\tfrac12e_j\). For \(1\le j\le n\), let \(Z_{j,\delta}\) denote the Euclidean representative \(\varrho^{(n-2)/2}Z_j\) at \(\xi=0\) and \(\eps=2\delta\). Its rescaling is denoted by \(\widehat Z_{j,\delta}\).

\begin{lemma}\label{Lem2.7}
Set
\[
V_\delta(y) =\frac{4\mu\delta^2}{(1-\delta^2|y|^2)^2}, \qquad y\in\BB_{1/\delta}.
\]
Suppose \(k_\delta,\phi_\delta\in H_0^1(\BB_{1/\delta})\) and, after zero extension to \(\RR^n\),
\[
k_\delta\longrightarrow k \quad\text{strongly in }\dot H^1(\RR^n), \qquad \sup_\delta\|\nabla\phi_\delta\|_2<\infty.
\]
Then
\begin{equation}\label{eq2.22}
\int_{\BB_{1/\delta}}V_\delta k_\delta^2\,dy \longrightarrow0, \qquad \int_{\BB_{1/\delta}}V_\delta k_\delta\phi_\delta\,dy \longrightarrow0.
\end{equation}
\end{lemma}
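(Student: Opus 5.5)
The plan is to reduce both limits in \eqref{eq2.22} to the first one, and to prove the first by splitting $\BB_{1/\delta}$ into an interior region, where $V_\delta$ is uniformly small, and a boundary layer, where the rescaled Hardy inequality gives uniform control and strong convergence of $k_\delta$ shows that little of $k_\delta$ remains there.

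\emph{Rescaled Hardy inequality.} Rescale \eqref{eq2.2}: for $w\in H_0^1(\BB_{1/\delta})$, set $v(x)=\delta^{-(n-2)/2}w(x/\delta)$. Then $v\in H_0^1(\BB^n)$ and $\|\nabla v\|_2=\|\nabla w\|_2$. Since $\varrho(\delta y)^2\delta^2=4\delta^2/(1-\delta^2|y|^2)^2$, inequality \eqref{eq2.2} becomes
\[
\int_{\BB_{1/\delta}}V_\delta w^2\,dy\le 4\mu\int|\nabla w|^2\,dy .
\]
So the quadratic form $w\mapsto\int V_\delta w^2$ is bounded on $\dot H^1$, uniformly in $\delta$.

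\emph{Reduction to the first limit.} By Cauchy--Schwarz,
\[
\Bigl|\int V_\delta k_\delta\phi_\delta\Bigr|\le\Bigl(\int V_\delta k_\delta^2\Bigr)^{1/2}\Bigl(\int V_\delta\phi_\delta^2\Bigr)^{1/2}.
\]
The second factor is bounded by $2\sqrt{\mu}\,\sup_\delta\|\nabla\phi_\delta\|_2$. Hence the second limit follows from the first.

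\emph{Replacing $k_\delta$ by $k$.} Write $k_\delta=k+(k_\delta-k)$ and use $(a+b)^2\le2a^2+2b^2$. The term $\int V_\delta(k_\delta-k)^2$ cannot be bounded directly, since $k$ need not vanish outside $\BB_{1/\delta}$. I would first approximate $k$ by some $\psi\in C_c^\infty(\RR^n)$ with $\|\nabla(k-\psi)\|_2<\eta$. For small $\delta$, $\operatorname{supp}\psi\subset\BB_{1/(2\delta)}$, so $\psi\in H_0^1(\BB_{1/\delta})$. Then $k_\delta-\psi\in H_0^1(\BB_{1/\delta})$ and
\[
\|\nabla(k_\delta-\psi)\|_2\le\|\nabla(k_\delta-k)\|_2+\eta\le 2\eta
\]
for $\delta$ small. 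By the rescaled Hardy inequality,
\[
\int V_\delta k_\delta^2\le 2\int V_\delta\psi^2+32\mu\eta^2 .
\]

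\emph{The smooth term.} On $\operatorname{supp}\psi\subset\BB_R$ with $R<1/(2\delta)$, we have $V_\delta\le 4\mu\delta^2(1-1/4)^{-2}\le C\delta^2$. Hence $\int V_\delta\psi^2\le C\delta^2\|\psi\|_2^2\to0$. So $\limsup_{\delta\to0}\int V_\delta k_\delta^2\le 32\mu\eta^2$, and letting $\eta\to0$ proves the first limit.

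I do not expect a real obstacle. The only delicate point is making the comparison with $k$ legitimate inside the Dirichlet space $H_0^1(\BB_{1/\delta})$, so that the Hardy bound applies. Approximating by the compactly supported $\psi$ handles this: it avoids both the nonintegrable boundary weight and the fact that $k$ itself need not lie in $H_0^1(\BB_{1/\delta})$.
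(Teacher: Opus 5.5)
Your proposal is correct and follows essentially the same route as the paper. You approximate $k$ by a fixed $\psi\in C_c^\infty(\RR^n)$, apply the rescaled Hardy inequality to $k_\delta-\psi\in H_0^1(\BB_{1/\delta})$, use $V_\delta=O(\delta^2)$ on $\operatorname{supp}\psi$, conclude by density, and obtain the second limit from Cauchy--Schwarz and the same Hardy bound; the differences are cosmetic ($(a+b)^2\le 2a^2+2b^2$ in place of the triangle inequality in the weighted $L^2$ norm, and the interior/boundary-layer split announced in your first paragraph is never actually used).
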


\begin{proof}
Choose a fixed \(\psi\in C_c^\infty(\RR^n)\) approximating \(k\) in \(\dot H^1(\RR^n)\). For sufficiently small \(\delta\), its support lies in \(\BB_{1/\delta}\), and \(k_\delta-\psi\in H_0^1(\BB_{1/\delta})\). The scaled Hardy inequality gives
\[
\left( \int_{\BB_{1/\delta}}V_\delta k_\delta^2\,dy \right)^{1/2} \le 2\sqrt\mu\,\|\nabla(k_\delta-\psi)\|_2 + \left( \int_{\BB_{1/\delta}}V_\delta\psi^2\,dy \right)^{1/2}.
\]
Because \(V_\delta\to0\) uniformly on the fixed support of \(\psi\), strong convergence of \(k_\delta\) implies
\[
\limsup_{\delta\downarrow0} \left( \int_{\BB_{1/\delta}}V_\delta k_\delta^2\,dy \right)^{1/2} \le 2\sqrt\mu\,\|\nabla(k-\psi)\|_2.
\]
Density proves the first limit. For the second, Cauchy--Schwarz and scaled Hardy yield
\[
\left| \int_{\BB_{1/\delta}} V_\delta k_\delta\phi_\delta\,dy \right| \le 2\sqrt\mu\,\|\nabla\phi_\delta\|_2 \left( \int_{\BB_{1/\delta}}V_\delta k_\delta^2\,dy \right)^{1/2} \longrightarrow0.
\]
\end{proof}

\begin{lemma}\label{Lem2.8}
The map
\[
(\eps,\xi)\longmapsto B^{\HH}_{\mathrm{pr},\xi,\eps}
\]
is strongly \(C^2\) from \((0,\infty)\times\HH^n\) into \(H^1_\lambda(\HH^n)\). For \(0<\eps<\eps_0\) and \(\xi\in\HH^n\),
\begin{equation}\label{eq2.23}
\sum_{j=0}^n\norm{Z_j}\le C
\end{equation}
and
\begin{equation}\label{eq2.24}
\begin{aligned}
&\norm{\cD^2B^{\HH}_{\mathrm{pr},\xi,\eps}} +\sum_{j=1}^n\norm{\cD Z_j} +\sum_{i,j=1}^n \norm{ \eps^2\nabla_{\xi,i}\partial_{\xi,j} B^{\HH}_{\mathrm{pr},\xi,\eps} } \le C.
\end{aligned}
\end{equation}
The rescaled center derivatives satisfy
\begin{equation}
\widehat Z_{j,\delta} =-\widehat{\, \delta\mathcal L_j^{(\frac{n-2}{2})} B^{\RR}_{\mathrm{pr},\delta}\,} \longrightarrow-\partial_jB^{\RR}_1, \qquad 1\le j\le n, \label{eq2.25}
\end{equation}
strongly in \(\dot H^1(\RR^n)\) as \(\delta=\eps/2\to 0\).
\end{lemma}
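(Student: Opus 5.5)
Everything reduces to the origin-centered Euclidean representative, and the plan is to transport all estimates through the Möbius action of Lemma~\ref{Lem2.6}. Under the isometry $v\mapsto \varrho^{-(n-2)/2}v$ from $(H_0^1(\BB^n),\mathcal B_\mu^{1/2})$ onto $H^1_\lambda(\HH^n)$, the profile $B^{\HH}_{\mathrm{pr},\xi,\eps}$ corresponds to $\mathsf C_{\frac{n-2}{2}}(h)B^{\RR}_{\mathrm{pr},\delta}$, where $h$ is the ball coordinate of $\xi$ and $\delta=\eps/2$.

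\textbf{Regularity.} Smoothness in $\delta$ with values in $H_0^1(\BB^n)$ follows as in the proof of Lemma~\ref{Lem2.4}. The map $\delta\mapsto B^{\RR}_\delta$ is smooth into $L^{(2^*)'}$, hence into $H^{-1}$, and $(\mathcal P^{\RR}_\mu)^{-1}$ is bounded. Smoothness in $h$ comes from Lemma~\ref{Lem2.6}: $\mathsf C_{\frac{n-2}{2}}(h)$ is a group action by isometries of the $\mathcal B_\mu$-norm, since it corresponds to hyperbolic isometries. Its derivatives at $h=0$ are the operators $-\mathcal L_j^{(\frac{n-2}{2})}$ and $\mathcal K_{ij}^{(\frac{n-2}{2})}$. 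These are bounded from a weighted space containing $B^{\RR}_{\mathrm{pr},\delta}$ and its $\cD$-derivatives into $H_0^1$, because $X_j$ is tangent to $\partial\BB^n$ and $x_j$ is bounded.

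Differentiability at a general $h$ then follows from the group law. Writing $\tau_{-(h+k)}=\tau_{-h}\circ(\text{isometry near identity in }k)$ up to a rotation, the parameter derivative at $h$ is the derivative at $0$ conjugated by the isometry. Mixed $C^2$ dependence in $(\eps,\xi)$ follows because $\cD$ commutes with $\mathsf C_\beta(h)$.

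\textbf{Uniform bounds \eqref{eq2.23}--\eqref{eq2.24}.} Since isometries preserve $\norm{\cdot}$ and geodesic center derivatives transform covariantly (the covariant Hessian is also intertwined), it suffices to take $\xi=0$ with $e^0_j=\frac12 e_j$. There $Z_j=\eps\partial_{\xi,j}$ corresponds to $-\delta\mathcal L_j^{(\frac{n-2}{2})}B^{\RR}_{\mathrm{pr},\delta}$, with the factor $\tfrac12$ absorbing $\eps=2\delta$. By the norm equivalence \eqref{eq2.3} we only need Dirichlet bounds.

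For $Z_0$ and $\cD^2$, I would use $B^{\RR}_{\mathrm{pr},\delta}=B^{\RR}_\delta+v_\delta-c_\delta$. The explicit bubble part is scale invariant in $\dot H^1$, $c_\delta$ is constant, and $v_\delta$ is controlled by \eqref{eq2.15}.

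For $\delta\mathcal L_j f$ with $f=B^{\RR}_{\mathrm{pr},\delta}$, the operator splits as $\delta(1+|x|^2)\partial_jf-2\delta x_jx\cdot\nabla f-(n-2)\delta x_jf$. The terms $\delta\,\partial_j$ and $\delta\,x\cdot\nabla$ applied to $B^{\RR}_\delta$ are bounded uniformly in $\dot H^1$ by scaling, with the extra powers of $x$ costing only factors of $\delta|y|\le 1$ after rescaling. For the pieces involving $v_\delta$ I would rather not differentiate $v_\delta$ spatially twice. Instead, the intertwining $\mathcal P^{\RR}_\mu\mathcal L_j^{(\frac{n-2}{2})}=\mathcal L_j^{(\frac{n+2}{2})}\mathcal P^{\RR}_\mu$, obtained by differentiating Lemma~\ref{Lem2.6}, gives
\[
\mathcal P^{\RR}_\mu\bigl(\delta\mathcal L_j^{(\frac{n-2}{2})}B^{\RR}_{\mathrm{pr},\delta}\bigr)=\delta\mathcal L_j^{(\frac{n+2}{2})}(B^{\RR}_\delta)^{2^*-1}.
\]
The right-hand side is bounded in $L^{(2^*)'}$ uniformly by scaling. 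Justifying this rigorously in $H_0^1$ is the one subtle point; I would use a difference-quotient argument in $h$, which is legitimate since $\mathsf C(h)$ acts isometrically. The same intertwining handles $\cD Z_j$ via \eqref{eq2.17} and the Hessian terms via $\mathcal K_{ij}$, whose source $\delta^2\mathcal K^{(\frac{n+2}{2})}_{ij}(B^{\RR}_\delta)^{2^*-1}$ is uniformly bounded in $L^{(2^*)'}$.

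\textbf{Limit \eqref{eq2.25}.} Rescale the intertwined equation. Then $\widehat Z_{j,\delta}$ solves $-\Delta w-V_\delta w=g_\delta$ in $\BB_{1/\delta}$, where $g_\delta$ is the rescaling of $-\delta\mathcal L_j^{(\frac{n+2}{2})}(B^{\RR}_\delta)^{2^*-1}$. This converges in $L^{(2^*)'}(\RR^n)$ to $-\partial_j(B_1^{\RR})^{2^*-1}$, because $\delta\mathcal L_j$ rescales to $\partial_j+O(\delta|y|)$-weighted terms with dominated tails. The uniform bound gives weak convergence of a subsequence to some $w$, and Lemma~\ref{Lem2.7} shows the potential term vanishes in the limit. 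Hence $-\Delta w=-\partial_j(B^{\RR}_1)^{2^*-1}$, so $w=-\partial_jB^{\RR}_1$.

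Strong convergence follows from energy convergence. Testing the equation with $\widehat Z_{j,\delta}$ gives $\|\nabla\widehat Z_{j,\delta}\|_2^2=\int g_\delta\widehat Z_{j,\delta}+\int V_\delta\widehat Z_{j,\delta}^2$. The first term converges by strong $L^{(2^*)'}$ against weak $L^{2^*}$ convergence. The second requires strong convergence as in Lemma~\ref{Lem2.7}, which is circular, so I would first compare with $k_\delta=-\chi\,\widehat{\delta\mathcal L_jB^{\RR}_\delta}$, which converges strongly. Lemma~\ref{Lem2.7} applied to the difference equation, together with the coercivity $\int V_\delta\phi^2\le4\mu\|\nabla\phi\|^2$ with $4\mu<1$, closes the argument. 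I expect this coercive comparison, and the rigorous intertwining identity, to be the main obstacle.
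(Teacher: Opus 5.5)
Your treatment of the estimates and of the limit follows the paper. At $\xi=0$ you identify $Z_j$ with $-\delta\mathcal L_j^{(\frac{n-2}{2})}B^{\RR}_{\mathrm{pr},\delta}$. You reduce \eqref{eq2.23}--\eqref{eq2.24}, via the intertwining with $\mathcal P^{\RR}_\mu$, to scale-invariant $L^{(2^*)'}$ bounds on sources built from $F_\delta=(B^{\RR}_\delta)^{2^*-1}$. You then obtain \eqref{eq2.25} by a coercive comparison with a cut-off copy of $-\partial_jB^{\RR}_1$. Using \eqref{eq2.15} for $Z_0$ and $\cD^2B^{\RR}_{\mathrm{pr},\delta}$ instead of source bounds is a harmless variation.

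What is not yet a proof is the regularity paragraph, which is the step you yourself flag. You justify differentiability in $h$ by asserting that $\mathcal L_j$ and $\mathcal K_{ij}$ are bounded from an unspecified weighted space into $H_0^1(\BB^n)$ ``because $X_j$ is tangent to $\partial\BB^n$''. That reason does not cover $\mathcal K_{ij}$. On $|x|=1$ one has $X_{ij}\cdot x=4x_ix_j-4\delta_{ij}\neq0$, so $X_{ij}\cdot\nabla f$ carries the full boundary singularity of $\nabla f$, which is of order $(1-|x|)^{\alpha_\lambda-1}$ for $f=B^{\RR}_{\mathrm{pr},\delta}$. It is cancelled only by the tangential part of $\nabla^2f[X_i,X_j]$, and checking that the combination lies in $H_0^1$ would need boundary asymptotics of $B^{\RR}_{\mathrm{pr},\delta}$ that you have not derived. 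Likewise, the fact that $\mathsf C_{\frac{n-2}{2}}(h)$ acts isometrically gives no convergence of difference quotients. An isometric group is differentiable only on the domain of its generator.

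The step closes once all parameter dependence is moved onto the source, which is what the paper does. Extending Lemma~\ref{Lem2.6} by density and using uniqueness of the $H_0^1$ solution gives
\[
\mathsf C_{\frac{n-2}{2}}(h)B^{\RR}_{\mathrm{pr},t}=(\mathcal P^{\RR}_\mu)^{-1}\bigl(\mathsf C_{\frac{n+2}{2}}(h)F_t\bigr).
\]
Take $t$ in a compact subset of $(0,\infty)$ and $h$ near $0$. Then $\mathsf C_{\frac{n+2}{2}}(h)F_t$ is jointly smooth in $(t,h,x)$ on a neighborhood of $\overline{\BB^n}$. Hence it is $C^2$ in $(t,h)$ with values in $L^{(2^*)'}(\BB^n)\subset H^{-1}(\BB^n)$. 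The bounded resolvent then makes the left side strongly $C^2$ into $H_0^1(\BB^n)$.

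Differentiating this identity gives directly the equations you use, such as $\mathcal P^{\RR}_\mu\bigl(\delta\mathcal L_j^{(\frac{n-2}{2})}B^{\RR}_{\mathrm{pr},\delta}\bigr)=\delta\mathcal L_j^{(\frac{n+2}{2})}F_\delta$, together with its $\cD$- and $\mathcal K_{ij}$-analogues. No boundary regularity of $B^{\RR}_{\mathrm{pr},\delta}$ is needed. Isometry invariance then transports the $C^2$ property and the bounds to all centers. With this replacement the rest of your argument goes through.
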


\begin{proof}
Set
\[
F_\delta=(B^{\RR}_\delta)^{2^*-1}, \qquad F_1=(B^{\RR}_1)^{2^*-1}.
\]
We first establish the strong differentiability needed for the parameter calculations. For each fixed \(\delta>0\), the map
\[
(t,h)\longmapsto \mathsf C_{\frac{n+2}{2}}(h)F_t
\]
is \(C^2\) with values in \(L^{(2^*)'}(\BB^n)\) near \((\delta,0)\). Indeed, when \(t\) stays in a compact subinterval of \((0,\infty)\) and \(h\) is sufficiently close to zero, the derivatives through order two are continuous and uniformly bounded on the closed ball. Applying the bounded resolvent
\[
(\mathcal P^{\RR}_\mu)^{-1}: H^{-1}(\BB^n)\longrightarrow H_0^1(\BB^n)
\]
therefore gives a \(C^2\) family in \(H_0^1(\BB^n)\).

The covariance identity in Lemma~\ref{Lem2.6} extends to \(H_0^1(\BB^n)\) by density. Together with uniqueness, it identifies this family as
\[
(\mathcal P^{\RR}_\mu)^{-1} \bigl(\mathsf C_{\frac{n+2}{2}}(h)F_t\bigr) = \mathsf C_{\frac{n-2}{2}}(h) B^{\RR}_{\mathrm{pr},t}.
\]
Thus the parameter derivatives below exist strongly in \(H_0^1(\BB^n)\). Differentiating this identity and using Lemma~\ref{Lem2.6}, we obtain
\begin{align*}
\mathcal P^{\RR}_\mu (\cD B^{\RR}_{\mathrm{pr},\delta}) &=\cD F_\delta,\\
\mathcal P^{\RR}_\mu (\cD^2B^{\RR}_{\mathrm{pr},\delta}) &=\cD^2F_\delta, \\
\mathcal P^{\RR}_\mu \bigl( \delta\mathcal L_j^{(\frac{n-2}{2})} B^{\RR}_{\mathrm{pr},\delta} \bigr) &= \delta\mathcal L_j^{(\frac{n+2}{2})}F_\delta, \\
\mathcal P^{\RR}_\mu\cD \bigl( \delta\mathcal L_j^{(\frac{n-2}{2})} B^{\RR}_{\mathrm{pr},\delta} \bigr) &= \delta\mathcal L_j^{(\frac{n+2}{2})} (\cD+1)F_\delta, \\
\mathcal P^{\RR}_\mu \bigl( \delta^2\mathcal K_{ij}^{(\frac{n-2}{2})} B^{\RR}_{\mathrm{pr},\delta} \bigr) &= \delta^2\mathcal K_{ij}^{(\frac{n+2}{2})}F_\delta.
\end{align*}
The mixed derivative includes the contribution from differentiating the prefactor \(\delta\):
\[
\cD\bigl( \delta\mathcal L_j^{(\frac{n-2}{2})} B^{\RR}_{\mathrm{pr},\delta} \bigr) = \delta\mathcal L_j^{(\frac{n-2}{2})} (\cD+1)B^{\RR}_{\mathrm{pr},\delta},
\]
in agreement with \eqref{eq2.17}.

We next estimate the expressions involving \(F_\delta\). The scaling formula is
\[
F_\delta(x) =\delta^{-\frac{n+2}{2}}F_1(x/\delta),
\]
and the explicit bubble formula gives
\[
|\nabla^jF_1(y)| \le C_j(1+|y|)^{-n-2-j}, \qquad j=0,1,2.
\]
For a fixed weight \(\beta\), direct substitution into \(\mathcal L_j^{(\beta)}\) yields
\begin{align}
\delta^{\frac{n+2}{2}} \bigl(\delta\mathcal L_j^{(\beta)}F_\delta\bigr) (\delta y)= \partial_jF_1(y) +\delta^2\bigl[ (|y|^2e_j-2y_jy)\cdot\nabla F_1(y) -2\beta y_jF_1(y) \bigr]. \label{eq2.26}
\end{align}
The bracketed expression belongs to \(L^{(2^*)'}(\RR^n)\). Similarly, \eqref{eq2.20} gives, for \(|y|<1/\delta\),
\begin{align*}
&\left| \delta^{\frac{n+2}{2}} \bigl(\delta^2\mathcal K_{ij}^{(\beta)}F_\delta\bigr) (\delta y) -\partial_{ij}F_1(y) \right|\\
&\qquad\le C\delta^2\bigl[ (1+|y|^2)|\nabla^2F_1(y)| +(1+|y|)|\nabla F_1(y)| +|F_1(y)| \bigr].
\end{align*}
The function in brackets also belongs to \(L^{(2^*)'}(\RR^n)\).

For the mixed derivative, we use
\[
(\cD+1)F_\delta(x) = \delta^{-\frac{n+2}{2}} \left[ -\frac n2F_1(y)-y\cdot\nabla F_1(y) \right]_{y=x/\delta}.
\]
The function \(-\frac n2F_1-y\cdot\nabla F_1\) satisfies the same decay bounds as \(F_1\) through one derivative. The expressions \(\cD F_\delta\) and \(\cD^2F_\delta\) likewise have uniformly bounded \(L^{(2^*)'}\)-norms. Taking \(\beta=\frac{n+2}{2}\), these estimates give
\begin{align}
&\|\cD F_\delta\|_{(2^*)'} +\|\cD^2F_\delta\|_{(2^*)'} +\sum_{j=1}^n \|\delta\mathcal L_j^{(\frac{n+2}{2})} F_\delta\|_{(2^*)'}\notag\\
&\quad +\sum_{j=1}^n \|\delta\mathcal L_j^{(\frac{n+2}{2})} (\cD+1)F_\delta\|_{(2^*)'} +\sum_{i,j=1}^n \|\delta^2\mathcal K_{ij}^{(\frac{n+2}{2})} F_\delta\|_{(2^*)'} \le C, \label{eq2.27}
\end{align}
where all norms in \eqref{eq2.27} are taken over \(\BB^n\).

To convert these estimates into bounds for the geodesic center derivatives, recall that the metric at the origin is \(4\,dx^2\) and its Christoffel symbols vanish there. Choosing \(e_j^0=\tfrac12e_j\), and using \(\eps=2\delta\), we obtain
\[
\begin{aligned}
\left.\varrho^{\frac{n-2}{2}}Z_j\right|_{\xi=0} &= -\delta\mathcal L_j^{(\frac{n-2}{2})} B^{\RR}_{\mathrm{pr},\delta},\\
\left. \varrho^{\frac{n-2}{2}} \eps^2\nabla_{\xi,i}\partial_{\xi,j} B^{\HH}_{\mathrm{pr},\xi,\eps} \right|_{\xi=0} &= \delta^2\mathcal K_{ij}^{(\frac{n-2}{2})} B^{\RR}_{\mathrm{pr},\delta}.
\end{aligned}
\]
Coercivity, Sobolev duality, and \eqref{eq2.27} now prove \eqref{eq2.23}--\eqref{eq2.24} at \(\xi=0\). Hyperbolic isometry invariance gives the same bounds at every center. The strong \(C^2\) dependence proved above is transported in the same way.

To prove \eqref{eq2.25}, choose \(\chi\in C_c^\infty(\BB_{1/2})\) equal to one near zero. Both \(\widehat Z_{j,\delta}\) and \(\chi(\delta\,\cdot)\partial_jB^{\RR}_1\) belong to \(H_0^1(\BB_{1/\delta})\). Equation \eqref{eq2.26}, together with \(-\Delta(\partial_jB^{\RR}_1)=\partial_jF_1\), gives
\[
\begin{aligned}
&\left\| (-\Delta-V_\delta) \bigl( \widehat Z_{j,\delta} +\chi(\delta\,\cdot)\partial_jB^{\RR}_1 \bigr) \right\|_{H^{-1}(\BB_{1/\delta})}\\
&\qquad\le C\delta^2 +C\left\| \nabla\bigl( (1-\chi(\delta\,\cdot))\partial_jB^{\RR}_1 \bigr) \right\|_{L^2(\RR^n)} +C\left\| V_\delta\chi(\delta\,\cdot)\partial_jB^{\RR}_1 \right\|_{L^{(2^*)'}(\BB_{1/\delta})}.
\end{aligned}
\]
Here the \(H^{-1}\)-norm is dual to the gradient norm on \(H_0^1(\BB_{1/\delta})\). The explicit derivative of \(B^{\RR}_1\) gives
\[
\left\| \nabla\bigl( (1-\chi(\delta\,\cdot))\partial_jB^{\RR}_1 \bigr) \right\|_{L^2(\RR^n)} \le C\delta^{n/2}.
\]
On the support of \(\chi(\delta\,\cdot)\), \(V_\delta\le C\delta^2\). Since \(\partial_jB^{\RR}_1(y)=O(|y|^{1-n})\) as \(|y|\to\infty\), direct integration yields
\[
\left\| V_\delta\chi(\delta\,\cdot)\partial_jB^{\RR}_1 \right\|_{L^{(2^*)'}(\BB_{1/\delta})} \le C
\begin{cases}
\delta^2(1+\ell_\delta)^{3/4},&n=4,\\
\delta^2,&n\ge5.
\end{cases}
\]
In dimension four the truncated \(L^{4/3}\)-norm of \(\partial_jB^{\RR}_1\) grows at most like \((1+\ell_\delta)^{3/4}\). For \(n\ge5\), \(\partial_jB^{\RR}_1\in L^{(2^*)'}(\RR^n)\).

Rescaling \eqref{eq2.3} gives
\begin{equation*}
\int_{\BB_{1/\delta}} \bigl(|\nabla\phi|^2-V_\delta\phi^2\bigr)\,dy \ge(1-4\mu)\|\nabla\phi\|_2^2, \qquad \phi\in H_0^1(\BB_{1/\delta}).
\end{equation*}
Applying this estimate to the preceding difference equation shows that
\[
\left\| \nabla\bigl( \widehat Z_{j,\delta} +\chi(\delta\,\cdot)\partial_jB^{\RR}_1 \bigr) \right\|_2 \longrightarrow0.
\]
The cutoff estimate then proves \eqref{eq2.25}.

\end{proof}

\subsection{Mass, energy, and residual estimates}
\label{Sec2.4}

For \(n\ge5\), define
\[
\mathfrak m_n =\int_{\RR^n}(B^{\RR}_1)^2\,dx =\kappa_n^2\pi^{n/2} \frac{\Gamma((n-4)/2)}{\Gamma(n-2)}.
\]
In particular,
\[
\mathfrak m_5=\frac{\kappa_5^2\pi^3}{2}, \qquad \mathfrak m_6=96\pi^3.
\]
For a domain \(\mathcal O\subset\RR^n\), we also write
\[
E_0(v;\mathcal O) =\frac12\int_{\mathcal O}|\nabla v|^2\,dx -\frac1{2^*}\int_{\mathcal O}|v|^{2^*}\,dx, \qquad E_0(v)=E_0(v;\BB^n).
\]

\begin{lemma}\label{Lem2.9}
As \(\eps\to 0\), the mass and Euclidean energy of the projected bubble satisfy
\begin{equation}\label{eq2.28}
\begin{array}{c|c|c}
n &\|B^{\HH}_{\mathrm{pr},\xi,\eps}\|_{L^2(\HH^n)}^2 &E_0(B^{\RR}_{\mathrm{pr},\eps/2}) -\dfrac{\mathcal S_{n,2}^{n/2}}{n} \\ \hline 4 &\kappa_4^2|\mathbb S^3|\eps^2\log(2/\eps) +O_\cD(\eps^2) &O_\cD(\eps^2) \\[3pt]
5 &\mathfrak m_5\eps^2+O_\cD(\eps^3) &O_\cD(\eps^3) \\[3pt]
6 &\mathfrak m_6\eps^2 +O_\cD(\eps^4\log(2/\eps)) &O_\cD(\eps^4\log(2/\eps)) \\[3pt]
n\ge7 &\mathfrak m_n\eps^2+O_\cD(\eps^4) &O_\cD(\eps^4).
\end{array}
\end{equation}
\end{lemma}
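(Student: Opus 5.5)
We compute in the Euclidean ball picture, where \(\|B^{\HH}_{\mathrm{pr},0,\eps}\|_{L^2(\HH^n)}^2=\int_{\BB^n}\varrho^2(B^{\RR}_{\mathrm{pr},\delta})^2\,dx\) with \(\delta=\eps/2\). Isometry invariance removes the dependence on \(\xi\). Throughout, we write \(B^{\RR}_{\mathrm{pr},\delta}=B^{\RR}_\delta+e_\delta\) with \(|e_\delta|+|\cD e_\delta|\le Cf_\delta\), using \eqref{eq2.14} and \eqref{eq2.16}.

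\textbf{The mass.} We expand
\[
\int\varrho^2(B^{\RR}_{\mathrm{pr},\delta})^2=\int_{\BB^n}\varrho^2(B^{\RR}_\delta)^2+2\int\varrho^2B^{\RR}_\delta e_\delta+\int\varrho^2e_\delta^2,
\]
but only on \(|x|<1/2\). On \(|x|>1/2\) we use \eqref{eq2.11} directly, exactly as in the proof of Lemma~\ref{Lem2.5}; this gives \(O(\delta^{n-2})\), and the same bound holds after applying \(\cD\) by \eqref{eq2.14}.

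For the main term, write \(\varrho^2=4+O(|x|^2)\) on \(|x|<1/2\). Rescaling \(x=\delta y\) gives
\[
4\int_{|x|<1/2}(B^{\RR}_\delta)^2=4\delta^2\int_{|y|<1/(2\delta)}(B^{\RR}_1)^2 .
\]
\begin{itemize}
\item For \(n\ge5\), this equals \(4\delta^2\mathfrak m_n\) minus a tail of size \(O(\delta^{n-2})\). Since \(4\delta^2=\eps^2\), this is the claimed leading term.
\item For \(n=4\), we have \((B^{\RR}_1)^2\sim\kappa_4^2|y|^{-4}\) at infinity. The integral is therefore \(\kappa_4^2|\mathbb S^3|\log(1/\delta)+O(1)\), which gives \(\kappa_4^2|\mathbb S^3|\eps^2\log(2/\eps)+O(\eps^2)\) once we recall \(\log(1/\delta)=\log(2/\eps)\).
\end{itemize}
The correction \(O(|x|^2)(B^{\RR}_\delta)^2\) contributes \(\delta^4\int(1+|y|)^{2-2n}|y|^2\) truncated at \(|y|\sim1/\delta\). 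This is \(O(\delta^{n-2})\) for \(n\le5\), \(O(\delta^4\log(1/\delta))\) for \(n=6\), and \(O(\delta^4)\) for \(n\ge7\).

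The cross terms are bounded with \(f_\delta\). For \(n\ge5\), \(\int B^{\RR}_\delta f_\delta\lesssim\delta^{n-2}\int_{|x|<1/2}(\delta+|x|)^{6-2n}\), which gives \(\delta^3\) for \(n=5\), \(\delta^4\log(1/\delta)\) for \(n=6\), and \(\delta^4\) for \(n\ge7\). For \(n=4\) the cross term is \(O(\delta^2)\). The \(e_\delta^2\) term is smaller still. Each of these bounds survives one application of \(\cD\) by Lemma~\ref{Lem2.4}, and the leading terms are explicit, so \(\cD\) of the main term is computed directly. This yields the \(O_\cD\) statements.

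\textbf{The energy.} Test \eqref{eq2.4} with \(B^{\RR}_{\mathrm{pr},\delta}\) to obtain
\[
\int|\nabla B^{\RR}_{\mathrm{pr},\delta}|^2=\int(B^{\RR}_\delta)^{2^*-1}B^{\RR}_{\mathrm{pr},\delta}+\mu\int\varrho^2(B^{\RR}_{\mathrm{pr},\delta})^2 .
\]
The last integral is the mass just computed, which is \(O(\eps^2)\) or \(O(\eps^2\log)\). This is the step I expect to be the main obstacle. For \(n\ge5\) the energy error is claimed to be smaller than \(\eps^2\), so the mass term must cancel against something.

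The cancellation comes from expanding the nonlinearity. We have
\[
\int(B^{\RR}_\delta)^{2^*-1}B^{\RR}_{\mathrm{pr},\delta}=\int_{\BB^n}(B^{\RR}_\delta)^{2^*}+\int(B^{\RR}_\delta)^{2^*-1}e_\delta ,
\]
and
\[
\tfrac1{2^*}\int(B^{\RR}_{\mathrm{pr},\delta})^{2^*}=\tfrac1{2^*}\int(B^{\RR}_\delta)^{2^*}+\int(B^{\RR}_\delta)^{2^*-1}e_\delta+O\Bigl(\int(B^{\RR}_\delta)^{2^*-2}e_\delta^2+|e_\delta|^{2^*}\Bigr).
\]
Substituting these into \(E_0\), the linear terms in \(e_\delta\) combine to \(-\tfrac12\int(B^{\RR}_\delta)^{2^*-1}e_\delta\). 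The defect \(\int_{\RR^n\setminus\BB^n}(B^{\RR}_\delta)^{2^*}\) is \(O(\delta^n)\). So the energy error is
\[
\tfrac\mu2\int\varrho^2(B^{\RR}_{\mathrm{pr},\delta})^2-\tfrac12\int(B^{\RR}_\delta)^{2^*-1}(v_\delta-c_\delta)+\text{quadratic}.
\]

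Now use the equation \(\mathcal P^{\RR}_\mu v_\delta=\mu\varrho^2\mathring B^{\RR}_\delta\) from \eqref{eq2.5}, together with \(\mathcal P^{\RR}_\mu B^{\RR}_{\mathrm{pr},\delta}=(B^{\RR}_\delta)^{2^*-1}\) and symmetry of the resolvent:
\[
\int(B^{\RR}_\delta)^{2^*-1}v_\delta=\mu\int\varrho^2\mathring B^{\RR}_\delta B^{\RR}_{\mathrm{pr},\delta}.
\]
This matches the mass term up to \(\mu\int\varrho^2 B^{\RR}_{\mathrm{pr},\delta}(B^{\RR}_{\mathrm{pr},\delta}-\mathring B^{\RR}_\delta)=\mu\int\varrho^2B^{\RR}_{\mathrm{pr},\delta}v_\delta\), which is controlled by the \(f_\delta\) bounds. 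The remaining pieces are handled as follows.
\begin{itemize}
\item The term \(c_\delta\int(B^{\RR}_\delta)^{2^*-1}\) is \(O(\delta^{n-2})\).
\item The quadratic term \(\int(B^{\RR}_\delta)^{2^*-2}f_\delta^2\) is estimated with \eqref{eq2.16}. For \(n\ge7\) it is \(O(\delta^4)\), for \(n=6\) it is \(O(\delta^4\log)\), and for \(n=5\) it is \(O(\delta^3)\).
\end{itemize}
For \(n=4\) everything is at most \(O(\eps^2)\). Here the cancellation is not needed, since the \(\log\) terms cancel exactly in the same way.

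The \(\cD\)-versions follow by differentiating each identity, using \eqref{eq2.14} and \eqref{eq2.15}, and noting that \(\cD\) preserves every pointwise bound.
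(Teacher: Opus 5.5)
Your mass computation is essentially the paper's. For the energy column you take a different route that is correct and equivalent.

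The paper expands $E_0(B^{\RR}_\delta+e_\delta)$ directly and integrates by parts on $\BB^n$ using $-\Delta B^{\RR}_\delta=(B^{\RR}_\delta)^{2^*-1}$. Since $B^{\RR}_\delta$ is critical for $E_0$, the linear term in $e_\delta$ collapses to the explicit boundary integral $\int_{\partial\BB^n}\partial_\nu B^{\RR}_\delta\,e_\delta\,d\sigma$. This gives the exact identity \eqref{eq2.31}. There the mass never appears, and the only non-explicit terms are $\frac12\|\nabla v_\delta\|_2^2$ and $\int\Phi_\delta$; the first is controlled together with its scale derivative by \eqref{eq2.15}.

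You instead test the projected equation \eqref{eq2.4}, which brings in $\frac\mu2\int\varrho^2(B^{\RR}_{\mathrm{pr},\delta})^2$. You then cancel it with the symmetric identity $\int(B^{\RR}_\delta)^{2^*-1}v_\delta=\mu\int\varrho^2\mathring{B}^{\RR}_\delta B^{\RR}_{\mathrm{pr},\delta}$. This actually reproduces \eqref{eq2.31}:
\begin{itemize}
\item testing the second equation in \eqref{eq2.5} with $v_\delta$ gives $\mu\int\varrho^2B^{\RR}_{\mathrm{pr},\delta}v_\delta=\|\nabla v_\delta\|_2^2$ exactly;
\item integrating by parts on $\RR^n\setminus\BB^n$ shows that $\frac12c_\delta\int_{\BB^n}(B^{\RR}_\delta)^{2^*-1}-\frac1n\int_{\RR^n\setminus\BB^n}(B^{\RR}_\delta)^{2^*}$ equals the paper's boundary term minus $E_0(B^{\RR}_\delta;\RR^n\setminus\BB^n)$.
\end{itemize}
The paper's route is shorter and never has to cancel a quantity of the size of the mass. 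Yours reuses the mass computation and makes explicit why the $\mu$-term, once introduced through \eqref{eq2.4}, drops out.

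Three details need fixing.
\begin{itemize}
\item Your leftover term $\frac\mu2\int\varrho^2B^{\RR}_{\mathrm{pr},\delta}v_\delta$ is not controlled by the $f_\delta$ bounds near $\partial\BB^n$. There $f_\delta\ge c_\delta$ does not vanish and $\varrho^2\sim(1-|x|)^{-2}$. The resulting majorant $\delta^{n-2}(1-|x|)^{\alpha_\lambda-2}$ is not integrable, because $\alpha_\lambda<1$. On the annulus, use $0\le v_\delta\le B^{\RR}_{\mathrm{pr},\delta}$ together with \eqref{eq2.11}, exactly as in your mass estimate. Alternatively, use the exact identity with $\|\nabla v_\delta\|_2^2$ and \eqref{eq2.15}.
\item In dimension four the cancellation is needed, not optional. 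The mass is of order $\eps^2\log(2/\eps)$, so without it you would only obtain $O(\eps^2\log(2/\eps))$. Your identity does supply the cancellation, so the conclusion stands.
\item In your $|x|^2$-correction the weight should be $(1+|y|)^{4-2n}$, not $(1+|y|)^{2-2n}$. The orders you state are the ones that correct weight gives.
\end{itemize}
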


\begin{proof}
By hyperbolic isometry invariance, it suffices to work at \(\xi=0\). Set \(\delta=\eps/2\). The mass is
\[
\|B^{\HH}_{\mathrm{pr},0,\eps}\|_{L^2(\HH^n)}^2 = \int_{\BB^n} \varrho^2(B^{\RR}_{\mathrm{pr},\delta})^2\,dx.
\]
Using \(B^{\RR}_{\mathrm{pr},\delta}=B^{\RR}_\delta+e_\delta\) and the bounds in Lemma~\ref{Lem2.4}, we obtain
\begin{equation*}
\begin{aligned}
&\left| (B^{\RR}_{\mathrm{pr},\delta})^2 -(B^{\RR}_\delta)^2 \right|+ \left| \cD\bigl( (B^{\RR}_{\mathrm{pr},\delta})^2 -(B^{\RR}_\delta)^2 \bigr) \right| \le C\bigl(B^{\RR}_\delta f_\delta+f_\delta^2\bigr).
\end{aligned}
\end{equation*}
Integrating the bounds in \eqref{eq2.16} over the interior gives
\[
\int_{\{|x|<1/2\}} \bigl(B^{\RR}_\delta f_\delta+f_\delta^2\bigr)\,dx \le C
\begin{cases}
\delta^2,&n=4,\\
\delta^3,&n=5,\\
\delta^4\ell_\delta,&n=6,\\
\delta^4,&n\ge7.
\end{cases}
\]
To verify these orders for \(n\ge5\), note that \(f_\delta^2\le CB^{\RR}_\delta f_\delta\) on \(|x|\le\tfrac12\), and the integral of \(B^{\RR}_\delta f_\delta\) is bounded by
\[
C\delta^{n-2} \int_0^{1/2} \frac{t^{n-1}}{(\delta+t)^{2n-6}}\,dt.
\]

The radial integral is \(O(\delta^3)\) for \(n=5\), \(O(\delta^4\ell_\delta)\) for \(n=6\), and \(O(\delta^4)\) for \(n\ge7\). For \(n=4\), the contribution from \(\delta\le|x|\le\tfrac12\) is bounded by
\[
C\delta^2\int_\delta^{1/2} \left[ t(1+\log(1/t)) +t^3(1+\log(1/t))^2 \right]\,dt \le C\delta^2.
\]
The contribution from \(0\le|x|\le\delta\) is
\[
O\bigl( \delta^4(1+\ell_\delta) +\delta^6(1+\ell_\delta)^2 \bigr),
\]
and is therefore of smaller order.

Since \(\varrho^2=4+O(|x|^2)\) on \(|x|<\tfrac12\), replacing the conformal weight by its value at the origin introduces an error bounded by
\[
C\int_{\{|x|<1/2\}} |x|^2(B^{\RR}_\delta)^2\,dx.
\]
Direct integration gives the same orders in the four dimension ranges as above. These estimates also hold after applying \(\cD\), since
\[
|\cD B^{\RR}_\delta|\le CB^{\RR}_\delta, \qquad |\cD f_\delta|\le Cf_\delta.
\]
Near the boundary, \eqref{eq2.11} and \eqref{eq2.14} give
\[
\int_{\{x\in\BB^n:\,|x|>1/2\}} \varrho^2(B^{\RR}_{\mathrm{pr},\delta})^2\,dx =O_\cD(\delta^{n-2}),
\]
because
\[
\int_0^{1/2}t^{2\alpha_\lambda-2}\,dt<\infty.
\]

For \(n\ge5\), scaling gives
\[
\int_{\RR^n}(B^{\RR}_\delta)^2\,dx =\mathfrak m_n\delta^2,
\]
and restricting this integral to \(|x|<\tfrac12\) changes it by \(O_\cD(\delta^{n-2})\). In dimension four, the leading contribution is instead
\[
\begin{aligned}
&4\kappa_4^2|\mathbb S^3|\delta^2 \int_0^{1/2} \frac{t^3}{(\delta^2+t^2)^2}\,dt= 4\kappa_4^2|\mathbb S^3|\delta^2\ell_\delta +O_\cD(\delta^2).
\end{aligned}
\]
Combining these formulas with the preceding error bounds and substituting \(\delta=\eps/2\) proves the mass column of \eqref{eq2.28}.

We now turn to the Euclidean energy. Define the nonlinear Taylor remainder
\[
\Phi_\delta = (B^{\RR}_{\mathrm{pr},\delta})^{2^*} -(B^{\RR}_\delta)^{2^*} -2^*(B^{\RR}_\delta)^{2^*-1}e_\delta.
\]
The nonlinear remainder satisfies
\begin{equation}\label{eq2.29}
|\Phi_\delta|+|\cD\Phi_\delta| \le C\left( (B^{\RR}_\delta)^{2^*-2}f_\delta^2 +f_\delta^{2^*} \right).
\end{equation}
To prove this, first consider the region \(f_\delta<B^{\RR}_\delta/2\). There \(B^{\RR}_{\mathrm{pr},\delta}\) and \(B^{\RR}_\delta\) are comparable, and Taylor's formula, together with
\[
|\cD B^{\RR}_\delta|\le CB^{\RR}_\delta, \qquad |e_\delta|+|\cD e_\delta|\le Cf_\delta,
\]
bounds both \(\Phi_\delta\) and its scale derivative by \(C(B^{\RR}_\delta)^{2^*-2}f_\delta^2\). On the complementary region,
\[
B^{\RR}_{\mathrm{pr},\delta}+B^{\RR}_\delta \le Cf_\delta,
\]
so direct differentiation bounds every term by \(Cf_\delta^{2^*}\).

The corresponding integral estimate is
\begin{equation}\label{eq2.30}
\int_{\BB^n} \left( (B^{\RR}_\delta)^{2^*-2}f_\delta^2 +f_\delta^{2^*} \right)\,dx \le C
\begin{cases}
\delta^4(1+\ell_\delta)^3,&n=4,\\
\delta^4,&n\ge5.
\end{cases}
\end{equation}
In dimension four, the largest contribution on \(\delta\le|x|\le\tfrac12\) is bounded by
\[
C\delta^4\int_\delta^{1/2} t^{-1}(1+\log(1/t))^2\,dt \le C\delta^4(1+\ell_\delta)^3.
\]
The remaining interior terms satisfy the same bound. In dimensions five and six,
\[
(B^{\RR}_\delta)^{2^*-2}f_\delta^2 \le
\begin{cases}
C\delta^5(\delta+|x|)^{-6},&n=5,\\
C\delta^6(\delta+|x|)^{-8},&n=6,
\end{cases}
\qquad 0\le|x|\le\tfrac12.
\]
Their radial integrals are \(O(\delta^4)\), while the integrals of \(f_\delta^{2^*}\) over the same region are \(O(\delta^5)\) and \(O(\delta^6\ell_\delta)\), respectively.

For \(n\ge7\), we apply Sobolev to \(v_\delta\) and estimate the constant term separately:
\[
\|f_\delta\|_{L^{2^*}(\BB^n)} \le \|v_\delta\|_{L^{2^*}(\BB^n)} +c_\delta|\BB^n|^{1/2^*} \le C\bigl(\|\nabla v_\delta\|_2+c_\delta\bigr) \le C\delta^2.
\]
H\"older's inequality and the uniform \(L^{2^*}\)-bound for \(B^{\RR}_\delta\) then give
\[
\int_{\BB^n} (B^{\RR}_\delta)^{2^*-2}f_\delta^2\,dx \le C\delta^4, \qquad \int_{\BB^n}f_\delta^{2^*}\,dx \le C\delta^{2\cdot2^*}.
\]
Finally, on \(\tfrac12\le|x|<1\), both \(B^{\RR}_\delta\) and \(f_\delta\) are bounded by \(C\delta^{\frac{n-2}{2}}\), so this region contributes \(O(\delta^n)\) in every dimension. These estimates prove \eqref{eq2.30}.

Recall that \(e_\delta\) has trace \(-c_\delta\) and \(\nabla e_\delta=\nabla v_\delta\). Let \(\nu\) be the outward Euclidean unit normal to \(\partial\BB^n\), let \(\partial_\nu\) denote the corresponding normal derivative, and let \(d\sigma\) denote the Euclidean surface measure. Using the Euclidean bubble equation and integrating by parts, we obtain the exact identity
\begin{align}
E_0(B^{\RR}_{\mathrm{pr},\delta}) -\frac{\mathcal S_{n,2}^{n/2}}{n} ={}& -E_0(B^{\RR}_\delta;\RR^n\setminus\BB^n) +\int_{\partial\BB^n} \partial_\nu B^{\RR}_\delta\,e_\delta\,d\sigma +\frac12\int_{\BB^n}|\nabla v_\delta|^2\,dx -\frac1{2^*}\int_{\BB^n}\Phi_\delta\,dx. \label{eq2.31}
\end{align}
The exterior energy is \(O_\cD(\delta^{n-2})\). Since \(e_\delta=-c_\delta\) on \(\partial\BB^n\), the boundary term is explicitly
\[
\int_{\partial\BB^n} \partial_\nu B^{\RR}_\delta\,e_\delta\,d\sigma = (n-2)|\mathbb S^{n-1}|\kappa_n^2 \delta^{n-2}(1+\delta^2)^{-(n-1)} =O_\cD(\delta^{n-2}).
\]
The energy estimate \eqref{eq2.15} controls the squared gradient correction and its scale derivative, since
\[
\cD\left( \frac12\int_{\BB^n}|\nabla v_\delta|^2\,dx \right) = \int_{\BB^n} \nabla v_\delta\cdot\nabla\cD v_\delta\,dx.
\]
Finally, \eqref{eq2.29} and \eqref{eq2.30} control both the integral of \(\Phi_\delta\) and its scale derivative.

Combining these estimates in \eqref{eq2.31} gives the respective remainders \(O_\cD(\delta^2)\) for \(n=4\), \(O_\cD(\delta^3)\) for \(n=5\), \(O_\cD(\delta^4\ell_\delta)\) for \(n=6\), and \(O_\cD(\delta^4)\) for \(n\ge7\). Substituting \(\delta=\eps/2\) proves the energy column of \eqref{eq2.28}.
\end{proof}

\begin{proposition}\label{Prop2.10}
The hyperbolic and Euclidean energies of the projected bubble are related by
\begin{equation}\label{eq2.32}
\begin{aligned}
J_\lambda(B^{\HH}_{\mathrm{pr},\xi,\eps}) &= E_0(B^{\RR}_{\mathrm{pr},\eps/2}) -\frac\mu2 \|B^{\HH}_{\mathrm{pr},\xi,\eps}\|_{L^2(\HH^n)}^2,\\
\cD J_\lambda(B^{\HH}_{\mathrm{pr},\xi,\eps}) &= \cD E_0(B^{\RR}_{\mathrm{pr},\eps/2}) -\frac\mu2\cD \|B^{\HH}_{\mathrm{pr},\xi,\eps}\|_{L^2(\HH^n)}^2.
\end{aligned}
\end{equation}
The signed dilation pairing
\[
s(\eps) =-\langle \cR_\lambda(B^{\HH}_{\mathrm{pr},\xi,\eps}),Z_0 \rangle
\]
is independent of \(\xi\) and satisfies
\begin{equation}\label{eq2.33}
s(\eps) =
\begin{cases}
\mu\kappa_4^2|\mathbb S^3|\eps^2\log(2/\eps) +O(\eps^2),&n=4,\\
\mu\mathfrak m_5\eps^2+O(\eps^3),&n=5,\\
\mu\mathfrak m_6\eps^2 +O(\eps^4\log(2/\eps)),&n=6,\\
\mu\mathfrak m_n\eps^2+O(\eps^4),&n\ge7.
\end{cases}
\end{equation}
In particular, \(s(\eps)>0\) for \(0<\eps<\eps_0\).
\end{proposition}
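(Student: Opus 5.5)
The proof has three steps: move everything to the ball via conformal covariance, identify \(s(\eps)\) as minus the scale derivative of the energy, and differentiate the expansions of Lemma~\ref{Lem2.9}.

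\textbf{Energy identity \eqref{eq2.32}.} By isometry invariance of \(J_\lambda\), \(\cR_\lambda\), \(\langle\cdot,\cdot\rangle_\lambda\) and of the \(L^2\)-norm, we may take \(\xi=0\). The identity \(\norm{\varrho^{-(n-2)/2}v}^2=\mathcal B_\mu(v,v)\) and the equality of critical integrals give, with \(v=B^{\RR}_{\mathrm{pr},\eps/2}\),
\[
J_\lambda(B^{\HH}_{\mathrm{pr},0,\eps})=E_0(v)-\frac\mu2\int_{\BB^n}\varrho^2v^2\,dx .
\]
The last integral equals \(\|B^{\HH}_{\mathrm{pr},0,\eps}\|_{L^2(\HH^n)}^2\), because \(\dv=\varrho^n\,dx\) and \(\varrho^n\cdot\varrho^{-(n-2)}=\varrho^2\). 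The second line of \eqref{eq2.32} follows by applying \(\cD\). This is legitimate because, by Lemma~\ref{Lem2.8}, the map \(\eps\mapsto B^{\HH}_{\mathrm{pr},\xi,\eps}\) is \(C^2\) into \(H^1_\lambda\), so each term is differentiable.

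\textbf{Independence of \(\xi\) and the link with the energy.} Hyperbolic isometries commute with \(\cR_\lambda\) and \(\cD\) and preserve the pairing, so \(s(\eps)\) does not depend on \(\xi\). Since \(J_\lambda\) is \(C^1\) on \(H^1_\lambda\) with \(J_\lambda'(u)=\cR_\lambda(u)\), the chain rule gives
\[
\cD J_\lambda(B^{\HH}_{\mathrm{pr},\xi,\eps})=\langle\cR_\lambda(B^{\HH}_{\mathrm{pr},\xi,\eps}),Z_0\rangle=-s(\eps).
\]
Combining this with \eqref{eq2.32},
\[
s(\eps)=\frac\mu2\,\cD\|B^{\HH}_{\mathrm{pr},\xi,\eps}\|_{L^2(\HH^n)}^2-\cD E_0(B^{\RR}_{\mathrm{pr},\eps/2}).
\]

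\textbf{Expansion \eqref{eq2.33}.} Both terms are now read off from Lemma~\ref{Lem2.9}, using that \(O_\cD\) remainders stay of the same order after applying \(\cD\).
\begin{itemize}
\item For \(n\ge5\): \(\cD(\mathfrak m_n\eps^2)=2\mathfrak m_n\eps^2\), so the mass term contributes \(\mu\mathfrak m_n\eps^2\). The energy contribution and the mass remainders are \(O(\eps^3)\) for \(n=5\), \(O(\eps^4\log(2/\eps))\) for \(n=6\), and \(O(\eps^4)\) for \(n\ge7\).
\item For \(n=4\): \(\cD\bigl(\eps^2\log(2/\eps)\bigr)=2\eps^2\log(2/\eps)-\eps^2\). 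The \(-\eps^2\) is absorbed into \(O(\eps^2)\), giving the leading term \(\mu\kappa_4^2|\mathbb S^3|\eps^2\log(2/\eps)\). The \(O_\cD(\eps^2)\) remainders contribute \(O(\eps^2)\).
\end{itemize}
Positivity of \(s(\eps)\) for small \(\eps\) follows since \(\mu>0\) and the leading coefficients are positive. We shrink \(\eps_0\) if needed.

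\textbf{Main obstacle.} Lemma~\ref{Lem2.9} carries the \(\cD\)-derivatives inside its \(O_\cD\) notation, so the argument above is essentially bookkeeping. The one point needing care is the chain-rule step \(\cD J_\lambda=\langle\cR_\lambda,Z_0\rangle\). This requires \(J_\lambda\) to be Fréchet \(C^1\) on \(H^1_\lambda\), which holds by the Sobolev inequality with constant \(S_{n,\lambda}\), together with strong differentiability of the profile family in \(\eps\), which is Lemma~\ref{Lem2.8}.
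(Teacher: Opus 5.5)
Your proposal is correct and follows the paper's proof essentially step for step. It uses conformal covariance for the energy identity, and the variational identity \(\cD J_\lambda(B^{\HH}_{\mathrm{pr},\xi,\eps})=\langle\cR_\lambda(B^{\HH}_{\mathrm{pr},\xi,\eps}),Z_0\rangle\) justified by the strong regularity of Lemma~\ref{Lem2.8}; it then inserts the differentiated \(O_\cD\) expansions of Lemma~\ref{Lem2.9}, absorbing the \(-\eps^2\) term from \(\cD(\eps^2\log(2/\eps))\) into the \(O(\eps^2)\) remainder in dimension four, exactly as the paper does.
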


\begin{proof}
The first identity in \eqref{eq2.32} follows from conformal covariance and the corresponding identity for the quadratic form. Its scale derivative is justified by the strong parameter regularity established in Lemma~\ref{Lem2.8}. The variational identity gives
\[
\cD J_\lambda(B^{\HH}_{\mathrm{pr},\xi,\eps}) = \langle \cR_\lambda(B^{\HH}_{\mathrm{pr},\xi,\eps}),Z_0 \rangle,
\]
and hence
\[
s(\eps) = -\cD E_0(B^{\RR}_{\mathrm{pr},\eps/2}) +\frac\mu2\cD \|B^{\HH}_{\mathrm{pr},\xi,\eps}\|_{L^2(\HH^n)}^2.
\]
Insert the differentiated expansions from Lemma~\ref{Lem2.9}. In dimension four,
\[
\cD\bigl(\eps^2\log(2/\eps)\bigr) =2\eps^2\log(2/\eps)-\eps^2,
\]
so differentiating the logarithm contributes to the \(O(\eps^2)\) remainder. For \(n\ge5\), differentiating \(\mathfrak m_n\eps^2\) gives \(2\mathfrak m_n\eps^2\), which combines with the coefficient \(\mu/2\). The \(O_\cD\)-bounds in Lemma~\ref{Lem2.9} give precisely the ordinary \(O\)-remainders in \eqref{eq2.33}. Since \(\mu>0\), the leading coefficient is positive in every dimension.
\end{proof}

Set
\[
\omega_n(\eps) =
\begin{cases}
\eps^2\log(2/\eps),&n=4,\\
\eps^2,&n\ge5.
\end{cases}
\]

\begin{proposition}\label{Prop2.11}
For \(0<\eps<\eps_0\),
\begin{align}
\dnorm{ \cR_\lambda(B^{\HH}_{\mathrm{pr},\xi,\eps}) } &\le C\omega_n(\eps), \label{eq2.34}\\
\dnorm{ \cD\cR_\lambda(B^{\HH}_{\mathrm{pr},\xi,\eps}) } &\le C\omega_n(\eps), \label{eq2.35}\\
c\omega_n(\eps) \le s(\eps) &\le C\dnorm{ \cR_\lambda(B^{\HH}_{\mathrm{pr},\xi,\eps}) } \le C\omega_n(\eps). \label{eq2.36}
\end{align}
Moreover,
\begin{equation}\label{eq2.37}
\begin{aligned}
\left| \norm{B^{\HH}_{\mathrm{pr},\xi,\eps}}^2 -\mathcal S_{n,2}^{n/2} \right|+ \left| \int_{\HH^n} (B^{\HH}_{\mathrm{pr},\xi,\eps})^{2^*}\dv -\mathcal S_{n,2}^{n/2} \right|+ \left| J_\lambda(B^{\HH}_{\mathrm{pr},\xi,\eps}) -\frac{\mathcal S_{n,2}^{n/2}}n \right| \le C\omega_n(\eps).
\end{aligned}
\end{equation}
\end{proposition}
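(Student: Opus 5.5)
By isometry invariance it suffices to work at \(\xi=0\) and pass to the Euclidean representative. Conformal covariance and the conformal identity for the quadratic form reduce everything to ball quantities with \(\delta=\eps/2\). There
\[
P^{\HH}_\lambda B^{\HH}_{\mathrm{pr},0,\eps}=\varrho^{-\frac{n+2}{2}}(B^{\RR}_\delta)^{2^*-1},
\]
so the residual is represented by
\[
h_\delta=(B^{\RR}_\delta)^{2^*-1}-(B^{\RR}_{\mathrm{pr},\delta})^{2^*-1}.
\]
By the dual-norm comparison after Lemma~\ref{Lem2.1}, \(\dnorm{\cR_\lambda}\le C\|h_\delta\|_{H^{-1}(\BB^n)}\), and Sobolev duality bounds this by \(\|h_\delta\|_{L^{(2^*)'}(\BB^n)}\).

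\textbf{Proof of \eqref{eq2.34} and \eqref{eq2.35}.} Write \(B^{\RR}_{\mathrm{pr},\delta}=B^{\RR}_\delta+e_\delta\) with \(|e_\delta|\le f_\delta\). Pointwise,
\[
|h_\delta|\le C\bigl((B^{\RR}_\delta)^{2^*-2}f_\delta+f_\delta^{2^*-1}\bigr).
\]
The same bound holds for \(\cD h_\delta\) by \eqref{eq2.14} and \(|\cD B^{\RR}_\delta|\le CB^{\RR}_\delta\), splitting into the regions \(f_\delta<B^{\RR}_\delta/2\) and its complement as in the proof of \eqref{eq2.29}. It remains to integrate \eqref{eq2.16} in \(L^{(2^*)'}\).
\begin{itemize}
\item For \(n\ge5\): \((B^{\RR}_\delta)^{2^*-2}f_\delta\lesssim\delta^{2}\cdot\delta^{\frac{n-2}{2}}(\delta+|x|)^{-n}\) in the interior, up to the appropriate scaling. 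Its \(L^{2n/(n+2)}\) norm is \(O(\delta^2)\), and the boundary region contributes \(O(\delta^{(n+2)/2})\).
\item For \(n=4\): the logarithm in \(f_\delta\) yields \(\delta^2\ell_\delta\) at worst.
\item The term \(f_\delta^{2^*-1}\) is of lower order. For \(n\ge7\) one can instead use Hölder together with \(\|f_\delta\|_{2^*}\le C\delta^2\) and the uniform \(L^{2^*}\) bound on \(B^{\RR}_\delta\).
\end{itemize}
I expect the \(n=4\) bookkeeping to be the main obstacle: \(f_\delta\sim\delta\log(1/|x|)\) combined with \((B^{\RR}_\delta)^{2}\sim\delta^2|x|^{-4}\) gives an integrand near \(\delta^{3}|x|^{-4}\log\). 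Its \(L^{4/3}\) norm must be checked to be at most \(\delta^2\ell_\delta\) rather than having an extra log power. If the pure \(L^{4/3}\) route loses a logarithm, I would instead test against \(\phi\) and use the Hardy inequality at the origin, as in \eqref{eq2.12}.

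\textbf{Proof of \eqref{eq2.36}.} The lower bound \(s(\eps)\ge c\omega_n(\eps)\) follows from \eqref{eq2.33} after shrinking \(\eps_0\). The middle inequality follows from Cauchy–Schwarz in the dual pairing together with \(\norm{Z_0}\le C\) from \eqref{eq2.23}. The last inequality is \eqref{eq2.34}.

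\textbf{Proof of \eqref{eq2.37}.} The \(J_\lambda\) bound follows from \eqref{eq2.32}, Lemma~\ref{Lem2.9}, and the mass bound \(\|B^{\HH}_{\mathrm{pr}}\|_2^2\le C\omega_n\). For the critical integral, \(\int_{\BB^n}(B^{\RR}_{\mathrm{pr},\delta})^{2^*}dx\) differs from \(\int_{\RR^n}(B^{\RR}_\delta)^{2^*}=\mathcal S_{n,2}^{n/2}\) by three pieces:
\begin{itemize}
\item the exterior tail, which is \(O(\delta^n)\);
\item the linear term \(2^*\int(B^{\RR}_\delta)^{2^*-1}e_\delta\), which is \(O(\int (B^{\RR}_\delta)^{2^*-1}f_\delta)\), of size \(O(\delta^2)\) for \(n\ge5\) and \(O(\delta^2\ell_\delta)\) for \(n=4\) by \eqref{eq2.16};
\item \(\int\Phi_\delta\), which is controlled by \eqref{eq2.29}–\eqref{eq2.30}.
\end{itemize}
Finally, \(\norm{B^{\HH}_{\mathrm{pr}}}^2=\int_{\BB^n}(B^{\RR}_\delta)^{2^*-1}B^{\RR}_{\mathrm{pr},\delta}\,dx\) by testing \eqref{eq2.4} with the solution itself. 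This equals \(\mathcal S_{n,2}^{n/2}\) up to the same linear error term and the tail, which gives the remaining bound.
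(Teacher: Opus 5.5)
Your argument for \eqref{eq2.34}--\eqref{eq2.36} is the paper's. It uses the same pointwise bound for the power difference and its scale derivative (the paper's \eqref{eq2.38}, proved by the same two-region splitting). It uses the same dimension-by-dimension $L^{(2^*)'}$ integration of \eqref{eq2.16}, with H\"older and $\|f_\delta\|_{2^*}\le C\delta^2$ for $n\ge7$. And it uses the same Cauchy--Schwarz step with $\norm{Z_0}\le C$.

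The extra logarithm you feared in dimension four does not occur. On $\delta\le|x|\le\tfrac12$ the $4/3$-power integral is at most $C\delta^4\int_\delta^{1/2}t^{-7/3}(1+\log(1/t))^{4/3}\,dt\le C\delta^{8/3}(1+\ell_\delta)^{4/3}$. The ball $|x|\le\delta$ contributes the same order, and the $3/4$ root gives exactly $C\delta^2(1+\ell_\delta)$. So no Hardy argument is needed.

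For \eqref{eq2.37} you take a different route. Write $B=B^{\HH}_{\mathrm{pr},\xi,\eps}$. The paper never expands $\norm{B}^2$ or $\int B^{2^*}\dv$. It takes the $J_\lambda$ bound from \eqref{eq2.32} and Lemma~\ref{Lem2.9} and combines it with two exact identities:
\[
\langle\cR_\lambda(B),B\rangle=\norm{B}^2-\int_{\HH^n}B^{2^*}\dv,\qquad J_\lambda(B)=\frac1n\norm{B}^2+\frac1{2^*}\langle\cR_\lambda(B),B\rangle.
\]
It then bounds the pairing by \eqref{eq2.34} and $\norm{B}\le C$.

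You instead expand both quantities around $B^{\RR}_\delta$. This costs one additional estimate, $\int_{\BB^n}(B^{\RR}_\delta)^{2^*-1}f_\delta\,dx=O(\omega_n(\eps))$, and it does hold. For $n\ge5$ the interior integrand is at most $C\delta^n(\delta+|x|)^{2-2n}$, giving $C\delta^2$. For $n=4$ the logarithm in $f_\delta$ gives $C\delta^2(1+\ell_\delta)$. Together with \eqref{eq2.29}--\eqref{eq2.30} and the $O(\delta^n)$ tail, this proves both bounds.

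The paper's version is shorter because it recycles \eqref{eq2.34}. Yours is self-contained and even yields the $J_\lambda$ estimate directly from $J_\lambda(B)=\frac12\norm{B}^2-\frac1{2^*}\int_{\HH^n}B^{2^*}\dv$, which makes the appeal to Lemma~\ref{Lem2.9} optional.
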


\begin{proof}
By isometry invariance, it suffices to prove the estimates at \(\xi=0\), with \(\delta=\eps/2\). Conformal covariance gives
\[
\cR_\lambda(B^{\HH}_{\mathrm{pr},0,\eps}) = \varrho^{-\frac{n+2}{2}} \left( (B^{\RR}_\delta)^{2^*-1} -(B^{\RR}_{\mathrm{pr},\delta})^{2^*-1} \right).
\]
We first establish the pointwise bound
\begin{equation}\label{eq2.38}
\begin{aligned}
\left| (B^{\RR}_\delta)^{2^*-1} -(B^{\RR}_{\mathrm{pr},\delta})^{2^*-1} \right|+ \left| \cD\bigl( (B^{\RR}_\delta)^{2^*-1} -(B^{\RR}_{\mathrm{pr},\delta})^{2^*-1} \bigr) \right|\le C\left( (B^{\RR}_\delta)^{2^*-2}f_\delta +f_\delta^{2^*-1} \right).
\end{aligned}
\end{equation}
The first bound follows from the mean value theorem and \(|e_\delta|\le f_\delta\). For the scale derivative, we write
\[
\begin{aligned}
\cD\bigl( (B^{\RR}_{\mathrm{pr},\delta})^{2^*-1} -(B^{\RR}_\delta)^{2^*-1} \bigr)= (2^*-1) \left( (B^{\RR}_{\mathrm{pr},\delta})^{2^*-2} -(B^{\RR}_\delta)^{2^*-2} \right)\cD B^{\RR}_\delta+ (2^*-1)(B^{\RR}_{\mathrm{pr},\delta})^{2^*-2} \cD e_\delta.
\end{aligned}
\]
On the region \(f_\delta<B^{\RR}_\delta/2\), the two bubbles are comparable, so the mean value theorem and \eqref{eq2.14} bound this expression by \(C(B^{\RR}_\delta)^{2^*-2}f_\delta\). On the complementary region,
\[
B^{\RR}_{\mathrm{pr},\delta}+B^{\RR}_\delta \le Cf_\delta,
\]
and every term is bounded by \(Cf_\delta^{2^*-1}\). This proves \eqref{eq2.38}.

We now estimate the two terms on the right-hand side of \eqref{eq2.38}. For \(n=4\), the bounds in \eqref{eq2.16} give
\[
\begin{aligned}
\|(B^{\RR}_\delta)^{2^*-2}f_\delta \|_{L^{4/3}(\BB_{1/2})} &\le C\delta^2(1+\ell_\delta),\\
\|f_\delta^{2^*-1}\|_{L^{4/3}(\BB_{1/2})} &\le C\delta^3.
\end{aligned}
\]
For the first estimate, the region \(|x|\le\delta\) contributes \(O(\delta^2(1+\ell_\delta))\) to the norm. On \(\delta\le|x|\le\tfrac12\), its \(4/3\)-power is bounded by
\[
C\delta^4\int_\delta^{1/2} t^{-7/3}(1+\log(1/t))^{4/3}\,dt \le C\delta^{8/3}(1+\ell_\delta)^{4/3}.
\]
Taking the power \(3/4\) gives the asserted bound.

For \(n=5\), the interior pointwise bounds are
\[
(B^{\RR}_\delta)^{2^*-2}f_\delta \le\frac{C\delta^{7/2}}{(\delta+|x|)^5}, \qquad f_\delta^{2^*-1} \le\frac{C\delta^{7/2}}{(\delta+|x|)^{7/3}}, \qquad 0\le|x|\le\tfrac12.
\]
The corresponding \(L^{10/7}(\BB_{1/2})\)-norms are \(O(\delta^2)\) and \(O(\delta^{7/2})\), respectively.

For \(n=6\), we use the pointwise estimates
\[
B^{\RR}_\delta f_\delta \le\frac{C\delta^4}{(\delta+|x|)^6}, \qquad f_\delta^2 \le\frac{C\delta^4}{(\delta+|x|)^4}, \qquad 0\le|x|\le\tfrac12.
\]
Their \(L^{3/2}(\BB_{1/2})\)-norms are \(O(\delta^2)\) and \(O(\delta^4\ell_\delta^{2/3})\).

For \(n\ge7\), Sobolev embedding and \eqref{eq2.15} give
\[
\|f_\delta\|_{L^{2^*}(\BB^n)} \le C\bigl(\|\nabla v_\delta\|_2+c_\delta\bigr) \le C\delta^2.
\]
H\"older's inequality therefore yields
\[
\begin{aligned}
\|(B^{\RR}_\delta)^{2^*-2}f_\delta \|_{L^{(2^*)'}(\BB^n)} +\|f_\delta^{2^*-1}\|_{L^{(2^*)'}(\BB^n)}\le C\bigl(\delta^2+\delta^{2(2^*-1)}\bigr) \le C\delta^2.
\end{aligned}
\]
For the boundary region in every dimension, \eqref{eq2.16} and the explicit bubble formula give
\[
B^{\RR}_\delta+f_\delta \le C\delta^{\frac{n-2}{2}}, \qquad \tfrac12\le|x|<1.
\]
Its contribution to the \(L^{(2^*)'}\)-norm in \eqref{eq2.38} is consequently \(O(\delta^{\frac{n+2}{2}})\). Combining the interior and boundary estimates, and using conformal covariance, Sobolev duality, and \eqref{eq2.3}, proves \eqref{eq2.34} and \eqref{eq2.35} after substituting \(\delta=\eps/2\).

For the first inequality in \eqref{eq2.36}, Proposition~\ref{Prop2.10} gives
\[
s(\eps)\sim_{n,\lambda}\omega_n(\eps).
\]
Since \(s(\eps)>0\) at sufficiently small scales, the uniform bound for \(Z_0\) in \eqref{eq2.23} implies
\[
\begin{aligned}
s(\eps)= \left| \langle \cR_\lambda(B^{\HH}_{\mathrm{pr},\xi,\eps}), Z_0 \rangle \right|\le \dnorm{ \cR_\lambda(B^{\HH}_{\mathrm{pr},\xi,\eps}) }\norm{Z_0}\le C\dnorm{ \cR_\lambda(B^{\HH}_{\mathrm{pr},\xi,\eps}) }.
\end{aligned}
\]
Together with \eqref{eq2.34}, this proves \eqref{eq2.36}.

Finally, coercivity applied to \eqref{eq2.4} gives
\[
\norm{B^{\HH}_{\mathrm{pr},\xi,\eps}}\le C.
\]
Testing the residual with the projected bubble and using \eqref{eq2.34}, we obtain
\[
\begin{aligned}
&\left| \norm{B^{\HH}_{\mathrm{pr},\xi,\eps}}^2 -\int_{\HH^n} (B^{\HH}_{\mathrm{pr},\xi,\eps})^{2^*}\dv \right|= \left| \langle \cR_\lambda(B^{\HH}_{\mathrm{pr},\xi,\eps}), B^{\HH}_{\mathrm{pr},\xi,\eps} \rangle \right| \le C\omega_n(\eps).
\end{aligned}
\]
Lemma~\ref{Lem2.9} and \eqref{eq2.32} also give
\[
\left| J_\lambda(B^{\HH}_{\mathrm{pr},\xi,\eps}) -\frac{\mathcal S_{n,2}^{n/2}}n \right| \le C\omega_n(\eps).
\]
The exact identity
\[
\begin{aligned}
J_\lambda(B^{\HH}_{\mathrm{pr},\xi,\eps})= \frac1n\norm{B^{\HH}_{\mathrm{pr},\xi,\eps}}^2+ \frac1{2^*} \langle \cR_\lambda(B^{\HH}_{\mathrm{pr},\xi,\eps}), B^{\HH}_{\mathrm{pr},\xi,\eps} \rangle
\end{aligned}
\]
therefore yields
\[
\left| \norm{B^{\HH}_{\mathrm{pr},\xi,\eps}}^2 -\mathcal S_{n,2}^{n/2} \right| \le C\omega_n(\eps).
\]
The identity obtained by testing the residual with the projected bubble then gives the estimate for the critical integral, completing the proof of \eqref{eq2.37}.
\end{proof}
\section{Hyperbolic decay and mixed interactions}
\label{Sec3}

We prove decay and parameter bounds for the bilinear interaction \(Q\), compare it with the nonlinear interaction \(I\), and estimate the mixed residual in the dual energy norm, uniformly in the centers.

We retain the scale notation from Section~\ref{Sec2}:
\[
\delta=\eps/2,\qquad \cD=\eps\partial_\eps=\delta\partial_\delta,\qquad \ell_\delta=\log(1/\delta).
\]
The operator \(\cD\) acts with the centers, and hence their distance \(r\) in \eqref{eq3.8}, held fixed. The notation \(O_\cD\) controls a remainder and its first normalized scale derivative, as specified in Section~\ref{Sec2}.

\subsection{Hyperbolic decay and bilinear interactions}
\label{Sec3.1}

The parameters satisfy
\begin{equation*}
\gamma_\lambda-\frac{n-2}{2} =\frac{1+\sqrt{1-4\mu}}2=\alpha_\lambda.
\end{equation*}

\begin{proposition}\label{Prop3.1}
There exists \(\mathsf A_\lambda\in(0,\infty)\) such that, for every integer \(j\geq0\), there are \(r_0>0\) and \(C_{j,\lambda}>0\) for which
\begin{equation}\label{eq3.1}
\left| \partial_r^j\left( B^{\HH}_{0,\lambda}(r)-\mathsf A_\lambda e^{-\gamma_\lambda r} \right)\right| \leq C_{j,\lambda}e^{-(\gamma_\lambda+2)r}, \qquad r\geq r_0.
\end{equation}
In particular,
\begin{equation}\label{eq3.2}
B^{\HH}_{0,\lambda}(r)=\mathsf A_\lambda e^{-\gamma_\lambda r} \bigl(1+O(e^{-2r})\bigr),\qquad\frac{(B^{\HH}_{0,\lambda})'(r)}{B^{\HH}_{0,\lambda}(r)} =-\gamma_\lambda+O(e^{-2r}).
\end{equation}
Globally, for \(r\geq0\),
\begin{align}
c e^{-\gamma_\lambda r} &\leq B^{\HH}_{0,\lambda}(r)\leq C e^{-\gamma_\lambda r}, \label{eq3.3}\\
c\min\{r,1\}B^{\HH}_{0,\lambda}(r) &\leq-(B^{\HH}_{0,\lambda})'(r) \leq C\min\{r,1\}B^{\HH}_{0,\lambda}(r), \label{eq3.4}\\
|(B^{\HH}_{0,\lambda})^{(k)}(r)| &\leq C_kB^{\HH}_{0,\lambda}(r),\qquad k\geq0. \label{eq3.5}
\end{align}
\end{proposition}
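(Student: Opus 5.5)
The plan is to study the radial ODE satisfied by \(w=B^{\HH}_{0,\lambda}\),
\[
w''+(n-1)\coth r\,w'+\lambda w+w^{2^*-1}=0,\qquad w'(0)=0,
\]
as a perturbation at infinity of the linear equation with constant coefficients. Write \((n-1)\coth r=(n-1)+O(e^{-2r})\). The linear part \(w''+(n-1)w'+\lambda w\) has characteristic roots \(-\gamma_\lambda\) and \(-\gamma_\lambda'\), where \(\gamma_\lambda'=n-1-\gamma_\lambda<\gamma_\lambda\). The finite-energy condition (with \(w\in H^1_\lambda\)) forces \(w\to0\). Since \(\gamma'_\lambda>(n-1)/2\) fails exactly when \(\lambda<\lambdab\) reversed, we must first check which branch is selected: the \(H^1\) condition \(\int e^{(n-1)r}(w'^2+w^2)<\infty\) excludes \(e^{-\gamma'_\lambda r}\), since \(2\gamma'_\lambda<n-1\). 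A crude a priori decay \(w(r)\le Ce^{-\sigma r}\) for some \(\sigma>(n-1)/2-\) small comes from the radial lemma (Strauss-type for hyperbolic space: \(|w(r)|\le C e^{-(n-1)r/2}\|w\|\) up to polynomial factors, via \(\frac{d}{dr}\) and Cauchy--Schwarz with weight \(\sinh^{n-1}r\)). This makes the nonlinearity \(w^{2^*-1}=O(e^{-(2^*-1)\sigma r})\) a faster-decaying forcing.

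Next, write the equation as \(w''+(n-1)w'+\lambda w=F\) with \(F=-(n-1)(\coth r-1)w'-w^{2^*-1}\), and apply variation of constants with the two exponentials. Choosing the constants so that the \(e^{-\gamma'_\lambda r}\) coefficient vanishes (justified by the \(H^1\) exclusion), we get
\[
w(r)=\mathsf A e^{-\gamma_\lambda r}+\frac{1}{\gamma_\lambda-\gamma'_\lambda}\Bigl(\int_r^\infty e^{-\gamma'_\lambda(r-t)}F\,dt-\int_r^\infty e^{-\gamma_\lambda(r-t)}F\,dt\Bigr),
\]
with \(\mathsf A\) defined by a convergent integral. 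A bootstrap then improves the decay: if \(|w|+|w'|\le Ce^{-\sigma r}\), then \(|F|\le Ce^{-\min(\sigma+2,(2^*-1)\sigma)r}\), and the integrals give \(w\) the form \(\mathsf A e^{-\gamma_\lambda r}+O(e^{-\min(\cdot)r})\) whenever that exponent exceeds \(\gamma_\lambda\). After finitely many iterations, \(\sigma=\gamma_\lambda\). Then \(F=O(e^{-(\gamma_\lambda+2)r})\), because \((2^*-1)\gamma_\lambda\ge\gamma_\lambda+2\) amounts to \(\frac{4}{n-2}\gamma_\lambda\ge2\), i.e. \(\gamma_\lambda\ge(n-2)/2\), which holds. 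The remainder is therefore \(O(e^{-(\gamma_\lambda+2)r})\). The strict inequality \(\gamma_\lambda>\gamma'_\lambda\), valid because \(\lambda<\lambdab\), keeps the integrals convergent and avoids resonance logs. The borderline \(\gamma_\lambda+2\) versus \(\gamma'_\lambda\) needs no care, since the first integral only contributes \(e^{-\gamma'_\lambda r}\int_r^\infty e^{\gamma'_\lambda t}F\), which is fine when \(\gamma_\lambda+2>\gamma'_\lambda\).

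Derivative bounds \eqref{eq3.1} for all \(j\) follow by differentiating the integral representation (each derivative falls on an exponential or on \(F\) at the endpoint) and inducting, using the ODE to express \(w''\) in terms of lower derivatives. \(\mathsf A_\lambda>0\) because \(w>0\) is decreasing (\(w'<0\) from the integrated radial equation \(\sinh^{n-1}r\,w'=-\int_0^r\sinh^{n-1}(\lambda w+w^{2^*-1})<0\)), and if \(\mathsf A=0\) then \(w=O(e^{-(\gamma_\lambda+2)r})\); iterating would give superexponential decay, contradicting unique continuation for the ODE, or more simply \(w\not\equiv0\) combined with the linear theory. The global bounds \eqref{eq3.3} combine the asymptotics for \(r\ge r_0\) with positivity and continuity on \([0,r_0]\). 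For \eqref{eq3.4}, the integrated identity gives \(-w'(r)\sim r\) near \(0\) and \(-w'\sim\gamma_\lambda w\) at infinity, with strict negativity in between. Bound \eqref{eq3.5} follows from \eqref{eq3.1} at infinity and smoothness on compacts.

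The main obstacle is the initial a priori decay and the exclusion of the slow branch \(e^{-\gamma'_\lambda r}\). I would handle both through the finite-energy radial estimate, which gives rate \((n-1)/2>\gamma'_\lambda\), before bootstrapping.
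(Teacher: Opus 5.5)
Your proposal is correct. It uses the same basic mechanism as the paper: treat the radial equation as an exponentially small perturbation of a constant-coefficient equation at infinity, and integrate from infinity. The inputs differ, though.

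The paper substitutes \(v=(\sinh r)^{(n-1)/2}B^{\HH}_{0,\lambda}\), which removes the first-order term exactly and gives \(v''-\nu^2v=Vv\), with \(\nu=\sqrt{\lambdab-\lambda}\) and \(V^{(j)}=O(e^{-2r})\). It then imports from Mancini--Sandeep the limits \(r^{-1}\log B\to-\gamma_\lambda\) and \(B'/B\to-\gamma_\lambda\). These supply the starting decay, and they give positivity of the leading constant immediately.

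You instead keep the operator \(w''+(n-1)w'+\lambda w\), move \((n-1)(\coth r-1)w'\) into the forcing, and start from the Strauss-type bound \(|w|\le Ce^{-(n-1)r/2}\). The same rate for \(w'\) follows from the integrated radial identity; you use it in the bootstrap, so you should state it. This route is self-contained and suffices. Since \(\nu<1/2\), one has \(\min\{\tfrac{n-1}2+2,(2^*-1)\tfrac{n-1}2\}>\tfrac n2>\gamma_\lambda\). So a single step already gives \(w=\mathsf Ae^{-\gamma_\lambda r}+O(e^{-\tilde\sigma r})\) with \(\tilde\sigma>\gamma_\lambda\). The pointwise bound alone removes the \(e^{-\gamma'_\lambda r}\) branch, because \(\gamma'_\lambda<(n-1)/2\).

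The price of avoiding the Mancini--Sandeep asymptotics is that \(\mathsf A\ne0\) needs its own argument. Your appeal to unique continuation is right, but make it explicit:
\begin{itemize}
\item If \(\mathsf A=0\), repeating the representation gives \(|w|+|w'|=O(e^{-Kr})\) for every \(K\).
\item On \([1,\infty)\) the coefficients \((n-1)\coth r\) and \(\lambda+w^{2^*-2}\) are bounded, so \(E=w^2+(w')^2\) satisfies \(E'\ge-CE\).
\item Hence \(E(r)\ge E(1)e^{-C(r-1)}\), which contradicts superexponential decay unless \(w\equiv0\).
\end{itemize}

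Two minor points. First, the particular solution decaying at infinity is \(\frac{1}{\gamma_\lambda-\gamma'_\lambda}\int_r^\infty\bigl(e^{-\gamma_\lambda(r-t)}-e^{-\gamma'_\lambda(r-t)}\bigr)F(t)\,dt\); your sign is reversed, which is harmless. Second, in the induction for higher derivatives, the bound \(|(w^{2^*-1})^{(k)}|\le Cw^{2^*-1}\) uses \(|w^{(i)}|\le Cw\), hence the lower bound \(w\ge ce^{-\gamma_\lambda r}\) coming from \(\mathsf A>0\). That step must therefore come after the positivity argument.
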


\begin{proof}
Write \(B=B^{\HH}_{0,\lambda}\), \(m=(n-1)/2\), \(\nu=\sqrt{\lambdab-\lambda}>0\), and \(v=(\sinh r)^mB\), so \(\gamma_\lambda=m+\nu\). By \cite[Lemma~3.4]{ManciniSandeep2008}, \(r^{-1}\log B\to-\gamma_\lambda\) and \(B'/B\to-\gamma_\lambda\). Choose \(0<\vartheta<\min\{\nu,2,\gamma_\lambda-(n-2)/2\}\). Then \(v+|v'|=O(e^{-(\nu-\vartheta)r})\), while
\[
v''-\nu^2v=Vv,\qquad V(r)=\frac{(n-1)(n-3)}4\operatorname{csch}^2r-B^{2^*-2}, \qquad V^{(j)}=O_j(e^{-2r})\quad(j\ge0).
\]
The last bounds follow inductively from \(w'=-w^2-(n-1)\coth r\,w-\lambda-B^{2^*-2}\), where \(w=B'/B\) is bounded, and \((2^*-2)(\gamma_\lambda-\vartheta)>2\). Putting \(y=v'+\nu v\) and integrating \(y'-\nu y=Vv\) from infinity gives
\[
y(r)=-e^{\nu r}\int_r^\infty e^{-\nu s}V(s)v(s)\,ds =O(e^{-(\nu+2-\vartheta)r}).
\]
Thus \((e^{\nu r}v)'=O(e^{-(2-\vartheta)r})\), and \(v=ce^{-\nu r}+O(e^{-(\nu+2-\vartheta)r})\) for some \(c\ge0\). If \(c=0\), this contradicts \(r^{-1}\log v\to-\nu\); hence \(c>0\). Reinserting \(v=O(e^{-\nu r})\) into the integral sharpens the bounds to \(y=O(e^{-(\nu+2)r})\) and \(v-ce^{-\nu r}=O(e^{-(\nu+2)r})\). The formula for \(y\) gives the same bound for the first derivative of this remainder; the equation \(v''=\nu^2v+Vv\) and the bounds on \(V^{(j)}\) give every higher derivative inductively. Since \((\sinh r)^{-m}=2^me^{-mr}(1+O(e^{-2r}))\), also after differentiation, \eqref{eq3.1}--\eqref{eq3.2} follow with \(\mathsf A_\lambda=2^mc>0\).

Finally, the radial equation gives \(B'(r)<0\) for \(r>0\), \(B'(0)=0\), and \(B''(0)=-(\lambda B(0)+B(0)^{2^*-1})/n<0\). These facts, smoothness at the origin, and the preceding asymptotics give \eqref{eq3.3}, \eqref{eq3.4}, and \eqref{eq3.5} by compactness on intermediate intervals.
\end{proof}

Set
\[
\mathcal C=\{x\in\BB^n:1/2<|x|<1\}.
\]
For \(\zeta\in\mathbb S^{n-1}\), let \(z_{r,\zeta}=\tanh(r/2)\zeta\) be the point at distance \(r\) from \(0\) on the unit-speed geodesic ray with ideal endpoint \(\zeta\), and define
\[
h_{r,\zeta}(x)=\varrho(x)^{\frac{n-2}{2}} B^{\HH}_{0,\lambda}\bigl(d_g(x,z_{r,\zeta})\bigr).
\]

\begin{lemma}\label{Lem3.2}
There is a constant \(C=C(n,\lambda)\) such that
\begin{equation}\label{eq3.6}
\sup_{\zeta\in\mathbb S^{n-1}}\int_{\mathcal C} \frac{(1-|x|^2)^{\alpha_\lambda}}{|x-\zeta|^{2\gamma_\lambda}}\,dx\le C.
\end{equation}
Moreover, for every \(r\ge0\), \(x\in\BB^n\), \(\zeta\in\mathbb S^{n-1}\), and \(j=0,1,2\),
\begin{equation}\label{eq3.7}
|\partial_r^j h_{r,\zeta}(x)| \le C B^{\HH}_{0,\lambda}(r) \frac{(1-|x|^2)^{\alpha_\lambda}} {(e^{-r}+|x-\zeta|)^{2\gamma_\lambda}}.
\end{equation}
\end{lemma}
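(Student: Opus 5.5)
The plan is to reduce \eqref{eq3.7} to the explicit distance formula in the ball model together with the global bounds \eqref{eq3.3}--\eqref{eq3.5}, and to prove \eqref{eq3.6} by a local scaling computation near the ideal point \(\zeta\). The identity \(\gamma_\lambda-\frac{n-2}{2}=\alpha_\lambda\) recorded above is what turns the conformal weight into the boundary factor \((1-|x|^2)^{\alpha_\lambda}\).

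\emph{Integral bound \eqref{eq3.6}.} By rotation invariance I take \(\zeta=e_n\). On \(\mathcal C\), \(1-|x|^2\sim t:=1-|x|\). Away from a fixed neighbourhood of \(\zeta\) the integrand is bounded, so only \(|x-\zeta|\le c\) matters. There I use coordinates \(x\leftrightarrow(t,w)\), with \(w\in\RR^{n-1}\) tangential, so that \(|x-\zeta|\sim t+|w|\) and \(dx\sim dt\,dw\). The integral is then bounded by
\[
\int_0^1\!\!\int_{|w|<1}\frac{t^{\alpha_\lambda}}{(t+|w|)^{2\gamma_\lambda}}\,dw\,dt .
\]
Since \(2\gamma_\lambda=n-2+2\alpha_\lambda\), the integrand is homogeneous of degree \(\alpha_\lambda-2\gamma_\lambda=-(n-2)-\alpha_\lambda\) in \(n\) variables. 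The total homogeneity is therefore \(2-\alpha_\lambda>0\), because \(\alpha_\lambda<1\). Integrating in polar coordinates in \((t,w)\) then gives a finite bound. The angular factor \(\int t^{\alpha_\lambda}\) over the unit half-sphere is finite since \(\alpha_\lambda>-1\).

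\emph{Case \(j=0\) of \eqref{eq3.7}.} I use
\[
\cosh d_g(x,z)=1+\frac{2|x-z|^2}{(1-|x|^2)(1-|z|^2)},
\]
together with \(e^{-d}\sim(\cosh d)^{-1}\). This gives
\[
e^{-d}\sim\frac{(1-|x|^2)(1-|z|^2)}{|x-z|^2+(1-|x|^2)(1-|z|^2)} .
\]
Combining this with \eqref{eq3.3}, \(\varrho^{\frac{n-2}{2}}\sim(1-|x|^2)^{-\frac{n-2}{2}}\), and \(\gamma_\lambda-\frac{n-2}{2}=\alpha_\lambda\), I obtain
\[
h_{r,\zeta}(x)\le C(1-|x|^2)^{\alpha_\lambda}\frac{(1-|z|^2)^{\gamma_\lambda}}{\bigl(|x-z|^2+(1-|x|^2)(1-|z|^2)\bigr)^{\gamma_\lambda}}, \qquad z=z_{r,\zeta}.
\]
Here \(1-|z|^2=\operatorname{sech}^2(r/2)\sim e^{-r}\), so \((1-|z|^2)^{\gamma_\lambda}\sim B^{\HH}_{0,\lambda}(r)\) by \eqref{eq3.3}. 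It remains to show
\[
(e^{-r}+|x-\zeta|)^2\le C\bigl(|x-z|^2+(1-|x|)e^{-r}\bigr).
\]
The triangle inequality and \(|z-\zeta|=1-\tanh(r/2)\sim e^{-r}\) reduce this to bounding \(e^{-2r}\). If \(1-|x|\ge e^{-r}/2\), the second term dominates \(e^{-2r}\). Otherwise \(1-|z|=2/(e^r+1)\ge e^{-r}\), so \(|x-z|\ge|x|-|z|\ge e^{-r}/2\). For bounded \(r\) all quantities are comparable to constants, so no separate argument is needed there.

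\emph{Cases \(j=1,2\) of \eqref{eq3.7}.} This is the only delicate step. Since \(B^{\HH}_{0,\lambda}\) is smooth and even at \(0\), the map \(z\mapsto B^{\HH}_{0,\lambda}(d_g(x,z))\) is smooth. Because \(r\mapsto z_{r,\zeta}\) is a unit-speed geodesic, \(\partial_r\) and \(\partial_r^2\) equal the gradient and covariant Hessian of this function evaluated on the unit velocity. In terms of \(d=d_g(x,z)\), the gradient has size \(|B'(d)|\). The Hessian has eigenvalues \(B''(d)\) and \(B'(d)\coth d\). The factor \(\coth d\) is singular as \(d\to0\), but \eqref{eq3.4} gives \(|B'(d)|\le C\min\{d,1\}B(d)\), so \(|B'(d)|\coth d\le CB(d)\). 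Together with \eqref{eq3.5}, this yields \(|\partial_r^j h_{r,\zeta}|\le C\varrho^{\frac{n-2}{2}}B^{\HH}_{0,\lambda}(d)\) for \(j=1,2\). The \(j=0\) argument then finishes the proof.
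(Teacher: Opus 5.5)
Your proof is correct and follows essentially the same route as the paper. For $j=0$ you use the ball distance formula together with \eqref{eq3.3} and $\gamma_\lambda-\frac{n-2}{2}=\alpha_\lambda$, and for $j=1,2$ you use the gradient and the Hessian eigenvalues $(B^{\HH}_{0,\lambda})''(d)$ and $(B^{\HH}_{0,\lambda})'(d)\coth d$ along the unit-speed center geodesic. The differences are minor: for \eqref{eq3.6} the paper just uses $1-|x|\le|x-\zeta|$ and polar coordinates about $\zeta$, which gives $\int_0^2 t^{1-\alpha_\lambda}\,dt$, instead of your tangential--normal homogeneity count. For the $\coth d$ singularity it uses a Taylor expansion at $d=0$, where you invoke \eqref{eq3.4}.
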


\begin{proof}
Since \((1-|x|)\le|x-\zeta|\) and \(1-|x|^2\le2(1-|x|)\), polar coordinates centered at \(\zeta\) give
\[
\int_{\mathcal C} \frac{(1-|x|^2)^{\alpha_\lambda}}{|x-\zeta|^{2\gamma_\lambda}}\,dx \le C\int_0^2 t^{n-1+\alpha_\lambda-2\gamma_\lambda}\,dt =C\int_0^2 t^{1-\alpha_\lambda}\,dt<\infty.
\]
Here \(n+\alpha_\lambda-2\gamma_\lambda=2-\alpha_\lambda>0\), so the bound is uniform in \(\zeta\). Multiplication by any uniformly bounded weight on \(\mathcal C\) preserves this estimate.

For the pointwise estimate, the ball distance formula implies
\begin{align*}
e^{-d_g(x,z_{r,\zeta})}\le \frac{(1-|x|^2)(1-\left(\tanh(r/2)\right)^2)} {1-2\tanh(r/2)x\cdot\zeta+(\tanh(r/2))^2|x|^2}\le C\frac{e^{-r}(1-|x|^2)} {(e^{-r}+|x-\zeta|)^2}.
\end{align*}
Indeed, \(1-\tanh(r/2)\sim e^{-r}\), \(1-(\tanh(r/2))^2\sim e^{-r}\), and
\[
\begin{aligned}
1&-2\tanh(r/2)x\cdot\zeta+(\tanh(r/2))^2|x|^2\\
&=(1-\tanh(r/2))^2 +2\tanh(r/2)(1-\tanh(r/2))(1-x\cdot\zeta)+(\tanh(r/2))^2|x-\zeta|^2.
\end{aligned}
\]
This is bounded below by a constant times \((e^{-r}+|x-\zeta|)^2\) when \(\tanh(r/2)\ge1/2\). When \(\tanh(r/2)\le1/2\), the same conclusion follows from \(1-2\tanh(r/2)x\cdot\zeta+(\tanh(r/2))^2|x|^2\ge(1-\tanh(r/2)|x|)^2\ge1/4\). Combining this with \eqref{eq3.3} and \(\gamma_\lambda-\frac{n-2}{2}=\alpha_\lambda\) proves \eqref{eq3.7} for \(j=0\).

For the derivatives, the center curve is a unit-speed geodesic. At distance \(d\) from \(x\), the gradient norm of the radial profile is \(|(B^{\HH}_{0,\lambda})'(d)|\), and its Hessian has eigenvalues
\[
(B^{\HH}_{0,\lambda})''(d),\qquad (B^{\HH}_{0,\lambda})'(d)\coth d.
\]

At \(d=0\), smooth radiality gives, with \(\beta_2=(B^{\HH}_{0,\lambda})''(0)\),
\[
(B^{\HH}_{0,\lambda})'(d)=\beta_2d+O(d^3),\qquad (B^{\HH}_{0,\lambda})''(d)=\beta_2+O(d^2),\qquad (B^{\HH}_{0,\lambda})'(d)\coth d=\beta_2+O(d^2).
\]
The two Hessian eigenvalues therefore extend continuously with the same value at the center. They are bounded by \(CB^{\HH}_{0,\lambda}(d)\) near zero by smoothness and away from zero by \eqref{eq3.5}. The first and second derivatives along the center geodesic are the gradient and Hessian evaluated on its unit tangent. These bounds apply also when \(x=z_{r,\zeta}\). The distance estimate consequently proves \eqref{eq3.7} for \(j=1,2\).
\end{proof}

After a common isometry, we may assume that \(\xi=0\) and \(z_r=\tanh(r/2)e_1\), where \(e_1\) is the first Euclidean coordinate vector, and write
\[
h_r=\varrho^{\frac{n-2}{2}}B^{\HH}_{z_r,\lambda}=h_{r,e_1}.
\]
For \(z,\xi\in\HH^n\), let
\begin{align}\label{eq3.8}
r=d_g(z,\xi),\qquad Q(\eps,r) =\langle B^{\HH}_{z,\lambda},B^{\HH}_{\mathrm{pr},\xi,\eps}\rangle_\lambda.
\end{align}
Self-adjointness and conformal covariance give the exact representation
\[
Q(\eps,r)=\int_{\BB^n}h_r(B^{\RR}_{\delta})^{2^*-1}\,dx.
\]

The weak equations for the two conformal representatives give
\[
\mathcal B_\mu(h_r,B^{\RR}_{\mathrm{pr},\delta}) =\int_{\BB^n}h_r(B^{\RR}_\delta)^{2^*-1}\,dx =\int_{\BB^n}h_r^{2^*-1}B^{\RR}_{\mathrm{pr},\delta}\,dx.
\]
Both conformal representatives belong to \(H_0^1(\BB^n)\), and \((B^{\RR}_\delta)^{2^*-1}\in L^{(2^*)'}(\BB^n)\). Sobolev duality therefore justifies the pairings. The explicit Euclidean bubble enters through its source. In the scalar interaction \(I\) from \eqref{eq3.13}, the corresponding source is \((B^{\RR}_{\mathrm{pr},\delta})^{2^*-1}\), so the comparison of \(I\) and \(Q\) reduces to the difference of these two powers.

Define the interaction error by
\[
\mathcal E_Q(\eps,r) =Q(\eps,r)-(n-2)|\mathbb S^{n-1}|\kappa_n\eps^{\frac{n-2}{2}}B^{\HH}_{0,\lambda}(r).
\]

\begin{theorem}\label{Thm3.3}
As \(\eps\to0\), uniformly for every \(r\ge0\),
\[
Q(\eps,r)=(n-2)|\mathbb S^{n-1}|\kappa_n \eps^{\frac{n-2}{2}}B^{\HH}_{0,\lambda}(r) \left(1+O_\cD\!\left(\eps^2\log\frac2\eps\right)\right).
\]
More precisely,
\begin{equation}\label{eq3.9}
|\mathcal E_Q(\eps,r)|+|\cD\mathcal E_Q(\eps,r)| \le C\eps^{\frac{n+2}{2}}\log\frac2\eps B^{\HH}_{0,\lambda}(r).
\end{equation}
The parameter derivatives also satisfy
\begin{equation}\label{eq3.10}
|\partial_r^j\cD^k Q(\eps,r)| \le C\eps^{\frac{n-2}{2}}B^{\HH}_{0,\lambda}(r), \qquad 0\le j\le2,\quad 0\le k\le1,
\end{equation}
and \(\partial_r\cD^kQ(\eps,0)=0\) for \(k=0,1\).
\end{theorem}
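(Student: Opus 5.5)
The proof works directly from the exact representation
\[
Q(\eps,r)=\int_{\BB^n}h_r(B^{\RR}_\delta)^{2^*-1}\,dx,\qquad \delta=\eps/2.
\]
The idea is that \((B^{\RR}_\delta)^{2^*-1}\) concentrates at the origin with total mass
\[
\int_{\RR^n}(B^{\RR}_\delta)^{2^*-1}\,dx=\delta^{\frac{n-2}{2}}\int_{\RR^n}(B^{\RR}_1)^{2^*-1}\,dy=(n-2)|\mathbb S^{n-1}|\kappa_n\delta^{\frac{n-2}{2}}.
\]
The last equality is the flux identity for \(-\Delta B^{\RR}_1=(B^{\RR}_1)^{2^*-1}\), using \(B^{\RR}_1\sim\kappa_n|y|^{2-n}\) at infinity. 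Since \(\varrho(0)=2\), the value of the weight at the concentration point is \(h_r(0)=2^{\frac{n-2}{2}}B^{\HH}_{0,\lambda}(r)\). The product of these two quantities is exactly \((n-2)|\mathbb S^{n-1}|\kappa_n\eps^{\frac{n-2}{2}}B^{\HH}_{0,\lambda}(r)\), which is the main term. The task is therefore to bound \(\mathcal E_Q\), with every error carrying the factor \(B^{\HH}_{0,\lambda}(r)\) uniformly in \(r\).

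\textbf{Step 1: the inner region.} Split \(\BB^n\) into the inner ball \(\{|x|<1/2\}\) and the shell \(\mathcal C\). On the inner ball the plan is to show
\[
\|h_r\|_{C^2(\BB_{1/2})}\le CB^{\HH}_{0,\lambda}(r),
\]
together with the same bound for \(\partial_r^jh_r\), \(j\le2\).
\begin{itemize}
\item For \(j=0,1,2\) in the \(r\)-variable, the values of \(\partial_r^jh_r\) are already controlled by \eqref{eq3.7}, because the weight \((1-|x|^2)^{\alpha_\lambda}(e^{-r}+|x-e_1|)^{-2\gamma_\lambda}\) is bounded on \(\{|x|<1/2\}\).
\item For spatial derivatives, use \eqref{eq3.5} and the smoothness of \(\varrho\) there. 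Along with the triangle inequality \(|d_g(x,z_r)-r|\le d_g(x,0)\le C\), this gives \(B^{\HH}_{0,\lambda}(d_g(x,z_r))\le CB^{\HH}_{0,\lambda}(r)\) by \eqref{eq3.3}.
\end{itemize}
Taylor expand \(h_r\) at \(0\). The linear term integrates to zero against the radial function \((B^{\RR}_\delta)^{2^*-1}\). The quadratic remainder contributes at most
\[
CB^{\HH}_{0,\lambda}(r)\int_{|x|<1/2}|x|^2(B^{\RR}_\delta)^{2^*-1}\,dx\le CB^{\HH}_{0,\lambda}(r)\,\delta^{\frac{n+2}{2}}\int_0^{1/(2\delta)}\frac{t^{n+1}}{(1+t^2)^{\frac{n+2}{2}}}\,dt.
\]
The integrand decays like \(t^{-1}\), so the right-hand side is \(O(\delta^{\frac{n+2}{2}}\ell_\delta)B^{\HH}_{0,\lambda}(r)\). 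This is the source of the logarithm in \eqref{eq3.9}.

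\textbf{Step 2: the shell and the truncation error.} Replacing the inner-ball mass of \((B^{\RR}_\delta)^{2^*-1}\) by the full-space mass costs \(O(\delta^{\frac{n+2}{2}})\). On \(\mathcal C\), use \((B^{\RR}_\delta)^{2^*-1}\le C\delta^{\frac{n+2}{2}}\) together with the bound
\[
\int_{\mathcal C}|\partial_r^jh_r|\,dx\le CB^{\HH}_{0,\lambda}(r).
\]
This bound follows from \eqref{eq3.7}, after dropping \(e^{-r}\) in the denominator, and then \eqref{eq3.6}. Hence the shell contributes \(O(\delta^{\frac{n+2}{2}})B^{\HH}_{0,\lambda}(r)\). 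This is the step where uniformity in \(r\) is at stake, since the hyperbolic center may escape toward \(\partial\BB^n\) near \(e_1\). Lemma~\ref{Lem3.2} is exactly what handles it, so I expect no real difficulty here.

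\textbf{Step 3: the scale derivative and the derivative bounds.} For the \(\cD\)-part of \eqref{eq3.9}, note that \(\cD\) acts only on \((B^{\RR}_\delta)^{2^*-1}\), with \(r\) held fixed.
\begin{itemize}
\item The function \(\cD(B^{\RR}_\delta)^{2^*-1}\) is radial and satisfies \(|\cD(B^{\RR}_\delta)^{2^*-1}|\le C(B^{\RR}_\delta)^{2^*-1}\).
\item Its full-space integral is \(\cD\bigl(c\,\delta^{\frac{n-2}{2}}\bigr)\), which is exactly \(\cD\) of the main term.
\end{itemize}
Repeating Steps 1 and 2 verbatim therefore bounds \(\cD\mathcal E_Q\). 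The estimate \eqref{eq3.10} is cruder. Bound \(|\partial_r^jh_r|\le CB^{\HH}_{0,\lambda}(r)\) on the inner ball and integrate against \(|\cD^k(B^{\RR}_\delta)^{2^*-1}|\le C(B^{\RR}_\delta)^{2^*-1}\), whose mass is \(O(\delta^{\frac{n-2}{2}})\); add the shell bound from Step 2.

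\textbf{Step 4: vanishing of \(\partial_rQ\) at \(r=0\).} At \(r=0\) we have \(z_0=0\), and the differentiation in the \(r\)-variable is in the unit direction \(e_1\) along the geodesic. Hence \(\partial_rh_r|_{r=0}\) is a translation derivative of the radial function \(\varrho^{\frac{n-2}{2}}B^{\HH}_{0,\lambda}(d_g(\cdot,0))\) in the direction \(e_1\). This derivative is odd under the reflection \(x_1\mapsto-x_1\), which is a hyperbolic isometry fixing \(0\) and preserving \(\varrho\). Pairing an odd function with the radial functions \((B^{\RR}_\delta)^{2^*-1}\) and \(\cD(B^{\RR}_\delta)^{2^*-1}\) gives zero. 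Differentiation under the integral is justified by the dominated bounds from Step 1 and \eqref{eq3.7}.
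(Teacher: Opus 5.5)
Your proposal is correct and follows the paper's proof essentially step by step: Taylor expansion of $h_r$ at the origin with the first moment of the radial source $\cD^k(B^{\RR}_\delta)^{2^*-1}$ vanishing, the second moment producing the logarithm, the shell controlled via Lemma~\ref{Lem3.2}, the same differentiated representation for $\partial_r^j\cD^kQ$, and the reflection $x_1\mapsto-x_1$ giving $\partial_r\cD^kQ(\eps,0)=0$. The one step you compress is the uniform interior Hessian bound for $h_r$, which also needs uniform bounds on the $x$-derivatives of $d_g(x,z_r)$ (or the Hessian-eigenvalue bound from the proof of Lemma~\ref{Lem3.2}); the paper obtains these from the explicit $\cosh d$ formula when $r\ge2$, where $d$ stays away from zero on $\{|x|\le1/2\}$, and from smoothness and compactness when $r\le2$.
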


\begin{proof}
Write \(S_\delta=(B^{\RR}_\delta)^{2^*-1}\). The explicit bubble gives
\[
\int_{\RR^n}S_\delta\,dx =(n-2)|\mathbb S^{n-1}|\kappa_n\delta^{\frac{n-2}{2}}, \qquad |\cD^kS_\delta|\le CS_\delta,\quad k=0,1.
\]
The mass identity follows by integrating \(\frac{d}{dt}[t^n(1+t^2)^{-n/2}] =nt^{n-1}(1+t^2)^{-(n+2)/2}\) and using \(\kappa_n^{4/(n-2)}=n(n-2)\). The tail and second moment satisfy
\begin{align}
\left|\int_{\BB^n}\cD^kS_\delta\,dx -\cD^k\bigl((n-2)|\mathbb S^{n-1}|\kappa_n \delta^{\frac{n-2}{2}}\bigr)\right| &\le C\delta^{\frac{n+2}{2}},\label{eq3.11}\\
\int_{|x|<1/2}|x|^2|\cD^kS_\delta|\,dx &\le C\delta^{\frac{n+2}{2}}(1+\ell_\delta), \qquad k=0,1.\label{eq3.12}
\end{align}
Indeed, after \(|x|=\delta t\), the radial integrands for these two estimates decay respectively as \(t^{-3}\) and \(t^{-1}\).

We need the interior Hessian bound
\[
\sup_{|x|\le1/2}|\nabla_x^2h_r(x)| \le CB^{\HH}_{0,\lambda}(r).
\]
For \(0\le r\le2\), this follows from smoothness of the translated radial solution and positivity of \(B^{\HH}_{0,\lambda}\). For \(r\ge2\), put \(d=d_g(x,z_r)\). Then \(|d-r|\le\log3\), and
\[
\cosh d=\frac{1+|x|^2}{1-|x|^2}\cosh r -\frac{2x_1}{1-|x|^2}\sinh r.
\]
On \(|x|\le1/2\), this formula and \(d\ge2-\log3>0\) give uniform bounds for \(\nabla_xd\) and \(\nabla_x^2d\). The chain rule and Proposition~\ref{Prop3.1}, including \(B^{\HH}_{0,\lambda}(d)\le CB^{\HH}_{0,\lambda}(r)\), prove the Hessian bound.

Since \(\cD^kS_\delta\) is radial, its first moment on \(\BB^n\) vanishes. Taylor's formula therefore estimates the interior part of
\[
\int_{\BB^n} \bigl(h_r(x)-h_r(0)-\nabla h_r(0)\cdot x\bigr) \cD^kS_\delta(x)\,dx
\]
by \(CB^{\HH}_{0,\lambda}(r)\delta^{(n+2)/2}(1+\ell_\delta)\). On \(\mathcal C\), \(|\cD^kS_\delta|\le C\delta^{(n+2)/2}\), while Lemma~\ref{Lem3.2} controls the integral of \(h_r\) by \(CB^{\HH}_{0,\lambda}(r)\). The same bound holds for \(|h_r(0)|+|\nabla h_r(0)|\), by Proposition~\ref{Prop3.1}. Thus the annular contribution is at most \(CB^{\HH}_{0,\lambda}(r)\delta^{(n+2)/2}\). Combining \eqref{eq3.11}--\eqref{eq3.12} with \(h_r(0)=2^{(n-2)/2}B^{\HH}_{0,\lambda}(r)\) proves \eqref{eq3.9}.

For \eqref{eq3.10}, differentiate the exact representation:
\[
\partial_r^j\cD^kQ =\int_{\BB^n}\partial_r^jh_r\,\cD^kS_\delta\,dx.
\]
Lemma~\ref{Lem3.2} bounds \(|\partial_r^jh_r|\) by \(CB^{\HH}_{0,\lambda}(r)\) on \(|x|\le1/2\), and its annular integral by the same quantity, for \(j\le2\). The source mass and annular bounds above give \eqref{eq3.10}. These bounds also supply integrable majorants locally in the parameters, justifying each differentiation. Finally, reflection in \(x_1=0\) shows that \(Q(\eps,r)\) extends smoothly and evenly to negative \(r\), so \(\partial_r\cD^kQ(\eps,0)=0\).
\end{proof}

\subsection{Nonlinear interactions and residual estimates}
\label{Sec3.2}

Define the nonlinear scalar interaction by
\begin{align}\label{eq3.13}
I(\eps,r)=\int_{\HH^n}B^{\HH}_{z,\lambda} (B^{\HH}_{\mathrm{pr},\xi,\eps})^{2^*-1}\dv.
\end{align}
Self-adjointness gives
\[
Q(\eps,r)=\int_{\HH^n}(B^{\HH}_{z,\lambda})^{2^*-1} B^{\HH}_{\mathrm{pr},\xi,\eps}\dv,
\]
whereas the mixed energy expansion also involves \(I(\eps,r)\). In the common conformal coordinates,
\[
I-Q=\int_{\BB^n}h_r \left[(B^{\RR}_{\mathrm{pr},\delta})^{2^*-1} -(B^{\RR}_{\delta})^{2^*-1}\right]\,dx.
\]
The bound \eqref{eq2.38} for the projection error \(e_\delta=B^{\RR}_{\mathrm{pr},\delta}-B^{\RR}_\delta\) gives the following comparison without a finer expansion of that error.

\begin{proposition}\label{Prop3.4}
For every fixed \(n\ge4\) and \(\lamc<\lambda<\lambdab\), set \(a_4=2\) and \(a_n=1\) for \(n\ge5\). Uniformly for \(r\ge0\), as \(\eps\to0\),
\begin{equation}\label{eq3.14}
|I-Q|+|\cD(I-Q)| \le CB^{\HH}_{0,\lambda}(r)\eps^{\frac{n+2}{2}} \log^{a_n}\frac2\eps.
\end{equation}
Consequently,
\begin{align}
\frac IQ&=1+O_\cD\!\left(\eps^2\log^{a_n}\frac2\eps\right), \label{eq3.15}\\
\frac{\cD I}{I}&=\frac{n-2}{2} +O\!\left(\eps^2\log^{a_n}\frac2\eps\right). \label{eq3.16}
\end{align}
In particular, for all sufficiently small \(\eps\), uniformly in \(r\),
\begin{equation}\label{eq3.17}
I(\eps,r)\sim_{n,\lambda}Q(\eps,r) \sim_{n,\lambda}\eps^{\frac{n-2}{2}}B^{\HH}_{0,\lambda}(r), \qquad \cD I(\eps,r)\ge c_{n,\lambda}I(\eps,r).
\end{equation}
When \(n=6\), the mass \(\mathfrak m_6\) from Section~\ref{Sec2.4} gives
\begin{equation}\label{eq3.18}
I(\eps,r)=\mathfrak m_6B^{\HH}_{0,\lambda}(r)\eps^2 +O_\cD\!\left(B^{\HH}_{0,\lambda}(r)\eps^4\log\frac2\eps\right).
\end{equation}
\end{proposition}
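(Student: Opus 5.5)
The plan is to work in the common conformal coordinates with \(\xi=0\), \(z=z_r\), and start from the representation
\[
I-Q=\int_{\BB^n}h_r\bigl[(B^{\RR}_{\mathrm{pr},\delta})^{2^*-1}-(B^{\RR}_\delta)^{2^*-1}\bigr]\,dx .
\]
Applying \(\cD\) commutes with the integral, because \(h_r\) does not depend on \(\delta\); the differentiation is justified by the strong \(C^2\) regularity from Lemma~\ref{Lem2.8}. By \eqref{eq2.38}, both the integrand difference and its scale derivative are bounded by \(C\bigl((B^{\RR}_\delta)^{2^*-2}f_\delta+f_\delta^{2^*-1}\bigr)\). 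So it suffices to show
\[
\int_{\BB^n}h_r\bigl((B^{\RR}_\delta)^{2^*-2}f_\delta+f_\delta^{2^*-1}\bigr)\,dx\le CB^{\HH}_{0,\lambda}(r)\,\delta^{\frac{n+2}{2}}\log^{a_n}(1/\delta).
\]
I will split the integral into \(|x|\le\tfrac12\) and the annulus \(\mathcal C\).

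On the annulus, \eqref{eq2.16} and the bubble formula give \(B^{\RR}_\delta+f_\delta\le C\delta^{(n-2)/2}\). Hence the weight there is at most \(C\delta^{(n+2)/2}\). Lemma~\ref{Lem3.2}, combining \eqref{eq3.7} with \(j=0\) and \eqref{eq3.6}, gives \(\int_{\mathcal C}h_r\le CB^{\HH}_{0,\lambda}(r)\), uniformly in \(r\). On \(|x|\le\tfrac12\), the interior Hessian argument in the proof of Theorem~\ref{Thm3.3} (already with \(j=0\)) gives \(h_r\le CB^{\HH}_{0,\lambda}(r)\). It then remains to compute \(\int_{|x|\le1/2}\bigl((B^{\RR}_\delta)^{2^*-2}f_\delta+f_\delta^{2^*-1}\bigr)\,dx\) from \eqref{eq2.16}. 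This is the main bookkeeping step, and the logarithm power \(a_n\) comes out of it.
\begin{itemize}
\item For \(n\ge5\), \((B^{\RR}_\delta)^{2^*-2}f_\delta\le C\delta^{\frac{n+2}{2}}(\delta+|x|)^{-n}\), whose integral is \(O(\delta^{\frac{n+2}{2}}\log(1/\delta))\). The term \(f_\delta^{2^*-1}\) is smaller.
\item For \(n=4\), the factor \(\delta(1+\log(1/|x|))\) produces \(\delta^3\int_\delta^{1/2}t^{-1}\log(1/t)\,dt\sim\delta^3\ell_\delta^2\). The region \(|x|\le\delta\) is also \(O(\delta^3\ell_\delta)\).
\end{itemize}
Together these give \eqref{eq3.14}.

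The remaining conclusions follow from \eqref{eq3.14} and Theorem~\ref{Thm3.3}.
\begin{itemize}
\item Dividing \eqref{eq3.14} by \(Q\sim\eps^{(n-2)/2}B^{\HH}_{0,\lambda}(r)\) gives \(I/Q=1+O(\eps^2\log^{a_n})\). The \(\cD\) of the ratio is controlled in the same way, using \(|\cD Q|\le CQ\) from \eqref{eq3.10}; this proves \eqref{eq3.15}.
\item For \eqref{eq3.16}, write \(\cD I=\cD Q+\cD(I-Q)\). From \eqref{eq3.9}, \(\cD Q=\tfrac{n-2}{2}Q\bigl(1+O(\eps^2\log)\bigr)\), since \(\cD\eps^{(n-2)/2}=\tfrac{n-2}{2}\eps^{(n-2)/2}\). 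Dividing by \(I\) and using \eqref{eq3.15} gives the claim.
\item \eqref{eq3.17} then follows immediately for small \(\eps\).
\end{itemize}

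For \eqref{eq3.18}, at \(n=6\) we have \(2^*-1=2\) and \(\kappa_6\) is explicit. Theorem~\ref{Thm3.3} gives the leading coefficient \(4|\mathbb S^5|\kappa_6\eps^2\). I would check that this equals \(\mathfrak m_6=96\pi^3\): with \(\kappa_6=24\) and \(|\mathbb S^5|=\pi^3\), this is \(96\pi^3\). The error terms from \eqref{eq3.9} and \eqref{eq3.14} with \(a_6=1\) are \(O_\cD(B^{\HH}_{0,\lambda}(r)\eps^4\log(2/\eps))\).

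The only delicate point I expect is the \(n=4\) logarithm count and the uniformity in \(r\) of the annular integral. The latter is exactly what Lemma~\ref{Lem3.2} supplies.
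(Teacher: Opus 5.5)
Your proposal is correct and follows the paper's proof essentially step for step. It uses the same reduction via \eqref{eq2.38}, the same split into \(\{|x|\le1/2\}\) (where \(h_r\le CB^{\HH}_{0,\lambda}(r)\)) and \(\mathcal C\) (handled by \eqref{eq3.6}--\eqref{eq3.7}), the same logarithm count giving \(a_4=2\), and the same derivation of \eqref{eq3.15}--\eqref{eq3.18} from Theorem~\ref{Thm3.3} and \(4|\mathbb S^5|\kappa_6=\mathfrak m_6\). The differences are cosmetic: you justify moving \(\cD\) under the integral by strong parameter regularity rather than local integrable majorants, and you get \eqref{eq3.16} from \(\cD I=\cD Q+\cD(I-Q)\) instead of the identity \(\cD I/I=\cD Q/Q+\cD\log(I/Q)\).
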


\begin{proof}
By \eqref{eq2.38}, the difference of the two powers in the formula for \(I-Q\), and its normalized scale derivative, are bounded by
\[
C\left((B^{\RR}_\delta)^{2^*-2}f_\delta +f_\delta^{2^*-1}\right).
\]
On \(|x|\le1/2\), Lemma~\ref{Lem3.2} gives \(h_r\le CB^{\HH}_{0,\lambda}(r)\). In dimension four, \eqref{eq2.16} and the explicit bubble imply
\[
(B^{\RR}_\delta)^2f_\delta \le C\frac{\delta^3}{(\delta+|x|)^4} \left(1+\log\frac1{\delta+|x|}\right), \qquad f_\delta^3\le C\delta^3 \left(1+\log\frac1{\delta+|x|}\right)^3.
\]
Splitting the first radial integral at \(|x|=\delta\) gives
\[
\int_{|x|<1/2}(B^{\RR}_\delta)^2f_\delta\,dx \le C\delta^3\left(1+\ell_\delta +\int_\delta^{1/2}\frac{1+\log(1/t)}t\,dt\right) \le C\delta^3\ell_\delta^2.
\]
The integrability of \(t^3(1+\log(1/t))^3\) at zero gives \(\int_{|x|<1/2}f_\delta^3\,dx\le C\delta^3\).

For \(n\ge5\), the same bounds give
\[
(B^{\RR}_\delta)^{2^*-2}f_\delta \le C\frac{\delta^{\frac{n+2}{2}}}{(\delta+|x|)^n}, \qquad f_\delta^{2^*-1} \le C\frac{\delta^{\frac{n+2}{2}}} {(\delta+|x|)^{(n-4)(2^*-1)}}.
\]
Their interior integrals are bounded respectively by \(C\delta^{(n+2)/2}\ell_\delta\) and \(C\delta^{(n+2)/2}\), since \(n-(n-4)(2^*-1)=8/(n-2)>0\).

On \(\mathcal C\), both \(B^{\RR}_\delta\) and \(f_\delta\) are at most \(C\delta^{(n-2)/2}\). Thus \eqref{eq3.6}--\eqref{eq3.7} yield, in every dimension,
\[
\int_{\mathcal C}h_r \left((B^{\RR}_\delta)^{2^*-2}f_\delta +f_\delta^{2^*-1}\right)\,dx \le CB^{\HH}_{0,\lambda}(r)\delta^{\frac{n+2}{2}}.
\]
The same integrable majorants hold up to fixed constants in a dyadic neighborhood of each positive \(\delta\). They justify passing \(\cD\) under the integral. The regional estimates therefore prove \eqref{eq3.14}, including its scale derivative, uniformly in \(r\).

Theorem~\ref{Thm3.3} gives \(Q\sim\eps^{(n-2)/2} B^{\HH}_{0,\lambda}(r)\) and \(\cD Q/Q=(n-2)/2+O(\eps^2\log(2/\eps))\). Dividing \eqref{eq3.14} by \(Q\) and differentiating the quotient proves \eqref{eq3.15}. The identity
\[
\frac{\cD I}{I}=\frac{\cD Q}{Q}+\cD\log\frac IQ
\]
then gives \eqref{eq3.16} and \eqref{eq3.17}. For \(n=6\), combining \eqref{eq3.14} with \eqref{eq3.9} and \(4|\mathbb S^5|\kappa_6=\mathfrak m_6\) proves \eqref{eq3.18}.
\end{proof}

The scalar estimates must be supplemented by an \(H^{-1}_\lambda\) estimate to treat the residual equation for a perturbed configuration. Define
\begin{equation}\label{eq3.19}
\mathcal M_{z,\xi,\eps}=(B^{\HH}_{z,\lambda}+B^{\HH}_{\mathrm{pr},\xi,\eps})^{2^*-1}-(B^{\HH}_{z,\lambda})^{2^*-1}-(B^{\HH}_{\mathrm{pr},\xi,\eps})^{2^*-1}.
\end{equation}
Since \(B^{\HH}_{z,\lambda}\) solves the hyperbolic equation,
\[
\cR_\lambda(B^{\HH}_{z,\lambda}+B^{\HH}_{\mathrm{pr},\xi,\eps}) =\cR_\lambda(B^{\HH}_{\mathrm{pr},\xi,\eps})-\mathcal M_{z,\xi,\eps}.
\]
The next lemma gives the dimension-dependent dual bounds for this mixed term, its normalized scale derivative, and the two interaction terms arising from variations of the coefficients. Here \(\cD\) acts only on the projected Euclidean profile.

\begin{lemma}\label{Lem3.5}
For every fixed \(n\ge4\) and \(\lamc<\lambda<\lambdab\), there are \(\eps_0>0\) and \(C>0\) such that, for \(0<\eps<\eps_0\), uniformly in \(z,\xi\),
\begin{equation*}
\dnorm{\mathcal M_{z,\xi,\eps}} +\dnorm{\cD\mathcal M_{z,\xi,\eps}}
\le C\begin{cases}
I(\eps,r),&n=4,5,\\[1mm]
\eps^2\sqrt{\log(2/\eps)},&n=6,\\[1mm]
\eps^{\frac{n+2}{4}},&n\ge7.
\end{cases}
\end{equation*}
Each of
\begin{equation}\label{eq3.20}
\begin{aligned}
&\left[(B^{\HH}_{z,\lambda}+B^{\HH}_{\mathrm{pr},\xi,\eps})^{2^*-2} -(B^{\HH}_{z,\lambda})^{2^*-2}\right]B^{\HH}_{z,\lambda},\\
&\left[(B^{\HH}_{z,\lambda}+B^{\HH}_{\mathrm{pr},\xi,\eps})^{2^*-2} -(B^{\HH}_{\mathrm{pr},\xi,\eps})^{2^*-2}\right] B^{\HH}_{\mathrm{pr},\xi,\eps}
\end{aligned}
\end{equation}
has \(H^{-1}_\lambda\) norm bounded by the same right side. For \(n\ge7\), the bound \(C\eps^{(n+2)/4}\) also holds in \(L^{(2^*)'}(\HH^n)\) for \(\mathcal M_{z,\xi,\eps}\), \(\cD\mathcal M_{z,\xi,\eps}\), and the two expressions in \eqref{eq3.20}.
\end{lemma}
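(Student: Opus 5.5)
The plan is to work in the common conformal coordinates. After a common isometry we take \(\xi=0\) and \(z=z_r\), so that, by conformal covariance, \(\mathcal M_{z,\xi,\eps}=\varrho^{-(n+2)/2}\widetilde{\mathcal M}\) with
\[
\widetilde{\mathcal M}=(h_r+b)^{2^*-1}-h_r^{2^*-1}-b^{2^*-1},\qquad b=B^{\RR}_{\mathrm{pr},\delta}.
\]
The \(H^{-1}_\lambda\) norm is then bounded by the Euclidean \(H^{-1}(\BB^n)\) norm, up to \((1-4\mu)^{-1/2}\). Functions in this dual space are controlled in two ways: by their \(L^{(2^*)'}(\BB^n)\) norm (Sobolev duality), or, near \(\partial\BB^n\), by weighted Cauchy--Schwarz with the Hardy inequality \eqref{eq2.2}. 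We use the elementary pointwise bound
\[
|\widetilde{\mathcal M}|\le C\bigl(h_r^{2^*-2}b+h_r b^{2^*-2}\bigr)\quad (n\le 6),\qquad |\widetilde{\mathcal M}|\le C\min\{h_r^{2^*-2}b,\;h_r b^{2^*-2}\}\quad (n\ge 6,\ 2^*-1\le2).
\]
More precisely, for \(2^*-2\le1\) we have \(|\widetilde{\mathcal M}|\le C\min\{h_r,b\}^{2^*-2}\max\{h_r,b\}\). The same bound holds for \(\cD\widetilde{\mathcal M}\), because \(|\cD b|\le Cb\) by \eqref{eq2.14} and \(\cD\) acts only on \(b\). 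It also holds for the two expressions in \eqref{eq3.20}, each of which is dominated by \(\min\{h_r,b\}^{2^*-2}\max\{h_r,b\}\) through the mean value theorem.

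We split \(\BB^n\) into three regions: the core \(|x|\le \delta^{\theta}\) for a suitable \(\theta\in(0,1)\) (or simply the region where \(b\ge h_r\)), the intermediate region up to \(|x|=1/2\), and the annulus \(\mathcal C\).

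\emph{Core and intermediate regions.} On \(|x|\le1/2\), Lemma~\ref{Lem3.2} gives \(h_r\le CB^{\HH}_{0,\lambda}(r)=:C\beta\le C\), and Lemma~\ref{Lem2.3} gives \(b\sim B^{\RR}_\delta\).

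For \(n=4,5\) we estimate the \(L^{(2^*)'}\) norm of \(h_r b^{2^*-2}+h_r^{2^*-2}b\) directly.
\begin{itemize}
\item The term \(\beta\,(B^{\RR}_\delta)^{2^*-2}\) has \(L^{(2^*)'}(\BB_{1/2})\) norm of order \(\beta\delta^{(n-2)/2}\). For \(n=4\) this is \(\beta\delta\cdot\) (logarithmically bounded? No: the norm is \(\|(B_\delta)\|_{L^{4/3}}\sim\delta\)), which is fine.
\item The term \(\beta^{2^*-2}B^{\RR}_\delta\) has \(L^{(2^*)'}\) norm \(\sim\beta^{2^*-2}\delta^{(n-2)/2}\) for \(n=5\) and \(\delta\) for \(n=4\). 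Since \(\beta^{2^*-2}\) may be much larger than \(\beta\), we use the minimum instead: where \(h_r\ge b\), the bound \(b^{2^*-2}h_r\) only involves \(\beta\) linearly.
\end{itemize}
The conclusion aimed for is a bound \(C\beta\delta^{(n-2)/2}\sim C I(\eps,r)\), by \eqref{eq3.17}.

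For \(n=6\) the minimum gives \(|\widetilde{\mathcal M}|\le C\min\{h_r,b\}\), hence at most \(C\min\{1,B^{\RR}_\delta\}\). We pair this with test functions exactly as in \eqref{eq2.12}, using \(B^{\RR}_\delta\lesssim\delta^2(\delta+|x|)^{-4}\) and the Hardy inequality at the origin, which yields \(\delta^2\sqrt{\ell_\delta}\).

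For \(n\ge7\) we bound \(\min\{h_r,b\}^{2^*-1}\cdot\bigl(\max/\min\bigr)\) by \(h_r^{(2^*-1)/2}b^{(2^*-1)/2}\) when \(2^*-1\le 2\). The \(L^{(2^*)'}\) norm of \(b^{(n+2)/(2(n-2))}\) on \(\BB_{1/2}\) equals \(\|B^{\RR}_\delta\|_{L^{n/(n-2)}}^{(n+2)/(2(n-2))}\), and this scales to \(\delta^{(n+2)/4}\). This gives the claimed \(\eps^{(n+2)/4}\) bound in \(L^{(2^*)'}\), hence in \(H^{-1}_\lambda\).

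\emph{Annulus \(\mathcal C\).} Here \(b\le C\delta^{(n-2)/2}(1-|x|)^{\alpha_\lambda}\) by \eqref{eq2.11}, and \(h_r\) obeys \eqref{eq3.7}. For \(n=4,5\) the term \(h_r b^{2^*-2}\) is paired with \(\phi\) via
\[
\int_{\mathcal C} h_r b^{2^*-2}|\phi|\le \Bigl\|(1-|x|)h_r b^{2^*-2}\Bigr\|_{L^2(\mathcal C)}\,\bigl\|\phi/(1-|x|)\bigr\|_2 ,
\]
and then \eqref{eq2.2}, \eqref{eq3.6}, and \((1-|x|)^{2\alpha_\lambda+\dots}\) integrability give \(C\beta\delta^{(n+2)/2}\ll I\). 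The term \(h_r^{2^*-2}b\) is handled through the minimum: it is at most \(b^{2^*-2}h_r\) when \(b\le h_r\), and at most \(h_r^{2^*-2}b\le b^{2^*-1}\) otherwise, which is \(O(\delta^{(n+2)/2})\) times a boundary weight. This last piece is not proportional to \(\beta\), and we expect it to be the \textbf{main obstacle}: when the hyperbolic center escapes, \(\beta\) can be far smaller than \(\delta^2\).

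The first attempt will be to show that the region \(\{b>h_r\}\cap\mathcal C\) carries only \(\min\{h_r,b\}^{2^*-2}\max\)-type mass that can still be bounded by \(\beta\delta^{(n-2)/2}\cdot\delta^{\text{something}}\). To do this we compare the explicit profiles \(b\approx\delta^{(n-2)/2}(1-|x|)^{\alpha_\lambda}\) and
\[
h_r\approx\beta(1-|x|)^{\alpha_\lambda}(e^{-r}+|x-\zeta|)^{-2\gamma_\lambda}.
\]
Both have the same boundary exponent \(\alpha_\lambda\), so the ratio \(b/h_r\) is bounded by \(\delta^{(n-2)/2}\beta^{-1}|x-\zeta|^{2\gamma_\lambda}\) up to constants. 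This makes \(\min\{h_r,b\}^{2^*-2}\max\{h_r,b\}\le C h_r^{\theta}b^{2^*-1-\theta}\) for a fraction \(\theta\). Integrating with \eqref{eq3.6} then gives a bound \(\lesssim(\beta\delta^{(n-2)/2})^{\theta'}\delta^{\dots}\).

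For \(n=4,5\) we need exactly linear dependence on \(I\). We therefore use the \(\min\) form, \(\min^{2^*-2}\max\le h_r\,b^{2^*-2}+\)(symmetric), and note that on \(\{b\ge h_r\}\) the bound \(h_r b^{2^*-2}\) is still linear in \(\beta\). The same estimate shows that the pointwise bound \(|\widetilde{\mathcal M}|\le C h_r b^{2^*-2}\) holds wherever \(2^*-2\le1\) and \(b\ge h_r\), which covers \(n\ge6\). For \(n=4,5\), where \(2^*-2>1\), we instead use \(|\widetilde{\mathcal M}|\le C(h_r b^{2^*-2}+h_r^{2^*-2}b)\). In the second term \(h_r^{2^*-2}b\le h_r b^{2^*-2}\) when \(h_r\le b\), and \(\le h_r^{2^*-2}b\) with \(h_r>b\). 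The latter is linear in \(b\), hence in \(\delta^{(n-2)/2}\), and we bound it by \(\beta^{2^*-3}h_r b\). This gives \(\beta\delta^{(n-2)/2}\) times a bounded integral, which is again \(\lesssim I\).

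The scale derivatives and the expressions in \eqref{eq3.20} follow from the same pointwise majorants.
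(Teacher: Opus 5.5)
There is a genuine gap at the step you yourself flag as the main obstacle: the annulus \(\mathcal C\) for \(n=4,5\), which is exactly where uniformity in \(r\) is decided. In your last paragraph you bound \(h_r^{2^*-2}b\) on \(\{h_r>b\}\) by \(\beta^{2^*-3}h_rb\), with \(\beta=B^{\HH}_{0,\lambda}(r)\). This needs \(h_r\le C\beta\), which Lemma~\ref{Lem3.2} gives only on \(\{|x|\le1/2\}\). On \(\mathcal C\) the bound fails badly. At the escaping center \(z_r=\tanh(r/2)e_1\) one has \(h_r(z_r)=\varrho(z_r)^{\frac{n-2}{2}}B^{\HH}_{0,\lambda}(0)\sim e^{\frac{n-2}{2}r}\), whereas \(\beta\sim e^{-\gamma_\lambda r}\) and \(b(z_r)\lesssim\delta^{\frac{n-2}{2}}e^{-\alpha_\lambda r}\); hence \(z_r\in\{h_r>b\}\). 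Nor is the resulting integral bounded. With the boundary Hardy bound \eqref{eq3.24}, the majorants \eqref{eq3.25}, and \(\tau=e^{-r}\), one gets
\[
\|(1-|x|)h_r^{2^*-2}b\|_{L^2(\mathcal C)}^2\le C\delta^{n-2}\beta^{2(2^*-2)}\tau^{-2\gamma_\lambda(2^*-3)},
\]
and the last factor is unbounded as \(r\to\infty\). Linearity in \(I\) survives only because the two-sided decay \(\beta\sim\tau^{\gamma_\lambda}\) from \eqref{eq3.3} cancels this factor exactly, leaving \(C\delta^{n-2}\beta^2\). This compensation, carried out in \eqref{eq3.26}--\eqref{eq3.27}, is the missing idea.

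Your diagnosis of the obstacle is also off. For \(n=4,5\) the mixed term is comparable to \(\max\{h_r,b\}^{2^*-2}\min\{h_r,b\}\), which is the \emph{larger} monomial there, so no \(\min\)-type bound is available. On the other hand, both monomials carry a factor \(h_r\), so no \(b^{2^*-1}\) piece ever appears. Likewise \(\beta^{2^*-2}\le C\beta\), not the reverse, since \(2^*-2>1\) and \(\beta\le C\); that is why the interior is immediate.

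The other dimensions also leave \(\mathcal C\) untreated, and the \(n\ge7\) computation does not give the stated rate.

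For \(n=6\), \(\widetilde{\mathcal M}=2h_rb\) exactly, not \(\le C\min\{h_r,b\}\). The weighted Cauchy--Schwarz argument of \eqref{eq2.12} uses \(h_r\le C\), which is false on \(\mathcal C\). There you need a center-independent bound, e.g.\ H\"older with \(\|B^{\HH}_{z,\lambda}\|_{L^3(\HH^6)}\) and \eqref{eq3.28}.

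For \(n\ge7\) the admissible pointwise bound is \(C\max\{h_r,b\}^{2^*-2}\min\{h_r,b\}\), which is the smaller monomial since \(2^*-2<1\). The form \(\min\{h_r,b\}^{2^*-2}\max\{h_r,b\}\) that you actually use is the larger one and is not dominated by \((h_rb)^{(2^*-1)/2}\). Even with the correct monomial, the geometric mean loses a logarithm. Indeed, \(\int_{\BB_{1/2}}b^{n/(n-2)}\,dx\sim\delta^{n/2}\int_0^{1/2}t^{n-1}(\delta+t)^{-n}\,dt\sim\delta^{n/2}\ell_\delta\), so you obtain \(\delta^{(n+2)/4}\ell_\delta^{(n+2)/(2n)}\) rather than \(\delta^{(n+2)/4}\).

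The paper avoids the logarithm by using \(h_rb^{2^*-2}\) on \(|x|\le\sqrt\delta\) and \(h_r^{2^*-2}b\) on \(\sqrt\delta<|x|<1/2\); each region contributes \(O(\delta^{n/2})\) to \(\int|\cdot|^{2n/(n+2)}\). It then handles \(\mathcal C\) by H\"older with \(\|(B^{\HH}_{z,\lambda})^{2^*-2}\|_{L^{n/2}(\HH^n)}\) and \eqref{eq3.28}. The logarithm would be harmless downstream, where only \(o(\eps^2)\) is used, but it does not prove the lemma as stated.
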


\begin{proof}
Normalize the centers by a common isometry as in \eqref{eq3.8}. In conformal variables,
\[
B^{\HH}_{z,\lambda}=\varrho^{-\frac{n-2}{2}}h_r,\qquad B^{\HH}_{\mathrm{pr},\xi,\eps}=\varrho^{-\frac{n-2}{2}}B^{\RR}_{\mathrm{pr},\delta}.
\]
Since \(\frac{n-2}{2}\,(2^*-1)=\frac{n+2}{2}\) and \(\dv=\varrho^n\,dx\), all conformal weights cancel in duality with \(\varrho^{-\frac{n-2}{2}}\phi\), \(\phi\in H_0^1(\BB^n)\). Thus the representative of \eqref{eq3.19} in the Euclidean metric is
\[
\mathcal M^{\RR}_{\delta,r} =(h_r+B^{\RR}_{\mathrm{pr},\delta})^{2^*-1}-h_r^{2^*-1}-(B^{\RR}_{\mathrm{pr},\delta})^{2^*-1} .
\]
The two expressions in \eqref{eq3.20} have the same conformal weight. Lemma~\ref{Lem2.4} gives, in every dimension,
\begin{equation}\label{eq3.21}
|\cD B^{\RR}_{\mathrm{pr},\delta}|\le C B^{\RR}_{\mathrm{pr},\delta}.
\end{equation}
Moreover,
\begin{equation}\label{eq3.22}
\cD\mathcal M^{\RR}_{\delta,r} =(2^*-1)\left[(h_r+B^{\RR}_{\mathrm{pr},\delta})^{2^*-2} -(B^{\RR}_{\mathrm{pr},\delta})^{2^*-2}\right] \cD B^{\RR}_{\mathrm{pr},\delta}.
\end{equation}

Suppose first that \(n=4,5\). For positive \(s,t\),
\begin{equation}\label{eq3.23}
(s+t)^{2^*-1}-s^{2^*-1}-t^{2^*-1} \leq C_{2^*-1}\bigl(s^{2^*-2}t+st^{2^*-2}\bigr).
\end{equation}
On a fixed interior ball, \(h_r\leq CB^{\HH}_{0,\lambda}(r)\) uniformly for \(r\ge0\), by Lemma~\ref{Lem3.2}. Sobolev duality and the scaling of the Aubin--Talenti bubble give
\[
\|B^{\RR}_{\mathrm{pr},\delta}\|_{L^{4/3}}+ \|(B^{\RR}_{\mathrm{pr},\delta})^{2}\|_{L^{4/3}}\leq C\delta\quad(n=4),
\]
and
\[
\|B^{\RR}_{\mathrm{pr},\delta}\|_{L^{10/7}}+ \|(B^{\RR}_{\mathrm{pr},\delta})^{4/3}\|_{L^{10/7}} \leq C\delta^{3/2} \quad(n=5),
\]
where all norms in these two displays are taken on the fixed interior ball. Hence \eqref{eq3.23} contributes at most \(C\delta^{\frac{n-2}{2}}B^{\HH}_{0,\lambda}(r)\) to \(H^{-1}\) there.

It remains to treat the annulus \(\{x\in\BB^n:1/2<|x|<1\}\). The Euclidean boundary Hardy inequality gives
\begin{equation}\label{eq3.24}
\|f\|_{H^{-1}(\BB^n)}\le C\|(1-|x|) f\|_{L^2(\BB^n)}.
\end{equation}
Indeed, for \(\phi\in H_0^1(\BB^n)\),
\[
\left|\int_{\BB^n}f\phi\,dx\right| \le\|(1-|x|) f\|_2\|\phi/(1-|x|)\|_2 \le C\|(1-|x|) f\|_2\|\nabla\phi\|_2.
\]
By \eqref{eq2.3}, this also controls the corresponding \(H^{-1}_\lambda\) norm.

Set \(\tau=e^{-r}\). Under the normalization \(z_r=\tanh(r/2)e_1\), Lemma~\ref{Lem3.2} and \eqref{eq2.11} give directly, on \(\mathcal C\),
\begin{equation}\label{eq3.25}
B^{\RR}_{\mathrm{pr},\delta}(x)\le C\delta^{\frac{n-2}{2}}(1-|x|)^{\alpha_\lambda}, \qquad h_r(x)\le C B^{\HH}_{0,\lambda}(r) \frac{(1-|x|)^{\alpha_\lambda}} {(\tau+|x-e_1|)^{2\gamma_\lambda}}.
\end{equation}
For \(0\le r\le1\), \(\tau\) and \(B^{\HH}_{0,\lambda}(r)\) are bounded below, so \eqref{eq3.23}--\eqref{eq3.25} yield the required bound immediately. Assume henceforth that \(r\ge1\). In polar coordinates centered at \(e_1\), use \(1-|x|\le|x-e_1|\), with \(t\) as the radial integration variable. All norms below are taken on \(\mathcal C\).

In dimension $4$, the dominant term is \(h_r^2 B^{\RR}_{\mathrm{pr},\delta}\). Since \(\gamma_\lambda=1+\alpha_\lambda\), \eqref{eq3.25} gives
\begin{align}
\|(1-|x|) h_r^2B^{\RR}_{\mathrm{pr},\delta}\|_2^2 &\le C\delta^2 B^{\HH}_{0,\lambda}(r)^4 \int_0^2\frac{t^{6\gamma_\lambda-1}} {(\tau+t)^{8\gamma_\lambda}}\,dt\notag\\
&\le C\delta^2 B^{\HH}_{0,\lambda}(r)^4\tau^{-2\gamma_\lambda} \le C\delta^2 B^{\HH}_{0,\lambda}(r)^2.\label{eq3.26}
\end{align}
For the other term,
\begin{align*}
\|(1-|x|) h_r(B^{\RR}_{\mathrm{pr},\delta})^2\|_2^2 &\le C\delta^4 B^{\HH}_{0,\lambda}(r)^2 \int_0^2\frac{t^{6\gamma_\lambda-1}} {(\tau+t)^{4\gamma_\lambda}}\,dt\\
&\le C\delta^4 B^{\HH}_{0,\lambda}(r)^2,
\end{align*}
because the integral is at most \(\int_0^2t^{2\gamma_\lambda-1}\,dt<\infty\).

In dimension $5$, the dominant term is \(h_r^{4/3}B^{\RR}_{\mathrm{pr},\delta}\). Now \(\gamma_\lambda=3/2+\alpha_\lambda\), so
\begin{align}
\|(1-|x|) h_r^{4/3}B^{\RR}_{\mathrm{pr},\delta}\|_2^2 &\le C\delta^3 B^{\HH}_{0,\lambda}(r)^{8/3} \int_0^2\frac{t^{14\gamma_\lambda/3-1}} {(\tau+t)^{16\gamma_\lambda/3}}\,dt\notag\\
&\le C\delta^3 B^{\HH}_{0,\lambda}(r)^{8/3}\tau^{-2\gamma_\lambda/3} \le C\delta^3 B^{\HH}_{0,\lambda}(r)^2.\label{eq3.27}
\end{align}
For the other term,
\begin{align*}
\|(1-|x|) h_r(B^{\RR}_{\mathrm{pr},\delta})^{4/3}\|_2^2 &\le C\delta^4 B^{\HH}_{0,\lambda}(r)^2 \int_0^2\frac{t^{14\gamma_\lambda/3-1}} {(\tau+t)^{4\gamma_\lambda}}\,dt\\
&\le C\delta^4 B^{\HH}_{0,\lambda}(r)^2,
\end{align*}
since this integral is bounded by \(\int_0^2t^{2\gamma_\lambda/3-1}\,dt\). Thus the smaller term contributes at most \(C\delta^2 B^{\HH}_{0,\lambda}(r)\) in either dimension, which is bounded by \(C\delta^{\frac{n-2}{2}}B^{\HH}_{0,\lambda}(r)\). The last inequalities in \eqref{eq3.26} and \eqref{eq3.27} use \(B^{\HH}_{0,\lambda}(r)\sim\tau^{\gamma_\lambda}\).

For the radial integrals in \eqref{eq3.26} and \eqref{eq3.27}, the substitution \(t=\tau u\) gives, whenever \(b>a>0\),
\[
\int_0^2\frac{t^{a-1}}{(\tau+t)^b}\,dt =\tau^{a-b}\int_0^{2/\tau}\frac{u^{a-1}}{(1+u)^b}\,du \le C_{a,b}\tau^{a-b}.
\]
Combining \eqref{eq3.24} with \eqref{eq3.26}--\eqref{eq3.27} proves
\[
\|\mathcal M_{z,\xi,\eps}\|_{H^{-1}_\lambda} \leq C\delta^{\frac{n-2}{2}}B^{\HH}_{0,\lambda}(r).
\]
Proposition~\ref{Prop3.4} shows that the right side is comparable to \(I(\eps,r)\) with fixed constants.

For the interaction terms arising from variations of the coefficients, the mean value theorem gives, for \(s,t>0\),
\begin{align*}
&\big[(s+t)^{2^*-2}-s^{2^*-2}\big]s +\big[(s+t)^{2^*-2}-t^{2^*-2}\big]t\le C\bigl(s^{2^*-2}t+st^{2^*-2}\bigr).
\end{align*}
Thus the same two monomials control their representatives in the Euclidean metric. Equations \eqref{eq3.21} and \eqref{eq3.22} give the identical bound for \(|\cD\mathcal M^{\RR}_{\delta,r}|\). The estimates on \(\{x\in\BB^n:|x|\le1/2\}\) and \(\{x\in\BB^n:1/2<|x|<1\}\) just proved therefore establish all the asserted dual bounds when \(n=4,5\).

Now let \(n=6\), so \(2^*=3\). The exact identity is
\[
\mathcal M_{z,\xi,\eps} =2B^{\HH}_{z,\lambda}B^{\HH}_{\mathrm{pr},\xi,\eps}, \qquad \mathcal M^{\RR}_{\delta,r}=2h_rB^{\RR}_{\mathrm{pr},\delta}.
\]
Both quantities in \eqref{eq3.20} equal \(B^{\HH}_{z,\lambda}B^{\HH}_{\mathrm{pr},\xi,\eps}\). On \(|x|\le1/2\), Lemmas~\ref{Lem2.3}--\ref{Lem2.4} and the uniform bound for \(h_r\) imply
\[
|2h_rB^{\RR}_{\mathrm{pr},\delta}| +|\cD(2h_rB^{\RR}_{\mathrm{pr},\delta})| \le C\frac{\delta^2}{(\delta+|x|)^4}.
\]

The weighted Cauchy--Schwarz estimate \eqref{eq2.12} therefore bounds the interior dual norm by \(C\delta^2\sqrt{\ell_\delta}\). The common pointwise bound gives the same estimate for the scale derivative and both coefficient terms.

For the exterior, let \(\Omega_{\mathrm{out}}\) denote the hyperbolic region represented by \(\mathcal C\) in the ball centered at \(\xi\). Critical conformal invariance and \eqref{eq2.11} give
\begin{equation}\label{eq3.28}
\|B^{\HH}_{\mathrm{pr},\xi,\eps}\|_{L^{2^*}(\Omega_{\mathrm{out}})} =\|B^{\RR}_{\mathrm{pr},\delta}\|_{L^{2^*}(\mathcal C)} \le C\delta^{\frac{n-2}{2}}.
\end{equation}
In dimension $6$, H\"older's inequality therefore yields
\[
\|B^{\HH}_{z,\lambda}B^{\HH}_{\mathrm{pr},\xi,\eps}\|_{L^{3/2}(\Omega_{\mathrm{out}})} \le\|B^{\HH}_{z,\lambda}\|_{L^3(\HH^6)} \|B^{\HH}_{\mathrm{pr},\xi,\eps}\|_{L^3(\Omega_{\mathrm{out}})} \le C\delta^2.
\]
The hyperbolic norm is independent of \(z\), and \eqref{eq3.21} gives the same estimate with the scale derivative of the projected bubble. Sobolev duality, together with the interior estimate and \(\eps=2\delta\), proves all the six-dimensional assertions uniformly in the centers.

Finally, let \(n\ge7\), so \(2<2^*<3\). For positive \(h,b\),
\begin{equation}\label{eq3.29}
0\le(h+b)^{2^*-1}-h^{2^*-1}-b^{2^*-1} \le C\min\{hb^{2^*-2},h^{2^*-2}b\}.
\end{equation}
Indeed, if \(b\ge h\), the mean value theorem after subtracting \(b^{2^*-1}\) gives \(Chb^{2^*-2}\), which is the smaller monomial. The other case is symmetric. The same two cases give
\begin{align*}
&\big[(h+b)^{2^*-2}-h^{2^*-2}\big]h +\big[(h+b)^{2^*-2}-b^{2^*-2}\big]b\le C\min\{hb^{2^*-2},h^{2^*-2}b\}.
\end{align*}
When \(b\ge h\), the first term is at most \(Chb^{2^*-2}\), and the second is at most \(Chb^{2^*-3}b\). Exchange \(h\) and \(b\) for the other case. Combining these inequalities with \eqref{eq3.21} and \eqref{eq3.22}, the Euclidean representatives of these four expressions are bounded by
\[
C\min\left\{h_r(B^{\RR}_{\mathrm{pr},\delta})^{2^*-2}, h_r^{2^*-2}B^{\RR}_{\mathrm{pr},\delta}\right\}.
\]

For every \(F\in L^{(2^*)'}(\BB^n)\), conformal change of variables gives
\[
\|\varrho^{-\frac{n+2}{2}}F\|_{L^{(2^*)'}(\HH^n)} =\|F\|_{L^{(2^*)'}(\BB^n)}, \qquad (2^*)'=\frac{2n}{n+2}.
\]
On \(|x|\le1/2\), \(h_r\le C\) uniformly in the center. By Lemma~\ref{Lem2.3},
\[
B^{\RR}_{\mathrm{pr},\delta}(|x|) \sim B^{\RR}_{\delta}(|x|) \sim\frac{\delta^{\frac{n-2}{2}}}{(\delta+|x|)^{n-2}}.
\]
It remains to estimate \(\min\{(B^{\RR}_{\mathrm{pr},\delta})^{2^*-2},B^{\RR}_{\mathrm{pr},\delta}\}\). We bound it by the first monomial on \(|x|\le\sqrt\delta\) and by the second on \(|x|>\sqrt\delta\). For \(\delta<1/4\), the three radial integrals satisfy
\begin{align*}
\int_0^\delta(\delta^{-2})^{\frac{2n}{n+2}}t^{n-1}\,dt &\le C\delta^{\frac{n(n-2)}{n+2}},\\
\int_\delta^{\sqrt\delta}(\delta^2t^{-4})^{\frac{2n}{n+2}} t^{n-1}\,dt &\le C\delta^{n/2},\\
\int_{\sqrt\delta}^{1/2} (\delta^{\frac{n-2}{2}}t^{2-n})^{\frac{2n}{n+2}} t^{n-1}\,dt &\le C\delta^{n/2}.
\end{align*}
The annular exponents have the required signs because
\[
n-\frac{8n}{n+2}=\frac{n(n-6)}{n+2}>0, \qquad n-\frac{2n(n-2)}{n+2}=-\frac{n(n-6)}{n+2}<0.
\]

Since \(n(n-2)/(n+2)>n/2\), taking the \((n+2)/(2n)\)-power gives the interior bound \(C\delta^{(n+2)/4}\).

On \(\Omega_{\mathrm{out}}\), retain the second monomial in \eqref{eq3.29}. H\"older's inequality and \eqref{eq3.28} give
\begin{align*}
&\|(B^{\HH}_{z,\lambda})^{2^*-2} B^{\HH}_{\mathrm{pr},\xi,\eps}\|_{L^{(2^*)'}(\Omega_{\mathrm{out}})}\le \|(B^{\HH}_{z,\lambda})^{2^*-2}\|_{L^{n/2}(\HH^n)} \|B^{\HH}_{\mathrm{pr},\xi,\eps}\|_{L^{2^*}(\Omega_{\mathrm{out}})} \le C\delta^{\frac{n-2}{2}}.
\end{align*}
Here \(\frac1{(2^*)'}=\frac2n+\frac1{2^*}\) and \((2^*-2)n/2=2^*\), so the first factor is independent of the hyperbolic center. Since \((n-2)/2>(n+2)/4\) for \(n>6\), the exterior satisfies the required bound as well. This proves the stronger Lebesgue estimates for these four expressions. Sobolev duality gives their \(H^{-1}_\lambda\) estimates and completes the proof.
\end{proof}

\section{Local stability and sharpness}
\label{Sec4}

We prove Theorem~\ref{Thm1.1} by choosing orthogonal parameters, establishing uniform coercivity, and testing the residual against the profiles and the scale derivative. The coefficient errors and \(s+I\) admit linear bounds in every dimension; the remainder has the logarithmic loss when \(n=6\). We then prove parameter localization and construct sharpness examples.

\subsection{Orthogonal decomposition and uniform coercivity}
\label{Sec4.1}

For \(\theta=(z,\eps,\xi)\), put \(r=d_g(z,\xi)\) and write
\[
H=B^{\HH}_{z,\lambda},\qquad B=B^{\HH}_{\mathrm{pr},\xi,\eps},\qquad \Sigma_\theta=H+B.
\]
Choose hyperbolic orthonormal frames at \(z\) and \(\xi\), parallel along the geodesic joining the centers when \(z\ne\xi\). Let \(Y_j=\partial_{z,j}H\), \(1\le j\le n\), denote the center derivatives of the hyperbolic profile. With \(\cD=\eps\partial_\eps\), retain the normalized derivatives \(Z_0=\cD B\) and \(Z_j=\eps\partial_{\xi,j}B\), \(1\le j\le n\), from Section~\ref{Sec2.3}, and set
\begin{align}
\cK_\theta&=\Span\{H,Y_1,\ldots,Y_n,B,Z_0,Z_1,\ldots,Z_n\}, \label{eq4.1}\\
\cX_\theta&=\cK_\theta^{\perp_\lambda}. \notag
\end{align}
The space \(\cK_\theta\) is the tangent space, at coefficients \((1,1)\), to the family obtained by varying the two coefficients, the centers, and the Euclidean scale. The coefficient derivatives \(H\) and \(B\) account for the negative directions of the limiting single-profile quadratic forms. Orthogonality to the full space \(\cK_\theta\) gives positive coercivity on \(\cX_\theta\). Both coefficients remain fixed at \(1\) in the comparison family \(\cM_\mix(\eps_0)\).

We next compute the cross terms in the Gram matrix. For \(r>0\), let \(\gamma:[0,r]\to\HH^n\) be the unit-speed geodesic from \(z\) to \(\xi\). Choose
\[
e_1^z=-\dot\gamma(0),\qquad e_1^\xi=-\dot\gamma(r),
\]
so the two vectors are parallel, the first points away from \(\xi\), and the second points toward \(z\). Complete them by parallel transverse frames \(e_i^z,e_i^\xi\), \(2\leq i\leq n\). Thus
\[
d_zr(e_1^z)=1,\qquad d_\xi r(e_1^\xi)=-1,
\]
and the Jacobi field formula in curvature \(-1\) gives
\[
\nabla^2_{z,\xi}r(e_i^z,e_j^\xi) =-\frac{\delta_{ij}}{\sinh r}\quad(i,j\geq2).
\]
Every cross entry is a parameter derivative of \(Q(\eps,r)=\ip{H}{B}\), defined in \eqref{eq3.8}. With primes denoting \(r\)-derivatives, the full block is
\begin{equation}\label{eq4.2}
\begin{array}{c|cccc}
&B^{\HH}_{\mathrm{pr},\xi,\eps}&Z_0&Z_1&Z_j,\ j\ge2\\
\hline B^{\HH}_{z,\lambda}&Q&\cD Q&-\eps Q'&0\\
Y_1&Q'&\cD Q'&-\eps Q''&0\\
Y_i,\ i\ge2&0&0&0& -\eps\dfrac{Q'}{\sinh r}\,\delta_{ij}
\end{array}
\end{equation}
where the last quotient is interpreted at \(r=0\) by its limit \(Q''(0)\). Indeed, smooth radiality gives \(Q'(\eps,0)=0\) and \(Q'(\eps,r)/\sinh r\to Q''(\eps,0)\). When the centers coincide, the resulting bilinear form is independent of the chosen common frame. Theorem~\ref{Thm3.3} therefore controls the block for all relative positions.

\begin{lemma}\label{Lem4.1}
For sufficiently small \(\eps\), the Gram matrix of the ordered family in \eqref{eq4.1}, computed in the energy inner product, is uniformly positive definite, independently of \(z,\xi\), and \(r=d_g(z,\xi)\).
\end{lemma}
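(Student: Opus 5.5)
The Gram matrix splits into two diagonal blocks and the cross block \eqref{eq4.2}. The plan is to show that each diagonal block is a fixed positive definite matrix up to \(o(1)\), and that the cross block is uniformly \(o(1)\) as \(\eps\to0\). Positive definiteness with a uniform lower eigenvalue bound then follows by perturbation.

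\emph{Hyperbolic block.} The entries \(\ip{H}{H}\), \(\ip{H}{Y_j}\), \(\ip{Y_i}{Y_j}\) are invariant under isometries. They are independent of \(z\) and of the orthonormal frame, up to conjugation by an orthogonal matrix, which leaves the eigenvalues unchanged. By radiality, \(\ip{H}{Y_j}=\tfrac12\partial_{z,j}\norm{H}^2=0\). Also \(\ip{Y_i}{Y_j}=a\delta_{ij}\) with \(a>0\), since each \(Y_j\) is a nonzero derivative of a nonconstant radial profile; this uses \eqref{eq3.4}. Together with \(\norm{H}^2=S_{n,\lambda}^{n/2}>0\), this block is a fixed positive definite diagonal matrix.

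\emph{Euclidean block.} By isometry invariance, take \(\xi=0\) and pass to Euclidean representatives. The block is then \(\mathcal B_\mu(\cdot,\cdot)\) evaluated on \(B^{\RR}_{\mathrm{pr},\delta}\), \(\cD B^{\RR}_{\mathrm{pr},\delta}\) and \(Z_{j,\delta}\). After the rescaling \(\widehat{\cdot}\), which preserves the Dirichlet integral, \(\mathcal B_\mu\) becomes \(\int(\nabla\cdot\nabla-V_\delta\,\cdot\,\cdot)\,dy\). Lemma~\ref{Lem2.5} and \eqref{eq2.25} give strong \(\dot H^1\) convergence of the rescaled family to \(B^{\RR}_1, Z_0^{\RR}, -\partial_1B^{\RR}_1,\dots,-\partial_nB^{\RR}_1\). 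Lemma~\ref{Lem2.7} shows that the \(V_\delta\) terms vanish in the limit. The block therefore converges to the \(\dot H^1(\RR^n)\) Gram matrix of these limits. By parity and radiality this limit is diagonal, apart from \(\int\nabla B^{\RR}_1\cdot\nabla Z_0^{\RR}=\tfrac12\cD\|\nabla B^{\RR}_\delta\|_2^2|_{\delta=1}=0\), which vanishes by scale invariance. All diagonal entries are positive, and the limiting functions are linearly independent. Hence the block is uniformly positive definite for small \(\eps\).

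\emph{Cross block.} This is the step needing care, because uniformity in \(r\) is required, including \(r=0\) and \(r\to\infty\). By \eqref{eq4.2}, every entry is a combination of \(Q,\cD Q,\eps Q',\cD Q',\eps Q''\) and \(\eps Q'/\sinh r\). Theorem~\ref{Thm3.3}, through \eqref{eq3.10}, bounds the first five by \(C\eps^{(n-2)/2}B^{\HH}_{0,\lambda}(r)\le C\eps^{(n-2)/2}\).

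For the quotient there are two regimes. For \(r\ge1\), use \(\sinh r\ge c\) together with the bound on \(Q'\). For \(r\le1\), use \(Q'(\eps,0)=0\) and the mean value theorem, which give \(|Q'|\le r\sup|Q''|\le Cr\eps^{(n-2)/2}\); since \(r/\sinh r\le1\), the quotient is bounded by the same quantity. At \(r=0\) this is consistent with the interpretation of the quotient as \(Q''(0)\).

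Thus the cross block is \(O(\eps^{(n-2)/2})\) uniformly in \(z,\xi\). The full Gram matrix is a positive definite block-diagonal matrix, with smallest eigenvalue bounded below independently of the centers, perturbed by an \(O(\eps^{(n-2)/2})\) error. Shrinking \(\eps_0\) concludes the proof.
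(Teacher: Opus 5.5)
Your proposal is correct and follows the paper's proof: the hyperbolic block is fixed by isometry, the Euclidean block converges via \eqref{eq2.19}, \eqref{eq2.25} and \eqref{eq2.22} to the positive definite Gram matrix of $B^{\RR}_1, Z_0^{\RR}, -\partial_jB^{\RR}_1$, and Theorem~\ref{Thm3.3} makes the cross block \eqref{eq4.2} uniformly small; the paper then concludes by a Schur complement where you conclude by perturbation. Your explicit treatment of $\eps Q'/\sinh r$ near $r=0$ supplies a detail the paper leaves implicit, and although your list of cross entries omits $Q'$ (row $Y_1$, column $B$), \eqref{eq3.10} bounds that entry as well.
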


\begin{proof}
The diagonal hyperbolic block is fixed by isometry. The strong limits \eqref{eq2.19} and \eqref{eq2.25}, together with the vanishing potential pairings \eqref{eq2.22} in Lemma~\ref{Lem2.7}, show that the diagonal Euclidean block converges to the Euclidean Gram matrix of
\[
B^{\RR}_{1},\quad Z_0^{\RR},\quad-\partial_1B^{\RR}_{1},\ldots,-\partial_nB^{\RR}_{1},
\]
which is positive definite. Every cross entry is listed in \eqref{eq4.2}. The estimates of Theorem~\ref{Thm3.3} imply that the operator norm of the entire cross block tends to zero uniformly in \(r\). The result follows by the Schur complement.
\end{proof}

Uniformity and local uniqueness below are understood in the scaled variables
\[
|dz|_g+\frac{|d\xi|_g}{\eps}+\frac{|d\eps|}{\eps}.
\]
A chart centered at \(\theta=(z,\eps,\xi)\) uses the increment
\begin{equation*}
\left( \exp_z^{-1}(z'),\ \log\frac{\eps'}{\eps},\ \frac1\eps\exp_\xi^{-1}(\xi') \right)
\end{equation*}
on a ball of fixed radius. Parameter selection uses local smooth orthonormal frames at the two centers separately, including when they coincide. The orthogonality conditions are invariant under changes of these frames.

For \(\zeta\in T_z\HH^n\), \(\eta\in T_\xi\HH^n\), and \(\sigma\in\RR\), take components in the chosen frames. The differential of the profile map at the base point is
\[
\left.\frac{\mathrm d}{\mathrm dt}\right|_{t=0} \Sigma_{(\exp_z(t\zeta),\,\eps e^{t\sigma},\, \exp_\xi(t\eps\eta))} =\sum_{j=1}^n\zeta_jY_j+\sigma Z_0 +\sum_{j=1}^n\eta_jZ_j.
\]
These scaled first variations are uniformly bounded by Lemma~\ref{Lem2.8}. Orthogonality depends only on their span, so changes of frames or coordinates do not change the equations. On a fixed small chart, the scale ratios, coefficients near \(1\), and differentials of the exponential maps are uniformly bounded and invertible, including at coincident centers.

\begin{proposition}\label{Prop4.2}
After decreasing \(\eps_0\) if necessary, there is \(\eta_0>0\) such that every \(u\) satisfying \eqref{eq1.2} has a locally unique representation in the scaled coordinates above, uniformly in the reference parameters:
\begin{equation}\label{eq4.3}
u=\nu_\Hyp B^{\HH}_{z,\lambda}+\nu_\Euc B^{\HH}_{\mathrm{pr},\xi,\eps}+\rho,\qquad\rho\in\cX_\theta.
\end{equation}
The parameters and remainder satisfy
\[
\begin{aligned}
&|\nu_\Hyp-1|+|\nu_\Euc-1|+d_g(z,z_*) +\left|\log\frac{\eps}{\eps_*}\right| +\frac{d_g(\xi,\xi_*)}{\eps_*}+\norm{\rho}\\
&\qquad\le C\norm{u-\Sigma_{\theta_*}}, \qquad\theta_*=(z_*,\eps_*,\xi_*).
\end{aligned}
\]
\end{proposition}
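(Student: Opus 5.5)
This is a quantitative implicit function argument in the scaled chart centered at \(\theta_*=(z_*,\eps_*,\xi_*)\). Introduce the unknowns \(p=(\nu_\Hyp,\nu_\Euc,\zeta,\sigma,\eta)\in\RR^{2n+3}\), where \(\zeta\in T_{z_*}\HH^n\) and \(\eta\in T_{\xi_*}\HH^n\) are taken in the fixed frames. Set
\[
\theta(p)=\bigl(\exp_{z_*}\zeta,\ \eps_*e^{\sigma},\ \exp_{\xi_*}(\eps_*\eta)\bigr),
\]
and define the map \(F:\RR^{2n+3}\times H^1_\lambda(\HH^n)\to\RR^{2n+3}\) by
\[
F(p,u)=\Bigl(\bigl\langle u-\nu_\Hyp B^{\HH}_{z,\lambda}-\nu_\Euc B^{\HH}_{\mathrm{pr},\xi,\eps},\,W\bigr\rangle_\lambda\Bigr)_{W\in\{H,Y_1,\dots,Y_n,B,Z_0,Z_1,\dots,Z_n\}},
\]
with all quantities evaluated at \(\theta(p)\). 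The scaled derivatives appear in \(F\) so that every entry is \(O(1)\). A zero of \(F\) is exactly a representation \eqref{eq4.3} with \(\rho\in\cX_\theta\).

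At the base point \(p_0=(1,1,0,0,0)\) we have \(F(p_0,\Sigma_{\theta_*})=0\). Also \(|F(p_0,u)|\le C\norm{u-\Sigma_{\theta_*}}\), by \eqref{eq2.23} and the fixed norms of \(H\) and \(Y_j\). The \(p\)-differential at \((p_0,\Sigma_{\theta_*})\) is the negative of the Gram matrix of \(\cK_{\theta_*}\). This is because the differential of \(\nu_\Hyp H+\nu_\Euc B\) in \(p\) at \(p_0\) is \(\dot\nu_\Hyp H+\dot\nu_\Euc B+\sum\zeta_jY_j+\sigma Z_0+\sum\eta_jZ_j\), as computed before the statement. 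The terms obtained by differentiating the test functions \(W\) are multiplied by \(u-\Sigma_{\theta_*}\), so they vanish at \(u=\Sigma_{\theta_*}\) and are \(O(\norm{u-\Sigma_{\theta_*}})\) in general. By Lemma~\ref{Lem4.1}, the leading part is uniformly invertible, independently of \(\theta_*\) and \(r\).

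The quantitative step is a uniform bound on the second derivatives of \(F\) in \(p\), and hence uniform Lipschitz control of \(D_pF\) on a fixed ball \(|p-p_0|\le\kappa\). For this I use the strong \(C^2\) regularity and the bounds \eqref{eq2.24} from Lemma~\ref{Lem2.8}. These bounds are stated in the normalized variables \(\cD\) and \(\eps\partial_\xi\). Since \(\eps=\eps_*e^{\sigma}\) with \(|\sigma|\le\kappa\), normalizing by \(\eps\) rather than \(\eps_*\) changes the constants only by factors \(e^{\pm\kappa}\). The center variation \(\exp_{\xi_*}(\eps_*\eta)\) has scaled displacement \(O(\kappa)\). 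Moving the frames changes the tested quantities only through bounded smooth transition matrices, so it does not affect the equations.

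The hyperbolic part needs the analogous \(C^2\) bounds for \(z\mapsto B^{\HH}_{z,\lambda}\), which follow from isometry invariance and the smoothness and decay of \(B^{\HH}_{0,\lambda}\) (Proposition~\ref{Prop3.1}). None of these bounds depend on \(r\). The cross pairings only enter through the Gram matrix, which is already handled. Hence \(D_pF\) stays uniformly invertible on a \(\kappa\)-ball. A Newton, or contraction mapping, argument then gives, for \(\norm{u-\Sigma_{\theta_*}}<\eta_0\), a unique zero \(p(u)\) in that ball with \(|p(u)-p_0|\le C\norm{u-\Sigma_{\theta_*}}\).

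The chart bounds \(d_g(z,z_*)\le|\zeta|\), \(|\log(\eps/\eps_*)|=|\sigma|\), and \(d_g(\xi,\xi_*)/\eps_*\le|\eta|\) give the stated parameter estimate. For the remainder,
\[
\rho=(u-\Sigma_{\theta_*})+(\Sigma_{\theta_*}-\nu_\Hyp H-\nu_\Euc B),
\]
and the second difference is bounded by \(C|p-p_0|\) via the uniform first-derivative bounds \eqref{eq2.23}. Finally, \(\eps<\eps_0\) follows from \(\eps_*<\eps_0/2\) after shrinking \(\eta_0\).

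The main obstacle is the uniformity claim. It requires that every constant in this argument is independent of \(\eps_*\) and \(r\), including at coincident centers and as \(r\to\infty\). I would handle this entirely through Lemma~\ref{Lem4.1} and the normalized bounds of Lemma~\ref{Lem2.8}, checking carefully that the \(\eps\)-weighting of the center derivatives is consistent with the chart scaling \(\eps_*\eta\).
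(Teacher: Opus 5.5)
Your proposal is essentially the paper's proof. Both are a contraction-mapping form of the implicit function theorem in the chart $\mathbf p=(\nu_\Hyp,\nu_\Euc,\zeta,\sigma,\eta)$ about $\mathbf p_0=(1,1,0,0,0)$: the base Jacobian is minus the Gram matrix $G_0$ of $\cK_{\theta_*}$, uniformly invertible by Lemma~\ref{Lem4.1}, and the perturbation is controlled by Lemma~\ref{Lem2.8} and Proposition~\ref{Prop3.1}. The only cosmetic difference is the choice of test functions. The paper tests against the chart derivatives $f_i(\mathbf p)=\partial_{p_i}f(\mathbf p)$ of $f(\mathbf p)=\nu_\Hyp B^{\HH}_{z,\lambda}+\nu_\Euc B^{\HH}_{\mathrm{pr},\xi,\eps}$; these span $\cK_\theta$ and avoid choosing frames at the moving centers.

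One step should be restated. Your test functions $W$ are already first derivatives of the profile maps. A uniform bound on the second $\mathbf p$-derivatives of $F$, or Lipschitz continuity of $D_{\mathbf p}F$, would therefore need third-order bounds for the projected bubble, and Lemma~\ref{Lem2.8} does not supply these. You do not need them. The contraction only requires
\[
\|D_{\mathbf p}F(\mathbf p,u)+G_0\|\le C\bigl(|\mathbf p-\mathbf p_0|+\norm{u-\Sigma_{\theta_*}}\bigr)
\]
on the ball. This follows from \eqref{eq2.23}--\eqref{eq2.24} by bounding the term $\ip{u-f(\mathbf p)}{D_{\mathbf p}W}$ crudely by $\norm{u-f(\mathbf p)}\,\norm{D_{\mathbf p}W}$. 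The paper does exactly this with $\partial_{p_j}F_i=-\ip{f_j}{f_i}+\ip{u-f}{f_{ij}}$.
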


\begin{proof}
Fix \(\theta_*=(z_*,\eps_*,\xi_*)\) and use coordinates
\[
\mathbf p=(\nu_\Hyp,\nu_\Euc,\zeta,\sigma,\eta),\qquad z=\exp_{z_*}\zeta,\quad\eps=\eps_*e^\sigma,\quad\xi=\exp_{\xi_*}(\eps_*\eta),
\]
with base point \(\mathbf p_0=(1,1,0,0,0)\). Set
\[
f(\mathbf p)=\nu_\Hyp B^{\HH}_{z,\lambda}+\nu_\Euc B^{\HH}_{\mathrm{pr},\xi,\eps},\qquad f_i=\partial_{p_i}f,\qquad F_i(\mathbf p)=\ip{u-f(\mathbf p)}{f_i(\mathbf p)}.
\]
Write \(F=(F_i)_i\) and \(f_{ij}=\partial_{p_j}\partial_{p_i}f\). At \(\mathbf p_0\), the vectors \(f_i(\mathbf p_0)\) are precisely the functions listed in \eqref{eq4.1}, up to ordering. On a small fixed coordinate ball, \(\nu_\Hyp,\nu_\Euc\) stay away from zero, and the differentials of the exponential charts are invertible. The center derivatives of \(f\) are the corresponding linear combinations of \(\nu_\Hyp Y_j\) and \(\nu_\Euc(\eps_*/\eps)Z_j\), while its scale derivative is \(\nu_\Euc Z_0\). Thus the \(f_i(\mathbf p)\) span \(\cK_\theta\), and \(F(\mathbf p)=0\) is exactly the required orthogonality condition.

Let \(G_0=(\ip{f_j(\mathbf p_0)}{f_i(\mathbf p_0)})_{i,j}\). Lemma~\ref{Lem4.1} bounds \(G_0^{-1}\) uniformly. Lemma~\ref{Lem2.8}, Proposition~\ref{Prop3.1}, and the uniform geometry of the charts give
\[
\max_i\norm{f_i(\mathbf p)}+\max_{i,j}\norm{f_{ij}(\mathbf p)}\le C
\]
on a fixed ball about \(\mathbf p_0\). Writing \(d=\norm{u-f(\mathbf p_0)}\), we have
\[
\partial_{p_j}F_i(\mathbf p) =-\ip{f_j(\mathbf p)}{f_i(\mathbf p)}+\ip{u-f(\mathbf p)}{f_{ij}(\mathbf p)},
\]
and the preceding bounds imply, with a prime denoting the full derivative with respect to \(\mathbf p\),
\[
\|F'(\mathbf p)+G_0\|\le C\bigl(|\mathbf p-\mathbf p_0|+d\bigr).
\]
Indeed, the bounded second derivatives make \(f_i\) Lipschitz, and \(\norm{u-f(\mathbf p)}\le d+C|\mathbf p-\mathbf p_0|\).

Consider \(\mathcal T(\mathbf p)=\mathbf p+G_0^{-1}F(\mathbf p)\). Choose a fixed radius \(0<\eta_{\mathrm{chart}}<\log2\) sufficiently small and then choose \(\eta_0\) so that, whenever \(d<\eta_0\),
\[
\sup_{|\mathbf p-\mathbf p_0|\le \eta_{\mathrm{chart}}}\|\mathcal T'(\mathbf p)\|\le\tfrac12, \qquad|\mathcal T(\mathbf p_0)-\mathbf p_0|\le C_0d\le\tfrac{\eta_{\mathrm{chart}}}{2}.
\]
Thus \(\mathcal T\) maps the closed ball of radius \(\eta_{\mathrm{chart}}\) into itself and is a contraction there. Its unique fixed point satisfies \(F(\mathbf p)=0\) and \(|\mathbf p-\mathbf p_0|\le2C_0d\). In particular, the coefficient errors, scaled parameter increments, and \(\norm{\rho}\) are bounded by \(Cd\), proving the stated uniformity and local uniqueness. Changes of orthonormal frames preserve the equations, and common hyperbolic isometries preserve the construction. Finally, \(\eps_*<\eps_0/2\) and \(|\sigma|\le \eta_{\mathrm{chart}}<\log2\) ensure \(\eps<\eps_0\).
\end{proof}

Define the self-adjoint linearization at the reference sum by
\begin{equation*}
L_\theta=P^{\HH}_\lambda-(2^*-1)\Sigma_\theta^{2^*-2}.
\end{equation*}
For \(v\in H^1_\lambda(\HH^n)\), define its rescaling at \((\xi,\eps)\) by
\[
(\mathscr S_{\xi,\eps}v)(y) =\delta^{\frac{n-2}{2}}\varrho(\delta y)^{\frac{n-2}{2}} v\bigl(\tau_\xi(\delta y)\bigr), \qquad \delta=\eps/2,\quad |y|<\delta^{-1},
\]
and set it equal to zero outside this ball. Here \(\varrho(x)=2/(1-|x|^2)\), and \(\tau_\xi\) is a hyperbolic isometry sending the origin to \(\xi\), as in Section~\ref{Sec2.1}. The Euclidean representative has zero trace, and \eqref{eq2.3} shows that these maps are uniformly bounded from \(H^1_\lambda(\HH^n)\) to \(\dot H^1(\RR^n)\).

We next prove a uniform spectral gap on \(\cX_\theta\).

\begin{proposition}\label{Prop4.3}
For every fixed admissible \((n,\lambda)\), there are \(c,C,\eps_0>0\) such that
\begin{equation}\label{eq4.4}
\langle L_\theta\rho,\rho\rangle\ge c\norm{\rho}^2, \qquad \rho\in\cX_\theta,\quad0<\eps<\eps_0,
\end{equation}
uniformly in both centers. Define
\[
L_\theta^\perp:\cX_\theta\longrightarrow\cX_\theta^*, \qquad(L_\theta^\perp\rho)[\phi]=\langle L_\theta\rho,\phi\rangle.
\]
With the energy norm on \(\cX_\theta\) and its dual norm on \(\cX_\theta^*\),
\begin{equation}\label{eq4.5}
\bigl\|(L_\theta^\perp)^{-1}\bigr\|_{\cX_\theta^*\to\cX_\theta}\le C.
\end{equation}
\end{proposition}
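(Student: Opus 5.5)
We argue by contradiction and compactness, using the two single-profile nondegeneracy results as the only spectral inputs. Suppose \eqref{eq4.4} fails. Then there are parameters \(\theta_\ell=(z_\ell,\eps_\ell,\xi_\ell)\) with \(\eps_\ell\to0\) and functions \(\rho_\ell\in\cX_{\theta_\ell}\) with \(\norm{\rho_\ell}=1\) and \(\langle L_{\theta_\ell}\rho_\ell,\rho_\ell\rangle\le o(1)\).

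By a common isometry we may take \(z_\ell=o\). Passing to a subsequence, we obtain two weak limits: \(\rho_\ell\weakto\rho^\Hyp\) in \(H^1_\lambda(\HH^n)\), and \(\mathscr S_{\xi_\ell,\eps_\ell}\rho_\ell\weakto\rho^\Euc\) in \(\dot H^1(\RR^n)\). The key steps then run in the following order.

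First, we pass the orthogonality conditions to the limit.
\begin{itemize}
\item Orthogonality to \(H\) and to the \(Y_j\) is fixed at \(z=o\), so it gives \(\rho^\Hyp\perp_\lambda\{B^{\HH}_{o,\lambda},Y_j\}\).
\item Orthogonality to \(B,Z_0,Z_j\) rescales, via \eqref{eq2.19}, \eqref{eq2.25}, and the vanishing potential pairings of Lemma~\ref{Lem2.7}, to \(\dot H^1\)-orthogonality of \(\rho^\Euc\) to \(B^{\RR}_1\), \(Z_0^{\RR}\), and \(\partial_jB^{\RR}_1\).
\end{itemize}

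Second, we split the potential term. Pointwise,
\[
\Sigma^{2^*-2}\le H^{2^*-2}+B^{2^*-2}+\text{mixed terms},
\]
and the mixed error
\[
\int(\Sigma^{2^*-2}-H^{2^*-2}-B^{2^*-2})\rho_\ell^2
\]
tends to zero. By Hölder, this reduces to the statement that \(\Sigma^{2^*-2}-H^{2^*-2}-B^{2^*-2}\to0\) in \(L^{n/2}\). That convergence follows because \(B\) concentrates in a \(\delta\)-ball: outside a small fixed ball \(B\to0\) in \(L^{2^*}\) by \eqref{eq3.28}, and inside the ball \(H\) is bounded while the ball's measure shrinks. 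We also have
\[
\int H^{2^*-2}\rho_\ell^2\to\int (B^{\HH}_{o,\lambda})^{2^*-2}(\rho^\Hyp)^2
\]
by local compactness and the decay of \(H\). Likewise, \(\int B^{2^*-2}\rho_\ell^2\) converges, after rescaling and using the tail bound of \(B\), to \(\int (B^{\RR}_1)^{2^*-2}(\rho^\Euc)^2\).

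Third, we decouple the quadratic form, and this is the step I expect to be the main obstacle. The claim is
\[
\norm{\rho_\ell}^2\ge\norm{\rho^\Hyp}^2+\|\nabla\rho^\Euc\|_2^2-\mu\text{-potential contributions}+o(1).
\]
The plan is to write
\[
\rho_\ell=\rho^\Hyp+\mathscr S^{-1}\rho^\Euc+w_\ell,
\]
and to verify that the cross terms vanish by weak convergence plus asymptotic orthogonality of the two scales. This is Brezis--Lieb-type orthogonality in a Hilbert space. The hyperbolic potential \(\lambda\rho^2\) acting on the concentrating piece is negligible, by Lemma~\ref{Lem2.7} and the Hardy estimate. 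With the splitting in hand,
\[
\begin{aligned}
\liminf_{\ell\to\infty}\langle L_{\theta_\ell}\rho_\ell,\rho_\ell\rangle
\ge{}& \bigl\langle L^\Hyp\rho^\Hyp,\rho^\Hyp\bigr\rangle\\
&+\Bigl(\|\nabla\rho^\Euc\|^2-(2^*-1)\int(B^{\RR}_1)^{2^*-2}(\rho^\Euc)^2\Bigr)\\
&+\liminf_{\ell\to\infty}\Bigl(\norm{w_\ell}^2-(2^*-1)\int\Sigma^{2^*-2}w_\ell^2\Bigr).
\end{aligned}
\]
The hyperbolic linearized form is coercive on the orthogonal complement of \(B^{\HH}_{o,\lambda}\) and the \(Y_j\). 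This uses the Ganguly--Sandeep kernel result together with the fact that the single negative direction is \(B^{\HH}_{o,\lambda}\) itself; the Euclidean form is coercive for the same reasons. The remainder \(w_\ell\) converges weakly to zero at both scales, so its potential term tends to zero by the same local compactness. Hence \(\liminf\ge c(\norm{\rho^\Hyp}^2+\|\nabla\rho^\Euc\|^2+\norm{w_\ell}^2)\ge c'>0\), using \(\norm{\rho_\ell}=1\) and the decoupling, which is a contradiction.

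The uniformity needs separate handling of the cases \(r_\ell\) bounded versus \(r_\ell\to\infty\). It holds because \(\xi_\ell\) enters only through the rescaling, so the argument does not care where \(\xi_\ell\) sits. For \(\eps\) bounded below no case arises, since we only need small \(\eps\).

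Finally, \eqref{eq4.5} follows from \eqref{eq4.4} by Lax--Milgram on the closed subspace \(\cX_\theta\). The form is bounded by Sobolev, since \(\|\Sigma\|_{2^*}\le C\), and it is coercive, so the inverse has norm at most \(1/c\).
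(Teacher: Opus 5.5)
Your proposal is correct and follows essentially the same route as the paper: contradiction and compactness with two-scale weak limits, passage of the orthogonality conditions via \eqref{eq2.19}, \eqref{eq2.25} and \eqref{eq2.22}, convergence of the weighted forms, control of the mixed potential, the two single-profile gaps, and Lax--Milgram for \eqref{eq4.5}. The difference is bookkeeping. The paper phrases the gaps as upper bounds $(2^*-1)\int(B^{\HH}_{0,\lambda})^{2^*-2}v^2\dv\le(1-c_\Hyp)\norm{v}^2$ and uses only the Bessel-type inequality $\norm{\rho_\Hyp}^2+\|\nabla\rho_\Euc\|_2^2\le1$, proved with compactly supported approximants, so it never constructs $\mathscr S^{-1}\rho^\Euc$ or tracks the remainder $w_\ell$; for $n\ge6$ it also needs only the subadditive bound $(H+B)^{2^*-2}\le H^{2^*-2}+B^{2^*-2}$ rather than $L^{n/2}$ convergence of the mixed potential.
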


\begin{proof}
We first recall the two single-profile spectral gaps. A weight in \(L^{n/2}\) defines a compact quadratic form on the energy space, by approximation with bounded compactly supported weights and local Rellich compactness; this also applies to weights converging strongly in \(L^{n/2}\).

The hyperbolic spectral gap is
\begin{equation}\label{eq4.6}
(2^*-1)\int_{\HH^n}(B^{\HH}_{0,\lambda})^{2^*-2}v^2\dv \le(1-c_\Hyp)\norm{v}^2, \quad v\perp_\lambda\{B^{\HH}_{0,\lambda},Y_1|_{z=0},\ldots,Y_n|_{z=0}\},
\end{equation}
for some \(c_\Hyp>0\). Indeed, \cite[Proposition 3.1 and Lemma 3.2]{BGKMOneBubble}, using the nondegeneracy theorem \cite[Theorem 1.1]{GangulySandeep2015}, identifies the first two weighted eigenspaces with the profile and its center derivatives, with eigenvalues \(1\) and \(2^*-1\). The next eigenvalue is strictly larger than \(2^*-1\). Energy orthogonality to these eigenfunctions is equivalent to weighted \(L^2\)-orthogonality, so their removal gives \eqref{eq4.6}.

The Euclidean gap is
\begin{equation}\label{eq4.7}
(2^*-1)\int_{\RR^n}(B^{\RR}_1)^{2^*-2}v^2\,dy \le(1-c_\Euc)\|\nabla v\|_2^2, \quad v\perp_{\dot H^1}\{B^{\RR}_1,Z_0^{\RR},\partial_1B^{\RR}_1,\ldots,\partial_nB^{\RR}_1\},
\end{equation}
with \(c_\Euc=4/(n+4)\). Under stereographic projection, the weighted eigenvalues are \(1+4j(j+n-1)/(n(n-2))\), \(j\ge0\). The constraints remove the spherical harmonics of degrees zero and one; the next eigenvalue is \((2^*-1)(n+4)/n\), which gives the stated constant. The first two weighted eigenspaces are identified in \cite{BianchiEgnell1991}.

Let \(\theta_\ell=(z_\ell,\eps_\ell,\xi_\ell)\), with \(\eps_\ell\to0\), \(\rho_\ell\in\cX_{\theta_\ell}\), and \(\norm{\rho_\ell}=1\). For this proof write
\[
H_\ell=B^{\HH}_{z_\ell,\lambda},\qquad B_\ell=B^{\HH}_{\mathrm{pr},\xi_\ell,\eps_\ell},\qquad \delta_\ell=\eps_\ell/2.
\]
After recentering at \(z_\ell\), take a weak limit \(\rho_\Hyp\) in \(H^1_\lambda\). Set \(\widehat\rho_\ell=\mathscr S_{\xi_\ell,\eps_\ell}\rho_\ell\), and pass to a weak limit \(\rho_\Euc\) in \(\dot H^1(\RR^n)\). Hyperbolic orthogonality passes directly to \(\rho_\Hyp\). The strong limits \eqref{eq2.19} and \eqref{eq2.25}, and the vanishing potential pairings \eqref{eq2.22}, pass Euclidean orthogonality to \(\rho_\Euc\). The two limits satisfy the constraints in \eqref{eq4.6} and \eqref{eq4.7}, respectively.

We claim that their energies satisfy
\begin{equation}\label{eq4.8}
\norm{\rho_\Hyp}^2+\|\nabla\rho_\Euc\|_2^2\le1.
\end{equation}
Choose fixed smooth compactly supported approximations \(f\) and \(v\) to the two limits. Let \(F_\ell\) be the hyperbolic translate of \(f\) to \(z_\ell\), and let \(V_\ell\) have Euclidean representative \(\delta_\ell^{-(n-2)/2}v(x/\delta_\ell)\) in the frame at \(\xi_\ell\). Then
\[
\norm{F_\ell}^2=\norm f^2,\qquad \norm{V_\ell}^2\longrightarrow\|\nabla v\|_2^2.
\]
The second assertion follows because the rescaled potential tends uniformly to zero on the fixed support of \(v\). Moreover, \(\|P^{\HH}_\lambda F_\ell\|_\infty\le C_f\) and \(\|V_\ell\|_{L^1(\HH^n)}=O(\delta_\ell^{(n+2)/2})\). Hence
\[
|\ip{F_\ell}{V_\ell}| =|\langle P^{\HH}_\lambda F_\ell,V_\ell\rangle| \le C_{f,v}\delta_\ell^{\frac{n+2}{2}}\longrightarrow0,
\]
uniformly in the relative positions. Weak convergence and the local vanishing of the potential also give
\[
\ip{\rho_\ell}{F_\ell}\longrightarrow\ip{\rho_\Hyp}{f}, \qquad \ip{\rho_\ell}{V_\ell}\longrightarrow\int\nabla\rho_\Euc\cdot\nabla v\,dy.
\]
For each fixed pair \((f,v)\), expanding \(\norm{\rho_\ell-F_\ell-V_\ell}^2\) and taking the preceding limits gives
\[
\begin{aligned}
0\le\lim_{\ell\to\infty} \norm{\rho_\ell-F_\ell-V_\ell}^2 ={}&1+\norm f^2+\|\nabla v\|_2^2 -2\ip{\rho_\Hyp}{f}\\
&-2\int_{\RR^n}\nabla\rho_\Euc\cdot\nabla v\,dy.
\end{aligned}
\]
After taking \(\ell\to\infty\) with these supports fixed, let \(f\to\rho_\Hyp\) in \(H^1_\lambda\) and \(v\to\rho_\Euc\) in \(\dot H^1\). This proves~\eqref{eq4.8}.

The compactness of the weighted forms yields
\begin{align*}
\int_{\HH^n}H_\ell^{2^*-2}\rho_\ell^2\dv &\longrightarrow\int_{\HH^n}(B^{\HH}_{0,\lambda})^{2^*-2}\rho_\Hyp^2\dv,\\
\int_{\HH^n}B_\ell^{2^*-2}\rho_\ell^2\dv &\longrightarrow\int_{\RR^n}(B^{\RR}_1)^{2^*-2}\rho_\Euc^2\,dy.
\end{align*}
For the second limit, use conformal invariance of this integral and the strong \(L^{2^*}\) convergence of the rescaled projected bubble. Its power \(2^*-2\) converges strongly in \(L^{n/2}\), since \((2^*-2)n/2=2^*\).

Only the comparison of the mixed potential now depends on the dimension. If \(n\ge6\), then \(0<2^*-2\le1\), so
\[
(H_\ell+B_\ell)^{2^*-2} \le H_\ell^{2^*-2}+B_\ell^{2^*-2},
\]
with equality for \(n=6\). If \(n=4,5\), then \(1<2^*-2\le2\), and
\[
0\le(H_\ell+B_\ell)^{2^*-2}-H_\ell^{2^*-2}-B_\ell^{2^*-2} \le C H_\ell^{2^*-3}B_\ell.
\]
The profiles \(H_\ell\) are uniformly bounded. Lemma~\ref{Lem2.5} gives \(\|B_\ell\|_2\to0\). For \(n=5\), interpolation with the uniformly bounded \(L^{10/3}\) norm gives \(\|B_\ell\|_{5/2}\to0\). For \(n=4\), the required norm is already \(L^2\). Thus in both dimensions the difference of potentials tends to zero in \(L^{n/2}\), uniformly in the centers.

Set \(c_* =\min\{c_\Hyp,c_\Euc\}>0\). In every dimension, the two weighted limits, their spectral gaps, and \eqref{eq4.8} imply
\begin{align*}
\limsup_{\ell\to\infty}(2^*-1)\int_{\HH^n}(H_\ell+B_\ell)^{2^*-2}\rho_\ell^2\dv &\le(1-c_*)\bigl(\norm{\rho_\Hyp}^2+\|\nabla\rho_\Euc\|_2^2\bigr)\\
&\le1-c_*.
\end{align*}
Therefore \(\liminf_\ell\langle L_{\theta_\ell}\rho_\ell,\rho_\ell\rangle\ge c_*\). Failure of a uniform gap \(c_*/2\) at arbitrarily small scales would contradict this conclusion. This proves \eqref{eq4.4}. Boundedness of the form and the Lax--Milgram theorem give \eqref{eq4.5}.
\end{proof}

\subsection{Quantitative estimates and sharpness}
\label{Sec4.2}

We write \(s=s(\eps)\) and \(I=I(\eps,r)\). Propositions~\ref{Prop2.10} and~\ref{Prop2.11} give
\begin{equation}\label{eq4.9}
\begin{gathered}
s>0,\qquad \dnorm{\cR_\lambda(B)}+\dnorm{\cD\cR_\lambda(B)}\le Cs,\\
s\sim\begin{cases}
\eps^2\log(2/\eps),&n=4,\\
\eps^2,&n\ge5.
\end{cases}
\end{gathered}
\end{equation}
Thus \(s+I\to0\) uniformly in the centers. For \(n\ge6\), Proposition~\ref{Prop3.4} also gives \(s+I\sim\eps^2\).

The residual norm and the three scalar pairings satisfy the following bounds.
\begin{proposition}\label{Prop4.4}
For sufficiently small \(\eps>0\), uniformly for \(r\ge0\),
\begin{equation}\label{eq4.10}
-\langle\cR_\lambda(\Sigma_\theta),Z_0\rangle\ge c(s+I).
\end{equation}
Moreover,
\begin{equation}\label{eq4.11}
\dnorm{\cR_\lambda(\Sigma_\theta)}+\dnorm{L_\theta Z_0}
\le C\begin{cases}
s+I,&n=4,5\text{ or }n\ge7,\\
\eps^2\sqrt{\log(2/\eps)},&n=6,
\end{cases}
\end{equation}
whereas in every dimension
\begin{equation}\label{eq4.12}
|\langle\cR_\lambda(\Sigma_\theta),H\rangle| +|\langle\cR_\lambda(\Sigma_\theta),B\rangle|\le C(s+I).
\end{equation}
\end{proposition}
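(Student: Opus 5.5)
The starting point is the decomposition
\[
\cR_\lambda(\Sigma_\theta)=\cR_\lambda(B)-\mathcal M_{z,\xi,\eps},
\]
which holds because \(H\) solves the equation. Each claimed bound is then obtained by combining the single-profile estimates of Proposition~\ref{Prop2.11} with the interaction estimates of Lemma~\ref{Lem3.5} and Proposition~\ref{Prop3.4}.

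\textbf{The residual bound \eqref{eq4.11}.} Proposition~\ref{Prop2.11} gives \(\dnorm{\cR_\lambda(B)}\le Cs\). Lemma~\ref{Lem3.5} bounds \(\dnorm{\mathcal M}\) by \(CI\) for \(n=4,5\), by \(C\eps^2\sqrt{\log(2/\eps)}\) for \(n=6\), and by \(C\eps^{(n+2)/4}\le C\eps^2\le Cs\) for \(n\ge7\).

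For \(L_\theta Z_0\), differentiate the identity \(\cR_\lambda(H+B)=P^{\HH}_\lambda(H+B)-(H+B)^{2^*-1}\) with \(\cD\) acting only on \(B\). This gives
\[
L_\theta Z_0=\cD\bigl(\cR_\lambda(\Sigma_\theta)\bigr)=\cD\cR_\lambda(B)-\cD\mathcal M .
\]
The bound \eqref{eq2.35} handles the first term, and Lemma~\ref{Lem3.5} handles \(\cD\mathcal M\) with the same dimension-dependent right side.

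\textbf{The pairings \eqref{eq4.12}.} Test the residual against the two profiles.
\begin{itemize}
\item Against \(B\): the term \(\langle\cR_\lambda(B),B\rangle\) is \(O(s)\) since \(\norm{B}\le C\). The remaining term is \(\int\mathcal M B\), which the mean value theorem bounds by \(C\int(H^{2^*-2}B^2+HB^{2^*-1})\).
\item Against \(H\): the term \(\langle\cR_\lambda(B),H\rangle\) equals \(Q-I\) up to sign, because \(\langle P^{\HH}_\lambda B,H\rangle=\langle B,H\rangle_\lambda=Q\). By \eqref{eq3.14} this is \(O(I)\). The other term \(\int\mathcal M H\) is handled the same way as \(\int \mathcal M B\).
\end{itemize}
The scalar integral \(\int HB^{2^*-1}\) is exactly \(I\). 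The monomial \(\int H^{2^*-2}B^2\) should be estimated directly in conformal coordinates, using the pointwise bounds \eqref{eq3.25} and Lemma~\ref{Lem2.3} in place of dual norms, so that no \(\sqrt{\log}\) loss appears. In the interior, \(h_r\le CB^{\HH}_{0,\lambda}(r)\) and \(\int (B^{\RR}_{\mathrm{pr},\delta})^2\) is \(O(\delta^2)\) (times a log when \(n=4\)). This gives \(B^{\HH}_{0,\lambda}(r)^{2^*-2}\omega_n\). It is \(\le Cs\) when \(2^*-2\ge0\), which always holds because \(B^{\HH}_{0,\lambda}\) is bounded. On the annulus the Hardy-type integrals of Lemma~\ref{Lem3.2} give smaller contributions.

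\textbf{The signed estimate \eqref{eq4.10}, the main obstacle.} Expand
\[
-\langle\cR_\lambda(\Sigma_\theta),Z_0\rangle=s+\int\mathcal M\,\cD B .
\]
Write \(\mathcal M\,\cD B\) as
\[
\cD\Bigl[\tfrac1{2^*}\bigl((H+B)^{2^*}-H^{2^*}-B^{2^*}\bigr)\Bigr],
\]
with \(\cD\) acting on \(B\) only. The integrated bracket equals
\[
\int HB^{2^*-1}\bigl(1+o(1)\bigr)+\int\bigl[\text{positive terms involving } H^{2^*-1}B \text{ and } H^{2^*-2}B^2\bigr].
\]
I plan to extract the main part as
\[
\int \bigl[(H+B)^{2^*-1}-B^{2^*-1}\bigr]\cD B .
\]
Its leading piece, \((2^*-1)\int HB^{2^*-2}\cD B=\cD I\), is \(\ge c I\) by \eqref{eq3.17}. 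The piece \(-\int H^{2^*-1}\cD B=-\cD Q\) is negative of order \(\eps^{(n-2)/2}B^{\HH}_{0,\lambda}(r)\sim I\). Since the sign matters here, I would instead use the exact formula for the full term \(\cD\bigl(\tfrac1{2^*}\int[(H+B)^{2^*}-H^{2^*}-B^{2^*}]\bigr)\). The main term is \(\cD I\) minus the coefficient-type terms, and both \(I\) and \(Q\) scale like \(\eps^{(n-2)/2}\). The point is that \(\int(H+B)^{2^*-1}\cD B-\int B^{2^*-1}\cD B\) must be compared with \(\cD Q\) through \(\langle P^{\HH}_\lambda H,\cD B\rangle\). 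Using that \(H\) solves the equation, \(\int H^{2^*-1}\cD B=\cD Q\), so the cross term in the residual pairing is
\[
\int\bigl[(H+B)^{2^*-1}-H^{2^*-1}-B^{2^*-1}\bigr]\cD B .
\]
Its leading part near the concentration point is \((2^*-1)\int HB^{2^*-2}\cD B\). By Proposition~\ref{Prop3.4}, and with the error controlled by the Taylor bounds above, this is \(\cD I\bigl(1+o(1)\bigr)\ge cI\). Here \(\cD B\) changes sign, so I would bound \(|\cD B|\le CB\) only for the error terms \(H^{2^*-3}B^2\) (dimensions \(4,5\)) or the \(\min\)-monomial (\(n\ge7\)), which are \(o(I)+o(s)\) as shown for \eqref{eq4.12}. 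For \(n=6\) the cross term is exactly \(2\int HB\,\cD B=\cD I\), and no error term arises. Adding \(s>0\) then yields \(c(s+I)\).
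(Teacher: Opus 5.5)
\paragraph{Where your proposal agrees with the paper.}
Your argument for \eqref{eq4.11} is the paper's. Your argument for \eqref{eq4.12} is fine; for $n\ne6$ it follows at once from \eqref{eq4.11} and $\norm{H}+\norm{B}\le C$, so the monomial-by-monomial computation is unnecessary.

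Your eventual decomposition for \eqref{eq4.10} is $-\langle\cR_\lambda(\Sigma_\theta),Z_0\rangle=s+\cD I+\int[\mathcal M_{z,\xi,\eps}-(2^*-1)HB^{2^*-2}]\,\cD B$. This is exactly the paper's $s+\cD I+\cD\mathcal A$, which the paper obtains by differentiating \eqref{eq4.13}. Your first identity for $\mathcal M_{z,\xi,\eps}\cD B$ is off by $H^{2^*-1}\cD B$, but you recover from this later. For $n=6$, invoking $\cD I\ge cI$ from \eqref{eq3.17} is a slightly cleaner route than the paper's explicit expansion. For $n\ge7$ the remainder is $o(s)$ by Lemma~\ref{Lem3.5} together with $I=o(\eps^2)$.

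\paragraph{The gap: dimensions four and five.}
Here the signed estimate is delicate, because either $s$ or $I$ can dominate depending on $r$. The whole point is that the remainder $\cD\mathcal A$ must be $o(I)$ uniformly in $r\ge0$, and your justification does not give this.

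\emph{The error monomial is wrong.} The monomial you name, $H^{2^*-3}B^2$, has $H$ and $B$ interchanged. Taken literally, with $|\cD B|\le CB$, it produces $\int HB^3=I$ when $n=4$, which is not small. The correct pointwise control comes from Taylor's formula applied separately on $\{B\le H\}$ and $\{H<B\}$. It is $C(H^{2^*-2}B^2+B^{2^*-2}H^2)$, the integrand of $\mathcal T$ in \eqref{eq4.17}.

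\emph{The cited bound is too weak.} The estimate you invoke ``as shown for \eqref{eq4.12}'' is $B^{\HH}_{0,\lambda}(r)^{2^*-2}\omega_n(\eps)\le Cs$. This is only $O(s)$, and it covers only the interior part of one monomial. An $O(s)$ error with an uncontrolled constant cannot be absorbed into $s+\cD I$ when $I\ll s$.

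\emph{What is needed.} The missing ingredient is the paper's estimate \eqref{eq4.14}: $\mathcal T\le C\eps(1+|\log\eps|)\,I$ for $n=4$ and $\mathcal T\le C\eps^{1/2}I$ for $n=5$. It measures the remainder against $I\sim\eps^{(n-2)/2}B^{\HH}_{0,\lambda}(r)$ rather than against $s$.
\begin{itemize}
\item In the interior it uses $2^*-2>1$, so that $B^{\HH}_{0,\lambda}(r)^{2^*-2}+B^{\HH}_{0,\lambda}(r)^2\le CB^{\HH}_{0,\lambda}(r)$. It also uses $\omega_n(\eps)=o(\eps^{(n-2)/2})$.
\item On the annulus $\mathcal C$ it needs the weighted integrals from \eqref{eq3.25}, with $\tau=e^{-r}$. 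For $n=4$ a factor $1+|\log\tau|$ appears; it is absorbed because $B^{\HH}_{0,\lambda}(r)(1+|\log\tau|)\le C$. For $n=5$ a factor $\tau^{-2\gamma_\lambda/3}$ appears; it is absorbed by $B^{\HH}_{0,\lambda}(r)^{2/3}\sim\tau^{2\gamma_\lambda/3}$.
\end{itemize}
Your remark that the annulus gives ``smaller contributions'' skips exactly this uniformity as the hyperbolic center escapes.
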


\begin{proof}
With \(\mathcal M_{z,\xi,\eps}=\Sigma_\theta^{2^*-1}-H^{2^*-1}-B^{2^*-1}\), as in Lemma~\ref{Lem3.5}, the exact identities
\begin{equation*}
\cR_\lambda(\Sigma_\theta)=\cR_\lambda(B)-\mathcal M_{z,\xi,\eps}, \qquad L_\theta Z_0=\cD\cR_\lambda(B)-\cD\mathcal M_{z,\xi,\eps}
\end{equation*}
and Lemma~\ref{Lem3.5} prove \eqref{eq4.11}, using \eqref{eq4.9} and \(\eps^{(n+2)/4}=o(\eps^2)\) for \(n\ge7\). For \(n\ne6\), uniform boundedness of \(\norm H+\norm B\) also proves \eqref{eq4.12}.

For \(n=6\), the nonlinearity is quadratic on positive functions, so \(\mathcal M_{z,\xi,\eps}=2HB\). The identity \(\ip{H}{B}=Q\) gives the pairings with \(H\) and \(B\) directly:
\[
\langle\cR_\lambda(H+B),H\rangle=-Q-I, \qquad \langle\cR_\lambda(H+B),B\rangle =\langle\cR_\lambda(B),B\rangle-2I.
\]
Here \(Q+I=O(\eps^2)\), by Theorem~\ref{Thm3.3} and Proposition~\ref{Prop3.4}. Together with \eqref{eq4.9}, this proves \eqref{eq4.12} in dimension six.

To prove the signed estimate, first suppose \(n=4,5\). For \(h,b\ge0\), define
\[
G(h,b)=(h+b)^{2^*}-h^{2^*}-b^{2^*} -2^*h^{2^*-1}b-2^*hb^{2^*-1}.
\]
After a common isometry sends \(\xi\) to the origin, put \(\delta=\eps/2\), \(z_r=\tanh(r/2)e_1\), and \(h_r=\varrho^{(n-2)/2}B^{\HH}_{z_r,\lambda}\), as before \eqref{eq3.8}. Let
\[
\mathcal A(\eps,r) =\frac1{2^*}\int_{\BB^n}G(h_r,B^{\RR}_{\mathrm{pr},\delta})\,dx.
\]
Since \(\int(B^{\HH}_{z,\lambda})^{2^*-1}B^{\HH}_{\mathrm{pr},\xi,\eps}\dv=Q\), Taylor's formula gives the exact decomposition
\begin{equation}\label{eq4.13}
J_\lambda(H+B)-J_\lambda(H)-J_\lambda(B)=-I(\eps,r)-\mathcal A(\eps,r), \qquad\mathcal A(\eps,r)\ge0.
\end{equation}
For all sufficiently small \(\eps>0\), uniformly in \(r\ge0\), we claim that
\begin{equation}\label{eq4.14}
\mathcal A(\eps,r)+|\cD\mathcal A(\eps,r)| \le C I(\eps,r)
\begin{cases}
\eps(1+|\log\eps|),&n=4,\\
\eps^{1/2},&n=5.
\end{cases}
\end{equation}

Because \(2^*>3\), Taylor's formula on \(b\le h\) and on \(h<b\) gives
\begin{equation}\label{eq4.15}
0\le G(h,b)\le C
\begin{cases}
h^{2^*-2}b^2,&b\le h,\\
b^{2^*-2}h^2,&h<b,
\end{cases}
\end{equation}
and
\begin{equation}\label{eq4.16}
b|\partial_bG(h,b)|\le C \bigl(h^{2^*-2}b^2+b^{2^*-2}h^2\bigr).
\end{equation}
By Lemma~\ref{Lem2.4}, \(|\cD B^{\RR}_{\mathrm{pr},\delta}|\le C B^{\RR}_{\mathrm{pr},\delta}\). Thus \eqref{eq4.15}--\eqref{eq4.16} reduce both \(\mathcal A\) and \(|\cD\mathcal A|\) to
\begin{equation}\label{eq4.17}
\mathcal T =\int_{\BB^n} \bigl(h_r^{2^*-2}(B^{\RR}_{\mathrm{pr},\delta})^{2}+(B^{\RR}_{\mathrm{pr},\delta})^{2^*-2}h_r^2\bigr)\,dx.
\end{equation}

On \(\{|x|<1/2\}\),
\[
h_r\le CB^{\HH}_{0,\lambda}(r),\qquad B^{\RR}_{\mathrm{pr},\delta}(x)\le C\frac{\delta^{\frac{n-2}{2}}}{(\delta+|x|)^{(n-2)}}.
\]
Since \(2^*-2>1\) and \(B^{\HH}_{0,\lambda}\) is bounded,
\[
(B^{\HH}_{0,\lambda})^{2^*-2}+(B^{\HH}_{0,\lambda})^{2}\le CB^{\HH}_{0,\lambda}.
\]
Direct radial integration gives
\begin{align*}
\int_{|x|<1/2}\bigl((B^{\RR}_{\mathrm{pr},\delta})^{2}+(B^{\RR}_{\mathrm{pr},\delta})^{2^*-2}\bigr)\,dx &\le C\delta^2(1+\log(1/\delta)),&&n=4,\\
\int_{|x|<1/2}\bigl((B^{\RR}_{\mathrm{pr},\delta})^{2}+(B^{\RR}_{\mathrm{pr},\delta})^{2^*-2}\bigr)\,dx &\le C\delta^2,&&n=5.
\end{align*}
Thus the interior part of \(\mathcal T\) has the required bound.

On the annulus \(\mathcal C=\{x\in\BB^n:1/2<|x|<1\}\), use the normalization \(z_r=\tanh(r/2)e_1\). For \(r\ge1\), set \(\tau=e^{-r}\). By \eqref{eq3.25}, \(1-|x|\le|x-e_1|\), and polar coordinates centered at \(e_1\), the two terms in \eqref{eq4.17} have the following bounds. In \(n=4\), they coincide and
\begin{align*}
\int_{\mathcal C}h_r^2(B^{\RR}_{\mathrm{pr},\delta})^{2}\,dx &\le C\delta^2B^{\HH}_{0,\lambda}(r)^2 \int_0^2 \frac{t^{3+4\alpha_\lambda}} {(\tau+t)^{4\gamma_\lambda}}\,dt\\
&\le C\delta^2B^{\HH}_{0,\lambda}(r)^2(1+|\log\tau|) \le C\delta^2B^{\HH}_{0,\lambda}(r).
\end{align*}
Here \(3+4\alpha_\lambda=4\gamma_\lambda-1\), and \(B^{\HH}_{0,\lambda}(r)|\log\tau|\) is bounded. In \(n=5\), since
\[
5+\frac{10}{3}\alpha_\lambda=\frac{10}{3}\gamma_\lambda,
\]
we obtain
\begin{align*}
\int_{\mathcal C}h_r^{4/3}(B^{\RR}_{\mathrm{pr},\delta})^{2}\,dx &\le C\delta^3B^{\HH}_{0,\lambda}(r)^{4/3} \int_0^2 \frac{t^{4+(10/3)\alpha_\lambda}} {(\tau+t)^{(8/3)\gamma_\lambda}}\,dt\le C\delta^3B^{\HH}_{0,\lambda}(r),\\
\int_{\mathcal C}(B^{\RR}_{\mathrm{pr},\delta})^{4/3}h_r^2\,dx &\le C\delta^2B^{\HH}_{0,\lambda}(r)^2 \int_0^2 \frac{t^{4+(10/3)\alpha_\lambda}} {(\tau+t)^{4\gamma_\lambda}}\,dt\\
&\le C\delta^2B^{\HH}_{0,\lambda}(r)^2\tau^{-2\gamma_\lambda/3} \le C\delta^2B^{\HH}_{0,\lambda}(r).
\end{align*}
For \(r\le1\), the same estimates follow from \(B^{\HH}_{0,\lambda}(r)\sim1\) and the boundary estimate in Lemma~\ref{Lem3.2}. Combining the estimates on \(\{x\in\BB^n:|x|\le1/2\}\) and \(\{x\in\BB^n:1/2<|x|<1\}\), and using \(I\sim\delta^{\frac{n-2}{2}}B^{\HH}_{0,\lambda}(r)\), gives
\[
\mathcal T\le C
\begin{cases}
\delta(1+\log(1/\delta))\,I,&n=4,\\
\delta^{1/2}I,&n=5.
\end{cases}
\]
Replacing \(\delta\) by \(\eps/2\) proves \eqref{eq4.14}.

The energy identity \eqref{eq4.13} now yields
\[
-\langle\cR_\lambda(\Sigma_\theta),Z_0\rangle =s(\eps)+\cD I+\cD\mathcal A.
\]
By \eqref{eq4.14}, \(|\cD\mathcal A|=o(I)\) uniformly in \(r\), while Proposition~\ref{Prop3.4} gives \(\cD I\ge cI\). This proves \eqref{eq4.10} in these dimensions.

For \(n=6\), the exact quadratic identity gives
\begin{align*}
-\langle\cR_\lambda(\Sigma_\theta),Z_0\rangle &=s(\eps)+\int_{\HH^6}2HBZ_0\dv=s(\eps)+\cD I\\
&=\mathfrak m_6\bigl(\mu+2B^{\HH}_{0,\lambda}(r)\bigr)\eps^2 +O\!\left(\eps^4\log(2/\eps)\right),
\end{align*}
by \eqref{eq2.33} and \eqref{eq3.18}. The coefficient is bounded below by \(\mu\mathfrak m_6>0\), and \(s+I\sim\eps^2\), which proves the claim.

Finally, for \(n\ge7\), use Lemma~\ref{Lem3.5} and \(\norm{Z_0}\le C\) to obtain
\[
-\langle\cR_\lambda(\Sigma_\theta),Z_0\rangle =s(\eps)+\langle\mathcal M_{z,\xi,\eps},Z_0\rangle \ge c\eps^2-C\eps^{\frac{n+2}{4}}\ge c'\eps^2.
\]
In this range \(I=o(\eps^2)\) uniformly, so the last expression is bounded below by a positive constant times \(s+I\). This completes the proof.
\end{proof}

\begin{lemma}\label{Lem4.5}
For \(\Sigma_\theta=H+B\), \(0<\eps<\eps_0\), and any real-valued \(w\in H^1_\lambda(\HH^n)\), define
\[
\mathcal N_{\Sigma_\theta}(w) =|\Sigma_\theta+w|^{2^*-2}(\Sigma_\theta+w) -\Sigma_\theta^{2^*-1}-(2^*-1)\Sigma_\theta^{2^*-2}w.
\]
Uniformly in the centers and scale,
\begin{equation}\label{eq4.18}
\dnorm{\mathcal N_{\Sigma_\theta}(w)} \le C\bigl(\norm w^2+\norm w^{2^*-1}\bigr).
\end{equation}
In particular, if \(u\) has the decomposition \eqref{eq4.3} and
\[
a=|\nu_\Hyp-1|+|\nu_\Euc-1|, \qquad b=a+\norm\rho, \qquad w=u-\Sigma_\theta,
\]
then the right-hand side is at most \(C(b^2+b^{2^*-1})\).
\end{lemma}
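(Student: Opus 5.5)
The estimate follows from a pointwise inequality for the scalar function \(f(t)=|t|^{2^*-2}t\), combined with Hölder's inequality and Sobolev duality. Only \(\Sigma_\theta\ge0\) is used, together with a uniform bound on \(\|\Sigma_\theta\|_{L^{2^*}}\).

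\textbf{Pointwise bound.} For \(s\ge0\) and real \(t\), Taylor's formula with integral remainder gives
\[
\bigl|f(s+t)-f(s)-f'(s)t\bigr|\le C
\begin{cases}
s^{2^*-3}t^2+|t|^{2^*-1},&2^*\ge3\ (n\le6),\\
|t|^{2^*-1},&2^*<3\ (n\ge7).
\end{cases}
\]
For \(n\le6\) one uses \(|f''(\tau)|\le C|\tau|^{2^*-3}\) and \(|s+\vartheta t|\le s+|t|\), separating \(|t|\le s\) from \(|t|>s\). For \(n\ge7\) the map \(f'\) is \((2^*-2)\)-Hölder, since \(0<2^*-2<1\).

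\textbf{Dual estimate.} Apply this with \(s=\Sigma_\theta\) and \(t=w\). The quantity \(\|\Sigma_\theta\|_{L^{2^*}}\le\|H\|_{2^*}+\|B\|_{2^*}\) is bounded independently of the centers and of the scale. The hyperbolic profile has a fixed norm by isometry invariance. The projected bubble is bounded by coercivity (\(\norm{B}\le C\)) and the Sobolev inequality \(S_{n,\lambda}\|\cdot\|_{2^*}^2\le\norm{\cdot}^2\).

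For a test function \(\phi\), Hölder with exponents \((2^*/(2^*-3),\,2^*/2,\,2^*)\) gives
\[
\int\Sigma_\theta^{2^*-3}w^2|\phi|\dv\le\|\Sigma_\theta\|_{2^*}^{2^*-3}\|w\|_{2^*}^2\|\phi\|_{2^*}.
\]
Similarly,
\[
\int|w|^{2^*-1}|\phi|\dv\le\|w\|_{2^*}^{2^*-1}\|\phi\|_{2^*}.
\]
For \(n=6\), where \(2^*=3\), the first factor is simply absent. Since \(\|\cdot\|_{2^*}\le S_{n,\lambda}^{-1/2}\norm{\cdot}\), taking the supremum over \(\norm\phi\le1\) gives \eqref{eq4.18}. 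For \(n\ge7\) only the power \(2^*-1\) appears, and it is bounded by the stated right-hand side.

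\textbf{Consequence for the decomposition.} Here
\[
w=(\nu_\Hyp-1)H+(\nu_\Euc-1)B+\rho,
\]
so \(\norm w\le C(a+\norm\rho)\le Cb\) by the uniform bounds on \(\norm H\) and \(\norm B\). Inserting this into \eqref{eq4.18} gives the bound \(C(b^2+b^{2^*-1})\).

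There is no real obstacle. The only point needing care is that every constant is uniform in \((z,\xi,\eps)\), and this rests entirely on the uniform \(L^{2^*}\) bound for \(\Sigma_\theta\) established above.
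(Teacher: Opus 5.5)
Your proof is correct and follows the paper's argument. In both, the pointwise Taylor remainder bound for \(f(t)=|t|^{2^*-2}t\) is combined with H\"older's inequality, Sobolev duality, and the uniform \(L^{2^*}\) bound on \(\Sigma_\theta\); the paper cites the remainder bound from Figalli--Glaudo, whereas you derive it directly. Your split into \(n\le6\) and \(n\ge7\), versus the paper's \(n=4,5\) and \(n\ge6\), is only cosmetic, since at \(n=6\) the two bounds coincide: \(\Sigma_\theta^{2^*-3}w^2=w^2=|w|^{2^*-1}\).
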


\begin{proof}
The pointwise remainder estimates for \(f(t)=|t|^{2^*-2}t\) give
\[
|\mathcal N_{\Sigma_\theta}(w)|\le C
\begin{cases}
\Sigma_\theta^{2^*-3}|w|^2+|w|^{2^*-1},&n=4,5,\\
|w|^{2^*-1},&n\ge6;
\end{cases}
\]
see \cite{FigalliGlaudo2020}. These estimates hold for real-valued \(w\); H\"older's inequality, the Sobolev embedding and its dual, and the uniform bound for \(\|\Sigma_\theta\|_{2^*}\) give \eqref{eq4.18}. The last assertion follows from \(\norm{u-\Sigma_\theta}\le Cb\).
\end{proof}

In the following estimates, \(a\), \(b\), and \(w=u-\Sigma_\theta\) are as in Lemma~\ref{Lem4.5}.

\begin{lemma}\label{Lem4.6}
If \(b\) and \(\eps\) are sufficiently small, the following estimates hold uniformly in the centers. For \(n=4,5\) or \(n\ge7\),
\begin{align}
a &\le C\bigl(\Gamma_\lambda(u)+s+I+(s+I)\norm\rho +b^2+b^{2^*-1}\bigr),\label{eq4.19}\\
\norm\rho &\le C\bigl(\Gamma_\lambda(u)+s+I+(s+I)a +b^2+b^{2^*-1}\bigr),\label{eq4.20}\\
s+I &\le C\bigl(\Gamma_\lambda(u)+(s+I)b +b^2+b^{2^*-1}\bigr).\label{eq4.21}
\end{align}
For \(n=6\), put \(\omega_\eps=\eps^2\sqrt{\log(2/\eps)}\). Then
\begin{align}
a &\le C\bigl(\Gamma_\lambda(u)+s+I+\omega_\eps\norm\rho +b^2\bigr),\label{eq4.22}\\
\norm\rho &\le C\bigl(\Gamma_\lambda(u)+\omega_\eps +\omega_\eps a+b^2\bigr),\label{eq4.23}\\
s+I &\le C\bigl(\Gamma_\lambda(u)+\omega_\eps b +b^2\bigr).\label{eq4.24}
\end{align}
\end{lemma}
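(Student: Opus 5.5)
We start from the exact expansion of the residual around the reference sum. Writing \(u=\Sigma_\theta+w\) with \(w=(\nu_\Hyp-1)H+(\nu_\Euc-1)B+\rho\), we have
\[
\cR_\lambda(u)=\cR_\lambda(\Sigma_\theta)+L_\theta w-\mathcal N_{\Sigma_\theta}(w).
\]
Here \(L_\theta w=(\nu_\Hyp-1)L_\theta H+(\nu_\Euc-1)L_\theta B+L_\theta\rho\). Lemma~\ref{Lem4.5} bounds the nonlinear term in \(H^{-1}_\lambda\) by \(C(b^2+b^{2^*-1})\). For \(n=6\) the exponent \(2^*-1=2\), so this becomes \(Cb^2\). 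We then test this identity against the three families \(H,B\) (for the coefficients), \(\cX_\theta\) (for \(\rho\)) and \(Z_0\) (for \(s+I\)), using \(|\langle\cR_\lambda(u),\phi\rangle|\le\Gamma_\lambda(u)\norm\phi\) and the uniform bounds \(\norm H+\norm B+\norm{Z_0}\le C\).

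\textbf{Coefficients.} We compute \(L_\theta H=P^{\HH}_\lambda H-(2^*-1)\Sigma_\theta^{2^*-2}H=(2-2^*)H^{2^*-1}-(2^*-1)[\Sigma_\theta^{2^*-2}-H^{2^*-2}]H\). The bracketed term is one of the expressions in \eqref{eq3.20}, so Lemma~\ref{Lem3.5} bounds it by \(I\) (\(n=4,5\)), by \(\omega_\eps\) (\(n=6\)), or by \(o(\eps^2)\le C(s+I)\) (\(n\ge7\)). Similarly, \(L_\theta B=\cR_\lambda(B)+(2-2^*)B^{2^*-1}-(2^*-1)[\Sigma_\theta^{2^*-2}-B^{2^*-2}]B\), with \(\cR_\lambda(B)=O(s)\) by \eqref{eq4.9}.

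Pairing the identity with \(H\) and with \(B\) produces a \(2\times2\) system in \((\nu_\Hyp-1,\nu_\Euc-1)\). Its leading matrix is \(-(2^*-2)\operatorname{diag}(\int H^{2^*},\int B^{2^*})\) plus cross terms of size \(O(Q+I)=o(1)\) and errors \(O(s+I)\) (or \(\omega_\eps\)), so it is invertible uniformly. The contribution of \(\rho\) is \(\langle L_\theta\rho,H\rangle=\langle\rho,L_\theta H\rangle\). Since \(\rho\perp_\lambda H,B\) and \(\langle\rho,H^{2^*-1}\rangle=\ip{\rho}{H}=0\), and likewise \(\langle\rho,B^{2^*-1}\rangle=\ip\rho B-\langle\rho,\cR_\lambda(B)\rangle\), only the error parts survive. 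These give \((s+I)\norm\rho\), or \(\omega_\eps\norm\rho\) when \(n=6\). The residual pairings with \(H,B\) are controlled by \eqref{eq4.12}, giving \(C(s+I)\) in every dimension. This yields \eqref{eq4.19} and \eqref{eq4.22}.

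\textbf{Remainder.} We restrict the identity to test functions \(\phi\in\cX_\theta\) and apply Proposition~\ref{Prop4.3}, specifically \eqref{eq4.5}:
\[
\norm\rho\le C\sup_{\phi\in\cX_\theta,\,\norm\phi=1}\bigl|\langle\cR_\lambda(u)-\cR_\lambda(\Sigma_\theta)+\mathcal N-(\nu_\Hyp-1)L_\theta H-(\nu_\Euc-1)L_\theta B,\phi\rangle\bigr|.
\]
On \(\cX_\theta\) the leading parts \(H^{2^*-1}\) and \(B^{2^*-1}\) of \(L_\theta H\) and \(L_\theta B\) pair to zero up to \(\langle\cR_\lambda(B),\phi\rangle\), leaving \((s+I)a\), or \(\omega_\eps a\) for \(n=6\). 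The term \(\cR_\lambda(\Sigma_\theta)\) is bounded by its full dual norm \eqref{eq4.11}: this gives \(s+I\) in general and \(\omega_\eps\) in dimension six. This is exactly where the \(\sqrt{\log}\) loss enters \eqref{eq4.23}, and it proves \eqref{eq4.20}.

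\textbf{Dilation pairing.} We pair the identity with \(Z_0\). The signed estimate \eqref{eq4.10} gives \(c(s+I)\le-\langle\cR_\lambda(\Sigma_\theta),Z_0\rangle\). The remaining linear terms are handled by self-adjointness: \(\langle L_\theta w,Z_0\rangle=\langle w,L_\theta Z_0\rangle\), whose size is at most \(b\,\dnorm{L_\theta Z_0}\). By \eqref{eq4.11} this is \(\le Cb(s+I)\), or \(Cb\,\omega_\eps\) for \(n=6\). Together with Lemma~\ref{Lem4.5} this gives \eqref{eq4.21} and \eqref{eq4.24}.

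\textbf{Expected obstacle.} The main difficulty is the coefficient step, in particular checking that \(\langle\rho,L_\theta H\rangle\) and \(\langle\rho,L_\theta B\rangle\) really reduce to the interaction terms of \eqref{eq3.20} plus \(\cR_\lambda(B)\) pairings, without leaving an \(O(1)\)-size cross term. The key point is to use \(\ip\rho H=\ip\rho B=0\) together with the equations satisfied by \(H\) and \(B\). A second point to verify is that the off-diagonal entries \(\int H^{2^*-1}B=Q\) and \(I\) are \(o(1)\), so that the \(2\times2\) matrix stays uniformly invertible.
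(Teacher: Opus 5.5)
Your proposal is correct and follows the paper's proof essentially step by step. You use the same expansion \eqref{eq4.25} and the same identities for \(L_\theta H\) and \(L_\theta B\); you write \(H^{2^*-1}\) where the paper writes \(P^{\HH}_\lambda H\), which is the same function. You then run the same three tests: against \(H,B\) with the \(2\times2\) matrix and \eqref{eq4.12}, against \(\cX_\theta\) via the inverse bound \eqref{eq4.5} and \eqref{eq4.11}, and against \(Z_0\) via \eqref{eq4.10}, self-adjointness and \eqref{eq4.11}.
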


\begin{proof}
Use the common residual expansion
\begin{equation}\label{eq4.25}
\cR_\lambda(u)=\cR_\lambda(\Sigma_\theta)+L_\theta w-\mathcal N_{\Sigma_\theta}(w).
\end{equation}
Testing the linear terms associated with the two coefficients against \(H\) and \(B\) gives the matrix
\[
\mathbf A_\theta=
\begin{pmatrix}
\langle L_\theta H,H\rangle&\langle L_\theta B,H\rangle\\
\langle L_\theta H,B\rangle&\langle L_\theta B,B\rangle
\end{pmatrix}.
\]
The identities
\begin{align}
L_\theta H &=(2-2^*)P^{\HH}_\lambda H -(2^*-1)\bigl(\Sigma_\theta^{2^*-2}-H^{2^*-2}\bigr)H,\notag\\
L_\theta B &=(2-2^*)P^{\HH}_\lambda B+(2^*-1)\cR_\lambda(B) -(2^*-1)\bigl(\Sigma_\theta^{2^*-2}-B^{2^*-2}\bigr)B\label{eq4.26}
\end{align}
show, using Lemma~\ref{Lem3.5}, \eqref{eq4.9}, and \(Q=o(1)\), that
\begin{equation}\label{eq4.27}
\mathbf A_\theta =(2-2^*)
\begin{pmatrix}\norm H^2&0\\0&\mathcal S_{n,2}^{n/2}\end{pmatrix}+o(1).
\end{equation}
The convergence is uniform in \(r\), so \(\mathbf A_\theta^{-1}\) is uniformly bounded.

For every \(\phi\in\cX_\theta\), orthogonality removes the terms \(P^{\HH}_\lambda H\) and \(P^{\HH}_\lambda B\) in \eqref{eq4.26}. Lemma~\ref{Lem3.5} and \eqref{eq4.9} therefore give
\begin{equation}\label{eq4.28}
|\langle L_\theta H,\phi\rangle|+|\langle L_\theta B,\phi\rangle|
\le C\norm\phi\begin{cases}s+I,&n\ne6,\\\omega_\eps,&n=6.\end{cases}
\end{equation}
Testing \eqref{eq4.25} with \(H\) and \(B\), then using \eqref{eq4.12}, \eqref{eq4.27}, \eqref{eq4.28}, and \eqref{eq4.18}, gives \eqref{eq4.19} and \eqref{eq4.22}. In particular, the term involving \(\cR_\lambda(\Sigma_\theta)\) is \(O(s+I)\) even in dimension six.

Restrict \eqref{eq4.25} to \(\cX_\theta\). Apply the inverse estimate \eqref{eq4.5}, the residual bound \eqref{eq4.11}, and \eqref{eq4.28}. This proves \eqref{eq4.20} and \eqref{eq4.23}.

Finally, test with \(Z_0\). The signed pairing \eqref{eq4.10} is at least \(c(s+I)\). Self-adjointness and \eqref{eq4.11} bound \(|\langle L_\theta w,Z_0\rangle|\) by \(C(s+I)b\) if \(n\ne6\), and by \(C\omega_\eps b\) if \(n=6\). The nonlinear remainder gives the remaining terms, proving \eqref{eq4.21} and \eqref{eq4.24}.
\end{proof}

\begin{proof}[Proof of Theorem~\ref{Thm1.1}]
Proposition~\ref{Prop4.2} makes \(b\) arbitrarily small by decreasing the initial neighborhood. The operator \(L_\theta:H^1_\lambda\to H^{-1}_\lambda\) is uniformly bounded, so \eqref{eq4.11}, \eqref{eq4.18}, and \eqref{eq4.25} also give \(\Gamma_\lambda(u)<1\) after decreasing the neighborhood and \(\eps_0\).

Suppose first that \(n\ne6\). Since \(b<1\), adding \eqref{eq4.19} and \eqref{eq4.20} and substituting \eqref{eq4.21} gives
\[
b+s+I\le C\Gamma_\lambda(u) +C\bigl((s+I)b+b^2+b^{2^*-1}\bigr).
\]
The last terms are bounded by
\[
C\bigl(b+b^{4/(n-2)}\bigr) (b+s+I).
\]
Their coefficient tends to zero for each fixed \(n\), so absorption yields
\[
b+s+I\le C\Gamma_\lambda(u),\qquad n\ne6.
\]
Since \(s>0\), this also gives \(\Gamma_\lambda(u)>0\).

For \(n=6\), \(s+I\sim\eps^2\) and \(\omega_\eps=\eps^2\sqrt{\log(2/\eps)}\). Adding \eqref{eq4.22} and \eqref{eq4.23}, then absorbing \(C(\omega_\eps b+b^2)\), gives
\begin{equation}\label{eq4.29}
b\le C\bigl(\Gamma_\lambda(u)+\omega_\eps\bigr).
\end{equation}
Consequently,
\[
\omega_\eps b+b^2 \le C\bigl(\Gamma_\lambda(u)^2 +\omega_\eps\Gamma_\lambda(u)+\omega_\eps^2\bigr).
\]
The first two terms are at most \(C\Gamma_\lambda(u)\) for small neighborhood and scale. Since \(\omega_\eps^2\le C(s+I)^2\log(2/(s+I))\), substitution in \eqref{eq4.24} gives
\[
s+I\le C\Gamma_\lambda(u) +C(s+I)^2\log\frac2{s+I}.
\]
The uniform limit \((s+I)\log(2/(s+I))\to0\) permits absorption, yielding
\begin{equation}\label{eq4.30}
s+I\le C\Gamma_\lambda(u).
\end{equation}
In particular, \(s>0\) gives \(\Gamma_\lambda(u)>0\). We first control \(s+I\) in this way because \(\omega_\eps/(s+I)\sim\sqrt{\log(2/\eps)}\) grows as \(\eps\to0\). The function \(t\mapsto t\sqrt{\log(2/t)}\) is increasing for small positive \(t\). Thus \eqref{eq4.30}, \(s+I\sim\eps^2\), and \eqref{eq4.29} imply
\[
\omega_\eps+b \le C\Gamma_\lambda(u)\sqrt{\log\frac2{\Gamma_\lambda(u)}}.
\]
Returning to \eqref{eq4.22}, we have
\[
\omega_\eps\norm\rho+b^2 \le C\Gamma_\lambda(u)^2\log\frac2{\Gamma_\lambda(u)} \le C\Gamma_\lambda(u),
\]
so
\[
a+s+I\le C\Gamma_\lambda(u), \qquad \norm\rho\le C\Psi_6(\Gamma_\lambda(u)).
\]

In every dimension, choosing \(\Sigma_\theta=H+B\) in the comparison family gives
\[
\dist_{H^1_\lambda}(u,\cM_{\mix}(\eps_0)) \le\norm{u-\Sigma_\theta}\le Cb.
\]
This proves both estimates and \(0<\Gamma_\lambda(u)<1\).
\end{proof}

To estimate distance to the full family \(\cM_\mix(\eps_0)\), we must place every sufficiently close competitor in the chart of Proposition~\ref{Prop4.2}. The next lemma does this with the competing scale ranging throughout \((0,\eps_0)\).

For \(\theta=(z,\eps,\xi)\) and \(\theta'=(z',\eps',\xi')\), set
\[
d_{\mathrm{sc}}(\theta,\theta') =d_g(z,z')+\left|\log\frac{\eps'}{\eps}\right| +\frac{d_g(\xi,\xi')}{\eps}.
\]

\begin{lemma}\label{Lem4.7}
For every \(\eta_{\mathrm{par}}>0\) there exist \(\bar{\eps}(\eta_{\mathrm{par}})>0\) and \(\eta(\eta_{\mathrm{par}})>0\), depending only on \(n,\lambda,\eta_{\mathrm{par}}\), such that
\[
0<\eps<\bar{\eps}(\eta_{\mathrm{par}}),\qquad0<\eps'<\eps_0,\qquad\norm{\Sigma_\theta-\Sigma_{\theta'}}<\eta(\eta_{\mathrm{par}})
\]
imply
\[
d_{\mathrm{sc}}(\theta,\theta')<\eta_{\mathrm{par}}.
\]
Moreover, there are \(r_0,c_0,C_0>0\), depending only on \(n,\lambda\), such that, whenever
\[
0<\eps,\eps'<\eps_0,\qquad d_{\mathrm{sc}}(\theta,\theta')<r_0,
\]
\[
c_0d_{\mathrm{sc}}(\theta,\theta') \le\norm{\Sigma_\theta-\Sigma_{\theta'}} \le C_0d_{\mathrm{sc}}(\theta,\theta').
\]
\end{lemma}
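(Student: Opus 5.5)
We prove the local bilipschitz estimate first and then the qualitative localization, which follows from it together with a compactness argument.

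For the second assertion we work in the chart of Proposition~\ref{Prop4.2}, centered at \(\theta\), with coefficients frozen at \(1\). Write \(\theta'\) in the scaled coordinates \(\mathbf q=(\zeta,\sigma,\eta)\), where \(z'=\exp_z\zeta\), \(\eps'=\eps e^\sigma\) and \(\xi'=\exp_\xi(\eps\eta)\). Note that \(|\mathbf q|\) is comparable to \(d_{\mathrm{sc}}(\theta,\theta')\) on a ball of fixed radius.

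The map \(\mathbf q\mapsto\Sigma_{\theta'}\) has differential at \(\mathbf q=0\) equal to \(\sum\zeta_jY_j+\sigma Z_0+\sum\eta_jZ_j\). By Lemma~\ref{Lem2.8} its second derivatives are uniformly bounded in the energy norm. Lemma~\ref{Lem4.1} shows that the Gram matrix of \(\{Y_j,Z_0,Z_j\}\) is uniformly positive definite, being a principal submatrix of a uniformly positive definite matrix. Hence the differential satisfies \(c|\mathbf q|\le\norm{D\Sigma(0)\mathbf q}\le C|\mathbf q|\). Taylor's formula with the uniform \(C^2\) bound then gives
\[
\bigl|\norm{\Sigma_\theta-\Sigma_{\theta'}}-\norm{D\Sigma(0)\mathbf q}\bigr|\le C|\mathbf q|^2,
\]
and choosing \(r_0\) small yields both inequalities. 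Everything is uniform in the centers by isometry invariance and by the \(r\)-uniformity in Lemma~\ref{Lem4.1}. The hypotheses are \(\eps,\eps'<\eps_0\) and \(|\sigma|<r_0\), so all scales in the chart stay below \(2\eps_0\), and the estimates of Section~\ref{Sec2} still apply after shrinking \(\eps_0\).

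For the first assertion we argue by contradiction. Suppose there are sequences \(\theta_\ell,\theta'_\ell\) with \(\eps_\ell\to0\) and \(\norm{\Sigma_{\theta_\ell}-\Sigma_{\theta'_\ell}}\to0\), but \(d_{\mathrm{sc}}(\theta_\ell,\theta'_\ell)\ge\eta_{\mathrm{par}}\). The plan is to show that both parameter sequences must be close, first for the hyperbolic profile and then for the Euclidean one.

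First, the hyperbolic profiles must match. Recenter at \(z_\ell\). The projected bubble \(B_\ell\) tends to zero in \(L^2\) by Lemma~\ref{Lem2.5}, and so does \(B'_\ell\) if \(\eps'_\ell\to0\). Thus the weak limits of \(\Sigma_{\theta_\ell}\) and \(\Sigma_{\theta'_\ell}\) must agree. The first is \(B^{\HH}_{0,\lambda}\). The second is \(B^{\HH}_{\lim z',\lambda}\), plus a projected bubble if \(\eps'_\ell\) stays bounded below. If \(d_g(z_\ell,z'_\ell)\to\infty\), the \(L^2\) mass \(M_\lambda\) is lost in the weak limit while energy convergence forces equality, a contradiction. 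Mass bookkeeping rules out the other alternatives: each side has mass \(M_\lambda+o(1)\) only when the Euclidean scales are small, since the projected bubble at a fixed positive scale carries positive mass. This gives \(z'_\ell\) close to \(z_\ell\) and \(\eps'_\ell\to0\).

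Next, the Euclidean profiles must match. Subtract the hyperbolic parts, whose difference is small by continuity in \(z\), to get \(\norm{B_\ell-B'_\ell}\to0\). Apply the rescaling \(\mathscr S_{\xi_\ell,\eps_\ell}\) and use \eqref{eq2.19} together with the uniform \(\dot H^1\) approximation \eqref{eq2.21}. Two Aubin--Talenti bubbles with \(\dot H^1\)-distance tending to zero have relative scale ratio tending to \(1\) and normalized center separation \(d_g(\xi,\xi')/\eps\) tending to \(0\); this is the standard nondegeneracy of the bubble family modulo conformal motions. Together these contradict \(d_{\mathrm{sc}}\ge\eta_{\mathrm{par}}\).

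I expect the main obstacle to be the first step, excluding the configurations in which the two sums exchange roles: for example, \(\eps'_\ell\) bounded away from zero, or the hyperbolic profile of one sum absorbing the Euclidean part of the other. Energy alone may not separate these, since a hyperbolic profile and a Euclidean bubble could have equal energy. The mass \(M_\lambda\) versus \(o(1)\), together with the \(L^{2^*}\)-concentration of \(B_\ell\), is what I would use to rule them out.
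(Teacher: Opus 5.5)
Your proposal is correct. The local two-sided estimate is the paper's argument. The first assertion, however, follows a different route.

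The paper works Euclidean-first:
\begin{itemize}
\item It rescales by \(\mathscr S_{\xi_\ell,\eps_\ell}\) and proves a dichotomy for the rescaled second projected bubble. The case \(\liminf_\ell\eps'_\ell>0\) is handled through uniform \(L^\infty\) and gradient bounds on \(B^{\RR}_{\mathrm{pr},d}\) for \(d\) bounded below.
\item It converts the limiting Talenti parameters into \(\eps'_\ell/\eps_\ell\to1\) and \(d_g(\xi_\ell,\xi'_\ell)/\eps_\ell\to0\), using the explicit M\"obius formula for \(\mathsf C_{\frac{n-2}{2}}(h)B^{\RR}_d\).
\item Only then does it obtain \(d_g(z_\ell,z'_\ell)\to0\), from a uniform separation of distinct hyperbolic translates.
\end{itemize}

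You go hyperbolic-first, and your key tool is the \(L^2\) mass. By Lemma~\ref{Lem2.5}, \(\|\Sigma_{\theta_\ell}\|_2^2\to M_\lambda\), and the spectral gap transfers this to \(\Sigma_{\theta'_\ell}\). Because all profiles are positive, \(\|\Sigma_{\theta'_\ell}\|_2^2\ge M_\lambda+\|B^{\HH}_{\mathrm{pr},\xi'_\ell,\eps'_\ell}\|_2^2\). Since \(\eps\mapsto\|B^{\HH}_{\mathrm{pr},0,\eps}\|_2\) is continuous and positive on compact subsets of \((0,\eps_0]\), this forces \(\eps'_\ell\to0\).

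Three points to fix in the write-up:
\begin{itemize}
\item State the positivity of the cross term explicitly; it is what makes the mass bookkeeping work.
\item Put the mass step before the weak-limit identification after recentering at \(z_\ell\). Your sentence about \(d_g(z_\ell,z'_\ell)\to\infty\) tacitly assumes \(\eps'_\ell\to0\).
\item Your final ``standard nondegeneracy'' step still needs the same M\"obius computation as the paper (e.g.\ \(|h_\ell|\le\delta_\ell|y_\ell|\)) to translate Talenti parameters into scaled hyperbolic ones.
\end{itemize}

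Your route gives a shorter, more elementary exclusion of scales bounded away from zero, with no regularity analysis of the projected bubble. The paper's route gives the general parameter dichotomy, which it reuses in the realization part of Theorem~\ref{Thm1.4} and in Corollary~\ref{Cor6.4}.
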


\begin{proof}
We prove the first assertion by contradiction. Suppose that, for some \(\eta_{\mathrm{par}}>0\), there are
\[
\theta_\ell=(z_\ell,\eps_\ell,\xi_\ell),\qquad\theta_\ell'=(z_\ell',\eps_\ell',\xi_\ell'),
\]
such that
\begin{equation}\label{eq4.31}
\eps_\ell\to0,\qquad\norm{\Sigma_{\theta_\ell}-\Sigma_{\theta_\ell'}}\to0,\qquad d_{\mathrm{sc}}(\theta_\ell,\theta_\ell')\ge\eta_{\mathrm{par}}.
\end{equation}
Put \(\delta_\ell=\eps_\ell/2\) and \(\delta_\ell'=\eps_\ell'/2\). We use the rescaling \(\mathscr S_{\xi_\ell,\eps_\ell}\) defined before Proposition~\ref{Prop4.3}. Lemma~\ref{Lem2.5} gives
\begin{equation}\label{eq4.32}
\mathscr S_{\xi_\ell,\eps_\ell} B^{\HH}_{\mathrm{pr},\xi_\ell,\eps_\ell}\to B^{\RR}_{1} \quad\text{strongly in }\dot H^1(\RR^n).
\end{equation}
Every hyperbolic profile is uniformly smooth and bounded after recentering. Hence, on every fixed Euclidean ball, the gradients converge strongly in \(L^2\). We denote this local Dirichlet convergence by \(\dot H^1_{\mathrm{loc}}(\RR^n)\):
\begin{equation}\label{eq4.33}
\mathscr S_{\xi_\ell,\eps_\ell} B^{\HH}_{z_\ell,\lambda}\to0,\qquad\mathscr S_{\xi_\ell,\eps_\ell} B^{\HH}_{z_\ell',\lambda}\to0 \quad\text{strongly in }\dot H^1_{\mathrm{loc}}(\RR^n).
\end{equation}
The rescaled profiles also have uniformly bounded Dirichlet norms. Hence these local limits imply weak convergence to zero in \(\dot H^1(\RR^n)\).

We use the following parameter dichotomy for projected bubbles. After passing to a subsequence, exactly one of the following alternatives holds:
\par\smallskip
\noindent\textup{(i)}
\[
\eps_\ell'\to0,\qquad\frac{\delta_\ell'}{\delta_\ell}\to\sigma\in(0,\infty),\qquad\frac{d_g(\xi_\ell,\xi_\ell')}{\eps_\ell}=O(1).
\]
There are scaled centers \(y_\ell\to y_0\) such that
\begin{equation}\label{eq4.34}
\mathscr S_{\xi_\ell,\eps_\ell} B^{\HH}_{\mathrm{pr},\xi_\ell',\eps_\ell'} \to B^{\RR}_{y_0,\sigma} \quad\text{strongly in }\dot H^1(\RR^n), \qquad|y_\ell|\sim\frac{d_g(\xi_\ell,\xi_\ell')}{\eps_\ell}.
\end{equation}
\par\smallskip
\noindent\textup{(ii)}
\[
\mathscr S_{\xi_\ell,\eps_\ell} B^{\HH}_{\mathrm{pr},\xi_\ell',\eps_\ell'}\weakto0 \quad\text{in }\dot H^1(\RR^n).
\]
This occurs if the scale ratio tends to \(0\) or \(\infty\), if the scaled centers diverge while the ratio stays in a compact subset of \((0,\infty)\), or if \(\liminf_\ell\eps_\ell'>0\).

To prove this dichotomy, let \(h_\ell\) be the image of \(\xi_\ell'\) under the isometry sending \(\xi_\ell\) to the origin. First suppose that \(\delta_\ell'\to0\). As in Section~\ref{Sec2}, \(\widetilde{B^{\RR}_{\mathrm{pr},d}}\) denotes the zero extension, which belongs to \(\dot H^1(\RR^n)\). By \eqref{eq2.19} and scale invariance, as \(d\downarrow0\),
\[
\|\nabla(\widetilde{B^{\RR}_{\mathrm{pr},d}}-B^{\RR}_{d})\|_{L^2(\RR^n)}\to0.
\]
The conformal action preserves the Dirichlet norm on the whole Euclidean space, and the approximation in \eqref{eq2.21} is uniform in the center, as shown immediately after Lemma~\ref{Lem2.6}. Thus, uniformly in \(h_\ell\),
\[
\mathscr S_{\xi_\ell,\eps_\ell} B^{\HH}_{\mathrm{pr},\xi_\ell',\eps_\ell'} =\delta_\ell^{\frac{n-2}{2}}\bigl(\mathsf C_{\frac{n-2}{2}}(h_\ell)B^{\RR}_{\delta_\ell'}\bigr)(\delta_\ell\,\cdot) +o_{\dot H^1}(1).
\]

With the convention for \(\mathsf C_{\frac{n-2}{2}}\) used in Lemma~\ref{Lem2.6}, completing the square gives the exact formula
\[
\mathsf C_{\frac{n-2}{2}}(h)B^{\RR}_d=B^{\RR}_{y_h,d_h},\qquad d_h=\frac{d(1-|h|^2)}{1+d^2|h|^2},\qquad y_h=\frac{(1+d^2)h}{1+d^2|h|^2}.
\]
Consequently, on setting
\[
\sigma_\ell=\frac{\delta_\ell'(1-|h_\ell|^2)} {\delta_\ell(1+(\delta_\ell')^2|h_\ell|^2)},\qquad y_\ell=\frac{(1+(\delta_\ell')^2)h_\ell} {\delta_\ell(1+(\delta_\ell')^2|h_\ell|^2)},
\]
the preceding approximation is \(B^{\RR}_{y_\ell,\sigma_\ell}+o_{\dot H^1}(1)\).

The Talenti family converges weakly to zero whenever its parameters leave every compact subset of \((0,\infty)\times\RR^n\); this follows directly from its explicit formula by testing the equation against compactly supported functions. Thus a nonzero weak limit requires a bounded center and a scale in a compact subinterval of \((0,\infty)\).

Suppose now that the weak limit is nonzero. The exact center formula and \((1+(\delta_\ell')^2)/(1+(\delta_\ell')^2|h_\ell|^2)\ge1\) imply \(|h_\ell|\le\delta_\ell|y_\ell|=O(\delta_\ell)\). Hence
\[
\sigma_\ell=\frac{\delta_\ell'}{\delta_\ell}(1+o(1)),\qquad y_\ell=\frac{h_\ell}{\delta_\ell}(1+o(1)).
\]
In particular, \(\delta_\ell'/\delta_\ell\) lies in a compact subinterval of the positive real axis. Moreover,
\[
d_g(\xi_\ell,\xi_\ell')=2\operatorname{arctanh}|h_\ell|, \qquad\frac{d_g(\xi_\ell,\xi_\ell')}{\eps_\ell} =\frac{\operatorname{arctanh}|h_\ell|}{\delta_\ell} \sim|y_\ell|.
\]
After extraction, \(\sigma_\ell\to\sigma\in(0,\infty)\) and \(y_\ell\to y_0\). Continuity of translations and dilations in \(\dot H^1\) gives \eqref{eq4.34}. Conversely, the exact formulas show that each of the first two cases listed in (ii), with \(\delta_\ell'\to0\), precludes such parameter compactness and hence gives weak convergence to zero.

It remains to treat \(\liminf_\ell\delta_\ell'>0\). For any fixed \(d_*>0\), on the scale interval \([d_*,\eps_0/2]\), the profile estimates give a uniform bound on \(B^{\RR}_{\mathrm{pr},d}\) away from the boundary and \(B^{\RR}_{\mathrm{pr},d}(|x|)\le C(1-|x|)^{\alpha_\lambda}\) on the annulus \(\{x\in\BB^n:1/2<|x|<1\}\). The radial equation
\[
-|x|^{n-1}(B^{\RR}_{\mathrm{pr},d})'(|x|) =\int_0^{|x|} s^{n-1}\bigl(B^{\RR}_{d}(s)^{2^*-1}+\mu\varrho(s)^2B^{\RR}_{\mathrm{pr},d}(s)\bigr)\,ds
\]
then gives \(|(B^{\RR}_{\mathrm{pr},d})'(|x|)|\le C(1-|x|)^{\alpha_\lambda-1}\) on the annulus \(\{x\in\BB^n:1/2<|x|<1\}\), with uniform derivative bounds on \(\{x\in\BB^n:|x|\le1/2\}\). Thus
\[
\sup_{d\in[d_*,\eps_0/2]} \bigl(\|B^{\HH}_{\mathrm{pr},0,2d}\|_\infty+\|\nabla_gB^{\HH}_{\mathrm{pr},0,2d}\|_\infty\bigr)<\infty.
\]
These bounds hold uniformly under arbitrary hyperbolic isometries. Since the conformal factor and its derivatives are bounded on each fixed ball in the rescaled coordinates, the rescaled second profiles tend to zero strongly in \(H^1_{\mathrm{loc}}(\RR^n)\). Their global Dirichlet norms are uniformly bounded by \eqref{eq2.3} and \eqref{eq2.37}. Density gives weak convergence to zero in \(\dot H^1(\RR^n)\). This proves the dichotomy.

Applying \(\mathscr S_{\xi_\ell,\eps_\ell}\) to \eqref{eq4.31}, and using \eqref{eq4.32}--\eqref{eq4.33}, shows that the weak limit in \eqref{eq4.34} must equal \(B^{\RR}_{1}\). The Talenti parametrization is injective, so
\[
\sigma=1,\qquad y_0=0.
\]
Since this holds along every subsequence,
\begin{equation}\label{eq4.35}
\frac{\eps_\ell'}{\eps_\ell}\to1,\qquad\frac{d_g(\xi_\ell,\xi_\ell')}{\eps_\ell}\to0.
\end{equation}
The first derivative bounds in \eqref{eq2.23}, integrated along a parameter path satisfying \eqref{eq4.35}, now give
\begin{equation*}
\norm{B^{\HH}_{\mathrm{pr},\xi_\ell,\eps_\ell}-B^{\HH}_{\mathrm{pr},\xi_\ell',\eps_\ell'}}\to0.
\end{equation*}
Consequently,
\begin{equation}\label{eq4.36}
\norm{B^{\HH}_{z_\ell,\lambda}-B^{\HH}_{z_\ell',\lambda}}\to0.
\end{equation}

It remains to identify the hyperbolic centers. The profile distance depends continuously only on the distance between the centers, is positive for distinct centers, and tends to \(\sqrt2\norm{B^{\HH}_{0,\lambda}}\) as that distance tends to infinity, since escaping hyperbolic translates converge weakly to zero. Hence
\[
\inf_{d_g(z,w)\ge s}\norm{B^{\HH}_{z,\lambda}-B^{\HH}_{w,\lambda}}>0 \qquad\text{for every }s>0.
\]
Equation \eqref{eq4.36} yields \(d_g(z_\ell,z_\ell')\to0\). Together with \eqref{eq4.35}, this contradicts \eqref{eq4.31}, proving the first assertion.

For the local estimate, let \(v\) be the increment from \(\theta\) to \(\theta'\) in the scaled chart introduced before Proposition~\ref{Prop4.2}. Let \(T_\theta\) denote the differential at \(\theta\) of the map \(\vartheta\mapsto\Sigma_\vartheta\), expressed in this chart, and set
\[
\mathcal E_\theta(v) =\Sigma_{\theta'}-\Sigma_\theta-T_\theta v.
\]
The uniform second derivative bounds give
\[
\norm{\mathcal E_\theta(v)}\le C|v|^2.
\]
The tangent space to the comparison family is contained in \(\cK_\theta\). Hence Lemma~\ref{Lem4.1} gives
\[
\norm{T_\theta v}\ge c|v|.
\]
After decreasing \(r_0\), this proves the lower bound. The upper bound follows from the bounds for first derivatives in \eqref{eq2.23}, and \(|v|\) is uniformly equivalent to \(d_{\mathrm{sc}}(\theta,\theta')\) in this chart.
\end{proof}

\begin{proposition}\label{Prop4.8}
Let \(n=4,5\) or \(n\ge7\). With the constants of Theorem~\ref{Thm1.1}, there are real-valued functions \(u_\ell\) satisfying \eqref{eq1.2} for some \(z_{*,\ell},\xi_{*,\ell}\in\HH^n\) and \(0<\eps_{*,\ell}<\eps_0/2\), such that
\[
0<\Gamma_\lambda(u_\ell)\longrightarrow0, \qquad \dist_{H^1_\lambda}(u_\ell,\cM_{\mix}(\eps_0)) \ge c\Gamma_\lambda(u_\ell).
\]
Thus the linear rate cannot be replaced uniformly by \(o(t)\) as \(t\downarrow0\).
\end{proposition}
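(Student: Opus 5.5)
The plan is to perturb an exact mixed sum in a direction orthogonal to its tangent space. Fix a sequence \(\eps_\ell\downarrow0\) and centers \(z_\ell,\xi_\ell\), for instance with \(r_\ell=d_g(z_\ell,\xi_\ell)\) fixed, and put \(\theta_\ell=(z_\ell,\eps_\ell,\xi_\ell)\). Since \(\cX_{\theta_\ell}\) has finite codimension, we may choose a real-valued unit vector \(\phi_\ell\in\cX_{\theta_\ell}\). Set
\[
t_\ell=s(\eps_\ell)+I(\eps_\ell,r_\ell)\longrightarrow0,\qquad u_\ell=\Sigma_{\theta_\ell}+t_\ell\phi_\ell .
\]
For large \(\ell\), \(u_\ell\) satisfies \eqref{eq1.2} with \(\theta_*=\theta_\ell\), since \(t_\ell<\eta_0\) and \(\eps_\ell<\eps_0/2\). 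Theorem~\ref{Thm1.1} then applies, and \eqref{eq1.3} together with \(s>0\) gives
\[
0<s(\eps_\ell)\le C\Gamma_\lambda(u_\ell),\qquad \Gamma_\lambda(u_\ell)>0 .
\]

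\emph{Upper bound on the residual.} We use the expansion \eqref{eq4.25} with \(w=t_\ell\phi_\ell\). For \(n\ne6\), \eqref{eq4.11} bounds \(\dnorm{\cR_\lambda(\Sigma_{\theta_\ell})}\) by \(C(s+I)=Ct_\ell\). The operator \(L_\theta\) is uniformly bounded, so the linear term is at most \(Ct_\ell\). Lemma~\ref{Lem4.5} bounds the nonlinear term by \(C(t_\ell^2+t_\ell^{2^*-1})\). Altogether \(\Gamma_\lambda(u_\ell)\le Ct_\ell\to0\).

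\emph{Lower bound on the distance.} It remains to show \(\norm{u_\ell-\Sigma_{\theta'}}\ge ct_\ell\) for every \(\theta'=(z',\eps',\xi')\) with \(0<\eps'<\eps_0\). We split competitors into far and near ones.

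For far competitors, let \(r_0\) be the radius in the second part of Lemma~\ref{Lem4.7}, and apply its first part with \(\eta_{\mathrm{par}}=r_0\). For large \(\ell\) we have \(\eps_\ell<\bar\eps(r_0)\). Then \(d_{\mathrm{sc}}(\theta_\ell,\theta')\ge r_0\) forces \(\norm{\Sigma_{\theta_\ell}-\Sigma_{\theta'}}\ge\eta(r_0)\), hence
\[
\norm{u_\ell-\Sigma_{\theta'}}\ge\eta(r_0)-t_\ell\ge t_\ell .
\]

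For near competitors, \(d_{\mathrm{sc}}<r_0\), and we write \(v\) for the chart increment, so \(|v|\sim d_{\mathrm{sc}}\). Then
\[
\Sigma_{\theta'}-\Sigma_{\theta_\ell}=T_{\theta_\ell}v+\mathcal E(v),\qquad T_{\theta_\ell}v\in\cK_{\theta_\ell},\qquad \norm{\mathcal E(v)}\le C|v|^2,\qquad \norm{T_{\theta_\ell}v}\ge c|v| .
\]
Orthogonality of \(\phi_\ell\) to \(\cK_{\theta_\ell}\) gives
\[
\norm{u_\ell-\Sigma_{\theta'}}^2\ge t_\ell^2+c^2|v|^2-2C|v|^2\bigl(t_\ell+c'|v|\bigr)-C^2|v|^4 .
\]
After shrinking \(r_0\) once more and taking \(\ell\) large, this is at least \(\tfrac12(t_\ell^2+c^2|v|^2)\ge\tfrac12t_\ell^2\).

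Combining the two cases yields
\[
\dist_{H^1_\lambda}\bigl(u_\ell,\cM_\mix(\eps_0)\bigr)\ge ct_\ell\ge c'\Gamma_\lambda(u_\ell).
\]
The main delicacy is the near-competitor step, where the quadratic remainder must be absorbed uniformly. Lemma~\ref{Lem4.7} supplies exactly this uniformity, including competitor scales throughout \((0,\eps_0)\). Dimension \(6\) is excluded only because there \eqref{eq4.11} bounds the residual by \(\omega_\eps\), not by \(s+I\).
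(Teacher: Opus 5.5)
Your proposal is correct and is essentially the paper's proof. You perturb an exact mixed sum by $t_\ell\phi_\ell$ with a unit vector $\phi_\ell\in\cX_{\theta_\ell}$, use Lemma~\ref{Lem4.7} to confine all relevant competitors to the uniform scaled chart around $\theta_\ell$, and there use that $\phi_\ell$ is orthogonal to $T_{\theta_\ell}v\in\cK_{\theta_\ell}$, so only the $O(|v|^2)$ Taylor remainder interacts with the perturbation. The paper phrases this step as a contradiction argument that pairs with $\phi_\ell$, rather than as your far/near split.

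The only tactical difference is the size of $t_\ell$. The paper lets $t_\ell\downarrow0$ so slowly that $\dnorm{\cR_\lambda(\Sigma_{\theta_\ell})}=o(t_\ell)$, and then gets $\Gamma_\lambda(u_\ell)\sim t_\ell$, including positivity, from the coercivity in Proposition~\ref{Prop4.3}. Your choice $t_\ell=s+I$ instead uses \eqref{eq4.11} for the upper bound and Theorem~\ref{Thm1.1} for $\Gamma_\lambda(u_\ell)>0$. So $n\ne6$ enters your proof through \eqref{eq4.11}, but your closing diagnosis is slightly off. The paper's choice of $t_\ell$ works verbatim in dimension six, so the exclusion there only reflects that the linear rate is not sharp (Proposition~\ref{Prop4.9}).
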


\begin{proof}
Choose \(\theta_\ell=(0,\eps_\ell,0)\) with \(\eps_\ell\to0\), and set \(\Sigma_\ell=\Sigma_{\theta_\ell}\). Then \(\dnorm{\cR_\lambda(\Sigma_\ell)}\to0\). By Proposition~\ref{Prop4.3}, any real-valued unit \(\phi_\ell\in\cX_{\theta_\ell}\) satisfies
\[
\|L_{\theta_\ell}\phi_\ell\|_{H^{-1}_\lambda} \ge\langle L_{\theta_\ell}\phi_\ell,\phi_\ell\rangle\ge c.
\]
The reverse bound is uniform as well, by Sobolev's inequality and the uniform bound for \(\Sigma_{\theta_\ell}^{2^*-2}\) in \(L^{n/2}\). Take \(t_\ell\downarrow0\) so slowly that \(\dnorm{\cR_\lambda(\Sigma_\ell)}=o(t_\ell)\), and set \(u_\ell=\Sigma_\ell+t_\ell\phi_\ell\). The estimate for the power nonlinearity \eqref{eq4.18} gives
\[
\cR_\lambda(u_\ell) =\cR_\lambda(\Sigma_\ell)+t_\ell L_{\theta_\ell}\phi_\ell+O_{H^{-1}_\lambda}(t_\ell^2+t_\ell^{2^*-1}).
\]
Since \(2^*-1>1\), both \(t_\ell^2\) and \(t_\ell^{2^*-1}\) are \(o(t_\ell)\). Consequently \(\Gamma_\lambda(u_\ell)\sim t_\ell\).

We prove the distance lower bound using Lemma~\ref{Lem4.7}. By construction,
\[
\dist(u_\ell,\cM_{\mix}(\eps_0))\le\norm{u_\ell-\Sigma_\ell}=t_\ell.
\]
Suppose that, for some \(\Sigma_{\theta_\ell'}\in\cM_{\mix}(\eps_0)\),
\[
\norm{u_\ell-\Sigma_{\theta_\ell'}}<\frac{t_\ell}{2}.
\]
Then
\[
\norm{\Sigma_{\theta_\ell'}-\Sigma_{\theta_\ell}}<\frac{3t_\ell}{2}\to0.
\]
Since \(\eps_\ell\to0\), Lemma~\ref{Lem4.7} shows that \(\theta_\ell'\) lies in the uniform scaled chart centered at \(\theta_\ell\). Thus
\[
\Sigma_{\theta_\ell'}-\Sigma_{\theta_\ell} =T_{\theta_\ell}v_\ell+\mathcal E_\ell,\qquad|v_\ell|\le Ct_\ell,\qquad\norm{\mathcal E_\ell}\le C|v_\ell|^2.
\]
Because the tangent space to the comparison family is contained in \(\cK_{\theta_\ell}\) and \(\phi_\ell\in\cX_{\theta_\ell}\),
\[
\ip{T_{\theta_\ell}v_\ell}{\phi_\ell}=0.
\]
Pairing \(u_\ell-\Sigma_{\theta_\ell'}\) with the unit vector \(\phi_\ell\) gives
\[
\norm{u_\ell-\Sigma_{\theta_\ell'}} \ge t_\ell-\norm{\mathcal E_\ell} \ge t_\ell-Ct_\ell^2>\frac{t_\ell}{2}
\]
for large \(\ell\), a contradiction. Hence
\[
\frac{t_\ell}{2}\le\dist(u_\ell,\cM_{\mix}(\eps_0))\le t_\ell.
\]
Together with \(\Gamma_\lambda(u_\ell)\sim t_\ell\), this proves the assertion. The neighborhood condition holds for large \(\ell\), since \(\norm{u_\ell-\Sigma_\ell}=t_\ell\to0\) and \(\eps_\ell<\eps_0/2\).
\end{proof}

The six-dimensional rate requires a different approach: a positive correction cancels the leading mixed residual while retaining a larger orthogonal component.

\begin{proposition}\label{Prop4.9}
Assume \(n=6\) and \eqref{eq1.1}. There are positive functions \(u_\eps\), with both centers at the origin, such that
\begin{equation}\label{eq4.37}
\Gamma_\lambda(u_\eps)\sim\eps^2,\qquad \dist_{H^1_\lambda}(u_\eps,\cM_{\mix}(\eps_0)) \sim\eps^2\sqrt{\log(2/\eps)}.
\end{equation}
They satisfy the neighborhood hypothesis of Theorem~\ref{Thm1.1} for every sufficiently small \(\eps\). Thus its distance bound cannot be replaced uniformly by any rate \(o(t\sqrt{\log(2/t)})\) as \(t\downarrow0\).
\end{proposition}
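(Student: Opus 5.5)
The plan is to perturb the mixed sum at coincident centers by an explicit positive correction that removes the one term responsible for the logarithm. I would take \(z=\xi=0\), \(\theta=(0,\eps,0)\), \(\Sigma=H+B\), and set
\[
u_\eps=\Sigma+w_\eps,\qquad w_\eps=(P^{\HH}_\lambda)^{-1}(2HB).
\]
The right-hand side is positive and lies in \(H^{-1}_\lambda\) by Lemma~\ref{Lem3.5}, and the resolvent of \(P^{\HH}_\lambda\) is positive, as in Section~\ref{Sec2.1}. Hence \(w_\eps\ge0\) and \(u_\eps>0\).

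The choice is guided by the proof of Lemma~\ref{Lem3.5} and Proposition~\ref{Prop2.11} for \(n=6\). There \(\cR_\lambda(\Sigma)=\cR_\lambda(B)-2HB\). The term \(\cR_\lambda(B)\) is \(O(\eps^2)\) in the dual norm, because its interior bound is \(B_\delta f_\delta\lesssim\delta^4(\delta+|x|)^{-6}\). The only source of \(\sqrt{\log}\) is the interior part of \(2HB\sim2H(0)\delta^2(\delta+|x|)^{-4}\), which yields \eqref{eq2.12}. Since \(n=6\) makes the nonlinearity exactly quadratic, one has
\[
\cR_\lambda(u_\eps)=\cR_\lambda(B)-2\Sigma w_\eps-w_\eps^2 .
\]
The leading mixed term has therefore been cancelled exactly.

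The first set of steps gives the bounds on the residual.
\begin{enumerate}
\item Obtain pointwise bounds for \(w_\eps\) in conformal coordinates. Using the Green kernel of Lemma~\ref{Lem2.2}, or the barrier comparison of Lemma~\ref{Lem2.3}, I expect \(w_\eps\lesssim\eps^2(1+\log(1/(\delta+|x|)))\) on \(|x|\le1/2\). Near the boundary I expect the same boundary decay as \(h_r\). In particular \(w_\eps\) is small in \(L^{3/2}\) and in \(L^{3}_{\mathrm{loc}}\), with only a logarithmically integrable singularity.
\item Estimate the three remaining terms in \(L^{3/2}\). For \(Hw_\eps\), the bound \(H\le C\) gives \(O(\eps^2)\), because \(\log\) is integrable; far away I use \(\|H\|_{L^3}\) together with the decay of \(w\). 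For \(Bw_\eps\) I get \(O(\eps^4\log^{C})\). For \(w_\eps^2\) I also get \(o(\eps^2)\).
\item Conclude \(\Gamma_\lambda(u_\eps)\lesssim\eps^2\).
\item For the reverse bound, note that \(u_\eps\) lies in the neighborhood \eqref{eq1.2} because \(\norm{w_\eps}\to0\). Then \eqref{eq1.3} gives \(s(\eps)\le C\Gamma_\lambda(u_\eps)\), and \(s\sim\eps^2\). Hence \(\Gamma_\lambda(u_\eps)\sim\eps^2\).
\end{enumerate}

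The second set of steps gives the distance. The upper bound is \(\dist\le\norm{w_\eps}=\|2HB\|_{H^{-1}_\lambda}\lesssim\eps^2\sqrt{\log(2/\eps)}\), by Lemma~\ref{Lem3.5}. For the lower bound, I first show that \(\|2HB\|_{H^{-1}_\lambda}\gtrsim\eps^2\sqrt{\log(2/\eps)}\). To do this I test against \(\phi=\varrho^{-2}\psi\), where
\[
\psi=\chi(x)\bigl((\delta+|x|)^{-2}-c\bigr),
\]
with a cutoff \(\chi\) and a normalization for which \(\|\nabla\psi\|_2\sim\sqrt{\ell_\delta}\). This test function saturates the weighted Cauchy--Schwarz step in \eqref{eq2.12}, so the pairing is \(\sim\delta^2\ell_\delta\).

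Next I decompose \(w_\eps=k+\rho\) with \(k\in\cK_\theta\) and \(\rho\in\cX_\theta\). The pairings of \(w_\eps\) with the basis of \(\cK_\theta\) equal \(\langle 2HB,\cdot\rangle\) against the basis functions. These are \(O(\eps^2)\), by \eqref{eq4.12} and the bounds in \eqref{eq4.11}. Lemma~\ref{Lem4.1} then gives \(\norm{k}=O(\eps^2)\), so \(\norm\rho\sim\eps^2\sqrt{\log(2/\eps)}\).

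To finish, I rerun the argument of Proposition~\ref{Prop4.8}. Any competitor \(\Sigma_{\theta'}\) within \(\norm\rho/2\) of \(u_\eps\) lies in the scaled chart by Lemma~\ref{Lem4.7}. Its tangent part is orthogonal to \(\rho\), and its quadratic error is \(O(\norm{w}^2)\). Moreover \(\Sigma+k-\Sigma_{\theta'}\) has \(\rho\)-component at most \(O(\eps^4\log)\). Pairing with \(\rho/\norm\rho\) gives \(\dist\ge\norm\rho/2\), which proves \eqref{eq4.37}. The final sentence of the statement then follows because \(t\sqrt{\log(2/t)}\) is increasing for small \(t\) and \(\Gamma\sim\eps^2\).

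The main obstacle I expect is the precise two-sided control of \(w_\eps\). The pointwise logarithmic bound must be sharp enough that \(\|Hw_\eps\|_{L^{3/2}}=O(\eps^2)\) with no extra logarithm, and the annular and boundary contributions must be handled uniformly. If the pointwise route is delicate, I would first try splitting \(2HB\) into its interior part, where Lemma~\ref{Lem2.2} gives the kernel bound directly, and its annular part, whose dual norm is already \(O(\eps^2)\) by \eqref{eq3.28}.
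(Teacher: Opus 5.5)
Your scheme is the paper's: the correction $w_\eps=(P^{\HH}_\lambda)^{-1}(2HB)$, the exact residual identity, the splitting $w_\eps=k+\rho$, and the chart argument through Lemma~\ref{Lem4.7}. Your two local variations are sound. You derive $\Gamma_\lambda(u_\eps)\gtrsim\eps^2$ from \eqref{eq1.3} rather than by testing \eqref{eq4.25} against $Z_0$; this controls $s$ at the decomposition scale $\eps'$, and Proposition~\ref{Prop4.2} gives $|\log(\eps'/\eps)|\le C\norm{w_\eps}$. You also bound $\|2HB\|_{H^{-1}_\lambda}$ from below by duality rather than through \eqref{eq4.39}--\eqref{eq4.40}.

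The first genuine error is the pointwise profile in your step~1. Write $w^{\RR}_\eps=\varrho^2w_\eps$. The interior source $2h_0B^{\RR}_{\mathrm{pr},\delta}\sim\delta^2(\delta+|x|)^{-4}$, integrated against the six-dimensional kernel bound \eqref{eq2.6}, gives $w^{\RR}_\eps\sim\delta^2(\delta+|x|)^{-2}$ on $|x|\le1/2$. This is of order one at $|x|\sim\delta$ and has exactly the shape of your own test function; it is not $\eps^2(1+\log(1/(\delta+|x|)))$. Your guess is in fact inconsistent with your lower bound. Since $\norm{w_\eps}^2=\int_{\BB^6}2h_0B^{\RR}_{\mathrm{pr},\delta}w^{\RR}_\eps\,dx$, a logarithmic bound would force $\norm{w_\eps}=O(\eps^2)$. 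With the correct profile, $\|B^{\RR}_{\mathrm{pr},\delta}w^{\RR}_\eps\|_{L^{3/2}(\BB_{1/2})}\sim\delta^2$, not $O(\eps^4\log^C)$. The conclusion $\Gamma_\lambda(u_\eps)\le C\eps^2$ still holds, because the $L^{3/2}$ norms of $h_0w^{\RR}_\eps$ and $B^{\RR}_{\mathrm{pr},\delta}w^{\RR}_\eps$ are both $O(\delta^2)$ and $\dnorm{w_\eps^2}\le C\norm{w_\eps}^2$. Your step~2 has to be redone with this bound.

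The second gap is your claim that all pairings of $w_\eps$ with $\cK_\theta$ are $O(\eps^2)$ by \eqref{eq4.12} and \eqref{eq4.11}. You need $\norm{k}=o(\eps^2\sqrt{\log(2/\eps)})$. However, \eqref{eq4.12} covers only $H$ and $B$. For $Z_0,Y_j,Z_j$, the only bound \eqref{eq4.11} provides in dimension six is $\dnorm{\cR_\lambda(\Sigma_\theta)}\le C\eps^2\sqrt{\log(2/\eps)}$. That is the same order as $\norm{w_\eps}$, so it leaves $\norm{\rho}$ with no lower bound and the distance argument collapses. The fix is to compute the pairings directly:
\[
\ip{w_\eps}{H}=2Q,\qquad \ip{w_\eps}{B}=2I,\qquad \ip{w_\eps}{Z_0}=\int_{\HH^6}2HBZ_0\dv=\cD I .
\]
All three are $O(\eps^2)$ by Theorem~\ref{Thm3.3} and Proposition~\ref{Prop3.4}. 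At coincident centers $2HB$ is radial while $Y_j$ and $Z_j$ are odd, so those pairings vanish. With these two repairs your argument goes through and agrees with the paper's proof.
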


\begin{proof}
Fix \(z=\xi=0\), so \(H=B^{\HH}_{0,\lambda}\), \(B=B^{\HH}_{\mathrm{pr},0,\eps}\), and define
\begin{equation}\label{eq4.38}
\psi_\eps=(P^{\HH}_\lambda)^{-1}(2HB), \qquad u_\eps=H+B+\psi_\eps.
\end{equation}
The inverse exists by coercivity of the energy form. Testing the equation with the negative part shows that \(\psi_\eps\ge0\). Uniqueness and rotation invariance show that it is radial. Hence \(u_\eps>0\).

Let \(\psi^{\RR}_\eps=\varrho^2\psi_\eps\) be its Euclidean representative and recall \(h_0=\varrho^2H\), \(\delta=\eps/2\). Then
\[
\mathcal P^{\RR}_\mu\psi^{\RR}_\eps =2h_0B^{\RR}_{\mathrm{pr},\delta}.
\]
On \(\{|x|\le3/4\}\), the fixed function \(h_0\) is bounded above and below by positive constants. With \(c_\delta=B^{\RR}_\delta(1)\), Lemma~\ref{Lem2.3} and \(B^{\RR}_{\mathrm{pr},\delta}\ge B^{\RR}_\delta-c_\delta\) give
\[
B^{\RR}_{\mathrm{pr},\delta}(x) \sim\frac{\delta^2}{(\delta+|x|)^4} \qquad (|x|\le3/4).
\]
The Green kernel bounds in Lemma~\ref{Lem2.2} therefore imply
\begin{equation}\label{eq4.39}
\begin{aligned}
\psi^{\RR}_\eps(x)&\sim\frac{\delta^2}{(\delta+|x|)^2}, &&|x|\le1/2,\\
0\le\psi^{\RR}_\eps(x)&\le C\delta^2(1-|x|)^{\alpha_\lambda}, &&1/2<|x|<1.
\end{aligned}
\end{equation}
We first prove the interior estimate. For \(\delta\le|x|\le1/2\), split the radial kernel integral at \(s=|x|\). Its two interior parts are bounded by
\[
C|x|^{-4}\int_0^{|x|}\frac{\delta^2s^5}{(\delta+s)^4}\,ds \le C\delta^2|x|^{-2}, \qquad C\delta^2\int_{|x|}^{3/4}s^{-3}\,ds \le C\delta^2|x|^{-2}.
\]
For \(|x|<\delta\), both parts are \(O(1)\). The lower bound follows by integrating over \(|x|<s<3|x|/2\) when \(|x|\ge\delta\), or over \(\delta<s<2\delta\) when \(|x|<\delta\). The contribution from \(3/4<s<1\) is \(O(\delta^2)\), by \eqref{eq2.7} and the boundary bounds for \(h_0\) and the projected bubble. On the annulus \(\{x\in\BB^6:1/2<|x|<1\}\), these bounds give \(2h_0B^{\RR}_{\mathrm{pr},\delta}\le C\delta^2(1-|x|)^{2\alpha_\lambda}\). Comparison with \(C\delta^2(1-|x|^2)^{\alpha_\lambda}\), using the barrier computation in the proof of Lemma~\ref{Lem2.3} and the value at \(|x|=1/2\), proves the second line of \eqref{eq4.39}.

Testing the correction equation gives
\begin{equation}\label{eq4.40}
\norm{\psi_\eps}^2 =\int_{\BB^6}2h_0B^{\RR}_{\mathrm{pr},\delta}\psi^{\RR}_\eps\,dx \sim\delta^4\log(2/\delta).
\end{equation}
Indeed, on \(2\delta<s<1/4\) the radial integrand is comparable to \(\delta^4/s\). The regions \(s<2\delta\), \(1/4<s<1/2\), and the exterior annulus contribute at most \(C\delta^4\), by \eqref{eq4.39}. Thus \(\norm{\psi_\eps}\sim\eps^2\sqrt{\log(2/\eps)}\to0\).

Since all terms in \eqref{eq4.38} are nonnegative, the residual identity is exactly
\begin{equation*}
\cR_\lambda(u_\eps)=\cR_\lambda(B)-2(H+B)\psi_\eps-\psi_\eps^2.
\end{equation*}
On \(\{|x|\le1/2\}\), the Euclidean representatives of \(H\psi_\eps\) and \(B\psi_\eps\) are bounded by
\[
\frac{C\delta^2}{(\delta+|x|)^2}, \qquad\frac{C\delta^4}{(\delta+|x|)^6},
\]
respectively. Direct radial integration gives an \(L^{3/2}\) bound \(C\delta^2\) for each. On \(\{1/2<|x|<1\}\), their representatives are bounded by \(C\delta^2(1-|x|)^{2\alpha_\lambda}\) and \(C\delta^4(1-|x|)^{2\alpha_\lambda}\), respectively. Critical conformal invariance and Sobolev duality yield
\[
\dnorm{H\psi_\eps}+\dnorm{B\psi_\eps}\le C\eps^2, \qquad \dnorm{\psi_\eps^2}\le C\norm{\psi_\eps}^2 =O\!\left(\eps^4\log(2/\eps)\right).
\]
Together with \eqref{eq4.9}, this proves \(\Gamma_\lambda(u_\eps)\le C\eps^2\).

For the matching lower bound, use \eqref{eq4.25} with \(w=\psi_\eps\) and test against \(Z_0\). Equations \eqref{eq4.10}, \eqref{eq4.11}, and \eqref{eq4.40} give
\[
c\eps^2\le C\Gamma_\lambda(u_\eps) +C\eps^2\sqrt{\log(2/\eps)}\norm{\psi_\eps} +C\norm{\psi_\eps}^2 \le C\Gamma_\lambda(u_\eps)+C\eps^4\log(2/\eps).
\]
Absorbing the last term proves \(\Gamma_\lambda(u_\eps)\sim\eps^2\).

Let \(P_{\cK_\theta}\) and \(P_{\cX_\theta}\) be the energy-orthogonal projections at \(\theta=(0,\eps,0)\). The defining equation of \(\psi_\eps\) implies
\[
\ip{\psi_\eps}{H}=2Q,\qquad \ip{\psi_\eps}{B}=2I,\qquad \ip{\psi_\eps}{Z_0}=\cD I.
\]
Each pairing is \(O(\eps^2)\). All pairings with \(Y_j\) and \(Z_j\), \(1\le j\le n\), vanish by oddness, since the correction is radial and both centers coincide. Lemma~\ref{Lem4.1} therefore gives
\begin{equation}\label{eq4.41}
\norm{P_{\cK_\theta}\psi_\eps}=O(\eps^2), \qquad \norm{P_{\cX_\theta}\psi_\eps} \sim\eps^2\sqrt{\log(2/\eps)}.
\end{equation}

Energy orthogonality gives
\[
\norm{P_{\cX_\theta}\psi_\eps}^2 =\norm{\psi_\eps}^2 -\norm{P_{\cK_\theta}\psi_\eps}^2 =\norm{\psi_\eps}^2-O(\eps^4).
\]

It remains to compare with every pair in \(\cM_{\mix}(\eps_0)\). If \(\norm{u_\eps-\Sigma_{\theta'}}\le\norm{\psi_\eps}\), then \(\norm{\Sigma_{\theta'}-\Sigma_\theta}\le2\norm{\psi_\eps}\to0\). Lemma~\ref{Lem4.7} puts \(\theta'\) in the fixed scaled chart around \(\theta\). Its local estimate and the second derivative bounds give
\[
\Sigma_{\theta'}-\Sigma_\theta=T_\theta v+\mathcal E_\theta(v), \quad T_\theta v\in\cK_\theta, \quad |v|\le C\norm{\psi_\eps}, \quad\norm{\mathcal E_\theta(v)}\le C\norm{\psi_\eps}^2.
\]
By \eqref{eq4.40},
\[
\frac{\norm{\psi_\eps}^2} {\eps^2\sqrt{\log(2/\eps)}} =O\!\left(\eps^2\sqrt{\log(2/\eps)}\right)\longrightarrow0.
\]
Thus the second-order error is negligible at the scale of the orthogonal correction. Projecting onto \(\cX_\theta\) and applying \eqref{eq4.41} yields
\[
\norm{u_\eps-\Sigma_{\theta'}} \ge\norm{P_{\cX_\theta}\psi_\eps}-C\norm{\psi_\eps}^2 \ge c\eps^2\sqrt{\log(2/\eps)}.
\]
If \(\norm{u_\eps-\Sigma_{\theta'}}>\norm{\psi_\eps}\), the same lower bound follows directly from \eqref{eq4.40}. Choosing \(\Sigma_\theta\) gives the upper bound. This proves \eqref{eq4.37} and the claimed sharpness.
\end{proof}
\section{Positive profiles and global mixed stability}
\label{Sec5}

Throughout this section, \(n\ge4\) and \(\lamc<\lambda<\lambdab\) are fixed. We classify the profiles of nonnegative Palais--Smale sequences at arbitrary levels and replace localized Euclidean profiles by the projected bubbles from Section~\ref{Sec2}. Their energy and mass determine the profile numbers. This yields Theorem~\ref{Thm1.2}, its resonance obstruction, and the six-dimensional sharpness result. We also realize every pair of profile numbers. Theorem~\ref{Thm1.3}, which uses both energy and mass constraints, is proved in Section~\ref{Sec6}.

\subsection{Positive profiles, projection, and energy splitting}
\label{Sec5.1}

\begin{lemma}\label{Lem5.1}
Let \(n\ge4\), let \(\Omega\) be a connected open subset of \(\RR^n\) or \(\HH^n\), with its corresponding smooth metric \(g\), and let \(\lambda\in\RR\). If \(0\le u\in H^1_{\mathrm{loc}}(\Omega)\) is a weak solution of
\[
-\Delta_g u-\lambda u=u^{\frac{n+2}{n-2}}\quad\text{in }\Omega,
\]
then \(u\in C^2(\Omega)\). Either \(u\equiv0\) or \(u>0\) throughout \(\Omega\).
\end{lemma}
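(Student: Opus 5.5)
The plan is to prove regularity first, by Brezis--Kato/Moser iteration, and then positivity, by the strong maximum principle for a supersolution. Both steps are local, so we work in a coordinate ball \(U\Subset\Omega\). In local coordinates the equation reads
\[
-\operatorname{div}(A\nabla u)+b\cdot\nabla u=(\lambda+V)u,\qquad V=u^{2^*-2},
\]
with smooth uniformly elliptic coefficients \(A,b\), obtained by writing \(\Delta_g\) in divergence form with the volume density. Since \(u\in H^1_{\mathrm{loc}}\subset L^{2^*}_{\mathrm{loc}}\), the potential satisfies \(V\in L^{n/2}_{\mathrm{loc}}\). The Brezis--Kato lemma then applies: testing with \(\eta^2u\min\{u,L\}^{2\beta}\) and splitting \(V=V\mathbf 1_{\{V>K\}}+V\mathbf 1_{\{V\le K\}}\) with \(\|V\mathbf 1_{\{V>K\}}\|_{L^{n/2}}\) small shows that \(u\in L^p_{\mathrm{loc}}\) for every \(p<\infty\). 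In particular the right side \((\lambda+V)u\) lies in \(L^q_{\mathrm{loc}}\) for all \(q<\infty\).

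Next we bootstrap. Calder\'on--Zygmund \(W^{2,q}\) estimates give \(u\in W^{2,q}_{\mathrm{loc}}\) for all \(q\), hence \(u\in C^{1,\alpha}_{\mathrm{loc}}\). The nonlinearity \(t\mapsto t^{(n+2)/(n-2)}\) is locally H\"older on \([0,\infty)\) with exponent \(\min\{1,4/(n-2)\}\) (it is \(C^1\) for \(n\le6\) and at least \(C^{0,4/(n-2)}\) in general). Therefore the right side is \(C^{0,\alpha'}_{\mathrm{loc}}\), and Schauder estimates yield \(u\in C^{2,\alpha'}_{\mathrm{loc}}\subset C^2(\Omega)\). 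The main technical point is the first step, the Brezis--Kato iteration at the critical exponent, where the \(L^{n/2}\) smallness on small balls is essential. This is standard, and I would cite Brezis--Kato or Trudinger rather than redo it.

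For positivity, I would note that \(u\ge0\) and \(u\in C^2\), and that the equation can be rewritten as
\[
-\Delta_g u+\lambda_- u=(\lambda_++u^{2^*-2})u\ge0,
\]
where \(\lambda_\pm=\max\{\pm\lambda,0\}\). Thus \(u\) is a nonnegative supersolution of \(-\Delta_g+c\) with \(c=\lambda_-\ge0\) bounded. The strong maximum principle (Hopf) then implies that the zero set of \(u\) is open: near an interior minimum point where \(u=0\), we get \(u\equiv0\). The zero set is also closed, by continuity of \(u\). Since \(\Omega\) is connected, either \(u\equiv0\) or \(u>0\) throughout \(\Omega\). Alternatively, one can apply the weak Harnack inequality to the supersolution; this avoids the sign issue with \(\lambda\), since only \(c\ge0\) is needed.
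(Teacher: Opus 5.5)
Your proof is correct and follows essentially the same route as the paper: Brezis--Kato regularity at the critical exponent, then the strong maximum principle for the supersolution inequality $(-\Delta_g+c)u\ge0$ with $c\ge0$ (the paper takes $c=|\lambda|$). The one difference is in handling the hyperbolic metric: the paper uses the conformal identity \eqref{eq2.1} to rewrite the equation as $-\Delta v=\bigl((\lambda-\lamc)\varrho^2+v^{4/(n-2)}\bigr)v$ with $v=\varrho^{(n-2)/2}u$, so Brezis--Kato applies to the flat Laplacian with a smooth potential, whereas your local-coordinate argument with variable coefficients works for any smooth metric.
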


\begin{proof}
Locally, the Euclidean equation is \(-\Delta v=(V+v^{4/(n-2)})v\) with \(v=u\), \(V=\lambda\); by \eqref{eq2.1}, the hyperbolic equation has the same form with \(v=\varrho^{(n-2)/2}u\), \(V=(\lambda-\lamc)\varrho^2\). Since \(V\) is smooth and \(v\in H^1_{\mathrm{loc}}\), the Brezis--Kato argument \cite{BrezisKato1979}, in the local form of \cite{Struwe2008}, gives \(v\in C^2\), hence \(u\in C^2(\Omega)\). Finally, \(( -\Delta_g+|\lambda|)u\ge0\), so the strong maximum principle gives the alternative.
\end{proof}

The following identity links the energy to the residual:
\begin{equation}\label{eq5.1}
J_\lambda(u)=\frac1n\norm{u}^2 +\frac1{2^*}\langle\cR_\lambda(u),u\rangle.
\end{equation}
Thus, if \(|J_\lambda(u)|\le D\),
\[
\norm{u}^2\le nD+\frac n{2^*}\Gamma_\lambda(u)\norm{u}.
\]
Every Palais--Smale sequence at a level \(c\) is bounded in \(H^1_\lambda\), and \eqref{eq5.1} then gives \(\norm{u_\ell}^2\to nc\) and \(c\ge0\).

We use the profile decomposition of \cite{SandeepTintarev2019}. For Palais--Smale sequences, it yields the global compactness conclusion of \cite[Theorem~3.3]{BhaktaSandeep2012}, as follows. Testing the Palais--Smale condition in each extraction chart gives the limiting equations by local compactness; the Euclidean zeroth-order term has the factor \(\eps_\ell^2\). The Sobolev inequalities and the energy bound in that decomposition make the nonzero profile list finite. Its remainder \(r_\ell\) satisfies \(\|r_\ell\|_{2^*}\to0\) and is asymptotically orthogonal to each reconstructed profile. Testing the Palais--Smale condition against \(r_\ell\) therefore gives
\[
\norm{r_\ell}^2 =\int_{\HH^n}|u_\ell|^{2^*-2}u_\ell r_\ell\dv+o(1)=o(1).
\]
Thus these profiles give the required strong remainder and energy splitting.

The following uniform cutoff comparison permits the replacement of localized Euclidean profiles. For \(\chi\in C_c^\infty(\BB^n)\), put
\[
V_{\eps,\chi}=\varrho^{-\frac{n-2}{2}}\chi B^{\RR}_{\eps/2}.
\]

\begin{lemma}\label{Lem5.2}
Fix \(0<r_0<R_0<1\) and \(C_\chi>0\). For all cutoffs \(\chi\in C_c^\infty(\BB^n)\) satisfying
\[
\chi=1\text{ on }\{|x|<r_0\},\qquad \operatorname{supp}\chi\subset\{|x|<R_0\},\qquad \|\chi\|_{C^1}\le C_\chi,
\]
we have, uniformly over these cutoffs,
\[
\norm{B^{\HH}_{\mathrm{pr},0,\eps}-V_{\eps,\chi}}\to0 \qquad(\eps\downarrow0).
\]
If \(\eps_\ell\downarrow0\), the cutoffs \(\chi_\ell\) satisfy the same bounds, and \(T_\ell\) are hyperbolic isometries, then, with \(\xi_\ell=T_\ell(0)\),
\[
\norm{V_{\eps_\ell,\chi_\ell}\circ T_\ell^{-1} -B^{\HH}_{\mathrm{pr},\xi_\ell,\eps_\ell}}\to0.
\]
\end{lemma}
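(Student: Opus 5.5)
The first assertion is a statement about conformal representatives at the origin, and I would reduce it to Lemma~\ref{Lem2.5}. By the isometry between \(H_0^1(\BB^n)\) with \(\mathcal B_\mu^{1/2}\) and \(H^1_\lambda(\HH^n)\), and the upper bound \(\mathcal B_\mu(v,v)\le\|\nabla v\|_2^2\), it suffices to show that
\[
\|\nabla(B^{\RR}_{\mathrm{pr},\delta}-\chi B^{\RR}_\delta)\|_{L^2(\BB^n)}\to0,\qquad \delta=\eps/2,
\]
uniformly over the admissible cutoffs. Here \(\chi B^{\RR}_\delta\in H_0^1(\BB^n)\) because \(\operatorname{supp}\chi\subset\{|x|<R_0\}\). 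Estimate \eqref{eq2.18} already gives this difference a bound \(C\) times a positive power of \(\delta\), possibly with a logarithm. The constant there depends only on \(n,\lambda,r_0,R_0\) and \(\|\chi\|_{C^1}\).

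The uniformity over cutoffs needs one check. In the proof of Lemma~\ref{Lem2.5}, the only \(\chi\)-dependent term is \(\|\nabla((1-\chi)B^{\RR}_\delta)\|_2\). On \(\{|x|\ge r_0\}\), both \(|\nabla B^{\RR}_\delta|\) and \(B^{\RR}_\delta\) are \(O(\delta^{(n-2)/2})\), with constants depending on \(r_0\). Together with \(\|\chi\|_{C^1}\le C_\chi\), this bounds that term by \(C(r_0,C_\chi)\delta^{(n-2)/2}\), uniformly in \(\chi\). The remaining terms, \(\nabla v_\delta\) from \eqref{eq2.15} and the gradient-free constant \(c_\delta\), do not involve \(\chi\). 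This proves the first assertion with an explicit uniform rate.

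For the second assertion, note that hyperbolic isometries preserve \(\norm{\cdot}\), since \(P^{\HH}_\lambda\) commutes with them. By definition, \(B^{\HH}_{\mathrm{pr},\xi_\ell,\eps_\ell}=B^{\HH}_{\mathrm{pr},0,\eps_\ell}\circ\tau_{\xi_\ell}^{-1}\). The composition \(\tau_{\xi_\ell}^{-1}\circ T_\ell\) fixes the origin, so it is a rotation, and \(B^{\HH}_{\mathrm{pr},0,\eps}\) is radial. Hence
\[
B^{\HH}_{\mathrm{pr},0,\eps_\ell}\circ T_\ell^{-1}=B^{\HH}_{\mathrm{pr},\xi_\ell,\eps_\ell},
\]
whichever isometry \(T_\ell\) with \(T_\ell(0)=\xi_\ell\) is used. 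It follows that
\[
\norm{V_{\eps_\ell,\chi_\ell}\circ T_\ell^{-1}-B^{\HH}_{\mathrm{pr},\xi_\ell,\eps_\ell}}=\norm{V_{\eps_\ell,\chi_\ell}-B^{\HH}_{\mathrm{pr},0,\eps_\ell}},
\]
which tends to zero by the first assertion applied with the varying cutoffs \(\chi_\ell\).

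The main obstacle, though mild, is the uniformity in \(\chi\): the argument relies on the constant in \eqref{eq2.18} depending on \(\chi\) only through \(r_0,R_0,\|\chi\|_{C^1}\). I would therefore write the cutoff tail estimate out explicitly rather than cite it. Everything else follows from isometry invariance and radiality.
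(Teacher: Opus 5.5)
Your proposal is correct and follows essentially the same route as the paper. The paper also reduces, via the conformal isometry and \(\mathcal B_\mu(v,v)\le\|\nabla v\|_2^2\) (from \(\mu>0\)), to the gradient estimate \eqref{eq2.18}, with the cutoff entering only through \(r_0,R_0,C_\chi\), and then passes to arbitrary centers by isometry invariance. Your use of radiality to identify \(B^{\HH}_{\mathrm{pr},0,\eps_\ell}\circ T_\ell^{-1}\) with \(B^{\HH}_{\mathrm{pr},\xi_\ell,\eps_\ell}\) simply spells out that last step.
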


\begin{proof}
Set \(\delta=\eps/2\). By \eqref{eq2.18} in Lemma~\ref{Lem2.5},
\[
\|\nabla(B^{\RR}_{\mathrm{pr},\delta}-\chi B^{\RR}_\delta)\|_2\to0.
\]
The constants in that proof depend on the cutoff only through \(r_0,R_0,C_\chi\). In particular, the squared gradient term involving the cutoff is bounded by
\[
C\delta^2\int_{r_0/\delta<|y|<R_0/\delta}(B^{\RR}_1(y))^2\,dy =O(\delta^{n-2})\qquad(n\ge4).
\]
Both terms in the difference have zero trace. Conformal covariance and \(\mu>0\) therefore give
\[
\norm{B^{\HH}_{\mathrm{pr},0,\eps}-V_{\eps,\chi}} \le\|\nabla(B^{\RR}_{\mathrm{pr},\delta}-\chi B^{\RR}_\delta)\|_2\to0
\]
uniformly over these cutoffs. Isometry invariance proves the assertion at arbitrary centers.
\end{proof}

We retain the projected bubbles from Section~\ref{Sec2} and the energies \(E_{m,k}(\lambda)\) defined in \eqref{eq1.7}. For Euclidean concentration parameters, the separation condition is
\begin{equation}\label{eq5.2}
\frac{\eps_{i,\ell}}{\eps_{j,\ell}} +\frac{\eps_{j,\ell}}{\eps_{i,\ell}} +\frac{d_g(\xi_{i,\ell},\xi_{j,\ell})^2} {\eps_{i,\ell}\eps_{j,\ell}}\longrightarrow\infty \quad(i\ne j), \qquad \eps_{j,\ell}\longrightarrow0.
\end{equation}
Different hyperbolic profiles satisfy
\begin{equation}\label{eq5.3}
d_g(z_{i,\ell},z_{j,\ell})\longrightarrow\infty \quad(i\ne j).
\end{equation}
The different scales separate hyperbolic and Euclidean profiles for arbitrary relative positions of their centers, including coincident centers.

We now compare a localized positive Euclidean profile with a projected bubble and verify preservation of the separation parameters.

\begin{lemma}\label{Lem5.3}
Every localized positive Euclidean profile in the hyperbolic global compactness decomposition can be replaced by \(B^{\HH}_{\mathrm{pr},\xi_\ell,\eps_\ell}\), with \(\eps_\ell\to0\) and an error tending to zero in \(H^1_\lambda\). For a finite collection of profiles, the replacements preserve parameter separation in the form \eqref{eq5.2}.
\end{lemma}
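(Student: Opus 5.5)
We first fix what a localized Euclidean profile is in the decomposition of \cite{SandeepTintarev2019,BhaktaSandeep2012}. After composing with a hyperbolic isometry \(T_\ell\) taking the origin to a point \(\xi_\ell\), the profile has the form \(\varrho^{-\frac{n-2}{2}}\chi_\ell\,\delta_\ell^{-\frac{n-2}{2}}w((x-y_\ell)/\delta_\ell)\), where \(\delta_\ell\to0\), the centers \(y_\ell\) stay bounded in the ball chart, and \(\chi_\ell\) are cutoffs with uniform \(r_0,R_0,C_\chi\). The limit \(w\in\dot H^1(\RR^n)\) solves the limiting equation obtained by testing the Palais--Smale condition in the rescaled chart; the zeroth-order term disappears because of the factor \(\eps_\ell^2\). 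It is a weak limit of rescalings of \(u_\ell\ge0\), hence \(w\ge0\) and \(w\neq0\).

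\textbf{Step 1 (identify \(w\)).} By Lemma~\ref{Lem5.1}, \(w>0\). The classification of Caffarelli--Gidas--Spruck \cite{CaffarelliGidasSpruck1989} then gives \(w=B^{\RR}_{y_0,\sigma}\). Absorbing \(y_0,\sigma\) and \(y_\ell\) into the parameters, the profile becomes \(\varrho^{-\frac{n-2}{2}}\chi_\ell B^{\RR}_{y_\ell',\delta_\ell'}\) with \(\delta_\ell'\sim\delta_\ell\to0\) and \(y_\ell'\) bounded. Choose \(\xi_\ell'\) as the hyperbolic point corresponding to \(y_\ell'\) in this chart, and recenter by the Möbius translation \(\tau_{y_\ell'}\).

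\textbf{Step 2 (recentering and comparison).} The exact formula \(\mathsf C_{\frac{n-2}{2}}(h)B^{\RR}_d=B^{\RR}_{y_h,d_h}\) from the proof of Lemma~\ref{Lem4.7} shows that, after recentering, the profile equals \(\varrho^{-\frac{n-2}{2}}\widetilde\chi_\ell B^{\RR}_{\tilde\delta_\ell}\) composed with an isometry, up to a rescaling that is only \(1+o(1)\) and moves the center by \(o(\delta_\ell)\) since \(|y_\ell'|\) is bounded. Here \(\widetilde\chi_\ell\) is the recentered cutoff, which still satisfies uniform bounds after shrinking \(r_0\) and enlarging \(R_0<1\). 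Lemma~\ref{Lem5.2}, with its uniformity in the cutoff and the isometry, then gives
\[
\norm{\text{profile}-B^{\HH}_{\mathrm{pr},\xi_\ell,\eps_\ell}}\to0,\qquad \eps_\ell=2\tilde\delta_\ell\to0.
\]
The remaining \(o(1)\) discrepancies in scale and center (in scaled units) are controlled by the derivative bounds \eqref{eq2.23}, integrated along the parameter path. I expect the main obstacle to be this bookkeeping: checking that the conformal factor \(\varrho\), the Möbius map, and the cutoff combine into a configuration covered by Lemma~\ref{Lem5.2} with uniform constants. If direct recentering is awkward, I would fall back on the \(\dot H^1\)-norm invariance of \(\mathsf C_{\frac{n-2}{2}}\) together with \eqref{eq2.21}, comparing the zero extensions in \(\dot H^1(\RR^n)\) and converting back via \eqref{eq2.3}.

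\textbf{Step 3 (separation).} For two Euclidean profiles, the decomposition gives orthogonality of the Euclidean parameters in each chart: either the scale ratios diverge or the rescaled center differences diverge. The new parameters differ from the old ones by scale ratios tending to \(1\) and center shifts of size \(o(\eps_\ell)\) in hyperbolic distance. Moreover, on bounded chart regions \(d_g\) is comparable to the Euclidean distance up to the factor \(\varrho\), and this factor is uniformly bounded after the common isometry. Hence the quantity in \eqref{eq5.2} changes only by bounded multiplicative factors and still diverges. If two profiles use charts at different base points, we first move both to a common chart by one isometry, which preserves \(d_g\) and the hyperbolic scales. Finally, \(\eps_{j,\ell}\to0\) holds by construction.
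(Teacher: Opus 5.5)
Your proposal is correct and follows essentially the paper's route. It identifies the limit by Lemma~\ref{Lem5.1} and Caffarelli--Gidas--Spruck, recenters at the concentration point via the exact M\"obius formula for $\mathsf C_{\frac{n-2}{2}}(h)B^{\RR}_d$, applies the uniform cutoff comparison of Lemma~\ref{Lem5.2}, and obtains separation by comparing the quantity in \eqref{eq5.2} with its Euclidean counterpart in a common chart. The differences are minor: the paper absorbs the residual $o(t_\ell)$ center offset by continuity of translations in $\dot H^1(\RR^n)$ rather than by \eqref{eq2.23}, it explicitly converts the normal coordinates of the decomposition to centered ball coordinates, and it separately handles the case where $d_g(\xi_{i,\ell},\xi_{j,\ell})$ stays bounded below. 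Your common-chart argument does not cover that last case, but it follows at once from $\eps_{i,\ell},\eps_{j,\ell}\to0$.
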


\begin{proof}
Lemma~\ref{Lem5.1} and \cite[Corollary 8.2(b)]{CaffarelliGidasSpruck1989} identify the positive Euclidean limit as an Aubin--Talenti bubble. Normal and centered ball coordinates give the same reconstruction up to \(o_{H^1_\lambda}(1)\): their rescaled transition maps converge locally in \(C^1\) to a dilation, and compactly supported approximation controls the Dirichlet tail. Absorbing the fixed translation and dilation into the extraction parameters, we may therefore use the representative
\[
E_\ell= \bigl(\varrho^{-\frac{n-2}{2}}\eta B^{\RR}_{c_\ell,d_\ell}\bigr) \circ\Phi_\ell^{-1}, \qquad c_\ell\to c_0\in\BB^n, \qquad d_\ell\to0,
\]
where \(\Phi_\ell\) is a hyperbolic isometry and \(\eta\in C_c^\infty(\BB^n)\) equals one near \(c_0\). Such a fixed cutoff may be used because changing the extraction cutoff affects only a compact region at positive distance from \(c_\ell\), where
\[
|B^{\RR}_{c_\ell,d_\ell}|+|\nabla B^{\RR}_{c_\ell,d_\ell}| \le C d_\ell^{(n-2)/2}.
\]
The resulting change tends to zero in \(H^1_\lambda\). By Lemma~\ref{Lem2.6}, completing the square in the M\"obius formula gives the exact identity
\[
\mathsf C_{\frac{n-2}{2}}(-c_\ell)B^{\RR}_{c_\ell,d_\ell} =B^{\RR}_{\widetilde c_\ell,t_\ell},
\]
where
\[
A_\ell=(1-|c_\ell|^2)^2+d_\ell^2|c_\ell|^2, \qquad t_\ell=\frac{d_\ell(1-|c_\ell|^2)}{A_\ell}, \qquad \widetilde c_\ell=-\frac{d_\ell^2c_\ell}{A_\ell}.
\]
Set \(T_\ell=\Phi_\ell\circ\tau_{c_\ell}\) and \(\eta_\ell=\eta\circ\tau_{c_\ell}\). Then
\[
E_\ell= \bigl(\varrho^{-\frac{n-2}{2}} \eta_\ell B^{\RR}_{\widetilde c_\ell,t_\ell}\bigr) \circ T_\ell^{-1}, \qquad \frac{|\widetilde c_\ell|}{t_\ell} =\frac{d_\ell|c_\ell|}{1-|c_\ell|^2}\longrightarrow0.
\]
The cutoffs \(\eta_\ell\) satisfy the uniform assumptions in Lemma~\ref{Lem5.2}. Hence Sobolev's inequality and continuity of translations in \(\dot H^1(\RR^n)\) give
\[
\begin{aligned}
\bigl\|\nabla\bigl[\eta_\ell (B^{\RR}_{\widetilde c_\ell,t_\ell}-B^{\RR}_{t_\ell})\bigr]\bigr\|_2 &\le C\bigl\|\nabla\bigl[ B^{\RR}_1(\,\cdot-\widetilde c_\ell/t_\ell)-B^{\RR}_1\bigr]\bigr\|_2\longrightarrow0.
\end{aligned}
\]
Conformal covariance and Lemma~\ref{Lem5.2} therefore yield
\[
\norm{E_\ell-B^{\HH}_{\mathrm{pr},\xi_\ell,\eps_\ell}} \longrightarrow0, \qquad \xi_\ell=T_\ell(0)=\Phi_\ell(c_\ell), \qquad \eps_\ell=2t_\ell.
\]
In particular, the concentration center is unchanged, and
\[
\eps_\ell=\varrho(c_\ell)d_\ell\bigl(1+O(d_\ell^2)\bigr).
\]

To compare two concentration parameters, first suppose that \(d_g(\xi_{i,\ell},\xi_{j,\ell})\to0\). A common hyperbolic recentering puts both centers in a fixed compact ball. Let \(x_{j,\ell}\) be the center in this chart and let \(\bar d_{j,\ell}\) be the scale obtained from the differential of the chart change at \(c_{j,\ell}\). Conformality gives
\[
\bar d_{j,\ell} =\frac{\varrho(c_{j,\ell})}{\varrho(x_{j,\ell})}d_{j,\ell}, \qquad \eps_{j,\ell} =\varrho(x_{j,\ell})\bar d_{j,\ell}(1+o(1)).
\]
If \(\Theta_{j,\ell}\) denotes the chart change, then
\[
d_g(0,\Theta_{j,\ell}(0)) \le d_g(0,x_{j,\ell})+d_g(0,c_{j,\ell})\le C.
\]
Compactness of the stabilizer of the origin therefore gives uniform bounds for the chart changes and their inverses on the relevant compact sets. Thus
\[
\begin{aligned}
&\frac{\eps_{i,\ell}}{\eps_{j,\ell}} +\frac{\eps_{j,\ell}}{\eps_{i,\ell}} +\frac{d_g(\xi_{i,\ell},\xi_{j,\ell})^2} {\eps_{i,\ell}\eps_{j,\ell}}\sim \frac{\bar d_{i,\ell}}{\bar d_{j,\ell}} +\frac{\bar d_{j,\ell}}{\bar d_{i,\ell}} +\frac{|x_{i,\ell}-x_{j,\ell}|^2} {\bar d_{i,\ell}\bar d_{j,\ell}}.
\end{aligned}
\]
If instead \(d_g(\xi_{i,\ell},\xi_{j,\ell})\) is bounded below by a positive constant, \eqref{eq5.2} follows directly from \(\eps_{i,\ell},\eps_{j,\ell}\to0\). After passage to subsequences, these alternatives prove the assertion for every pair in a finite collection.
\end{proof}

Successive remainders may change sign. To establish positivity, we identify each extracted profile as a weak or almost-everywhere limit of the original sequence in its extraction coordinates.

\begin{lemma}\label{Lem5.4}
Let \(u_\ell\ge0\), \(J_\lambda(u_\ell)\to c\in\RR\), and \(\Gamma_\lambda(u_\ell)\to0\). Every nonzero hyperbolic or Euclidean profile in its global compactness decomposition is positive.
\end{lemma}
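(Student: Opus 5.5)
The approach is to follow each profile back to the original nonnegative sequence, rather than working with the successive remainders. Those remainders may change sign, but in the right coordinates every profile is a weak limit of \(u_\ell\) itself. For a hyperbolic profile extracted at centers \(z_\ell\), the profile decomposition of \cite{SandeepTintarev2019} gives it as the weak limit in \(H^1_\lambda(\HH^n)\) of \(u_\ell\circ\tau_{z_\ell}\), after subtracting the earlier reconstructed profiles. The subtracted terms are either profiles with centers at diverging distance from \(z_\ell\), or concentrating Euclidean profiles. In both cases their recentered versions tend to zero weakly. The first claim is that this holds in the concentrating case as well: the recentered versions have bounded energy and, for any fixed \(R\), \(L^2\) mass on the ball \(B_R(z_\ell)\) tending to zero. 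Hence the hyperbolic profile \(w\) is the weak limit of \(u_\ell\circ\tau_{z_\ell}\) itself.

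The Euclidean case runs the same way. A Euclidean profile \(v\) is the weak limit in \(\dot H^1_{\mathrm{loc}}(\RR^n)\) of the rescaling \(\mathscr S_{\xi_\ell,\eps_\ell}u_\ell\), again after removing the earlier profiles. The removed hyperbolic profiles tend to zero strongly in \(\dot H^1_{\mathrm{loc}}\), by the argument used for \eqref{eq4.33}. The removed Euclidean profiles with separated parameters tend to zero weakly by the dichotomy in the proof of Lemma~\ref{Lem4.7}; for general localized profiles this is the standard orthogonality of separated Talenti parameters, which the extraction itself ensures. So \(v\) is the weak limit of \(\mathscr S_{\xi_\ell,\eps_\ell}u_\ell\). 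In both cases local Rellich compactness gives \(L^2_{\mathrm{loc}}\), and after a subsequence almost-everywhere, convergence. Since the recentering and rescaling maps multiply by positive conformal factors, the rescaled functions are nonnegative, and so \(w\ge0\) and \(v\ge0\).

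It remains to show that the limits are nonnegative weak solutions. Testing the Palais--Smale condition \(\Gamma_\lambda(u_\ell)\to0\) against compactly supported test functions transported to the extraction chart gives the limit equation. The nonlinear term passes to the limit by \(L^{2^*}\) boundedness together with a.e.\ convergence, and in the Euclidean chart the zeroth-order term carries the factor \(\eps_\ell^2\) and disappears. Thus \(w\) solves \(-\Delta_{\HH^n}w-\lambda w=w^{2^*-1}\) and \(v\) solves \(-\Delta v=v^{2^*-1}\) on \(\RR^n\). Lemma~\ref{Lem5.1} then shows that each nonzero profile is \(C^2\) and strictly positive. A nonzero weak limit of the whole sequence is the case \(z_\ell\) fixed and is handled the same way.

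The main obstacle is the first step: justifying that subtracting the previously extracted profiles, and hence using remainders of unknown sign, does not change the weak limit in later charts. This needs the asymptotic orthogonality of the extraction parameters in the decomposition of \cite{SandeepTintarev2019}, expressed as weak convergence to zero of every other profile in each chart. The plan is to establish that orthogonality case by case: hyperbolic versus hyperbolic, hyperbolic versus Euclidean, and Euclidean versus Euclidean.
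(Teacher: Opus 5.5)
Your proposal follows essentially the paper's argument: each profile is identified as a weak or almost-everywhere limit of the original nonnegative sequence in its own extraction chart, after which the Palais--Smale limit equation and Lemma~\ref{Lem5.1} give strict positivity. The paper obtains that identification by citing Remark~4.2 and (4.8) of \cite{SandeepTintarev2019}, where you propose to verify the asymptotic orthogonality case by case. One small point: the argument behind \eqref{eq4.33} uses the smoothness and boundedness of the already-identified profiles \(B^{\HH}_{z,\lambda}\). That identification (Mancini--Sandeep) presupposes the positivity you are proving, so for general fixed-scale profiles \(V\in H^1\) you should instead argue by density of \(C_c^\infty\), or use the paper's uniform estimate \(\sup_{z}\int_{B_g(z,r)}|V|^{2^*}\dv\to0\) as \(r\downarrow0\).
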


\begin{proof}
Equation~\eqref{eq5.1} gives boundedness in \(H^1_\lambda\), which equals \(H^1\) with equivalent norms. Since \(\HH^n\) is homogeneous with bounded geometry, \cite[Remark~4.2 and (4.8)]{SandeepTintarev2019} applies: each hyperbolic profile is a weak limit of the original sequence after an isometry, and each Euclidean profile is an almost-everywhere limit of positive rescalings of that sequence in its own normal coordinates. The fixed-scale terms vanish under concentration because, for each \(V\in H^1(\HH^n)\), absolute continuity and homogeneity give
\[
\sup_{z\in\HH^n}\int_{B_g(z,r)}|V|^{2^*}\dv\longrightarrow0 \qquad(r\downarrow0).
\]
Here \(B_g(z,r)\) is the hyperbolic ball of radius \(r\) about \(z\). This gives vanishing in \(L^{2^*}_{\mathrm{loc}}\) in every concentrating chart, uniformly in the centers, and almost-everywhere vanishing after passage to a subsequence. Thus all profiles, including the initial weak limit, are nonnegative. They solve the corresponding limiting equations by the preceding Palais--Smale specialization, so Lemma~\ref{Lem5.1} makes every nonzero profile strictly positive.
\end{proof}

By the spectral gap and Cauchy--Schwarz, the mass is Lipschitz on bounded subsets of \(H^1_\lambda\); hence an error tending to zero in the energy norm preserves the limiting mass.

\begin{proof}[Proof of Theorem~\ref{Thm1.4}: decomposition and mass]
Let \(u_\ell\ge0\) be Palais--Smale at a level \(c\). Equation~\eqref{eq5.1} bounds the sequence in \(H^1_\lambda\), so the global compactness theorem applies. By Lemma~\ref{Lem5.4}, every nonzero profile is positive. The classification of Mancini and Sandeep \cite{ManciniSandeep2008} identifies each hyperbolic profile with an isometric copy of \(B^{\HH}_{0,\lambda}\), and the classification of Caffarelli, Gidas, and Spruck \cite{CaffarelliGidasSpruck1989} identifies each Euclidean profile with an Aubin--Talenti bubble. Lemma~\ref{Lem5.3} gives integers \(m,k\in\mathbb N_0\) and parameters satisfying \eqref{eq5.2}--\eqref{eq5.3}, with
\begin{equation}\label{eq5.4}
u_\ell=\sum_{i=1}^m B^{\HH}_{z_{i,\ell},\lambda} +\sum_{j=1}^k B^{\HH}_{\mathrm{pr},\xi_{j,\ell},\eps_{j,\ell}} +\rho_\ell, \qquad \norm{\rho_\ell}\longrightarrow0.
\end{equation}
If the weak limit is nonzero, it is one further hyperbolic profile, whose center has divergent distance from every escaping hyperbolic center.

The classified profiles have energies \(S_{n,\lambda}^{n/2}/n\) and \(\mathcal S_{n,2}^{n/2}/n\). Energy splitting and \eqref{eq5.1} give the energy and energy-norm identities in
\begin{equation}\label{eq5.5}
\begin{gathered}
c=E_{m,k}(\lambda),\qquad \norm{u_\ell}^2\longrightarrow mS_{n,\lambda}^{n/2}+k\mathcal S_{n,2}^{n/2},\\
\|u_\ell\|_2^2\longrightarrow mM_\lambda.
\end{gathered}
\end{equation}

For the mass limit, Lemma~\ref{Lem2.5} and the spectral gap give
\[
\|B^{\HH}_{\mathrm{pr},\xi_{j,\ell},\eps_{j,\ell}}\|_2\to0, \qquad \|\rho_\ell\|_2 \le(\lambdab-\lambda)^{-1/2}\norm{\rho_\ell}\to0.
\]
Compactly supported approximation in \(L^2(\HH^n)\), together with \eqref{eq5.3}, yields
\[
\int_{\HH^n}B^{\HH}_{z_{i,\ell},\lambda} B^{\HH}_{z_{j,\ell},\lambda}\dv\longrightarrow0 \quad(i\ne j).
\]
All remaining cross terms tend to zero by Cauchy--Schwarz. Since \(\|B^{\HH}_{z_{i,\ell},\lambda}\|_2^2=M_\lambda\), expanding the mass in \eqref{eq5.4} proves the last limit in \eqref{eq5.5}.
\end{proof}

\subsection{Mixed energy selection and sharp stability}
\label{Sec5.2}

\begin{lemma}\label{Lem5.5}
For \(n\ge4\), the optimal Sobolev constant \(S_{n,\lambda}\) is strictly decreasing on \((\lamc,\lambdab)\) and extends continuously to \([\lamc,\lambdab]\), with
\[
S_{n,\lamc}=\mathcal S_{n,2},\qquad S_{n,\lambdab}=S_{n,\mathrm{bot}}>0, \qquad S_{n,\mathrm{bot}}<S_{n,\lambda}<\mathcal S_{n,2} \quad(\lamc<\lambda<\lambdab),
\]
where
\[
S_{n,\mathrm{bot}} =\inf_{0\ne v\in C_c^\infty(\HH^n)} \frac{\displaystyle\int_{\HH^n} (|\nabla_gv|_g^2-\lambdab v^2)\dv} {\displaystyle\|v\|_{2^*}^2}.
\]
\end{lemma}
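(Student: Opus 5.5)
Monotonicity comes from the variational definition, the endpoint values from test-function constructions, and positivity at \(\lambdab\) from the Hardy--Sobolev--Maz'ya inequality. Write
\[
S_{n,\lambda}=\inf_{v\ne0}\frac{\int(|\nabla_gv|^2-\lambda v^2)\dv}{\|v\|_{2^*}^2}.
\]
For each fixed \(v\) the quotient is affine and nonincreasing in \(\lambda\), so \(S_{n,\lambda}\) is concave and nonincreasing on \([\lamc,\lambdab]\). Concavity gives continuity on the open interval and upper semicontinuity at the endpoints. Monotonicity gives lower semicontinuity from the right at \(\lamc\), and at \(\lambdab\) we have \(S_{n,\lambda}\ge S_{n,\mathrm{bot}}\). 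Together these give the continuous extension, provided the endpoint values are identified.

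\textbf{Strict decrease.} For \(\lamc<\lambda<\lambdab\), Mancini--Sandeep give an extremal \(B^{\HH}_{0,\lambda}\); the existence of the hyperbolic profile with energy \(S_{n,\lambda}^{n/2}/n\) amounts to \(S_{n,\lambda}\) being attained. For \(\lambda'>\lambda\), test with this extremal:
\[
S_{n,\lambda'}\le S_{n,\lambda}-(\lambda'-\lambda)\frac{\|B^{\HH}_{0,\lambda}\|_2^2}{\|B^{\HH}_{0,\lambda}\|_{2^*}^2}<S_{n,\lambda}.
\]
This yields \(S_{n,\mathrm{bot}}\le S_{n,\lambda'}<S_{n,\lambda}\). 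Applying the same argument with \(\lambdab\) in place of \(\lambda'\) gives the strict bound \(S_{n,\mathrm{bot}}<S_{n,\lambda}\). For the bound \(S_{n,\lambda}<\mathcal S_{n,2}\), compare with \(\lamc\), once it is shown that \(S_{n,\lamc}=\mathcal S_{n,2}\).

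\textbf{The endpoint \(\lamc\).} By conformal covariance \eqref{eq2.1} with \(\mu=0\), the quotient at \(\lamc\) equals the Euclidean Dirichlet quotient on \(H_0^1(\BB^n)\), so \(S_{n,\lamc}=\mathcal S_{n,2}\) because the Euclidean Sobolev constant is domain independent. The same transfer at any \(\mu>0\), using the projected bubbles \(B^{\HH}_{\mathrm{pr},0,\eps}\) of Proposition~\ref{Prop2.11}, gives \(S_{n,\lambda}\le\mathcal S_{n,2}\). Strictness then follows from strict monotonicity and continuity on the closed interval.

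\textbf{The endpoint \(\lambdab\), the main obstacle.} Here I would show \(S_{n,\mathrm{bot}}>0\) by invoking the Hardy--Sobolev--Maz'ya inequality \cite{BenguriaFrankLoss2008} for \(n\ge4\). In the conformal ball model it reads
\[
\int(|\nabla v|^2-\tfrac14\varrho^2v^2)\,dx\ge c\|v\|_{2^*}^2,
\]
which is exactly \(S_{n,\mathrm{bot}}>0\) by \eqref{eq2.1} with \(\mu=\tfrac14\). Continuity at \(\lambdab\) then reduces to \(\lim_{\lambda\uparrow\lambdab}S_{n,\lambda}=S_{n,\mathrm{bot}}\). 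Given \(\eta>0\), choose \(v\in C_c^\infty\) whose quotient at \(\lambdab\) is below \(S_{n,\mathrm{bot}}+\eta\). Its quotient at \(\lambda\) tends to that value as \(\lambda\uparrow\lambdab\), which gives \(\limsup_{\lambda\uparrow\lambdab} S_{n,\lambda}\le S_{n,\mathrm{bot}}\). The reverse inequality follows from monotonicity, so defining \(S_{n,\lambdab}=S_{n,\mathrm{bot}}\) is consistent.
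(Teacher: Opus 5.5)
Your route is essentially the paper's: you test the quotient at a larger parameter with the Mancini--Sandeep extremal to get strict decrease, use concavity for interior continuity, transfer the quotient at \(\lamc\) conformally to the Euclidean Dirichlet quotient, and use an almost-minimizer at \(\lambdab\). The step that fails as written is continuity at \(\lamc\), because you have the semicontinuity directions reversed.

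A nonincreasing function satisfies \(S_{n,\lambda}\le S_{n,\lamc}\) for \(\lambda>\lamc\). That is \emph{upper} semicontinuity from the right, not lower. Conversely, a concave function on a closed interval is automatically \emph{lower} semicontinuous at the endpoints: it can jump down there, never up. Upper semicontinuity instead comes from \(S_{n,\lambda}\) being an infimum of functions affine in \(\lambda\). So the inequality actually needed at \(\lamc\), namely
\[
\liminf_{\lambda\downarrow\lamc}S_{n,\lambda}\ge\mathcal S_{n,2},
\]
is attributed to monotonicity, which gives only the opposite bound. Your \(\lamc\) paragraph, with the projected bubbles, again proves only the upper bound \(S_{n,\lambda}\le\mathcal S_{n,2}\). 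As a result, continuity of the extension at \(\lamc\) is not established.

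The repair is immediate and stays within your framework. Use concavity correctly. The quotient at every \(\lambda\in[\lamc,\lambdab]\) may be minimized over \(C_c^\infty\), and \(S_{n,\mathrm{bot}}\ge0\) is finite by McKean's inequality. The chord inequality then gives
\[
S_{n,\lambda}\ge\frac{\lambdab-\lambda}{\lambdab-\lamc}\,\mathcal S_{n,2}+\frac{\lambda-\lamc}{\lambdab-\lamc}\,S_{n,\mathrm{bot}},
\]
whose right side tends to \(\mathcal S_{n,2}\) as \(\lambda\downarrow\lamc\). Alternatively, do as the paper does and use the Hardy bound \eqref{eq2.3}, which gives \((1-4(\lambda-\lamc))\mathcal S_{n,2}\le S_{n,\lambda}\le\mathcal S_{n,2}\) directly.

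With this lower bound inserted, the rest of your argument is correct and matches the paper. That includes the strict inequalities, the positivity of \(S_{n,\mathrm{bot}}\) from the Hardy--Sobolev--Maz'ya (endpoint) inequality, and the two-sided limit at \(\lambdab\).
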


\begin{proof}
For \(\lamc<\lambda_1<\lambda_2<\lambdab\), test the quotient at \(\lambda_2\) with the minimizer \(B=B^{\HH}_{0,\lambda_1}\) from \cite{ManciniSandeep2008}. Then
\[
S_{n,\lambda_2}\le S_{n,\lambda_1} -(\lambda_2-\lambda_1)\frac{\|B\|_2^2}{\|B\|_{2^*}^2} <S_{n,\lambda_1}.
\]
This is a short proof of the strict monotonicity established in \cite{BGKMPartI}. The constant is concave as an infimum of affine functions of \(\lambda\), hence continuous in the interior. At \(\lamc\), conformal covariance identifies the quotient with the Euclidean Dirichlet quotient on \(\BB^n\). Zero extension gives the lower bound \(\mathcal S_{n,2}\), while concentrating cutoff Aubin--Talenti bubbles give the reverse bound. Thus \(S_{n,\lamc}=\mathcal S_{n,2}\). By \eqref{eq2.3},
\[
\bigl(1-4(\lambda-\lamc)\bigr)\mathcal S_{n,2} \le S_{n,\lambda}\le\mathcal S_{n,2},
\]
which proves continuity at \(\lamc\). The endpoint inequality \cite{ManciniSandeep2008} gives \(S_{n,\mathrm{bot}}>0\). Given \(\eta>0\), choose a nonzero \(v_\eta\in C_c^\infty(\HH^n)\) with endpoint quotient below \(S_{n,\mathrm{bot}}+\eta\). Comparison of the quadratic forms gives
\[
S_{n,\mathrm{bot}}\le S_{n,\lambda} \le S_{n,\mathrm{bot}}+\eta +(\lambdab-\lambda) \frac{\|v_\eta\|_2^2}{\|v_\eta\|_{2^*}^2}.
\]
Letting \(\lambda\uparrow\lambdab\) and then \(\eta\downarrow0\) proves continuity there. Strict monotonicity gives the remaining strict inequalities.
\end{proof}

We retain the choice of \(\eps_0\) made before Theorem~\ref{Thm1.2}. Lemma~\ref{Lem5.5} gives \(0<E_{\mix}(\lambda)<2\mathcal S_{n,2}^{n/2}/n\). A resonant integer thus satisfies
\[
3\le m\le1+\left(\frac{\mathcal S_{n,2}}{S_{n,\mathrm{bot}}}\right)^{n/2}.
\]
Strict monotonicity permits at most one parameter for each such integer, so \(\Lambda_{\mix}\) is finite for every fixed \(n\).

\begin{proof}[Proof of Theorem~\ref{Thm1.2}]
Since the profile energies \(S_{n,\lambda}^{n/2}/n\) and \(\mathcal S_{n,2}^{n/2}/n\) are positive, the set
\[
\left\{\frac{mS_{n,\lambda}^{n/2}+k\mathcal S_{n,2}^{n/2}}n: m,k\in\mathbb N_0\right\}
\]
is locally finite. We may therefore choose
\[
0<\eta<\min\left\{\frac{E_{\mix}(\lambda)}2, \frac{\mathcal S_{n,2}^{n/2}-S_{n,\lambda}^{n/2}}{2n}\right\}
\]
so that \(E_{\mix}(\lambda)\) is the only such level in
\[
[E_{\mix}(\lambda)-2\eta,E_{\mix}(\lambda)+2\eta] \subset\left(0,2\mathcal S_{n,2}^{n/2}/n\right).
\]

Suppose there are nonnegative \(u_\ell\) outside the neighborhood in Theorem~\ref{Thm1.1} such that
\[
|J_\lambda(u_\ell)-E_{\mix}(\lambda)|<\eta, \qquad\Gamma_\lambda(u_\ell)\to0.
\]
Equation~\eqref{eq5.1} bounds the sequence in \(H^1_\lambda\). After passage to a subsequence, its energy converges to
\[
c\in[E_{\mix}(\lambda)-\eta,E_{\mix}(\lambda)+\eta] \subset\left(0,2\mathcal S_{n,2}^{n/2}/n\right).
\]
Theorem~\ref{Thm1.4} and the choice of \(\eta\) imply \(c=E_{\mix}(\lambda)\). Since \(c<2\mathcal S_{n,2}^{n/2}/n\), its Euclidean profile count satisfies \(k\le1\). The profile counts therefore satisfy
\[
mS_{n,\lambda}^{n/2}+k\mathcal S_{n,2}^{n/2} =S_{n,\lambda}^{n/2}+\mathcal S_{n,2}^{n/2}, \qquad m\in\mathbb N_0,\quad k\in\{0,1\}.
\]
If \(k=1\), then \(m=1\). If \(k=0\), then \(m\ge3\) and \((m-1)S_{n,\lambda}^{n/2}=\mathcal S_{n,2}^{n/2}\), contrary to \(\lambda\notin\Lambda_{\mix}\). Hence \((m,k)=(1,1)\). The decomposition \eqref{eq5.4} with \((m,k)=(1,1)\) now gives \(\eps_\ell\to0\) and a projected mixed configuration whose distance from \(u_\ell\) tends to zero. Eventually \(\eps_\ell<\eps_0/2\) and \eqref{eq1.2} holds, a contradiction.

Consequently, for some \(\Gamma_0\in(0,1)\), all functions satisfying the stated energy and residual bounds enter the local neighborhood. Theorem~\ref{Thm1.1} gives all the asserted conclusions.
\end{proof}

The energy constraint alone permits separated hyperbolic profiles at resonance. Their mass tends to \(mM_\lambda\), \(m\ge3\), which violates the mass constraint in Theorem~\ref{Thm1.3}.

\begin{lemma}\label{Lem5.6}
Assume \eqref{eq1.1} and let \(\lambda\in\Lambda_{\mix}\). There is a nonnegative sequence \((V_\ell)\subset H^1_\lambda(\HH^n)\) such that
\[
J_\lambda(V_\ell)\to E_{\mix}(\lambda),\qquad\dnorm{\cR_\lambda(V_\ell)}\to0,
\]
whereas
\[
\liminf_{\ell\to\infty} \dist_{H^1_\lambda} \left(V_\ell,\cM_{\mix}(\eps_0)\right)>0.
\]
Consequently, the conclusion of Theorem~\ref{Thm1.2} cannot hold at \(\lambda\) relative to the comparison family \(\cM_{\mix}(\eps_0)\).
\end{lemma}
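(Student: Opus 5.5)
Since \(\lambda\in\Lambda_{\mix}\), there is an integer \(m\ge3\) with \((m-1)S_{n,\lambda}^{n/2}=\mathcal S_{n,2}^{n/2}\), so \(mS_{n,\lambda}^{n/2}/n=E_{\mix}(\lambda)\). We take \(m\) separated hyperbolic profiles:
\[
V_\ell=\sum_{i=1}^m B^{\HH}_{z_{i,\ell},\lambda},\qquad d_g(z_{i,\ell},z_{j,\ell})\to\infty\quad(i\ne j).
\]
Equivalently, \(V_\ell\) is the realization in Theorem~\ref{Thm1.4} with \((m,0)\) and \(\rho_\ell=0\). Each summand is positive, so \(V_\ell\ge0\).

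\emph{Energy and residual.} Since each profile solves the equation, \(\cR_\lambda(V_\ell)=\sum_i H_i^{2^*-1}-V_\ell^{2^*-1}\), where \(H_i=B^{\HH}_{z_{i,\ell},\lambda}\). Bound this difference pointwise by \(C\sum_{i\ne j}H_i^{2^*-2}H_j\) when \(2^*\ge3\), and by \(C\sum_{i\ne j}\min\{H_i^{2^*-2}H_j,H_iH_j^{2^*-2}\}\) otherwise. Its \(L^{(2^*)'}\)-norm then tends to zero: approximate \(H_i^{2^*-2}\) in \(L^{n/2}\) by compactly supported functions, and use that translates escaping to infinity converge to zero in \(L^{2^*}_{\mathrm{loc}}\) uniformly (or apply Hölder on balls around each center together with the exponential decay \eqref{eq3.3}). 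Sobolev duality gives \(\dnorm{\cR_\lambda(V_\ell)}\to0\). The cross terms \(\ip{H_i}{H_j}=\int H_i^{2^*-1}H_j\to0\) by the same argument, and Brezis--Lieb type splitting of \(\int V_\ell^{2^*}\) gives \(J_\lambda(V_\ell)\to mS_{n,\lambda}^{n/2}/n=E_{\mix}(\lambda)\).

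\emph{Distance.} Suppose along a subsequence there are \(\Sigma_\ell=B^{\HH}_{z_\ell,\lambda}+B^{\HH}_{\mathrm{pr},\xi_\ell,\eps_\ell}\in\cM_{\mix}(\eps_0)\) with \(\norm{V_\ell-\Sigma_\ell}\to0\). The cleanest contradiction uses mass, since it is Lipschitz on bounded sets: \(\|V_\ell\|_2^2\to mM_\lambda\) because the hyperbolic cross terms vanish. On the other side, \(\|\Sigma_\ell\|_2\le M_\lambda^{1/2}+\|B^{\HH}_{\mathrm{pr},\xi_\ell,\eps_\ell}\|_2< \sqrt3M_\lambda^{1/2}\), by the choice of \(\eps_0\) made before Theorem~\ref{Thm1.2}. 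Thus \(\|\Sigma_\ell\|_2^2<3M_\lambda\le mM_\lambda\), strictly and uniformly over the whole family. Indeed \(\sup_\eps\|B^{\HH}_{\mathrm{pr}}\|_2<(\sqrt3-1)M_\lambda^{1/2}\) gives \(\|\Sigma_\ell\|_2\le\sqrt3M_\lambda^{1/2}-\kappa\) for some \(\kappa>0\). Since \(\bigl|\|V_\ell\|_2-\|\Sigma_\ell\|_2\bigr|\le(\lambdab-\lambda)^{-1/2}\norm{V_\ell-\Sigma_\ell}\) by McKean's inequality, we obtain
\[
\liminf_\ell\dist(V_\ell,\cM_{\mix}(\eps_0))\ge(\lambdab-\lambda)^{1/2}\bigl(\sqrt{m}-\sqrt3+\kappa M_\lambda^{-1/2}\bigr)M_\lambda^{1/2}>0.
\]
This argument needs no subsequence extraction.

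\emph{Consequence.} The \(V_\ell\) satisfy the hypotheses of Theorem~\ref{Thm1.2} for every \(\eta,\Gamma_0\) once \(\ell\) is large. If the conclusion of Theorem~\ref{Thm1.2} held, \eqref{eq1.4} would force \(\dist(V_\ell,\cM_{\mix}(\eps_0))\le C\Psi_n(\Gamma_\lambda(V_\ell))\to0\), contradicting the lower bound. The main obstacle is the residual estimate in dimensions \(n\ge7\), where \(2^*-1<2\) and the simple product bound fails. There the \(\min\) bound from \eqref{eq3.29} is essential, combined with the splitting of space into the Voronoi-type regions nearest each center and exponential decay; uniform vanishing of the interaction follows because \(d_g(z_i,z_j)\to\infty\).
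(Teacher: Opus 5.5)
Your proposal is correct and follows the paper's proof essentially step by step: \(m\ge3\) hyperbolic profiles with divergent centers, vanishing pairwise interactions for the energy and the residual, and the mass obstruction \(\|V_\ell\|_2^2\to mM_\lambda\ge3M_\lambda>\sup_{\Sigma\in\cM_{\mix}(\eps_0)}\|\Sigma\|_2^2\), which McKean's inequality turns into a positive lower bound for the energy-norm distance. One small correction: the simple product bound does not fail for \(n\ge7\). The finite-sum estimate \(\bigl(\sum_i t_i\bigr)^{\gamma}-\sum_i t_i^{\gamma}\le C\,t_{\max}^{\gamma-1}\sum_{i\ne i_0}t_i\le C\sum_{i\ne j}t_i^{\gamma-1}t_j\) holds for every \(\gamma>1\), and the paper applies it with \(\gamma=2^*-1\) and \(\gamma=2^*\). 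So your \(\min\) bound and the Voronoi splitting are valid but not needed.
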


\begin{proof}
Let \(m\geq3\) be the integer determined by
\[
\frac{S_{n,\lambda}^{n/2}}{\mathcal S_{n,2}^{n/2}}=\frac1{m-1}.
\]
Fix \(o\in\HH^n\) and a unit vector \(\zeta\in T_o\HH^n\), and set
\[
z_{j,\ell}=\exp_o\bigl((j-1)\ell\,\zeta\bigr),\qquad V_\ell=\sum_{j=1}^mB^{\HH}_{z_{j,\ell},\lambda}.
\]
Then \(d_g(z_{i,\ell},z_{j,\ell})\to\infty\) whenever \(i\ne j\).

Compactly supported approximation and H\"older's inequality show that products of translates with divergent centers tend to zero in every applicable H\"older norm. Since \(B^{\HH}_{0,\lambda}\in L^2\cap L^{2^*}\), this gives, with \((2^*)'=2n/(n+2)\) and \(i\ne j\),
\[
\int_{\HH^n}B^{\HH}_{z_{i,\ell},\lambda}B^{\HH}_{z_{j,\ell},\lambda}\dv=o(1),\qquad\int_{\HH^n}(B^{\HH}_{z_{i,\ell},\lambda})^{2^*-1}B^{\HH}_{z_{j,\ell},\lambda}\dv=o(1),
\]
and
\[
\|(B^{\HH}_{z_{i,\ell},\lambda})^{2^*-2}B^{\HH}_{z_{j,\ell},\lambda}\|_{L^{(2^*)'}(\HH^n)}=o(1).
\]
Testing \(P^{\HH}_\lambda B^{\HH}_{z_{i,\ell},\lambda}=(B^{\HH}_{z_{i,\ell},\lambda})^{2^*-1}\) against \(B^{\HH}_{z_{j,\ell},\lambda}\) yields
\[
\ip{B^{\HH}_{z_{i,\ell},\lambda}}{B^{\HH}_{z_{j,\ell},\lambda}} =\int_{\HH^n}(B^{\HH}_{z_{i,\ell},\lambda})^{2^*-1}B^{\HH}_{z_{j,\ell},\lambda}\dv=o(1).
\]

For nonnegative summands, the finite-sum estimate in \cite{FigalliGlaudo2020} holds for every \(\gamma>1\): separating a largest summand \(t_{\max}=t_{i_0}\), the mean value theorem bounds the difference \((\sum_i t_i)^\gamma-\sum_i t_i^\gamma\) by \(C_{\gamma,m}t_{\max}^{\gamma-1}\sum_{i\ne i_0}t_i\). Applying this estimate with \(\gamma=2^*\), and using \(\norm{B^{\HH}_{z_{j,\ell},\lambda}}^2=\int(B^{\HH}_{z_{j,\ell},\lambda})^{2^*}\dv=S_{n,\lambda}^{n/2}\), gives
\[
\norm{V_\ell}^2=mS_{n,\lambda}^{n/2}+o(1),\qquad\int_{\HH^n}V_\ell^{2^*}\dv=mS_{n,\lambda}^{n/2}+o(1).
\]
Consequently,
\[
J_\lambda(V_\ell) =\left(\frac12-\frac1{2^*}\right)mS_{n,\lambda}^{n/2}+o(1) =\frac{mS_{n,\lambda}^{n/2}}{n}+o(1) =E_{\mix}(\lambda)+o(1),
\]
where the last equality follows from \((m-1)S_{n,\lambda}^{n/2}=\mathcal S_{n,2}^{n/2}\).

Since \(P^{\HH}_\lambda V_\ell=\sum_j(B^{\HH}_{z_{j,\ell},\lambda})^{2^*-1}\),
\[
\cR_\lambda(V_\ell)=\sum_{j=1}^m(B^{\HH}_{z_{j,\ell},\lambda})^{2^*-1}-V_\ell^{2^*-1}.
\]
The same power estimate with \(\gamma=2^*-1\) and the product estimates imply
\[
\|\cR_\lambda(V_\ell)\|_{L^{(2^*)'}(\HH^n)} \leq C\sum_{i\ne j} \|(B^{\HH}_{z_{i,\ell},\lambda})^{2^*-2}B^{\HH}_{z_{j,\ell},\lambda}\|_{L^{(2^*)'}(\HH^n)} \to0.
\]
Sobolev duality proves \(\dnorm{\cR_\lambda(V_\ell)}\to0\).

The pairwise \(L^2\) products above also give
\[
\|V_\ell\|_2^2=mM_\lambda+o(1).
\]
By the choice of \(\eps_0\) and the triangle inequality,
\[
\overline M_\lambda:=\sup_{\Sigma\in\cM_{\mix}(\eps_0)}\|\Sigma\|_2^2 \le\Bigl(\sqrt{M_\lambda} +\sup_{0<\eps<\eps_0}\|B^{\HH}_{\mathrm{pr},0,\eps}\|_2\Bigr)^2 <3M_\lambda.
\]
Consequently, the spectral gap gives the quantitative separation
\[
\begin{aligned}
\liminf_{\ell\to\infty} \dist_{H^1_\lambda}(V_\ell,\cM_{\mix}(\eps_0)) &\ge\sqrt{\lambdab-\lambda} \bigl(\sqrt{mM_\lambda}-\sqrt{\overline M_\lambda}\bigr)>0,
\end{aligned}
\]
since \(m\ge3\).
\end{proof}

\begin{corollary}\label{Cor5.7}
Let \(n=6\) and assume \eqref{eq1.1}. The positive functions \(u_\eps\) of Proposition~\ref{Prop4.9} satisfy
\begin{equation}\label{eq5.6}
J_\lambda(u_\eps)\to E_{\mix}(\lambda),\qquad \|u_\eps\|_{L^2(\HH^6)}^2\to M_\lambda.
\end{equation}
Consequently the rate \(\Psi_6(t)=t\sqrt{\log(2/t)}\) cannot be replaced by \(o(\Psi_6(t))\) in Theorem~\ref{Thm1.3}, including at resonant parameters, or in Theorem~\ref{Thm1.2} at its nonresonant parameters.
\end{corollary}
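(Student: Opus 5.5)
The proof has two parts: the limits in \eqref{eq5.6}, and then the transfer of the sharpness from Proposition~\ref{Prop4.9} to the global theorems.

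\emph{Mass.} Recall $u_\eps=H+B+\psi_\eps$ with $H=B^{\HH}_{0,\lambda}$ and $B=B^{\HH}_{\mathrm{pr},0,\eps}$. Lemma~\ref{Lem2.5} gives $\|B\|_2\to0$. By \eqref{eq4.40} and the spectral gap,
\[
\|\psi_\eps\|_2\le(\lambdab-\lambda)^{-1/2}\norm{\psi_\eps}\to0.
\]
Expanding $\|u_\eps\|_2^2$ and bounding the cross terms by Cauchy--Schwarz then gives $\|u_\eps\|_2^2\to\|H\|_2^2=M_\lambda$.

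\emph{Energy.} Since $\norm{u_\eps-\Sigma_\theta}=\norm{\psi_\eps}\to0$ and $J_\lambda$ is locally Lipschitz on $H^1_\lambda$ (Sobolev), it is enough to show $J_\lambda(H+B)\to E_{\mix}(\lambda)$. In dimension six the nonlinearity is exactly cubic on positive functions, so
\[
J_\lambda(H+B)=J_\lambda(H)+J_\lambda(B)+Q-I-\tfrac13\!\int_{\HH^6}\bigl(3H^2B+3HB^2\bigr)\dv
\]
up to the bookkeeping of these terms. Each interaction term is $O(\eps^2)$ by Theorem~\ref{Thm3.3} and Proposition~\ref{Prop3.4}; the remaining $\int H^2B$ is $O(\eps^2)$ since $H$ is bounded and $\|B\|_{L^1}$-type bounds follow from Lemma~\ref{Lem2.3} near the center. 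Together with $J_\lambda(H)=S_{6,\lambda}^{3}/6$ and \eqref{eq2.37}, this gives $J_\lambda(H+B)=E_{\mix}(\lambda)+O(\eps^2)$. An alternative route, which I expect to be cleaner, avoids tracking terms: use \eqref{eq5.1} with $\Gamma_\lambda(u_\eps)\to0$, so $J_\lambda(u_\eps)=\tfrac1n\norm{u_\eps}^2+o(1)$. Then $\norm{H+B}^2=\norm H^2+\norm B^2+2Q$, which tends to $S_{6,\lambda}^3+\mathcal S_{6,2}^3$ by \eqref{eq2.37} and $Q\to0$.

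\emph{Consequence.} For any fixed tolerances $\eta_E,\eta_M$ in \eqref{eq1.5} and any $\Gamma_0$, the limits in \eqref{eq5.6} together with $\Gamma_\lambda(u_\eps)\sim\eps^2\to0$ show that $u_\eps\ge0$ satisfies \eqref{eq1.6} for all small $\eps$. In the nonresonant case the same holds for the hypotheses of Theorem~\ref{Thm1.2} with its $\eta,\Gamma_0$. Proposition~\ref{Prop4.9} gives
\[
\dist_{H^1_\lambda}(u_\eps,\cM_{\mix}(\eps_0))\sim\eps^2\sqrt{\log(2/\eps)}\sim\Psi_6(\Gamma_\lambda(u_\eps)),
\]
using $\Gamma_\lambda(u_\eps)\sim\eps^2$ and the monotonicity of $t\sqrt{\log(2/t)}$. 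A bound of the form $o(\Psi_6(t))$ would therefore contradict this. The only mildly delicate step is the energy limit, and the residual route via \eqref{eq5.1} avoids explicit cubic expansions.
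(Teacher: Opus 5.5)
Your argument is correct. The mass limit and the transfer of sharpness to Theorems~\ref{Thm1.3} and~\ref{Thm1.2} are exactly the paper's; the energy limit is where you differ.

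The paper uses the exact six-dimensional identity $J_\lambda(H+B)=J_\lambda(H)+J_\lambda(B)-I(\eps,0)$. It holds because $\ip{H}{B}=\int_{\HH^6}H^2B\dv$ cancels the $H^2B$ term of the cubic expansion. Your first display is meant to be this identity, but as written it counts the cross terms twice: it would give $-2I$. That slip is harmless, since every term is $O(\eps^2)$. The bound on $\int_{\HH^6}H^2B\dv$ should come from its equality with $Q$ and Theorem~\ref{Thm3.3}, not from boundedness of $H$ plus local integrability of $B$. The latter does not control the region near the ideal boundary, where $B$ is not integrable against $\dv$.

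Your preferred route through \eqref{eq5.1} is genuinely different and fully rigorous. The bound $\Gamma_\lambda(u_\eps)\le C\eps^2$ from \eqref{eq4.37} and the boundedness of $\norm{u_\eps}$ give $J_\lambda(u_\eps)=\frac16\norm{u_\eps}^2+o(1)$. The quadratic splitting $\norm{H+B}^2=S_{6,\lambda}^3+\norm{B}^2+2Q(\eps,0)$, together with \eqref{eq2.37} and Theorem~\ref{Thm3.3}, then finishes the argument.

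Your route trades the algebra of the quadratic nonlinearity for the residual upper bound of Proposition~\ref{Prop4.9}. It involves only the bilinear interaction $Q$ and would work in any dimension for any bounded family with vanishing residual. The paper's identity is instead independent of the residual estimate and identifies the energy defect of $H+B$ exactly as $-I(\eps,0)$.
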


\begin{proof}
Recall the functions \(H,B,\psi_\eps\) in \eqref{eq4.38}, with both centers at the origin. Lemma~\ref{Lem2.5} and the estimates in the proof of Proposition~\ref{Prop4.9} give
\[
\|B\|_{L^2(\HH^6)}\to0,\qquad \norm{\psi_\eps}=O(\eps^2\sqrt{\log(2/\eps)})\to0.
\]
The spectral gap therefore implies \(u_\eps\to H\) in \(L^2(\HH^6)\), which proves the second limit in \eqref{eq5.6}.

Since \(2^*=3\) in dimension six, expanding the energy and using \(\ip{H}{B}=\int H^2B\dv\) yields
\[
J_\lambda(H+B) =J_\lambda(H)+J_\lambda(B)-I(\eps,0).
\]
By Lemma~\ref{Lem2.9}, Proposition~\ref{Prop2.10}, and Proposition~\ref{Prop3.4},
\[
J_\lambda(B)\to\mathcal S_{6,2}^3/6, \qquad I(\eps,0)\to0.
\]
Thus \(J_\lambda(H+B)\to E_{\mix}(\lambda)\). Since \(J_\lambda\) is Lipschitz on bounded subsets of \(H^1_\lambda\), the estimate \(\norm{\psi_\eps}\to0\) proves the energy limit in \eqref{eq5.6}.

For every fixed \(\eta_E,\eta_M\) satisfying \eqref{eq1.5}, these positive functions eventually satisfy \eqref{eq1.6}. By \eqref{eq4.37},
\[
\Gamma_\lambda(u_\eps)\sim\eps^2,\qquad \dist_{H^1_\lambda}(u_\eps,\cM_{\mix}(\eps_0)) \sim\Psi_6\bigl(\Gamma_\lambda(u_\eps)\bigr).
\]
These comparisons exclude any smaller asymptotic rate in Theorem~\ref{Thm1.3}. If \(\lambda\notin\Lambda_{\mix}\), the same sequence eventually satisfies the energy and residual conditions of Theorem~\ref{Thm1.2}, proving its asserted sharpness.
\end{proof}

\subsection{Realization of prescribed profile numbers}
\label{Sec5.3}

Theorem~\ref{Thm1.4} restricts the possible energies and profile numbers of a nonnegative Palais--Smale sequence. We prove the converse by combining concentrating Euclidean bubbles at distinct fixed points with hyperbolic profiles whose centers diverge. The finite-sum power estimate \cite{FigalliGlaudo2020}, in the form used above, controls the nonlinear interactions for every \(n\ge4\).

\begin{proof}[Proof of Theorem~\ref{Thm1.4}: realization]
The zero sequence treats \((m,k)=(0,0)\). Fix a pair with \(m+k\ge1\). Choose distinct points \(\xi_1,\ldots,\xi_k\) and disjoint relatively compact hyperbolic balls around them. In the centered ball coordinates choose \(\chi_j\in C_c^\infty(\BB^n)\), with \(0\le\chi_j\le1\) and \(\chi_j=1\) near zero, so that
\[
b_{j,\ell} =\bigl(\varrho^{-\frac{n-2}{2}} \chi_j B^{\RR}_{\eps_\ell/2}\bigr) \circ\tau_{\xi_j}^{-1}, \qquad \eps_\ell\to 0,
\]
have supports in the chosen balls.

Sobolev and H\"older inequalities imply that \(J_\lambda:H^1_\lambda\to\RR\) and \(\cR_\lambda:H^1_\lambda\to H^{-1}_\lambda\) are Lipschitz on bounded sets. Lemma~\ref{Lem5.2} and Proposition~\ref{Prop2.11} thus give
\[
\begin{gathered}
\norm{b_{j,\ell}-B^{\HH}_{\mathrm{pr},\xi_j,\eps_\ell}}\to0, \qquad \Gamma_\lambda(b_{j,\ell})\to0,\\
\norm{b_{j,\ell}}^2\to\mathcal S_{n,2}^{n/2}, \qquad J_\lambda(b_{j,\ell})\to\frac1n\mathcal S_{n,2}^{n/2}.
\end{gathered}
\]

Fix a unit vector \(\zeta\in T_0\HH^n\), let \(L_\ell\to\infty\), and set
\[
z_{i,\ell}=\exp_0\bigl(iL_\ell\,\zeta\bigr), \qquad H_{i,\ell}=B^{\HH}_{z_{i,\ell},\lambda}, \qquad 1\le i\le m.
\]
These profiles have pairwise divergent centers, converge weakly to zero, and satisfy
\[
\|H_{i,\ell}\|_{L^{2^*}(K)}\to0 \quad\text{for every compact }K\subset\HH^n.
\]
The last assertion follows by approximating \(B^{\HH}_{0,\lambda}\) in \(L^{2^*}\) by compactly supported functions.

List the \(H_{i,\ell}\) and \(b_{j,\ell}\) as \(v_{1,\ell},\ldots,v_{m+k,\ell}\). We claim that
\begin{equation}\label{eq5.7}
\|v_{\alpha,\ell}^{2^*-2}v_{\beta,\ell}\|_{(2^*)'} \longrightarrow0 \qquad(\alpha\ne\beta).
\end{equation}
For two hyperbolic profiles, compactly supported approximation and divergence of the centers prove the claim by H\"older's inequality, since
\[
\frac1{(2^*)'}=\frac{2^*-2}{2^*}+\frac1{2^*}.
\]
Here the map \(v\mapsto v^{2^*-2}\) is continuous from the nonnegative cone of \(L^{2^*}\) to \(L^{n/2}\), including when \(2^*-2<1\). For two Euclidean terms the product is zero by disjoint supports. If \(K\) contains all the supports of \(b_{j,\ell}\), the two mixed orderings satisfy
\[
\begin{aligned}
\|H_{i,\ell}^{2^*-2}b_{j,\ell}\|_{(2^*)'} &\le\|H_{i,\ell}\|_{L^{2^*}(K)}^{2^*-2} \|b_{j,\ell}\|_{2^*}\longrightarrow0,\\
\|b_{j,\ell}^{2^*-2}H_{i,\ell}\|_{(2^*)'} &\le\|b_{j,\ell}\|_{2^*}^{2^*-2} \|H_{i,\ell}\|_{L^{2^*}(K)}\longrightarrow0.
\end{aligned}
\]
This proves \eqref{eq5.7}.

Set \(v_\ell=\sum_{\alpha=1}^{m+k}v_{\alpha,\ell}\). By the finite-sum power estimate with \(\gamma=2^*-1\), Sobolev duality gives
\[
\Gamma_\lambda(v_\ell) \le\sum_\alpha\Gamma_\lambda(v_{\alpha,\ell}) +C\sum_{\alpha\ne\beta} \|v_{\alpha,\ell}^{2^*-2}v_{\beta,\ell}\|_{(2^*)'} \longrightarrow0.
\]
Moreover,
\[
\int_{\HH^n}v_{\alpha,\ell}^{2^*-1}v_{\beta,\ell}\dv \le\|v_{\alpha,\ell}\|_{2^*} \|v_{\alpha,\ell}^{2^*-2}v_{\beta,\ell}\|_{(2^*)'} \longrightarrow0 \quad(\alpha\ne\beta).
\]
Whenever one term is hyperbolic, its exact equation gives
\[
\ip{H_{i,\ell}}{v_{\beta,\ell}} =\int_{\HH^n}H_{i,\ell}^{2^*-1}v_{\beta,\ell}\dv \longrightarrow0 \quad(v_{\beta,\ell}\ne H_{i,\ell}).
\]
The quadratic cross terms between two cutoff Euclidean profiles vanish. Applying the same estimate with \(\gamma=2^*\) therefore gives
\[
\begin{gathered}
\norm{v_\ell}^2 =\sum_\alpha\norm{v_{\alpha,\ell}}^2+o(1) =mS_{n,\lambda}^{n/2}+k\mathcal S_{n,2}^{n/2}+o(1),\\
\int_{\HH^n}v_\ell^{2^*}\dv =\sum_\alpha\int_{\HH^n}v_{\alpha,\ell}^{2^*}\dv+o(1), \qquad J_\lambda(v_\ell)\to E_{m,k}(\lambda).
\end{gathered}
\]

Finally, define
\[
u_\ell=\sum_{i=1}^m B^{\HH}_{z_{i,\ell},\lambda} +\sum_{j=1}^k B^{\HH}_{\mathrm{pr},\xi_j,\eps_\ell}.
\]
Then \(u_\ell\ge0\) and \(\norm{u_\ell-v_\ell}\to0\). The Lipschitz estimates above yield
\[
J_\lambda(u_\ell)\to E_{m,k}(\lambda), \qquad \Gamma_\lambda(u_\ell)\to0.
\]

The cutoff Euclidean terms have bounded energy and vanishing \(L^2\)-norm, hence weak limit zero in \(H^1_\lambda\). The hyperbolic terms also converge weakly to zero because their centers diverge. Thus \(u_\ell\weakto0\).

The chosen parameters satisfy \eqref{eq5.2}--\eqref{eq5.3}. In each extraction the selected term converges to its profile. All other terms vanish weakly: in a hyperbolic extraction this follows from center divergence and the vanishing \(L^2\)-norms of the Euclidean terms; in a Euclidean extraction it follows from \eqref{eq4.32}--\eqref{eq4.33} and the parameter dichotomy in the proof of Lemma~\ref{Lem4.7}, together with \eqref{eq5.2}. Thus this is a profile decomposition with \(\rho_\ell=0\).

The mass computation in the decomposition part also gives \(\|u_\ell\|_2^2\to mM_\lambda\). If another positive profile decomposition has numbers \((m',k')\), then \eqref{eq5.5} implies
\[
m'M_\lambda=mM_\lambda, \qquad m'S_{n,\lambda}^{n/2}+k'\mathcal S_{n,2}^{n/2} =mS_{n,\lambda}^{n/2}+k\mathcal S_{n,2}^{n/2}.
\]
Since \(M_\lambda>0\), these identities force \(m'=m\) and \(k'=k\), even when the same energy has other integer representations. This realizes every pair. Together with the decomposition assertion, it shows that the nonnegative Palais--Smale levels are exactly
\begin{equation}\label{eq5.8}
\{E_{m,k}(\lambda):(m,k)\in\mathbb N_0^2\}.
\end{equation}
\end{proof}

\section{Energy resonance and compactness}
\label{Sec6}

We classify the representations of the Palais--Smale energies \(E_{m,k}(\lambda)\) from Section~\ref{Sec5} and prove approximation by the corresponding configuration families. Prescribing the limiting mass determines the profile numbers even at resonant parameters. For one profile of each type, this yields Theorem~\ref{Thm1.3} from the local estimate in Theorem~\ref{Thm1.1}.

\subsection{Parameter dependence and energy resonance}
\label{Sec6.1}

Throughout this subsection, \(n\ge4\) is fixed and \(\lambda\in(\lamc,\lambdab)\). We use the energy ratio \(\mathfrak r_n\) from \eqref{eq1.8} and the endpoint constant \(S_{n,\mathrm{bot}}\) from Lemma~\ref{Lem5.5}. Let \(H^1_{\mathrm{rad}}(\HH^n;\RR)\) denote the real-valued \(H^1\) functions invariant under rotations about the origin.

\begin{theorem}\label{Thm6.1}
The map \(\lambda\mapsto B^{\HH}_{0,\lambda}\) belongs to \(C^1((\lamc,\lambdab);H^1_{\mathrm{rad}}(\HH^n;\RR))\), and
\begin{equation}\label{eq6.1}
\frac{\mathrm d}{\mathrm d\lambda} \frac{S_{n,\lambda}^{n/2}}n =-\frac12M_\lambda, \qquad \mathfrak r_n'(\lambda) =-\frac{nM_\lambda}{2\mathcal S_{n,2}^{n/2}}<0.
\end{equation}
The ratio \(\mathfrak r_n\) is a strictly decreasing \(C^1\) diffeomorphism from \((\lamc,\lambdab)\) onto
\begin{equation}\label{eq6.2}
\left( \left(\frac{S_{n,\mathrm{bot}}}{\mathcal S_{n,2}}\right)^{n/2}, 1\right).
\end{equation}
The parameters for which \(\mathfrak r_n(\lambda)\) is rational form a countable dense subset of \((\lamc,\lambdab)\).
\end{theorem}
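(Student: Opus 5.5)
The plan has two parts. We first get \(C^1\) dependence of \(\lambda\mapsto B^{\HH}_{0,\lambda}\) from the implicit function theorem in the radial class. We then compute the derivative of the energy level by a Hellmann--Feynman (envelope) argument.

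\emph{Step 1: \(C^1\) regularity.} Consider
\[
F(\lambda,u)=P^{\HH}_\lambda u-|u|^{2^*-2}u
\]
as a map \((\lamc,\lambdab)\times H^1_{\mathrm{rad}}\to (H^1_{\mathrm{rad}})^*\). The norm \(\norm{\cdot}\) depends on \(\lambda\), so we fix a reference \(H^1\) norm; by the McKean bounds of Section~\ref{Sec2.1} these norms are locally uniformly equivalent. The map \(F\) is \(C^1\), since \(u\mapsto|u|^{2^*-2}u\) is \(C^1\) from \(L^{2^*}\) to \(L^{(2^*)'}\) because \(2^*-1>1\), and \(F\) is affine in \(\lambda\). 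Its \(u\)-derivative at \(B^{\HH}_{0,\lambda}\) is the linearized operator \(P^{\HH}_\lambda-(2^*-1)(B^{\HH}_{0,\lambda})^{2^*-2}\). The Ganguly--Sandeep nondegeneracy theorem says the kernel is spanned by the infinitesimal isometry directions \(Y_j\). These are odd, degree-one harmonics, so the restriction to radial functions has trivial kernel. The operator equals a coercive form minus a compact form (the weight lies in \(L^{n/2}\), as in the proof of Proposition~\ref{Prop4.3}), so it is Fredholm of index zero and hence invertible on \(H^1_{\mathrm{rad}}\).

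The implicit function theorem then gives a local \(C^1\) branch \(u(\lambda)\) of radial solutions through \(B^{\HH}_{0,\lambda_0}\). We must check that this branch is the Mancini--Sandeep solution, i.e.\ that it stays positive. By continuity in \(L^{2^*}\) its negative part is small. Testing the equation with \(u_-\) gives \(\norm{u_-}^2\le\|u_-\|_{2^*}^{2^*}\), and this forces \(u_-=0\) by Sobolev. Positivity then follows from Lemma~\ref{Lem5.1}, and uniqueness (with radial centering at \(o\)) identifies \(u(\lambda)=B^{\HH}_{0,\lambda}\). Local branches patch, giving the global \(C^1\) statement.

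\emph{Step 2: derivative of the energy.} Set \(e(\lambda)=J_\lambda(B^{\HH}_{0,\lambda})=S_{n,\lambda}^{n/2}/n\). By the chain rule,
\[
e'(\lambda)=\partial_\lambda J_\lambda(u)\big|_{u=B}+\langle\cR_\lambda(B),\partial_\lambda B\rangle .
\]
The second term vanishes because \(B\) solves the equation. The first is \(-\tfrac12\|B\|_2^2=-\tfrac12 M_\lambda\). Writing \(\mathfrak r_n=n\,e(\lambda)/\mathcal S_{n,2}^{n/2}\) gives \eqref{eq6.1}. As a consistency check, Lemma~\ref{Lem5.5} already gives a one-sided difference quotient with the same constant.

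\emph{Step 3: range and density.} Since \(\mathfrak r_n'<0\) and \(\mathfrak r_n\) is \(C^1\), it is a \(C^1\) diffeomorphism onto its image. Lemma~\ref{Lem5.5} supplies the endpoint limits \(S_{n,\lamc}=\mathcal S_{n,2}\) and \(S_{n,\lambdab}=S_{n,\mathrm{bot}}\), so the image is exactly the interval \eqref{eq6.2}. The parameters with \(\mathfrak r_n(\lambda)\) rational are the preimage of \(\mathbb Q\) under a homeomorphism of open intervals, hence countable and dense.

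The main obstacle is Step 1, specifically the Fredholm setup: identifying the radial kernel as trivial from the nondegeneracy result, and confirming that the IFT branch coincides with the positive solution rather than some other radial solution. The derivative computation itself is routine once \(C^1\) regularity is in hand.
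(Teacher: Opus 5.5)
Your proposal is correct and follows essentially the paper's route. The paper also uses invertibility of the radial linearization (Fredholm of index zero plus Ganguly--Sandeep nondegeneracy), the implicit function theorem, a negative-part test to stay on the positive branch, identification via Mancini--Sandeep uniqueness, the envelope computation \(\partial_\lambda J_\lambda(u)=-\tfrac12\|u\|_2^2\), and Lemma~\ref{Lem5.5} for the endpoint values. The one step you leave implicit, which the paper states explicitly, is rotational averaging about \(o\): for radial \(u\), the functional \(\cR_\lambda(u)\) and the linearization applied to a radial function are rotation invariant, so vanishing against radial test functions implies vanishing against all test functions. This is needed both to reduce the kernel of the restricted operator to the full kernel described by Ganguly--Sandeep, and to make the implicit-function branch a genuine weak solution to which Lemma~\ref{Lem5.1} and the uniqueness theorem apply.
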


\begin{proof}
We work in the real space \(H^1_{\mathrm{rad}}(\HH^n)\) with its usual fixed \(H^1\)-norm. Sobolev embedding and continuity of \(u\mapsto |u|^{2^*-2}\) from \(L^{2^*}\) to \(L^{n/2}\) show that \((\lambda,u)\mapsto\cR_\lambda(u)\), restricted to radial test functions, is \(C^1\) into the dual space, with nonlinear derivative \(h\mapsto(2^*-1)|u|^{2^*-2}h\).

At the centered solution, write
\[
\mathscr L_\lambda =P^{\HH}_\lambda -(2^*-1)(B^{\HH}_{0,\lambda})^{2^*-2}.
\]
The operator \(P^{\HH}_\lambda\) is an isomorphism from the radial space to its dual. Since \((B^{\HH}_{0,\lambda})^{2^*-2}\in L^{n/2}(\HH^n)\), multiplication by this function is compact between these spaces. This follows by \(L^{n/2}\) approximation and local Rellich compactness, as in the proof of Proposition~\ref{Prop4.3}. Thus \(\mathscr L_\lambda\) is Fredholm of index zero.

A radial function satisfying the linearized equation against radial test functions satisfies it against all test functions, by rotational averaging. The nondegeneracy theorem \cite[Theorem 1.1]{GangulySandeep2015} therefore gives
\[
\begin{aligned}
\ker\bigl(\mathscr L_\lambda|_{H^1_{\mathrm{rad}}}\bigr) &=H^1_{\mathrm{rad}}\cap \operatorname{span}\left\{ \left.\partial_{z,j}B^{\HH}_{z,\lambda}\right|_{z=0}: 1\le j\le n\right\}=\{0\}.
\end{aligned}
\]
Here each center derivative has zero rotational average. Hence the radial linearized operator is invertible.

The implicit function theorem gives a local \(C^1\) radial branch near \(B^{\HH}_{0,\lambda_0}\) for each \(\lambda_0\in(\lamc,\lambdab)\). Rotational averaging shows that its elements solve the full weak equation. Set \(u^-=\max\{-u,0\}\). Testing with the negative part and using the endpoint inequality gives
\[
S_{n,\mathrm{bot}}\|u^-\|_{2^*}^2 \le\norm{u^-}^2 =\|u^-\|_{2^*}^{2^*}.
\]
Consequently,
\[
u^-\ne0 \quad\Longrightarrow\quad \|u^-\|_{2^*}\ge S_{n,\mathrm{bot}}^{1/(2^*-2)}.
\]
This contradicts
\[
\|u^-\|_{2^*} \le\|u-B^{\HH}_{0,\lambda_0}\|_{2^*}\longrightarrow0
\]
after the neighborhood is reduced. The branch remains nonzero, so Lemma~\ref{Lem5.1} gives strict positivity. Uniqueness of the centered positive solution \cite{ManciniSandeep2008} identifies the local branches with \(B^{\HH}_{0,\lambda}\), proving the global \(C^1\) dependence on the open interval. Differentiating its equation gives
\[
\mathscr L_\lambda \bigl(\partial_\lambda B^{\HH}_{0,\lambda}\bigr) =B^{\HH}_{0,\lambda}.
\]

Since \(J_\lambda(B^{\HH}_{0,\lambda})=S_{n,\lambda}^{n/2}/n\), differentiation in the fixed \(H^1\) space yields
\[
\begin{aligned}
\frac{\mathrm d}{\mathrm d\lambda} J_\lambda(B^{\HH}_{0,\lambda}) &=-\frac12\|B^{\HH}_{0,\lambda}\|_2^2 +\bigl\langle\cR_\lambda(B^{\HH}_{0,\lambda}), \partial_\lambda B^{\HH}_{0,\lambda}\bigr\rangle=-\frac12M_\lambda.
\end{aligned}
\]
This proves \eqref{eq6.1}.

Lemma~\ref{Lem5.5} gives
\[
\lim_{\lambda\to\lamc}S_{n,\lambda}=\mathcal S_{n,2}, \qquad \lim_{\lambda\to\lambdab}S_{n,\lambda}=S_{n,\mathrm{bot}}, \qquad 0<S_{n,\mathrm{bot}}<\mathcal S_{n,2}.
\]
Together with \eqref{eq6.1}, these limits prove the asserted \(C^1\) diffeomorphism. Taking inverse images of the rational numbers in \eqref{eq6.2} proves countability and density.
\end{proof}

If \(\mathfrak r_n(\lambda_0)=p/q\) in lowest terms, \eqref{eq6.1} gives
\[
\frac{qS_{n,\lambda}^{n/2}-p\mathcal S_{n,2}^{n/2}}n =-\frac q2M_{\lambda_0}(\lambda-\lambda_0) +o(|\lambda-\lambda_0|).
\]
Thus these two energies cross with nonzero derivative. The following proposition classifies all equal-energy pairs.

\begin{proposition}\label{Prop6.2}
Fix \(\lambda\in(\lamc,\lambdab)\). For pairs in \(\mathbb N_0^2\),
\begin{equation}\label{eq6.3}
E_{m,k}(\lambda)=E_{m',k'}(\lambda) \quad\Longleftrightarrow\quad (m-m')\mathfrak r_n(\lambda)=k'-k.
\end{equation}
If \(\mathfrak r_n(\lambda)\) is irrational, every energy has at most one representation. If \(\mathfrak r_n(\lambda)=p/q\) in lowest terms, all representations of \(E_{m_0,k_0}(\lambda)\) are
\begin{equation}\label{eq6.4}
(m,k)=(m_0+qa,k_0-pa), \qquad a\in\mathbb Z, \qquad -\frac{m_0}q\le a\le\frac{k_0}p.
\end{equation}
The smallest energy admitting two distinct representations is
\begin{equation}\label{eq6.5}
\frac{p\mathcal S_{n,2}^{n/2}}n =\frac{qS_{n,\lambda}^{n/2}}n,
\end{equation}
and its only representations are \((q,0)\) and \((0,p)\). For the same rational ratio and \(N\in\mathbb N_0\), the representations of \(NS_{n,\lambda}^{n/2}/n\) are
\[
(m,k)=(N-qa,pa), \qquad a\in\mathbb N_0, \qquad 0\le a\le N/q.
\]
A second representation exists exactly when \(q\le N\), and a representation with both numbers positive exists exactly when \(q<N\). For fixed \(N\), only finitely many parameters admit a second representation of this pure hyperbolic level.
\end{proposition}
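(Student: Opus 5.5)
The whole proposition is elementary arithmetic in the lattice \(\mathbb N_0^2\), once the energies are written through the ratio \(\mathfrak r_n(\lambda)\) of \eqref{eq1.8}.

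\emph{Equivalence \eqref{eq6.3}.} By \eqref{eq1.7},
\[
nE_{m,k}(\lambda)=\mathcal S_{n,2}^{n/2}\bigl(m\,\mathfrak r_n(\lambda)+k\bigr).
\]
Since \(\mathcal S_{n,2}>0\), equality of two energies is equivalent to \((m-m')\mathfrak r_n(\lambda)=k'-k\). If \(\mathfrak r_n\) is irrational, the right side is an integer, so \(m=m'\) is forced, and then \(k=k'\).

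\emph{Rational case.} Let \(\mathfrak r_n=p/q\) in lowest terms. Theorem~\ref{Thm6.1} places \(\mathfrak r_n\) in \((0,1)\), so \(p,q\ge1\) and \(p<q\). The relation becomes \((m-m')p=(k'-k)q\). Coprimality gives \(q\mid m-m'\); write \(m-m'=qa\) with \(a\in\mathbb Z\), which forces \(k'-k=pa\). Taking \((m',k')=(m_0,k_0)\), the representations are \((m_0+qa,\,k_0-pa)\). Nonnegativity of both entries is exactly \(-m_0/q\le a\le k_0/p\), which is \eqref{eq6.4}.

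\emph{Smallest energy with two representations.} If two distinct pairs share an energy, they differ by \(a(q,-p)\) with \(a\ne0\); say \(a\ge1\) after swapping the pairs. Then the pair \((m_0,k_0)\) with the larger second entry has \(k_0\ge p\), so its energy is at least \(p\mathcal S_{n,2}^{n/2}/n\). This bound is attained by \((0,p)\), which also equals \((q,0)\) since \(q\,\mathfrak r_n=p\). At this level, \eqref{eq6.4} with \((m_0,k_0)=(0,p)\) allows only \(0\le a\le1\), so \((0,p)\) and \((q,0)\) are the only representations. This gives \eqref{eq6.5}.

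\emph{Pure hyperbolic level.} Apply \eqref{eq6.4} to \((m_0,k_0)=(N,0)\), with the sign of \(a\) reversed. The representations are \((N-qa,\,pa)\) with \(0\le a\le N/q\). A second one exists iff \(a=1\) is admissible, i.e. \(q\le N\). A representation with both entries positive needs \(a\ge1\) and \(N-qa>0\), i.e. \(q<N\).

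\emph{Finiteness for fixed \(N\).} A second representation requires \(\mathfrak r_n(\lambda)=p/q\) with \(q\le N\) and \(0<p/q<1\), so \(p<q\). There are finitely many such fractions. Since \(\mathfrak r_n\) is strictly monotone by Theorem~\ref{Thm6.1}, each fraction is attained at no more than one \(\lambda\), so only finitely many parameters admit a second representation.

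The only potential subtlety is bookkeeping of signs and the use of coprimality; nothing analytic beyond Theorem~\ref{Thm6.1} (positivity, \(\mathfrak r_n<1\), injectivity) is needed.
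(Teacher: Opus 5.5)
Your proof is correct and follows essentially the same route as the paper: divide by \(\mathcal S_{n,2}^{n/2}/n\), use coprimality to parametrize equal-energy pairs by multiples \(a(q,-p)\), bound the common energy below by \(p\mathcal S_{n,2}^{n/2}/n\) through the pair with the larger second entry, and use the strict monotonicity of \(\mathfrak r_n\) from Theorem~\ref{Thm6.1} for finiteness. The only cosmetic difference is at the first common level: you identify its two representations by reapplying \eqref{eq6.4} with \((m_0,k_0)=(0,p)\), whereas the paper reads them off from the equality case \(m'=k=0\), \(a=1\).
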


\begin{proof}
Dividing the energy identity by \(\mathcal S_{n,2}^{n/2}/n\) gives \eqref{eq6.3}. Irrationality forces \(m=m'\) and \(k=k'\). In the rational case,
\[
(m-m_0)p=(k_0-k)q \quad\Longrightarrow\quad m-m_0=qa,\qquad k_0-k=pa
\]
for some \(a\in\mathbb Z\), by coprimality. The bounds in \eqref{eq6.4} are exactly the conditions \(m,k\ge0\).

For distinct equal-energy pairs, interchange them if necessary so that \(m=m'+qa\) and \(k'=k+pa\), with \(a\ge1\). Their common energy satisfies
\[
\frac{nE_{m,k}(\lambda)}{\mathcal S_{n,2}^{n/2}} =m'\mathfrak r_n(\lambda)+k+pa\ge p.
\]
Equality forces \(m'=k=0\) and \(a=1\), and Theorem~\ref{Thm1.4} realizes both representations.

The pure hyperbolic case follows from \eqref{eq6.4} with \((m_0,k_0)=(N,0)\). A nonzero step exists exactly when \(q\le N\), and a step leaving both numbers positive exists exactly when \(q<N\). Since \(0<p<q\le N\), only finitely many reduced ratios can occur; Theorem~\ref{Thm6.1} gives at most one parameter for each.
\end{proof}

For fixed \(\lambda\), the set of energies in \eqref{eq5.8} is locally finite, because an upper bound on \(E_{m,k}(\lambda)\) bounds both nonnegative integers. At a rational ratio \(p/q\), distinct levels differ by at least \(\mathcal S_{n,2}^{n/2}/(nq)\). At an irrational ratio, positive gaps can be arbitrarily small as the levels increase. Indeed, the pigeonhole principle gives integers \(1\le a_L\le L\) and \(b_L\in\mathbb Z\) such that
\[
0<|a_L\mathfrak r_n(\lambda)-b_L|\le\frac1L.
\]
Irrationality implies \(a_L\to\infty\), and positivity of the ratio gives \(b_L>0\) for large \(L\). Hence
\[
0<|E_{a_L,0}(\lambda)-E_{0,b_L}(\lambda)| \le\frac{\mathcal S_{n,2}^{n/2}}{nL}\longrightarrow0.
\]

The resonant parameters are nevertheless finite when the common energy is bounded. More precisely, fix \(D>0\). If the first common level \eqref{eq6.5} is at most \(D\), then
\[
p\le\frac{nD}{\mathcal S_{n,2}^{n/2}}, \qquad q=\frac p{\mathfrak r_n(\lambda)} <p\left(\frac{\mathcal S_{n,2}}{S_{n,\mathrm{bot}}}\right)^{n/2}.
\]
Both integers range over finite sets, and each reduced ratio determines at most one parameter. Thus finiteness holds on the entire interval \((\lamc,\lambdab)\) under a fixed energy bound, whereas allowing all energy levels gives the countable dense set in Theorem~\ref{Thm6.1}.

\subsection{Compactness at fixed energy and mass}
\label{Sec6.2}

We now fix \(\lamc<\lambda<\lambdab\) and a sufficiently small \(\eps_0>0\). The preceding energy classification determines which configuration families may occur in Theorem~\ref{Thm1.4}. For \((m,k)\in\mathbb N_0^2\), set
\begin{equation}\label{eq6.6}
\begin{split}
\cM_{m,k}(\eps_0)=\biggl\{\sum_{i=1}^m B^{\HH}_{z_i,\lambda} +\sum_{j=1}^k B^{\HH}_{\mathrm{pr},\xi_j,\eps_j}:z_i,\xi_j\in\HH^n,\quad 0<\eps_j<\eps_0 \biggr\}.
\end{split}
\end{equation}
Empty sums are zero, so \(\cM_{0,0}(\eps_0)=\{0\}\), and \(\cM_{1,1}(\eps_0)=\cM_{\mix}(\eps_0)\). For \(c\ge0\), define the finite, possibly empty set
\[
\mathcal P_{n,\lambda}(c) =\{(m,k)\in\mathbb N_0^2:E_{m,k}(\lambda)=c\}.
\]
Finiteness follows from the integer bound used in Section~\ref{Sec6.1}. The families in \eqref{eq6.6} allow arbitrary centers and scales in \((0,\eps_0)\); the approximating parameters in a profile decomposition satisfy \eqref{eq5.2}--\eqref{eq5.3}.

\begin{theorem}\label{Thm6.3}
Fix \(c\ge0\). If \(\mathcal P_{n,\lambda}(c)=\varnothing\), there exists \(\Gamma_0>0\) such that no nonnegative \(u\in H^1_\lambda\) satisfies
\[
|J_\lambda(u)-c|+\Gamma_\lambda(u)<\Gamma_0.
\]
If \(\mathcal P_{n,\lambda}(c)\ne\varnothing\), every nonnegative Palais--Smale sequence at level \(c\) has, after passage to a subsequence, one pair \((m,k)\in\mathcal P_{n,\lambda}(c)\) such that
\[
\dist_{H^1_\lambda}(u_\ell,\cM_{m,k}(\eps_0))\longrightarrow0,
\]
with approximating parameters as in Theorem~\ref{Thm1.4}. Moreover, for every \(\kappa>0\) there is \(\Gamma_0>0\) such that every nonnegative \(u\in H^1_\lambda\) satisfying
\[
|J_\lambda(u)-c|+\Gamma_\lambda(u)<\Gamma_0
\]
obeys
\begin{equation}\label{eq6.7}
\dist_{H^1_\lambda}\left(u, \bigcup_{(m,k)\in\mathcal P_{n,\lambda}(c)} \cM_{m,k}(\eps_0)\right)<\kappa.
\end{equation}
\end{theorem}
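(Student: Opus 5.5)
The plan is to argue by contradiction and reduce each assertion to Theorem~\ref{Thm1.4}, reading the profile numbers off the limiting energy.

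\emph{Empty case.} Suppose \(\mathcal P_{n,\lambda}(c)=\varnothing\) and no such \(\Gamma_0\) exists. Then there are nonnegative \(u_\ell\) with
\[
|J_\lambda(u_\ell)-c|+\Gamma_\lambda(u_\ell)\to0.
\]
These form a nonnegative Palais--Smale sequence at level \(c\); by \eqref{eq5.1} the sequence is bounded. Theorem~\ref{Thm1.4} then yields, along a subsequence, integers \(m,k\) with \(c=E_{m,k}(\lambda)\). This says \((m,k)\in\mathcal P_{n,\lambda}(c)\), a contradiction.

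\emph{Nonempty case, sequences.} Let \(u_\ell\ge0\) be Palais--Smale at level \(c\). Theorem~\ref{Thm1.4} gives a subsequence, a pair \((m,k)\) with \(c=E_{m,k}(\lambda)\), hence \((m,k)\in\mathcal P_{n,\lambda}(c)\), and a decomposition \eqref{eq5.4}. Its parameters satisfy \eqref{eq5.2}--\eqref{eq5.3}, and \(\norm{\rho_\ell}\to0\). Since \(\eps_{j,\ell}\to0\), we have \(\eps_{j,\ell}<\eps_0\) for large \(\ell\). The sum of profiles then lies in \(\cM_{m,k}(\eps_0)\), so the distance to \(\cM_{m,k}(\eps_0)\) is at most \(\norm{\rho_\ell}\to0\). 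The profile counts are kept as produced by the decomposition. I do not need to show that the pair is unique, since several pairs in \(\mathcal P_{n,\lambda}(c)\) may occur.

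\emph{Uniform statement \eqref{eq6.7}.} Fix \(\kappa>0\) and suppose no \(\Gamma_0\) works. Taking \(\Gamma_0=1/\ell\) produces nonnegative \(u_\ell\) with
\[
|J_\lambda(u_\ell)-c|+\Gamma_\lambda(u_\ell)<1/\ell
\]
whose distance to the union is at least \(\kappa\). These \(u_\ell\) again form a nonnegative Palais--Smale sequence at level \(c\). By the previous step, some subsequence has distance to one \(\cM_{m,k}(\eps_0)\), with \((m,k)\in\mathcal P_{n,\lambda}(c)\), tending to zero. Since the union contains \(\cM_{m,k}(\eps_0)\), this contradicts the lower bound \(\kappa\).

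\emph{Main obstacle.} The only delicate point is that the Palais--Smale notion in Theorem~\ref{Thm1.4} requires exact convergence of the energy to \(c\), not just proximity. Here this holds by construction, because the energy defect is at most \(1/\ell\). Boundedness, which Theorem~\ref{Thm1.4} also needs, comes from \eqref{eq5.1}. The remaining ingredients are finiteness of \(\mathcal P_{n,\lambda}(c)\), which makes the union a finite union, and the fact that \(\cM_{m,k}(\eps_0)\) allows arbitrary centers; both are built into the definitions. So the proof is a soft compactness argument on top of Theorem~\ref{Thm1.4}, with no new estimates.
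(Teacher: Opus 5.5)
Your proposal is correct and matches the paper's proof. Each assertion is reduced by contradiction to Theorem~\ref{Thm1.4}: \eqref{eq5.1} gives boundedness, the quantization \(c=E_{m,k}(\lambda)\) places the pair in \(\mathcal P_{n,\lambda}(c)\), and the eventual smallness of the Euclidean scales puts the profile sum in \(\cM_{m,k}(\eps_0)\). The paper handles the empty case last, as ``the same argument,'' rather than first, which makes no difference.
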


\begin{proof}
The subsequential statement follows from Theorem~\ref{Thm1.4}, since all Euclidean scales are eventually less than \(\eps_0\). To prove the uniform assertions, suppose first that \eqref{eq6.7} fails for some \(\kappa>0\). There are nonnegative \(u_\ell\) satisfying
\[
|J_\lambda(u_\ell)-c|+\Gamma_\lambda(u_\ell)<\frac1\ell,
\]
whose distance from the stated union is at least \(\kappa\). The identity
\[
\frac1n\norm{u_\ell}^2 =J_\lambda(u_\ell) -\frac1{2^*}\langle\cR_\lambda(u_\ell),u_\ell\rangle \le c+1+\frac1{2^*\ell}\norm{u_\ell}
\]
gives boundedness. Theorem~\ref{Thm1.4} produces a pair \((m,k)\in\mathcal P_{n,\lambda}(c)\) and an approximation by \(\cM_{m,k}(\eps_0)\) with error tending to zero, a contradiction. The same argument rules out a sequence with the displayed energy and residual errors when \(\mathcal P_{n,\lambda}(c)\) is empty. Finally, \eqref{eq6.7} implies that every Palais--Smale sequence at level \(c\) satisfies
\[
\dist_{H^1_\lambda}\left(u_\ell, \bigcup_{(m,k)\in\mathcal P_{n,\lambda}(c)} \cM_{m,k}(\eps_0)\right)\longrightarrow0.
\]
\end{proof}

At the first level with two representations, the two decompositions are asymptotically separated.

\begin{corollary}\label{Cor6.4}
Suppose \(\mathfrak r_n(\lambda)=p/q\) in lowest terms. At the level \eqref{eq6.5}, every nonnegative Palais--Smale sequence approaches \(\cM_{q,0}(\eps_0)\cup\cM_{0,p}(\eps_0)\), and both alternatives occur. If \(H_\ell\) is a sum of \(q\) hyperbolic profiles satisfying \eqref{eq5.3}, and \(V_\ell\) is a sum of \(p\) projected Euclidean profiles satisfying \eqref{eq5.2}, then
\begin{equation}\label{eq6.8}
\norm{H_\ell-V_\ell}^2\longrightarrow qS_{n,\lambda}^{n/2}+p\mathcal S_{n,2}^{n/2} =2p\mathcal S_{n,2}^{n/2}.
\end{equation}
There are Palais--Smale sequences at this level for which no fixed pair of profile numbers gives a decomposition along the full sequence.
\end{corollary}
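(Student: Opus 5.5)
The plan has four parts, taken in the order of the statement: the classification of levels, realization of both alternatives, the separation formula \eqref{eq6.8}, and a sequence that alternates between the two types.

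\emph{Classification and realization.} By Proposition~\ref{Prop6.2}, the level \eqref{eq6.5} has exactly the representations $(q,0)$ and $(0,p)$, so $\mathcal P_{n,\lambda}(c)=\{(q,0),(0,p)\}$. Theorem~\ref{Thm6.3} then shows that every nonnegative Palais--Smale sequence approaches $\cM_{q,0}(\eps_0)\cup\cM_{0,p}(\eps_0)$. The realization part of Theorem~\ref{Thm1.4} produces sequences with exactly the numbers $(q,0)$ and exactly $(0,p)$, so both alternatives occur.

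\emph{The separation formula \eqref{eq6.8}.} Expand
\[
\norm{H_\ell-V_\ell}^2=\norm{H_\ell}^2+\norm{V_\ell}^2-2\ip{H_\ell}{V_\ell}.
\]
\begin{itemize}
\item For $\norm{H_\ell}^2$, the pairwise cross terms $\ip{B^{\HH}_{z_i,\lambda}}{B^{\HH}_{z_j,\lambda}}=\int (B^{\HH}_{z_i,\lambda})^{2^*-1}B^{\HH}_{z_j,\lambda}\to0$ by center divergence, as in Lemma~\ref{Lem5.6}. Hence $\norm{H_\ell}^2\to qS_{n,\lambda}^{n/2}$.
\item For $\norm{V_\ell}^2$, each diagonal term tends to $\mathcal S_{n,2}^{n/2}$ by \eqref{eq2.37}. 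For a cross term, use $P^{\HH}_\lambda B^{\HH}_{\mathrm{pr},\xi_i,\eps_i}=\cR_\lambda(B_i)+B_i^{2^*-1}$. The residual part is $o(1)$ by \eqref{eq2.34}. The remaining pairing $\int B_i^{2^*-1}B_j$ tends to zero because, after rescaling at $(\xi_i,\eps_i)$, the $j$-th profile converges weakly to zero in $\dot H^1$ under \eqref{eq5.2}. This is the parameter dichotomy in the proof of Lemma~\ref{Lem4.7}, alternative (ii), while $B_i^{2^*-1}$ converges strongly in $L^{(2^*)'}$ by \eqref{eq2.19}. So $\norm{V_\ell}^2\to p\mathcal S_{n,2}^{n/2}$.
\item For the mixed term, $\ip{B^{\HH}_{z_i,\lambda}}{B^{\HH}_{\mathrm{pr},\xi_j,\eps_j}}=Q(\eps_j,r)\le C\eps_j^{(n-2)/2}\to0$ by Theorem~\ref{Thm3.3}, uniformly in $r$.
\end{itemize}
Summing, $\norm{H_\ell-V_\ell}^2\to qS_{n,\lambda}^{n/2}+p\mathcal S_{n,2}^{n/2}$, and resonance $qS_{n,\lambda}^{n/2}=p\mathcal S_{n,2}^{n/2}$ gives $2p\mathcal S_{n,2}^{n/2}$.

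\emph{A sequence with no fixed pair.} Take the two realizing sequences $u^{(1)}_\ell$ of type $(q,0)$ and $u^{(2)}_\ell$ of type $(0,p)$, and interleave them: $w_{2\ell}=u^{(1)}_\ell$, $w_{2\ell+1}=u^{(2)}_\ell$. This $w_\ell$ is still a nonnegative Palais--Smale sequence at level \eqref{eq6.5}. Suppose a single pair $(m,k)$ gave a decomposition of the form in Theorem~\ref{Thm1.4} along the full sequence. Then $\|w_\ell\|_2^2\to mM_\lambda$ by \eqref{eq5.5}. But the even terms have mass tending to $qM_\lambda$ and the odd terms mass tending to $0$, and $q\ge1$, $M_\lambda>0$, a contradiction. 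Alternatively, one can invoke \eqref{eq6.8} to see that the two subsequences stay at distance bounded below from each other's family.

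The main obstacle is the uniform vanishing of the cross terms in $\norm{V_\ell}^2$ under the general separation \eqref{eq5.2}, including comparable scales with diverging scaled centers. This should reduce to the dichotomy already proved in Lemma~\ref{Lem4.7}, applied symmetrically. Some care is needed when one of the scales is not small relative to the other; if needed, pair with whichever profile has the smaller scale.
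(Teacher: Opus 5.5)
Your proposal is correct and follows the paper's proof essentially step for step: Proposition~\ref{Prop6.2} with Theorem~\ref{Thm6.3} and Theorem~\ref{Thm1.4} for the two alternatives; center divergence, the rescaling dichotomy from Lemma~\ref{Lem4.7} and Theorem~\ref{Thm3.3} for the three kinds of cross terms; and the interleaved sequence with limiting masses $qM_\lambda$ and $0$ for the last claim.

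The only deviation is in the pairing of two projected bubbles, and it is cosmetic. You go through the equation, bounding $\langle\cR_\lambda(B_i),B_j\rangle$ by \eqref{eq2.34} and using strong $L^{(2^*)'}$ against weak $L^{2^*}$ convergence. The paper instead writes the energy pairing as a rescaled Dirichlet pairing minus $\mu\int B_jB_h\dv$, and uses strong--weak $\dot H^1$ convergence together with vanishing mass.

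Your worry about which scale to rescale at is unnecessary, since alternative (ii) of that dichotomy already covers scale ratios tending to $0$ or $\infty$.
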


\begin{proof}
Proposition~\ref{Prop6.2} and Theorem~\ref{Thm6.3} give the two alternatives, and Theorem~\ref{Thm1.4} realizes both. Write
\[
H_\ell=\sum_{i=1}^q H_{i,\ell}, \qquad H_{i,\ell}=B^{\HH}_{z_{i,\ell},\lambda}, \qquad V_\ell=\sum_{j=1}^p B_{j,\ell}, \qquad B_{j,\ell} =B^{\HH}_{\mathrm{pr},\xi_{j,\ell},\eps_{j,\ell}}.
\]
The hyperbolic separation and compactly supported approximation give
\[
\ip{H_{i,\ell}}{H_{h,\ell}} =\int_{\HH^n}H_{i,\ell}^{2^*-1}H_{h,\ell}\dv \longrightarrow0 \qquad(i\ne h).
\]
For two Euclidean terms, use the rescaling \(\mathscr S\) defined before Proposition~\ref{Prop4.3}. Lemma~\ref{Lem2.5} and the parameter dichotomy in the proof of Lemma~\ref{Lem4.7}, together with \eqref{eq5.2}, yield
\[
\begin{aligned}
\mathscr S_{\xi_{j,\ell},\eps_{j,\ell}}B_{j,\ell} &\longrightarrow B^{\RR}_1 &&\text{strongly in }\dot H^1(\RR^n),\\
\mathscr S_{\xi_{j,\ell},\eps_{j,\ell}}B_{h,\ell} &\weakto0 &&\text{in }\dot H^1(\RR^n),\qquad j\ne h.
\end{aligned}
\]
Conformal covariance expresses their energy pairing as
\[
\begin{aligned}
\ip{B_{j,\ell}}{B_{h,\ell}} =\int_{\RR^n} \nabla\bigl(\mathscr S_{\xi_{j,\ell},\eps_{j,\ell}}B_{j,\ell}\bigr) \cdot\nabla\bigl(\mathscr S_{\xi_{j,\ell},\eps_{j,\ell}}B_{h,\ell}\bigr)\,dy-\mu\int_{\HH^n}B_{j,\ell}B_{h,\ell}\dv \longrightarrow0.
\end{aligned}
\]
The first term tends to zero by strong--weak convergence, and the second satisfies
\[
\left|\mu\int_{\HH^n}B_{j,\ell}B_{h,\ell}\dv\right| \le\mu\|B_{j,\ell}\|_2\|B_{h,\ell}\|_2\longrightarrow0.
\]
For the mixed pairing, Theorem~\ref{Thm3.3} gives, uniformly in the relative centers,
\[
\left|\ip{H_{i,\ell}}{B_{j,\ell}}\right| =Q\bigl(\eps_{j,\ell},d_g(z_{i,\ell},\xi_{j,\ell})\bigr) \le C\eps_{j,\ell}^{(n-2)/2}\longrightarrow0.
\]
The single-profile norm limits now imply
\[
\norm{H_\ell}^2\to qS_{n,\lambda}^{n/2}, \qquad \norm{V_\ell}^2\to p\mathcal S_{n,2}^{n/2}, \qquad \ip{H_\ell}{V_\ell}\to0.
\]
Expanding the squared norm proves \eqref{eq6.8}.

Finally, interleave the sequences realizing \((q,0)\) and \((0,p)\). The resulting sequence is Palais--Smale at their common energy, whereas
\[
\|u_{2\ell}\|_2^2\longrightarrow qM_\lambda, \qquad \|u_{2\ell+1}\|_2^2\longrightarrow0.
\]
A decomposition with a fixed pair \((m,k)\) along the full sequence would imply \(\|u_\ell\|_2^2\to mM_\lambda\) by \eqref{eq5.5}, which is impossible.
\end{proof}

By \eqref{eq5.5}, the limiting mass determines \(m\), and the energy then determines \(k\). The next result uses fixed energy and mass tolerances, independent of the residual.

\begin{corollary}\label{Cor6.5}
Fix \((m_0,k_0)\in\mathbb N_0^2\) and constants
\begin{equation*}
0<\eta_M<M_\lambda, \qquad 0<\eta_E<\frac{\mathcal S_{n,2}^{n/2}}n.
\end{equation*}
For every \(\kappa>0\), there is \(\Gamma_0>0\) such that every nonnegative \(u\in H^1_\lambda\) satisfying
\begin{equation*}
\begin{gathered}
|J_\lambda(u)-E_{m_0,k_0}(\lambda)|<\eta_E, \qquad \bigl|\|u\|_2^2-m_0M_\lambda\bigr|<\eta_M,\\
\Gamma_\lambda(u)<\Gamma_0
\end{gathered}
\end{equation*}
obeys
\[
\dist_{H^1_\lambda}(u,\cM_{m_0,k_0}(\eps_0))<\kappa.
\]
This holds for every \(\lambda\in(\lamc,\lambdab)\), including the resonant parameters.
\end{corollary}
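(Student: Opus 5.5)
We argue by contradiction, in the same way as Theorem~\ref{Thm6.3}. Suppose that for some \(\kappa>0\) there are nonnegative \(u_\ell\) with \(|J_\lambda(u_\ell)-E_{m_0,k_0}(\lambda)|<\eta_E\), \(\bigl|\|u_\ell\|_2^2-m_0M_\lambda\bigr|<\eta_M\), \(\Gamma_\lambda(u_\ell)\to0\), and
\[
\dist_{H^1_\lambda}(u_\ell,\cM_{m_0,k_0}(\eps_0))\ge\kappa .
\]
The identity \eqref{eq5.1} bounds \(\norm{u_\ell}\), and hence bounds \(J_\lambda(u_\ell)\) and \(\|u_\ell\|_2^2\). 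After passing to a subsequence, \(J_\lambda(u_\ell)\to c\) with \(|c-E_{m_0,k_0}(\lambda)|\le\eta_E\), and \(\|u_\ell\|_2^2\to M\) with \(|M-m_0M_\lambda|\le\eta_M\). In particular \((u_\ell)\) is a nonnegative Palais--Smale sequence at level \(c\).

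Theorem~\ref{Thm1.4} then gives profile numbers \((m,k)\in\mathbb N_0^2\) such that \(c=E_{m,k}(\lambda)\) and \(M=mM_\lambda\). It also gives an approximation of \(u_\ell\) by a configuration in \(\cM_{m,k}(\eps_0)\) whose error tends to zero in \(H^1_\lambda\), once the scales \(\eps_{j,\ell}\) fall below \(\eps_0\). The argument therefore reduces to showing \((m,k)=(m_0,k_0)\), since then this approximation contradicts the distance lower bound \(\kappa\).

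\textbf{Mass selects \(m\).} We have \(|m-m_0|M_\lambda=|M-m_0M_\lambda|\le\eta_M<M_\lambda\). Since \(m-m_0\) is an integer, \(m=m_0\).

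\textbf{Energy then selects \(k\).} With \(m=m_0\), \(|E_{m_0,k}(\lambda)-E_{m_0,k_0}(\lambda)|=|k-k_0|\,\mathcal S_{n,2}^{n/2}/n\le\eta_E<\mathcal S_{n,2}^{n/2}/n\), so \(k=k_0\).

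The main point to handle carefully is that the tolerances are non-strict in the limit. The limit inequalities are \(\le\eta_M\) and \(\le\eta_E\), which is why the strict bounds \(\eta_M<M_\lambda\) and \(\eta_E<\mathcal S_{n,2}^{n/2}/n\) are needed. The mass limit \(\|u_\ell\|_2^2\to mM_\lambda\) in Theorem~\ref{Thm1.4} is already established, using Lemma~\ref{Lem2.5} and the spectral gap. Because the argument uses the mass rather than irrationality of \(\mathfrak r_n(\lambda)\), no resonant parameter is excluded.
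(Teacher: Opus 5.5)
Your proposal is correct and matches the paper's proof essentially step for step. Both argue by contradiction and extract a nonnegative Palais--Smale subsequence, then apply Theorem~\ref{Thm1.4}. The limiting mass \(mM_\lambda\) forces \(m=m_0\), the energy then forces \(k=k_0\), and the resulting approximation by \(\cM_{m_0,k_0}(\eps_0)\) contradicts the distance lower bound.
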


\begin{proof}
Otherwise, for some \(\kappa>0\), there are nonnegative \(u_\ell\) satisfying the fixed energy and mass constraints, with
\[
\Gamma_\lambda(u_\ell)\to0, \qquad \dist_{H^1_\lambda}(u_\ell,\cM_{m_0,k_0}(\eps_0))\ge\kappa.
\]
After passage to a subsequence, the bounded energies converge to a finite level \(c\). Theorem~\ref{Thm1.4} gives profile numbers \((m,k)\), energy \(E_{m,k}(\lambda)\), and limiting mass \(mM_\lambda\). Passing to the limit in the mass constraint gives
\[
|m-m_0|M_\lambda\le\eta_M<M_\lambda,
\]
and hence \(m=m_0\). The energy constraint then gives
\[
|k-k_0|\frac{\mathcal S_{n,2}^{n/2}}n \le\eta_E<\frac{\mathcal S_{n,2}^{n/2}}n,
\]
so \(k=k_0\). The approximation in \eqref{eq5.4} contradicts the distance lower bound.
\end{proof}

\begin{proof}[Proof of Theorem~\ref{Thm1.3}]
Let \(\eps_0,\eta_0\) be the constants from Theorem~\ref{Thm1.1}, with \(\eps_0\) chosen as before Theorem~\ref{Thm1.2}. Apply Corollary~\ref{Cor6.5} with \((m_0,k_0)=(1,1)\), comparison family \(\cM_{1,1}(\eps_0/2)\), and approximation error \(\eta_0/2\). For some \(\Gamma_0\in(0,1)\), every function satisfying \eqref{eq1.6} then obeys
\[
\dist_{H^1_\lambda}\bigl(u,\cM_{1,1}(\eps_0/2)\bigr)<\eta_0/2.
\]
By the definition of the infimum, there are \(z_*,\xi_*\in\HH^n\) and \(0<\eps_*<\eps_0/2\) for which \eqref{eq1.2} holds. Theorem~\ref{Thm1.1} gives the asserted conclusions with a constant depending only on \(n,\lambda\).

In dimension six, the positive functions from Proposition~\ref{Prop4.9} satisfy
\[
J_\lambda(u_\eps)\longrightarrow E_{\mix}(\lambda), \qquad \|u_\eps\|_2^2\longrightarrow M_\lambda,
\]
as proved in Corollary~\ref{Cor5.7}. They eventually satisfy every fixed pair of constraints in \eqref{eq1.5}, and their distance and residual have the sharp logarithmic relation established in Proposition~\ref{Prop4.9}. This proves optimality under the stated energy and mass constraints.
\end{proof}

For a fixed number of hyperbolic profiles, the exceptional set in the critical case of \cite[Corollary 1.1]{BGKMPartI} can be replaced by a finite set.

\begin{corollary}\label{Cor6.6}
Let \(n=4,5\), fix \(N\in\mathbb N\), and put
\[
\Lambda_N=\left\{\lambda\in(\lamc,\lambdab): \mathfrak r_n(\lambda)=\frac pq,\quad 1\le p<q\le N,\quad \gcd(p,q)=1\right\}.
\]
For every \(\lambda\in(\lamc,\lambdab)\setminus\Lambda_N\), there are \(\delta,C>0\) such that every nonnegative \(u\in H^1_\lambda\) satisfying
\[
\bigl|\norm{u}^2-NS_{n,\lambda}^{n/2}\bigr| <\delta S_{n,\lambda}^{n/2}
\]
obeys
\[
\dist_{H^1_\lambda}\bigl(u,\cM_{N,0}(\eps_0)\bigr) \le C\Gamma_\lambda(u).
\]
The finite set \(\Lambda_N\) is necessary for this conclusion under the norm constraint alone. For every \(\lambda\in(\lamc,\lambdab)\) and every \(0<\eta_M<M_\lambda\), there exist \(\delta,C>0\) such that the same estimate holds for every nonnegative \(u\in H^1_\lambda\) satisfying the norm window and
\[
\bigl|\|u\|_2^2-NM_\lambda\bigr|<\eta_M.
\]
\end{corollary}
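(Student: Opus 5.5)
The plan is to combine the local linear estimate for \(N\) weakly interacting hyperbolic profiles in dimensions four and five from \cite{BGKMPartI} with the global classification of Theorem~\ref{Thm1.4} and the resonance description in Proposition~\ref{Prop6.2}. The local theorem of \cite{BGKMPartI} gives \(\eta_1,C_1>0\) such that every \(u\) with \(\dist_{H^1_\lambda}(u,\cM_{N,0}(\eps_0))<\eta_1\) satisfies \(\dist_{H^1_\lambda}(u,\cM_{N,0}(\eps_0))\le C_1\Gamma_\lambda(u)\). It therefore suffices to show that, for \(\delta\) small and \(\Gamma_\lambda(u)<\Gamma_0\), the norm window forces \(u\) into this neighborhood. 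When \(\Gamma_\lambda(u)\ge\Gamma_0\) the estimate is trivial, since \(\dist(u,\cM_{N,0})\le\norm{u}+\norm{\Sigma}\) is bounded on the norm window. A fixed point of \(\cM_{N,0}\) with well separated centers bounds this distance by a constant times \(\Gamma_0^{-1}\Gamma_\lambda(u)\).

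For the compactness step, argue by contradiction. Take nonnegative \(u_\ell\) in the norm window with \(\Gamma_\lambda(u_\ell)\to0\) and distance at least \(\eta_1\). By \eqref{eq5.1}, after extraction \(J_\lambda(u_\ell)\to c=\lim\norm{u_\ell}^2/n\), and Theorem~\ref{Thm1.4} gives \((m,k)\) with
\[
\bigl|mS_{n,\lambda}^{n/2}+k\mathcal S_{n,2}^{n/2}-NS_{n,\lambda}^{n/2}\bigr|\le\delta S_{n,\lambda}^{n/2}.
\]
The level set \(\{E_{m,k}(\lambda)\}\) is locally finite, so for \(\delta\) small enough (depending on \(\lambda\)) this forces \(E_{m,k}(\lambda)=NS_{n,\lambda}^{n/2}/n\). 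By Proposition~\ref{Prop6.2}, if \(\lambda\notin\Lambda_N\) then either \(\mathfrak r_n(\lambda)\) is irrational or its reduced denominator satisfies \(q>N\). In both cases the only representation is \((N,0)\). The decomposition \eqref{eq5.4} then places \(u_\ell\) within \(o(1)\) of \(\cM_{N,0}(\eps_0)\), which gives the contradiction.

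For the mass version, run the same argument. Here the limiting mass \(mM_\lambda\) from \eqref{eq5.5} satisfies \(|m-N|M_\lambda\le\eta_M<M_\lambda\), so \(m=N\), and the energy identity then forces \(k=0\) at every parameter, in the manner of Corollary~\ref{Cor6.5}.

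Necessity of \(\Lambda_N\) uses the realization part of Theorem~\ref{Thm1.4}. If \(\mathfrak r_n(\lambda)=p/q\) with \(q\le N\), Proposition~\ref{Prop6.2} gives the representation \((N-q,p)\) of the level \(NS_{n,\lambda}^{n/2}/n\). The realizing sequence has \(\Gamma_\lambda\to0\) and \(\norm{u_\ell}^2\to NS_{n,\lambda}^{n/2}\), but its mass tends to \((N-q)M_\lambda\) instead of \(NM_\lambda\). Every element of \(\cM_{N,0}(\eps_0)\) with pairwise separated centers has mass near \(NM_\lambda\). Members with close centers have norm far from the window; I would handle these by a local-finiteness argument, or more simply by noting that the local estimate already fails along the sequence. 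Using the Lipschitz bound on mass through the spectral gap, one obtains \(\liminf\dist>0\), and the linear estimate then fails.

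The main obstacle is matching the exact neighborhood and comparison family of \cite{BGKMPartI}: its local result is stated near separated sums with a separation parameter. For this I would verify that the profile decomposition supplies centers with divergent distances, so that \(u_\ell\) eventually lies in their admissible neighborhood.
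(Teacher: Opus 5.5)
Your proposal is correct and matches the paper's proof. A contradiction argument via Theorem~\ref{Thm1.4} and Proposition~\ref{Prop6.2} (or, under the mass constraint, the argument of Corollary~\ref{Cor6.5}) puts $u$ into the separated neighborhood where the local theorem of \cite{BGKMPartI} applies; the bound is trivial above the residual threshold; and necessity follows by realizing $(N-q,p)$, whose mass tends to $(N-q)M_\lambda$.

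In the necessity step you can drop the discussion of members with close centers. Positivity of the profiles makes every cross term $\int_{\HH^n}B^{\HH}_{z_i,\lambda}B^{\HH}_{z_j,\lambda}\dv$ nonnegative, so every element of $\cM_{N,0}(\eps_0)$ has mass at least $NM_\lambda$, which is exactly the observation the paper uses.
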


\begin{proof}
First, \eqref{eq5.1} converts a bounded norm constraint into the corresponding energy constraint as the residual tends to zero. Proposition~\ref{Prop6.2} and Theorem~\ref{Thm1.4} then give approximation by \(N\) hyperbolic profiles with pairwise divergent centers when \(\lambda\notin\Lambda_N\). A contradiction argument supplies a fixed norm window and a residual threshold for which the hypotheses of \cite[Theorem 1.1]{BGKMPartI} hold. Above that threshold, the norm bound gives the estimate after increasing \(C\). With the additional mass constraint, Corollary~\ref{Cor6.5} gives the same conclusion for every \(\lambda\); the profile decomposition again supplies the separation required by the local theorem.

To see necessity, suppose \(\mathfrak r_n(\lambda)=p/q\) in lowest terms and \(q\le N\). Theorem~\ref{Thm1.4} realizes \((N-q,p)\) at \(E_{N,0}(\lambda)\), with residual tending to zero and mass tending to \((N-q)M_\lambda\). Every sum of \(N\) positive hyperbolic profiles has mass at least \(NM_\lambda\), so its distance from this sequence stays positive by the spectral gap. Thus the fixed-level exclusion is exactly \(\Lambda_N\).
\end{proof}

On the other hand, \(\bigcup_{N\ge1}\Lambda_N\) is the full rational-ratio set of Theorem~\ref{Thm6.1}; a single exclusion valid at all pure hyperbolic levels must still contain this countable dense set.

For arbitrary profile numbers, Corollary~\ref{Cor6.5} gives qualitative approximation. Sharp residual estimates for general configurations containing several profiles of both types require further quantitative bounds for the Euclidean--Euclidean and hyperbolic--hyperbolic interactions and for the corresponding linearized operator.

\section*{Acknowledgments}
J. Wei and Z. Yan were supported by National R\&D Program of China (2022YFA1005602), and Hong Kong General Research Fund ``New frontiers in singular limits of nonlinear partial differential equation''. Z. Yang was supported by the National Natural Science Foundation of China (12301145, 12561020, 12261107) and the Yunnan Fundamental Research Projects (202401AU070123, 202601AT070048). 
The authors acknowledge the use of ChatGPT (OpenAI) for exploratory discussions and language refinement during the preparation of this work. All mathematical statements, proofs, and references were independently checked by the authors, who take full responsibility for the content of the paper.

\medskip
{\bf Author Contributions:} All authors contributed equally to the writing and preparation of the manuscript.

\medskip
{\bf Data availability:} Data sharing is not applicable to this article as no new data were created or analyzed in this study.

\medskip
{\bf Conflict of interest:} The authors declare that they have no conflict of interest.

\end{document}